\newtheorem{thm}{Theorem}[section]
\newtheorem{lem}[thm]{Lemma}
\newtheorem{prop}[thm]{Proposition}
\newenvironment{pf}{\proof[\proofname]}{\endproof}
\newenvironment{pf*}[1]{\proof[#1]}{\endproof}
\newcommand{\fxpt}{{{{\mathbf v}_*}}}
\newcommand{\bfv}{{{\mathbf v}}}
\newcommand{\cal}[1]{{\mathcal #1}}
\theoremstyle{definition}
\newtheorem{defn}{Definition}[section]
\theoremstyle{remark}
\newcommand{\diam}{\operatorname{diam}}
\newcommand{\dist}{\operatorname{dist}}
\newcommand{\cl}{\operatorname{cl}}
\renewcommand{\mod}{\operatorname{mod}}
\newcommand{\tl}{\tilde}
\newcommand{\wtl}{\widetilde}
\newcommand{\eps}{\epsilon}
\newcommand{\aaa}[1]{{{\mathbf{#1}}}}
\newcommand{\cu}{{{\mathbf C}_{\mathcal U}}}
\newcommand{\curr}{{{\aaa C}_\cU^\RR}}
\newcommand{\hol}{{\aaa H}}
\newcommand{\mfld}{{\curr}}
\numberwithin{equation}{section}
\newcommand{\thmref}[1]{Theorem~\ref{#1}}
\newcommand{\propref}[1]{Proposition~\ref{#1}}
\newcommand{\lemref}[1]{Lemma~\ref{#1}}
\newcommand{\ang}[2]{\widehat{(#1,#2)}}
\newcommand{\C}[1]{{\Bbb C_{#1}}}
\newcommand{\cA}{{\cal A}}
\newcommand{\cU}{{\mathcal U}}
\newcommand{\cG}{{\cal G}}
\newcommand{\cP}{{\cal P}}
\newcommand{\cC}{{\cal C}}
\newcommand{\cH}{{\cal H}}
\newcommand{\cR}{{\cal R}}
\newcommand{\cL}{{\cal L}}
\newcommand{\cE}{{\cal E}}
\newcommand{\cK}{{\cal K}}
\newcommand{\CC}{{\Bbb C}}
\newcommand{\RR}{{\Bbb R}}
\newcommand{\TT}{{\Bbb T}}
\newcommand{\ZZ}{{\Bbb Z}}
\newcommand{\NN}{{\Bbb N}}
\newcommand{\DD}{{\Bbb D}}
\newcommand{\HH}{{\Bbb H}}
\newcommand{\QQ}{{\Bbb Q}}
\newcommand{\cren}{\cR_{\text cyl}}
\newcommand{\sm}{\setminus}
\begin{document}
\addtolength{\evensidemargin}{-0.7in}
\addtolength{\oddsidemargin}{-0.7in}

\title[Bi-cubic circle maps]
{Renormalization of  bi-cubic circle maps.}
\thanks{This work was partially supported by NSERC Discovery Grant}
\author{Michael Yampolsky}
\begin{abstract}We develop a renormalization theory for analytic homeomorphisms of the circle with two cubic critical points. We prove a renormalization hyperbolicity theorem. As a basis for the proofs, we develop complex {\it a priori} bounds for multi-critical circle maps.
\end{abstract}
\date{\today}
\maketitle

\section{Introduction}
In this paper we develop a renormalization theory for analytic critical circle maps with several critical points. The principal result is a renormalization hyperbolicity theorem for {\it bi-cubic} circle maps, that is, analytic homeomorphisms of the circle with two critical points, both of which are of cubic type. We construct periodic orbits for renormalization for such maps (Theorem~\ref{main1}), and show (Theorem~\ref{cor-hyperb}) that these orbits are hyperbolic, with {\it two} unstable directions.

The proof of Theorem~\ref{main1} uses a new version of our complex {\it a priori} bounds \cite{Ya1}. The proof of the bounds is of an independent importance, and we give it for {\it arbitrary} multicritical circle maps, not just bi-cubic ones (Theorem~\ref{thm:bounds}).

\section{Preliminaries}

\subsection{Multicritical circle maps and commuting pairs}
A multicritical circle map $f:\TT\to\TT$  is a $C^3$-smooth homeomorphism of the circle $\TT$ which has finitely many critical points
$c_0,\ldots,c_l$ so that:
\begin{equation}
  \label{eq:order}
  f(x)-f(c_j)=a_j(x-c_j)^{k_j}\phi_j(x),\text{ for }x\approx c_j,
  \end{equation}
where $k_j\in 2\ZZ+1$ and $\phi_j$ is a local $C^3$-diffeomorphism. We will say that a critical point described by the equation (\ref{eq:order}) has order $k_j$.

The same definition becomes much simpler in the analytic cathegory ($C^\omega$): an analytic multicritical circle map is an analytic homeomorphism of the circle
with at least one critical point.

A discussion of multicritical circle maps in the real setting has recently been carried out in \cite{GdF}.

In what follows, we will always place one of the critical points of a multicritical circle map at $0$.

 As discussed in some detail in \cite{Ya3}, the space of critical circle maps is
ill-suited to define renormalization. The pioneering works on the subject (\cite{ORSS} and \cite{FKS})
 circumvented this difficulty
by replacing critical circle maps with different objects:
\begin{defn}
A  {\it  $C^r$-smooth (or $C^\omega$) multicritical commuting pair} $\zeta=(\eta,\xi)$ consists of two 
$C^r$-smooth  (or $C^\omega$) orientation preserving interval homeomorphisms 
$\eta:I_\eta\to \eta(I_\eta),\;
\xi:I_{\xi}\to \xi(I_\xi)$, where
\begin{itemize}
\item[(I)]{$I_\eta=[0,\xi(0)],\; I_\xi=[\eta(0),0]$;}
\item[(II)]{Both $\eta$ and $\xi$ have homeomorphic extensions to interval
neighborhoods $V_\eta$, $V_\xi$ of their respective domains {\it with the same degree of
smoothness}, that is $C^r$ (or $C^\omega$), which commute, 
$\eta\circ\xi=\xi\circ\eta$;}
\item[(III)]{$\xi\circ\eta(0)\in I_\eta$;}
%\item[(IV)]{$\eta'(x)\ne 0\ne \xi'(y) $, for all $x\in I_\eta\setminus\{0\}$, and all $y\in I_\xi\setminus\{0\}$;}
\item[(IV)] each of the maps $\eta$ and $\xi$ has finitely many critical points in $V_\eta$, $V_\xi$ respectively, each of them of an odd integer order;
\item[(V)] the point $0$ is a critical point for both $\eta$ and $\xi$.
  \end{itemize}
\end{defn}
We note that the commutation condition (II) ensures that $\eta$ and $\xi$ have the same order of the critical point at $0$.

\begin{figure}[ht]
\caption{\label{compair}A commuting pair}
\includegraphics[width=0.7\textwidth]{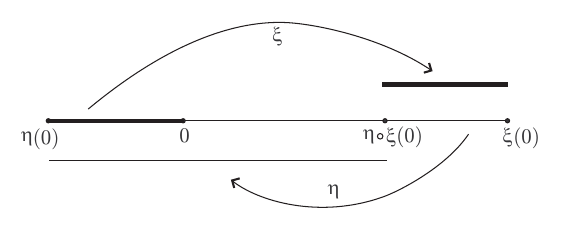}
\end{figure}

Given a multicritical commuting pair $\zeta=(\eta,\xi)$
we can regard the interval $I=[\eta(0),\xi\circ\eta(0)]$ as a circle, identifying 
$\eta(0)$ and $\xi\circ\eta(0)$ and define $F_\zeta:I\to I$ by 
\begin{equation}\label{Fzeta}
F_\zeta=\left\{\begin{array}{l}
                    \eta\circ\xi(x) \text{ for }x\in [\eta(0),0]\\
                    \eta(x)\text{ for } x\in [0,\xi\circ\eta(0)]
\end{array}\right.
\end{equation}
The mapping $\xi$ extends to a $C^r$- (or $C^\omega$-) diffeomorphism of open
neighborhoods of  $\eta(0)$ and $\xi\circ\eta(0)$. Using it as a local chart we turn 
the interval $I$ into a closed one-dimensional manifold $M$. Condition (II) above implies that
the mapping $F_\zeta$ projects to a well-defined $C^3$-smooth homeomorphism $F_\zeta:M\to M$.
Identifying $M$ with the circle by a diffeomorphism $\phi:M\to \TT$
we recover a multicritical circle mapping
\begin{equation}
  \label{fphi}
  f^\phi=\phi\circ F_\zeta\circ\phi^{-1}.
  \end{equation}
The critical circle mappings corresponding to two different choices
of $\phi$ are conjugated by a diffeomorphism, and thus we recovered
a $C^r$- (or $C^\omega$) smooth conjugacy class of circle mappings from a multicritical commuting
pair.

\subsection{Commuting pairs and renormalization of critical circle maps}
The {\it height} $\chi(\zeta)$
of a multicritical commuting pair $\zeta=(\eta,\xi)$ is equal to $r$,
if 
$$0\in [\eta^r(\xi(0)),\eta^{r+1}(\xi(0))].$$
 If no such $r$ exists,
we set $\chi(\zeta)=\infty$, in this case the map $\eta|I_\eta$ has a 
fixed point.  For a pair $\zeta$ with $\chi(\zeta)=r<\infty$ one verifies directly that the
mappings $\eta|[0,\eta^r(\xi(0))]$ and $\eta^r\circ\xi|I_\xi$
again form a commuting pair.

Given a commuting pair $\zeta=(\eta,\xi)$ we will denote by 
$\wtl\zeta$ the pair $(\wtl\eta|\wtl{I_\eta},\wtl\xi|\wtl{I_\xi})$
where tilde  means rescaling by a linear factor $\lambda={1\over |I_\eta|}$.
\begin{defn} We say that a real commuting pair $\zeta=(\eta,\xi)$ is {\it renormalizable} if $\chi(\zeta)<\infty$.
The {\it renormalization} of a renormalizable commuting pair $\zeta=(\eta,
\xi)$ is the commuting pair
\begin{center}
${\cal{R}}\zeta=(
\widetilde{\eta^r\circ\xi}|
 \widetilde{I_{\xi}},\; \widetilde\eta|\widetilde{[0,\eta^r(\xi(0))]}).$
\end{center}
\end{defn}

\noindent
The non-rescaled pair $(\eta^r\circ\xi|I_\xi,\eta|[0,\eta^r(\xi(0))])$ will be referred to as the 
{\it pre-renormalization} $p{\cal R}\zeta$ of the commuting pair $\zeta=(\eta,\xi)$. Suppose $\{\zeta_i\}_{i=1}^{k-1}$ is a sequence of renormalizable pairs such that $\zeta_0=\zeta$ and $\zeta_i=p\cR\zeta_{i-1}$. We call $\zeta_k=p\cR\zeta_{k-1}$ the $k$-th pre-renormalization of $\zeta$; and $\widetilde{\zeta_k}$ the $k$-th  renormalization of $\zeta$ and write
$$\zeta_k=p\cR^k\zeta,\;\widetilde{\zeta_k}=\cR^k\zeta.$$
%%Let $\zeta_k=(\eta_k,\xi_k)$. The domains of $\eta_k$ and $\xi_k$ will be denoted $I_k$ and $J_k$ correspondingly.

For a pair $\zeta$ we define its {\it rotation number} $\rho(\zeta)\in[0,1]$ to be equal to the 
continued fraction $[r_0,r_1,\ldots]$ where $r_i=\chi({\cal R}^i\zeta)$. 
In this definition $1/\infty$ is understood as $0$, hence a rotation number is rational
if and only if only finitely many renormalizations of $\zeta$ are defined;
if $\chi(\zeta)=\infty$, $\rho(\zeta)=0$.
Thus defined, the rotation number of a commuting pair can be viewed as a rotation number in
the usual sense:
\begin{prop}
\label{rotation number}
The rotation number of the mapping $F_\zeta$ is equal to $\rho(\zeta)$.
\end{prop}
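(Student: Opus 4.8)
The plan is to unwind the recursive definition of $\rho(\zeta)$ and match it against the standard combinatorics of rotation numbers under the first-return construction. Recall that the rotation number $\rho(F)$ of an orientation-preserving circle homeomorphism $F$ is characterized by its continued fraction expansion through the following dynamical fact: if $\rho(F)=[a_0,a_1,a_2,\ldots]$ with $a_0\ge 1$, then $F$ has no periodic orbit of period $\le a_0$ exhibiting a rotation by less than $1/(a_0+1)$, and the first-return map of $F$ to a suitable fundamental interval, after rescaling, again has rotation number $[a_1,a_2,\ldots]$. So the strategy is to show that the height $\chi(\zeta)$ computes the first partial quotient $a_0$ of $\rho(F_\zeta)$, and that the pre-renormalization $p\cR\zeta$ produces (a commuting pair representing) the first-return map of $F_\zeta$ to the interval $I_\xi\cup I_\eta$ collapsed appropriately, so that $\rho(F_{p\cR\zeta})=[a_1,a_2,\ldots]$. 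Then induction on the depth of renormalization closes the argument.

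First I would set up the base case. Given $\zeta=(\eta,\xi)$ with $\chi(\zeta)=r<\infty$, consider the circle $M$ and the map $F=F_\zeta$ as in (\ref{Fzeta}). The orbit of the point $\xi(0)$ under $\eta$, namely $\xi(0),\eta(\xi(0)),\ldots,\eta^r(\xi(0))$, stays to the right of $0$ (so $F$ acts as $\eta$ there), and then $0\in[\eta^r(\xi(0)),\eta^{r+1}(\xi(0))]$ means the orbit wraps past the identification point. Tracking how many iterates of $F$ are needed for the point $\xi(0)$ (equivalently the critical point $0$, since $F(0)=\eta(0)$ is the left endpoint) to return past $0$, one reads off that $\rho(F)$ lies in $[1/(r+1),1/r]$; more precisely the combinatorial position forces the first partial quotient of $\rho(F_\zeta)$ to equal $r=\chi(\zeta)$. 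This is the kind of direct verification already alluded to in the text (``one verifies directly'') for the fact that $\eta|[0,\eta^r(\xi(0))]$ and $\eta^r\circ\xi|I_\xi$ form a commuting pair.

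Next I would identify $p\cR\zeta$ with a first-return map. The pair $p\cR\zeta=(\eta^r\circ\xi|I_\xi,\,\eta|[0,\eta^r(\xi(0))])$ generates its own circle map $F_{p\cR\zeta}$, and the claim is that this is conjugate to the first-return map of $F_\zeta$ to the arc obtained from $[\eta(0),\eta^r(\xi(0))]$ by the identification built from $\xi$. Indeed on $[0,\eta^r(\xi(0))]$ the return map is just $\eta$ followed by no excursion (one application of $F_\zeta$), and on the other piece the return requires going through $\xi$ and then $r$ applications of $\eta$ — exactly the two branches of $F_{p\cR\zeta}$. Since a first-return map to a fundamental arc of a circle homeomorphism with rotation number $[a_0,a_1,\ldots]$ has rotation number $[a_1,a_2,\ldots]$ (the Gauss map on continued fractions), and since linear rescaling $\widetilde{\phantom{x}}$ does not change rotation numbers, we get $\rho(F_{p\cR\zeta})=\rho(F_{\cR\zeta})=[r_1,r_2,\ldots]$ where $r_i=\chi(\cR^i\zeta)$. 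Combined with the base case $a_0=r_0=\chi(\zeta)$, the continued fractions of $\rho(F_\zeta)$ and of $\rho(\zeta)$ agree term by term, hence the two numbers are equal. The case $\chi(\zeta)=\infty$ (and the tail cases where only finitely many renormalizations exist) is handled separately: then $\eta$ has a fixed point in $I_\eta$, which forces $F_\zeta$ to have a fixed point, so $\rho(F_\zeta)=0=\rho(\zeta)$, and similarly a rational $\rho(\zeta)=[r_0,\ldots,r_{n}]$ corresponds to $F_\zeta$ having a periodic orbit of the matching type.

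The main obstacle is the bookkeeping in the middle step: making the identification between $p\cR\zeta$ and the genuine first-return map of $F_\zeta$ precise, including checking that the domains match up under the gluing chart $\xi$ and that the commutation relation $\eta\circ\xi=\xi\circ\eta$ is exactly what makes $F_{p\cR\zeta}$ well-defined on the glued interval — i.e. that the two branch definitions agree at the identified endpoints. One must also be careful that the ``circle'' associated to $p\cR\zeta$ is glued using $\eta^r\circ\xi$ in place of $\xi$, and verify this is consistent with viewing it as a sub-arc of $M$. None of this is deep, but it requires drawing the picture (as in Figure~\ref{compair}) and carefully chasing endpoints; once the first-return interpretation is nailed down, the continued-fraction recursion does the rest essentially formally.
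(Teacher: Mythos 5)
The paper states Proposition~\ref{rotation number} without proof; it is a standard fact of the commuting-pair formalism going back to \cite{ORSS,FKS}. Your approach --- identify $\chi(\zeta)$ with the first partial quotient of $\rho(F_\zeta)$, recognize $F_{p\cR\zeta}$ (after gluing) as the first-return map of $F_\zeta$, invoke the Gauss-map recursion, and induct --- is exactly the right one, and the structure of the argument is sound.

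Two small points of imprecision, both of the ``bookkeeping'' kind you already flag, but worth naming. First, $\xi(0)$ is \emph{not} a point of the quotient circle $M$: since $\eta$ has no fixed point on $I_\eta$ we have $\xi\circ\eta(0)=\eta\circ\xi(0)<\xi(0)$, so $\xi(0)$ lies outside $I=[\eta(0),\xi\circ\eta(0)]$. The orbit to track under $F_\zeta$ is that of the identified endpoint $\xi\circ\eta(0)\sim\eta(0)$ (equivalently, of the critical point $0$), and the commutation relation is what turns its $F_\zeta$-orbit into the sequence $\eta\circ\xi(0),\eta^2\circ\xi(0),\ldots,\eta^{r+1}\circ\xi(0)$; note also that $F_\zeta$ equals $\eta$ only on $[0,\xi\circ\eta(0)]$, a proper subinterval of $I_\eta$, so ``the orbit stays to the right of $0$, so $F$ acts as $\eta$ there'' requires this correction to be literally true. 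Second, the orientation of $M$ must be fixed consistently: with the natural real-line orientation a direct computation in the linear model gives a lift of $F_\zeta$ with negative rotation, so one gets $1-\rho(\zeta)$ rather than $\rho(\zeta)$ unless the diffeomorphism $\phi:M\to\TT$ in (\ref{fphi}) is chosen to reverse orientation (or, equivalently, one reads $\rho$ modulo the involution $x\mapsto 1-x$). Once these two conventions are nailed down, the base case $\chi(\zeta)=a_0$, the first-return identification of $p\cR\zeta$, and the recursion via $G$ go through as you describe, and the edge cases ($\chi=\infty$ giving a fixed point of $\eta$ below $\xi\circ\eta(0)$, hence of $F_\zeta$; rational tails giving periodic orbits) are handled correctly.
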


\noindent
There is an  advantage in defining $\rho(\zeta)$ using a sequence of heights, since it 
removes the ambiguity in prescribing a continued fraction expansion to {\it rational} rotation numbers
in a dynamically natural way.

Let $f$ be a multicritical circle map with a critical point at $0$. Suppose that the continued fraction of $\rho (f)$ contains at least $n+1$ terms; as usual, let $p_k/q_k$ be the $k$-th convergent of $\rho(f)$ for $k\leq n+1$, and let
$$I_n\equiv [0,f^{q_n}(0)].$$

One obtains a multicritical commuting pair
from the pair of maps $(f^{q_{m+1}}|I_m,f^{q_m}|I_{m+1})$ as follows.
Let $\bar f$ be the lift of $f$ to the real line satisfying $\bar f '(0)=0$,
and $0<\bar f (0)<1$. For each $m>0$ let $\bar I_m\subset \Bbb R$ 
denote the closed 
interval adjacent to zero which projects down to the interval $I_m$.
Let $\tau :\Bbb R\to \Bbb R$ denote the translation $x\mapsto x+1$.
Let $\eta :\bar I_m\to \Bbb R$, $\xi:\bar I_{m+1}\to \Bbb R$ be given by
$\eta\equiv \tau^{-p_{m+1}}\circ\bar f^{q_{m+1}}$,
$\xi\equiv \tau^{-p_m}\circ\bar f^{q_m}$. Then the pair of maps
$(\eta|\bar I_m,\xi|\bar I_{m+1})$ forms a multicritical commuting pair
corresponding to $(f^{q_{m+1}}|I_m,f^{q_m}|I_{m+1})$.
Henceforth, we shall abuse notation and simply denote this commuting pair 
by
\begin{equation}
\label{real1}
(f^{q_{m+1}}|I_m,f^{q_m}|I_{m+1}).
\end{equation}
The $m$-th renormalization of $f$ at the critical point $0$ 
is the rescaled pair (\ref{real1}):
\begin{equation}
  \label{real2}
  (\wtl{{f}^{q_{m+1}}}|\wtl{{I}_{m}},\wtl{{f}^{q_{m}}}|\wtl{{I}_{m+1}}).
  \end{equation}

Denote $G(x)$ the Gauss map $$G(x)=\left\{\frac{1}{x}\right\}.$$ It is easy to verify that for a renormalizable pair $\zeta$
\begin{equation}\label{Gauss}
\rho(\cR\zeta)=G(\rho\zeta).
  \end{equation}

\subsection{Dynamical partitions and real {\it a priori} bounds}
Following the convention introduced by Sullivan \cite{Sullivan-bounds}, we say that for an infinitely renormalizable mapping $f$ (or a pair $\zeta$) a quantity is ``beau'' (which translates as ``bounded and eventually universally (bounded)'') if it is bounded for all renormalizations $\cR^n f$ ($\cR^n\zeta)$, and the bound becomes universal (that is, independent of the map) for $n\geq n_0$ for some $n_0\in\NN$. 
\label{sec:partition}
\begin{defn}
  \label{partition}
  Let $f$ be a multicritical circle map which is at least $n$-times renormalizable. The collection of intervals
  $$\cP_n(f)=\{I_n, f(I_n), \ldots, f^{q_{n+1}-1}(I_n)\}\cup \{I_{n+1}, f(I_{n+1}),\ldots, f^{q_n-1}(I_n+1)\}$$
  is called the {\it $n$-th dynamical partition of $f$}.
\end{defn}
The following standard fact is easy to verify:
\begin{prop}
The intervals of $\cP_n(f)$ are disjoint except at the endpoints. Moreover, they cover the whole circle $\TT$.
\end{prop}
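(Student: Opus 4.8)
The plan is to reduce both claims to a statement about the cyclic order in which the iterates $f^0(0),f^1(0),\dots$ of the marked point sit on $\TT$, and then to settle that statement for the rigid rotation, to which the general case is combinatorially equivalent. For the bookkeeping, observe that every interval of $\cP_n(f)$ has both endpoints in the set $E_n:=\{f^j(0):0\le j\le q_n+q_{n+1}-1\}$, and conversely that $E_n$ is exactly the set of all such endpoints. Since $\rho(f)$ has at least $n+1$ continued fraction terms the points of $E_n$ are pairwise distinct, so $|E_n|=q_n+q_{n+1}$, which is also the number of intervals of $\cP_n(f)$; hence the two assertions together are equivalent to saying that the intervals of $\cP_n(f)$, listed in the cyclic order of their endpoints, are consecutive and gap-free, and this depends only on the combinatorics of the orbit of $0$. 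Now $f$ is an orientation-preserving circle homeomorphism, so by Poincar\'e's theorem there is a monotone degree-one map $h:\TT\to\TT$ with $h\circ f=R_{\rho(f)}\circ h$ and, after post-composing by a rotation, $h(0)=0$; from $h(f^j(0))=R_{\rho(f)}^j(0)$ and the irrationality of $\rho(f)$ one sees that $h$ is injective on $E_n$, hence strictly order preserving there. Therefore the orbit of $0$ under $f$ has the same combinatorics as that of $R_\rho$, $\rho:=\rho(f)$, and it suffices to prove the proposition for the rigid rotation $R_\rho$.

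For $R_\rho$ I would argue by induction on $n$, using that $\cP_n(R_\rho)$ refines $\cP_{n-1}(R_\rho)$; the small values of $n$ are immediate from the normalization of the continued fraction expansion. Write $q_{n+1}=a\,q_n+q_{n-1}$ with $a$ the relevant continued fraction coefficient. The standard identity $\|q_{n-1}\rho\|=a\,\|q_n\rho\|+\|q_{n+1}\rho\|$ for convergents translates, once orientations are fixed, into the subdivision
\[
I_{n-1}\;=\;I_{n+1}\ \cup\ \bigcup_{i=0}^{a-1} R_\rho^{\,q_{n-1}+iq_n}(I_n),
\]
a union with pairwise disjoint interiors. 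Pushing this forward by $R_\rho^j$ for $0\le j<q_n$ shows that each of the $q_n$ ``long'' intervals $R_\rho^j(I_{n-1})$ of $\cP_{n-1}(R_\rho)$ splits into $a+1$ consecutive subintervals, one an iterate of $I_{n+1}$ and $a$ of them iterates of $I_n$, while the $q_{n-1}$ ``short'' intervals $R_\rho^j(I_n)$, $0\le j<q_{n-1}$, are untouched and survive in $\cP_n(R_\rho)$ since $q_{n-1}<q_{n+1}$. A count of indices shows that the iterates of $I_n$ produced this way, together with the surviving short intervals, are exactly $R_\rho^j(I_n)$ for $0\le j<q_{n+1}$, and the iterates of $I_{n+1}$ are exactly $R_\rho^j(I_{n+1})$ for $0\le j<q_n$; that is, the resulting partition is precisely $\cP_n(R_\rho)$ as in Definition~\ref{partition}. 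Since by the inductive hypothesis $\cP_{n-1}(R_\rho)$ tiles $\TT$ by intervals with disjoint interiors, so does $\cP_n(R_\rho)$, completing the induction.

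The single genuinely non-formal ingredient is the reduction in the first paragraph, namely that the orbit of $0$ under a multicritical circle map realizes the combinatorics of a rigid rotation with the same rotation number, where input about circle homeomorphisms (Poincar\'e's semi-conjugacy) enters; this is the step I would carry out with care. If one prefers a self-contained argument, one can bypass it by establishing directly for $f$, again by induction on $n$, the closest-return structure of $f$ (for instance that the interior of $I_m=[0,f^{q_m}(0)]$ contains no iterate $f^j(0)$ with $0<j<q_{m+1}$), and then carrying out the induction of the second paragraph for $f$ itself, with this combinatorial input replacing the continued-fraction identity; that route is routine but is the part requiring the most attention.
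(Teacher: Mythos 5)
The paper states this proposition as a standard fact and offers no proof, so there is no argument of the paper's to compare against; your proof is correct, and it is essentially the textbook argument. The reduction to the rigid rotation via the Poincar\'e semi-conjugacy $h$ is valid: since $\rho(f)$ is irrational, the points $R_{\rho}^j(0)$ are pairwise distinct, so $h$ is injective and hence strictly cyclic-order preserving on the finite orbit segment $\{f^j(0):0\le j<q_n+q_{n+1}\}$, which is exactly what is needed to transport the tiling statement. The inductive refinement of $\cP_{n-1}(R_\rho)$ into $\cP_n(R_\rho)$, driven by $q_{n+1}=a_{n+1}q_n+q_{n-1}$ together with $\|q_{n-1}\rho\|=a_{n+1}\|q_n\rho\|+\|q_{n+1}\rho\|$, is carried out correctly, and the index count showing that the produced pieces are precisely $\{R_\rho^j(I_n):0\le j<q_{n+1}\}\cup\{R_\rho^j(I_{n+1}):0\le j<q_n\}$ is right.

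One small point of hygiene: the definition of $\cP_n(f)$ in the paper only requires $f$ to be $n$ times renormalizable, which does not by itself force $\rho(f)$ to be irrational, while your first paragraph invokes irrationality to get injectivity of $h$ on the orbit. This is immaterial here, since the distinctness of the relevant $f^j(0)$ (and the matching of the cyclic order with that of $R_{\rho}$) can be read off directly from the lift $\bar f$ for an $n$-times renormalizable map, and the paper uses the proposition only in the irrational setting; but if you want the statement in full generality you should phrase the reduction in terms of the lift rather than the semi-conjugacy. Your closing remark that one can instead prove the closest-return property directly for $f$ and run the same induction is also a sound alternative.
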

Yoccoz \cite{YocDen} demonstrated:
\begin{thm}
  \label{conj-yoc}
  Suppose $\rho(f)\notin \QQ$. Then $$\underset{J\in\cP_n(f)}{\max}|J|\underset{n\to\infty}{\longrightarrow}0.$$
\end{thm}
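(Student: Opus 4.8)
The plan is to exploit the combinatorial structure of the dynamical partitions together with the absence of wandering intervals and the control on nonlinearity coming from the multicritical nature of $f$. First I would observe that the partitions $\cP_n(f)$ are nested and refine each other: each interval of $\cP_n(f)$ is subdivided by the intervals of $\cP_{n+1}(f)$, and moreover every element of $\cP_{n+1}(f)$ is the image under an appropriate iterate of $f$ of one of the two ``generating'' intervals $I_{n+1}$ or $I_{n+2}$. Hence it suffices to prove that $|I_n|\to 0$, because each $J\in\cP_n(f)$ is of the form $f^k(I_n)$ or $f^k(I_{n+1})$, and the maximal length over the partition is controlled once the generators shrink (using that $f$ has bounded distortion away from the critical points, and near a critical point of odd order $x\mapsto x^{k_j}$ only shrinks intervals containing the critical value). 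So the crux is $|I_n|\to 0$.

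Suppose for contradiction that $|I_n|\not\to 0$. Since $I_0\supset I_1\supset I_2\supset\cdots$ (up to the identification on the circle — more precisely $I_{n+1}\subset I_{n-1}$), the lengths $|I_n|$ decrease to some limit $\ell>0$, so $\bigcap_n I_n$ is a nondegenerate interval $K$. Because $\rho(f)\notin\QQ$, the combinatorics force the orbit $\{f^{q_{n+1}}(0)\}$ to accumulate on $0$ from one side and $\{f^{q_n}(0)\}$ from the other, and one checks that $f^{q_n}(K)$ and $f^{q_{n+1}}(K)$ are disjoint from $K$ but nested inside $I_{n-1}$; iterating, the intervals $K, f^{q_n}(K), f^{2q_n}(K),\dots$ (for the appropriate range of the closest-return combinatorics) are pairwise disjoint and all contained in a fixed interval, forcing $|f^{jq_n}(K)|\to 0$ along the subsequence, which would make $\bigcap I_n$ degenerate after all — unless $K$ is invariant under some return map, i.e.\ the return map of $f$ to $I_{n-1}$ has $K$ in a periodic/wandering configuration. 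The only way to sustain $\ell>0$ is therefore that $K$ (or one of its iterates) is a \emph{wandering interval} for $f$: an interval whose forward orbit is pairwise disjoint and which is not in the basin of a periodic orbit. Here I would invoke the density of the orbit (which holds since $\rho(f)$ is irrational, so $F_\zeta$ is minimal on the circle by Poincar\'e/Denjoy-type considerations, or directly: the endpoints $f^{q_n}(0)$ are dense) to conclude $K$ wanders.

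It then remains to rule out wandering intervals for $C^3$ multicritical circle maps. This is the main obstacle and the heart of the matter: one must upgrade the classical Denjoy/Yoccoz theorem to the multicritical setting. I would follow the Schwarzian/cross-ratio approach: a $C^3$ circle map with only finitely many critical points, all of odd order, has the property that its iterates distort cross-ratios by a bounded amount (the key tool being that $z\mapsto z^{k}$ with $k$ odd is a diffeomorphism off the critical point and has negative Schwarzian-type estimates on cross ratios, so one gets a Koebe-type distortion bound along orbits that pass a critical point a bounded number of times). Summing the lengths of a putative wandering interval's orbit and using the bounded cross-ratio distortion to compare it to the lengths of the flanking gaps yields, as in Yoccoz's argument for a single critical point, a contradiction with the finiteness of the total length of the circle. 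Concretely: the wandering interval $K$ together with its two ``shadows'' under the orbit forms an almost-invariant cross-ratio configuration; bounded distortion of cross ratios forces $\sum_j |f^j(K)|=\infty$ unless $|K|=0$. The only delicate point compared to the unicritical case is to check that an orbit meets the (finitely many) critical points with controlled recurrence so that the cross-ratio distortion constant stays bounded; this is handled by the real \emph{a priori} bounds / the combinatorial control of closest returns, exactly the ``beau'' estimates referenced just before the statement, so I would cite \cite{GdF,YocDen} for this step rather than reprove it. This contradiction establishes $|I_n|\to 0$, and with the reduction of the first paragraph, the theorem follows.
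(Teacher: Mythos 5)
The paper does not prove this theorem; it is quoted with a citation to \cite{YocDen}, so there is no ``paper's proof'' to compare against. Your overall plan --- show that a failure of shrinking would produce a wandering interval, and then rule out wandering intervals for $C^3$ multicritical circle maps via the cross-ratio / Schwarzian machinery of Yoccoz (extended to several critical points as in \cite{GdF}) --- is indeed the route the literature takes, and you correctly identify the absence of wandering intervals as the real content of the theorem.

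However, the ``reduction'' in your first paragraph is a genuine gap. You claim that $|I_n|\to 0$ already implies $\max_{J\in\cP_n(f)}|J|\to 0$ because each $J\in\cP_n$ has the form $f^k(I_n)$ or $f^k(I_{n+1})$ and ``$f$ has bounded distortion away from the critical points.'' This does not work: for $k$ of size up to $q_{n+1}$, the iterate $f^k$ is a composition of very many copies of $f$, and bounded distortion of a single application does not propagate to the long composition. Controlling $|f^k(I_n)|$ in terms of $|I_n|$ uniformly in $k<q_{n+1}$ is exactly the kind of long-iterate distortion estimate that {\it is} the theorem (it is what the real {\it a priori} bounds give, and those are themselves a consequence of Yoccoz's argument, not an input to it). The claim that near a critical point ``$x\mapsto x^{k_j}$ only shrinks intervals containing the critical value'' is also false --- that power map has derivative $k_j x^{k_j-1}$, which is large away from the critical point --- and in any case a single-step statement cannot replace the needed cumulative bound. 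A symptom of the circularity: one can also argue that if $f$ has no wandering interval then $f$ is conjugate to $R_{\rho(f)}$, whence $\max|J|\to 0$ follows from uniform continuity of the conjugacy, so once you have Step~4 you do not need Step~1 at all. There are also smaller slips: $I_n$ and $I_{n+1}$ lie on opposite sides of $0$, so $\bigcap_n I_n=\{0\}$ always; you should instead look at $\bigcap_n I_{2n}$ and $\bigcap_n I_{2n+1}$ separately, and $|I_n|$ is not monotone.

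The fix is to drop the detour through $|I_n|$ entirely. Since the partitions $\cP_n(f)$ refine each other, if $\max_{J\in\cP_n}|J|\not\to 0$ then by a K\H{o}nig-lemma argument there is a nested chain $J_0\supset J_1\supset\cdots$, $J_n\in\cP_n$, with $|J_n|\ge\ell>0$, and $K=\bigcap J_n$ has $|K|\ge\ell$. The first return time of $f$ to $J_n$ is $q_n$ or $q_{n+1}$, which tends to $\infty$; hence $f^j(K)\cap K=\emptyset$ for all $j\ge 1$, and since $f$ is a homeomorphism the forward orbit of $K$ is pairwise disjoint, i.e.\ $K$ is a wandering interval. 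This reduction is clean and uses no distortion input. The remaining --- and genuinely hard --- step is the exclusion of wandering intervals via cross-ratio distortion with finitely many critical points of odd order; that you are right to attribute to Yoccoz and its multicritical extension rather than rederive here.
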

This implies, in particular:
\begin{thm}
  \label{conj2}

  Suppose $f$ is a muticritical circle map with $\rho(f)\notin\QQ$. Then $f$ is topologically conjugate to the rigid rotation
  $R_{\rho(f)}$.
\end{thm}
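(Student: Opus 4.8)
The plan is to derive \thmref{conj2} from Yoccoz's \thmref{conj-yoc} by the classical Poincar\'e semiconjugacy construction. Write $\rho=\rho(f)\in(0,1)\sm\QQ$; by \propref{rotation number} the lift $\bar f$ has translation (rotation) number $\rho$. The first ingredient is the combinatorial rigidity of orbits of circle homeomorphisms with irrational rotation number: the cyclic order of $\{f^n(0):n\in\ZZ\}$ on $\TT$ coincides with the cyclic order of $\{R_\rho^n(0):n\in\ZZ\}=\{n\rho\bmod 1:n\in\ZZ\}$. This is standard Poincar\'e theory, and it is consistent with the way $\rho(\zeta)$ was defined above: both $f$ and $R_\rho$ have the same sequence of heights $r_i=\chi(\cR^i\zeta)$, which are the partial quotients of $\rho$, and these determine the combinatorial placement of the orbit points.

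Given this, define $h$ on the orbit of $0$ by $h(f^n(0))=n\rho\bmod 1$. By the previous paragraph $h$ is cyclically monotone on $\{f^n(0)\}$, hence it extends by monotonicity to the closure $E=\ov{\{f^n(0):n\in\ZZ\}}$, and since $h(E)$ is compact and contains the dense set $\{n\rho\bmod 1\}$, we get $h(E)=\TT$; the complementary intervals of $E$ are then collapsed to points, so $h$ extends further to a continuous, degree-one, monotone surjection $h:\TT\to\TT$. From $h(f^{n+1}(0))=(n+1)\rho=R_\rho(n\rho)$ and density plus continuity it follows that $h\circ f=R_\rho\circ h$ on all of $\TT$, i.e. $h$ is a topological semiconjugacy. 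Moreover $h$ is constant precisely on the closures of the connected components of $\TT\sm E$, so $h$ is a homeomorphism if and only if $E=\TT$, i.e. the forward (or full) orbit of $0$ is dense in $\TT$.

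It remains to prove that the orbit of $0$ is dense, and this is exactly where \thmref{conj-yoc} is used. Fix $x\in\TT$ and $\varepsilon>0$. Each interval of $\cP_n(f)$ is of the form $f^j(I_n)$ or $f^j(I_{n+1})$ with $I_m=[0,f^{q_m}(0)]$, so both of its endpoints lie in the orbit of $0$; moreover the intervals of $\cP_n(f)$ cover $\TT$ with pairwise disjoint interiors. Choosing $n$ with $\max_{J\in\cP_n(f)}|J|<\varepsilon$, which is possible by \thmref{conj-yoc}, the point $x$ lies in some $J\in\cP_n(f)$ whose endpoints are within $\varepsilon$ of $x$ and lie on the orbit of $0$. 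Hence the orbit of $0$ is dense, $E=\TT$, and $h$ is a homeomorphism conjugating $f$ to $R_\rho$.

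The argument is short precisely because the substantive difficulty---ruling out the Denjoy phenomenon (wandering intervals, Cantor minimal sets)---has already been resolved by \thmref{conj-yoc} in the multicritical setting; the only points requiring a little care in the write-up are the combinatorial rigidity step, i.e. that $f$ and $R_\rho$ induce the same order on orbits, and the elementary bookkeeping that the endpoints of the partition intervals lie on the orbit of $0$.
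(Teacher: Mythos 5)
Your proof is correct and is precisely the argument the paper is tacitly invoking when it states that \thmref{conj2} is ``implied'' by \thmref{conj-yoc} without giving details: one builds the Poincar\'e semiconjugacy $h$ from the orbit order type, observes that $h$ collapses exactly the gaps of the orbit closure, and then uses the shrinking of the dynamical partitions --- whose endpoints all lie on the orbit of $0$ --- to conclude that the orbit is dense, so $h$ is a homeomorphism. Nothing is missing; the combinatorial-rigidity and endpoint-bookkeeping points you flag as ``requiring care'' are indeed the only places where one must be explicit, and you handle both correctly.
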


A stronger statement is also true  \cite{dFbounds}:
\begin{thm}
  \label{realbounds}
  Suppose $f$ has an irrational rotation number. Then the iterate
  $f^{q_n}|_{I_{n+1}}$ decomposes as
  $$f^{q_n}|_{I_{n+1}}=\psi_{m+1}\circ p_m\circ\psi_{m}\circ p_{m-1}\circ\cdots\circ\psi_1\circ p_0\circ\psi_0$$
  where: \begin{itemize}
  \item $m\leq l+1$, where $l$ is the number of the non-zero critical points of $f$;
  \item $p_j$ is a power law $p_j(x)=x^{b_j}$, $b_j\in 2\ZZ+1$;
    \item $\psi_j$ is an interval diffeomorphism with beau-bounded distortion.
   \end{itemize}
  \end{thm}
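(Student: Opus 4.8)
The plan is to read the claimed decomposition off the orbit of the interval $I_{n+1}$ under $f$. Write $f^{q_n}|_{I_{n+1}}=f\circ\cdots\circ f$ ($q_n$ factors) and put $T_i=f^i(I_{n+1})$ for $0\le i\le q_n$; for $0\le i<q_n$ the $T_i$ are exactly the ``short'' intervals of the dynamical partition $\cP_n(f)$ of Definition~\ref{partition}, so they have pairwise disjoint interiors. The first step is the combinatorial skeleton: there are at most $l+1$ indices $i\in\{0,\dots,q_n-1\}$, say $\tau_0<\tau_1<\cdots<\tau_m$ with $m\le l$, for which $\overline{T_i}$ contains a critical point $c_0=0,c_1,\dots,c_l$. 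Indeed, the endpoints of $T_i$ are $f^i(0)$ and $f^{i+q_{n+1}}(0)$; by injectivity of the orbit of $0$ (recall $\rho(f)\notin\QQ$) together with $q_{n+1}>q_n$, no two of $T_0,\dots,T_{q_n-1}$ share an endpoint, and a critical point lying in the interior of some $T_i$ cannot be an endpoint of another $T_{i'}$ because the tiling $\cP_n(f)$ has no partition endpoint interior to a partition interval. Hence each $c_j$ lies in $\overline{T_i}$ for at most one $i$, which gives the count; since $0$ is the common endpoint of $T_0=I_{n+1}$ and $I_n$ one has $\tau_0=0$.

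The second step is distortion control along the diffeomorphic stretches $\tau_{j-1}<i<\tau_j$. The crucial input here is real \emph{a priori} bounds for multicritical maps: consecutive intervals of $\cP_n(f)$ are comparable in length, with beau constants. (This is classical for a single critical point; with several critical points it is the substance of \cite{dFbounds}, and it is the main obstacle below.) Granting it, every $T_i$ carries a definite scaled neighborhood of beau size, the relevant iterate of $f$ over such a block extends to a univalent map on a definite neighborhood of $T_{\tau_{j-1}+1}$, and the Koebe distortion principle bounds its distortion on $T_{\tau_{j-1}+1}$ --- beau, because the Koebe space is. The one subtlety is that at the two ends of a block one has Koebe space only on one side (the obstruction being the adjacent critical point), so the scaled neighborhoods must be chosen asymmetrically; once the metric bounds are available this is routine bookkeeping. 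These diffeomorphic iterates form the bulk of the factors $\psi_1,\dots,\psi_{m+1}$.

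The third step handles a critical index $\tau_j$, say with $c\in\overline{T_{\tau_j}}$ a critical point of odd order $k$. Here one uses the local normal form (\ref{eq:order}): writing $f(x)=f(c)+a(x-c)^{k}\phi(x)$ near $c$ and setting $B(x)=(x-c)\bigl(a\phi(x)\bigr)^{1/k}$ --- a $C^3$ (resp.\ $C^\omega$) local diffeomorphism, since $k$ is odd and $a\phi(c)\ne0$ --- one obtains $f=A\circ p\circ B$ with $A$ the translation by $f(c)$ and $p(x)=x^{k}$. The distortion of $B$ on $T_{\tau_j}$ is beau: it is bounded since $B$ is cut out of the fixed map $f$, and it tends to that of a linear map because $|T_{\tau_j}|\to0$ as $n\to\infty$ by Theorem~\ref{conj-yoc}. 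Absorbing each $B$ into the preceding $\psi$-block and each $A$ into the following one (and using $\tau_0=0$, where the corresponding $B$ already plays the role of $\psi_0$), the chain $f\circ\cdots\circ f$ collapses to
$$f^{q_n}|_{I_{n+1}}=\psi_{m+1}\circ p_m\circ\psi_m\circ\cdots\circ\psi_1\circ p_0\circ\psi_0,$$
with $\psi_j$ of beau-bounded distortion and $p_j(x)=x^{b_j}$, $b_j\in2\ZZ+1$; as $m\le l\le l+1$ this is the asserted decomposition.

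The step I expect to be the main obstacle is the real \emph{a priori} bounds themselves --- the comparability, with beau constants, of adjacent intervals of $\cP_n(f)$. In the unicritical case this is classical, but the presence of several critical points of possibly different orders defeats the usual cross-ratio and Schwarzian-derivative estimates, and the bounds must instead be recovered by the finer analysis of \cite{dFbounds}. Once those metric bounds are in hand, the combinatorial count of the first step and the Koebe/normal-form bookkeeping of the second and third are comparatively routine.
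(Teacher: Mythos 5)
The paper does not prove Theorem~\ref{realbounds}; it is quoted from \cite{dFbounds}, and the paper's only use of it is to derive Theorem~\ref{realbounds2}. So there is no proof in this paper to compare against. Your reconstruction is a reasonable sketch of the standard derivation, and it runs in the logical order actually used in the literature (commensurability of adjacent partition intervals is established first via cross-ratio inequalities, and the decomposition with controlled distortion follows as a refinement), which is the reverse of the paper's exposition, where Theorem~\ref{realbounds} implies Theorem~\ref{realbounds2}. Consequently you must not cite Theorem~\ref{realbounds2} as stated here --- that would be circular --- and you correctly outsource the independent derivation of commensurability with beau constants in the multicritical setting to \cite{dFbounds}, acknowledging this as the main technical content. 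Once that is granted, your combinatorial count of critical passages, the Koebe/distortion bookkeeping on the diffeomorphic stretches, and the local factorization $f=A\circ p\circ B$ at odd-order critical points are the standard argument and are correct in outline.

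Two small gaps. First, your decomposition allocates exactly one power law $p_j$ per ``critical index'' $\tau_j$, which tacitly assumes each short interval $T_i=f^i(I_{n+1})$ meets at most one critical point. For $n$ below the threshold supplied by Theorem~\ref{conj-yoc} a single $T_{\tau_j}$ can contain two or more of the $c_j$'s, and then the single factor $f|_{T_{\tau_j}}$ itself requires a composition of several power laws interleaved with diffeomorphisms, so the per-index factorization as you wrote it is too coarse; the fix is to count critical points met rather than critical indices, and the resulting $m$ still satisfies $m\le l\le l+1$. Second, the ``Koebe distortion principle'' you invoke is the one for univalent holomorphic maps; since Theorem~\ref{realbounds} is asserted for $C^3$ multicritical circle maps, one needs the real one-dimensional Koebe principle based on cross-ratio and Schwarzian-derivative control, which is in fact what the real bounds deliver.
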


Let $\phi$ be a conjugacy $\phi\circ f\circ \phi^{-1}=R_{\rho(f)}$. As before, denote $c_0=0,c_1,\ldots,c_l$ the critical points
of $f$, listed in the counter-clockwise order. Let $k_j$ denote the order of the critical point $c_j$, and set
$$\cK(f)=\{k_0,\ldots,k_l\}.$$
The points $\phi(c_j)$ divide the circle into $l+1$ arcs, let $a_0, a_1,\ldots, a_l$ be their lengths listed in the counter-clockwise order, starting from the point $\phi(0)$
and set
$$\cA(f)=\{a_0,\ldots,a_l\}.$$
We say that  
$\cC(f)=(\rho(f),\cK(f),\cA(f))$ is {\it the marked combinatorial type of $f$}. We say that $f$ and $g$ have {\it the same (unmarked) combinatorial type} if
$\cC(f)=\sigma\cC(g)$, where $\sigma$ is a cyclical permutation, and $\sigma\cC(g)\equiv (\rho(f),\sigma\cK(g),\sigma\cA(g))$.

For a commuting pair $\zeta$ with $\rho(\zeta)\notin \QQ$ we will defined the marked combinatorial type
$$\cC(\zeta)=\cC(f^\phi),\text{ where }f^\phi\text{ is given by  (\ref{fphi}),}$$
and similarly for the unmarked combinatorial type.

To define the $n$-th dynamical partition $\cP_n(\zeta)$ for an $n$-times renormalizable commuting pair $\zeta=(\eta,\xi)$, we simply take the $n$-th dynamical
partition of a map $f^\phi$ as in (\ref{fphi}), and pull it back to $I_\eta\cup I_\xi$.

Both for multicritical circle maps and for commuting pairs we 
will denote by $\overline{\cP_n}$ the set of boundary points of the $n$-th dynamical partition.

Theorem \ref{realbounds} implies:
\begin{thm}
  \label{realbounds2}
Suppose $f$ has an irrational rotation number. Let $J_1$, $J_2$ be two adjacent intervals of the $n$-th dynamical partition $\cP_n(f)$. Then $J_1$, $J_2$ are $K$-commensurable where $K=K(l,\max_{0\leq j\leq l} k_j)$ is beau.
  \end{thm}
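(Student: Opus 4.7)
The plan is to reduce the general case to a base case at a critical point, and then transport beau commensurability via the decomposition provided by Theorem~\ref{realbounds}.

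\textbf{Step 1 (base case at the critical point $0$).} I first aim to show that $|I_n|$ is beau-commensurable with $|I_{n+1}|$. The iterate $f^{q_n}$ sends $I_{n+1}$ diffeomorphically onto an interval $f^{q_n}(I_{n+1})\subset I_n$ which is adjacent to $I_{n+2}$ in $\cP_{n+1}(f)$. Theorem~\ref{realbounds} decomposes this iterate as a chain of beau-distortion diffeomorphisms and odd power laws $x\mapsto x^{b_j}$ with $b_j\in\cK(f)$. Consequently, $|I_{n+1}|$ and $|f^{q_n}(I_{n+1})|$ differ by at most a beau multiplicative factor, and their sum together with $|I_{n+2}|$ equals $|I_n|$. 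Inducting on $n$ (and using Theorem~\ref{conj-yoc} to start the induction, so that the ratios do not degenerate at infinity) gives $|I_n|$ beau-commensurable with $|I_{n+1}|$. The analogous assertion holds at each critical point $c_j$ of $f$, since each $c_j$ plays the same role under a conjugating homeomorphism into its own dynamical partition piece.

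\textbf{Step 2 (pullback to the base).} For arbitrary adjacent $J_1,J_2\in\cP_n(f)$, write $J_i=f^{a_i}(I_n^{*_i})$ with $I_n^{*_i}\in\{I_n,I_{n+1}\}$. Let $k=\min(a_1,a_2)$. The pullback $f^{-k}(J_1\cup J_2)$ is an adjacent pair of intervals $\wtl J_1,\wtl J_2\in\cP_n(f)$, at least one of which is $I_n$ or $I_{n+1}$. The forward map $f^k$ on $\wtl J_1\cup\wtl J_2$ is composed of partial renormalization returns, and iterated application of Theorem~\ref{realbounds} gives a decomposition
\[
f^k|_{\wtl J_1\cup\wtl J_2}=\Psi_s\circ P_s\circ\Psi_{s-1}\circ\cdots\circ P_1\circ\Psi_0,
\]
in which each $\Psi_t$ is a diffeomorphism with beau-bounded distortion and each $P_t$ is an odd power law centered at a critical point of $f$.

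\textbf{Step 3 (transfer).} The diffeomorphic factors preserve ratios of length up to a beau factor. Each power-law factor $P_t$ is applied at a critical point $c_{j(t)}$ to an interval which, by the combinatorial structure of the dynamical partitions, is a pair of partition intervals adjacent at $c_{j(t)}$ at some renormalization level; by Step~1 these are beau-commensurable, and an odd power law distorts the ratio of two beau-commensurable adjacent intervals only by a beau factor. Composing the bounds across the chain yields that $|J_1|/|J_2|$ is beau-bounded, as required.

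The main obstacle is the combinatorial bookkeeping in Step~2: one must verify that pulling back by $f^{-k}$ crosses each critical point of $f$ only a controlled number of times per descent through one renormalization level, and that at each crossing the two intervals remain adjacent partition pieces so Step~1 applies. Once this bookkeeping is in place, the analytic content of the argument is entirely supplied by Theorem~\ref{realbounds}.
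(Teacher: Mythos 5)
Your Steps 2--3 (transporting commensurability along the orbit through bounded-distortion diffeomorphisms and odd power laws, with the bookkeeping you flag) are sound in outline and match the standard reduction of a general adjacent pair to the pair $(I_n,I_{n+1})$ at the critical point. The genuine gap is Step 1, which is where the entire difficulty of the theorem lives. Theorem~\ref{realbounds} controls the \emph{nonlinearity} of $f^{q_n}|_{I_{n+1}}$: the $\psi_j$ have beau-bounded distortion and the $p_j$ are power laws, so the decomposition bounds how the iterate distorts \emph{ratios} of subintervals. It says nothing about the overall scale factor $|f^{q_n}(I_{n+1})|/|I_{n+1}|$ --- a bounded-distortion diffeomorphism or a power law can shrink or expand an interval by an arbitrary amount depending on its normalization (e.g.\ $x\mapsto x^{3}$ carries $[0,\epsilon]$ to $[0,\epsilon^{3}]$ while distorting ratios of adjacent subintervals by a bounded factor). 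So your key claim that $|I_{n+1}|$ and $|f^{q_n}(I_{n+1})|$ differ by a beau factor does not follow from the decomposition. Moreover, the identity $|I_{n+1}|+|f^{q_n}(I_{n+1})|+|I_{n+2}|=|I_n|$ is false unless the partial quotient $a_{n+2}$ equals $1$: in general $I_n=I_{n+2}\cup\bigcup_{j=0}^{a_{n+2}-1}f^{q_n+jq_{n+1}}(I_{n+1})$, and when the partial quotients are unbounded the lower bound $|I_{n+1}|\gtrsim|I_n|$ is exactly the delicate ``almost parabolic'' estimate of the real a priori bounds, which your induction does not address (nor does the appeal to Theorem~\ref{conj-yoc} prevent the constants from degenerating as $n\to\infty$).

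Note also that the paper does not derive Theorem~\ref{realbounds2} from Theorem~\ref{realbounds} by an argument of this kind: both statements are quoted forms of the real a priori bounds of \cite{dFbounds,GdF}, and in those references the comparability of adjacent atoms of $\cP_n$ (in particular $|I_n|\asymp|I_{n+1}|$ and its analogues at the other critical points) is established first, by cross-ratio distortion tools, and the decomposition of $f^{q_n}|_{I_{n+1}}$ is deduced afterwards. If you want a self-contained argument you must either import that base comparability from the cited references or reprove it; once it is in hand, your Steps 2--3 do complete the proof.
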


\subsection{Periodic renormalization types for bi-critical circle maps}
Let us now specialize to the case when the map $f$ has only two critical points: $0$ and $c$ with orders $k_0$ and $k_1$ respectively. Suppose $\rho(f)$ is irrational. We say that $\rho(f)$ is of periodic type if it is a periodic point of the Gauss map $G$. Let $p$ be the period of $\rho(f)$, and assume that
the orbit of $c_1$ never falls into the critical point $0$. That ensures that for each $j\in\NN$ the pair of maps $p\cR^j f$ has exactly two critical points: $0$ and $c_j\neq 0$ with orders $k_0$, $k_1$.

\begin{defn}
We say that $f$ is of a {\it periodic marked combinatorial type} with period $n$, where $p|n$, if 
$\cC(f)=\cC(R^n f)$. 
\end{defn}

An equivalent way of defining a periodic marked combinatorial type uses dynamical partitions as follows; it will be useful in the applications.
\begin{defn}
Consider the $2$-nd dynamical partition of the renormalization $\cR^{j+m}(f)$ for $0\leq m\leq p-1$. It has $v_m$ intervals, let us enumerate them
$P_1,\ldots,P_{v_m}$ from left to right. Let $i=w_{j,m}$ denote the number of the interval $P_i$ which contains $c_{j+m}$.
  We say that $f$ is of {\it a periodic marked combinatorial type} if:
  \begin{enumerate}
  \item  the numbers 
\begin{equation}\label{eq:type}
  w_m\equiv w_{j,m}
\end{equation}
are independent of $j$;
\item   there exists $N\leq m$ such that for any $j$ the two critical points of $\cR^j f$ are separated by an interval of $\cP_N(\cR^j f)$.
\end{enumerate}
  \end{defn}
The equivalence of the two definitions is a straightforward consequence of  Theorem~\ref{conj-yoc}, and will be left to the reader.

Another straightforward consequence of  Theorem~\ref{conj-yoc} is the following:
  \begin{prop}
    \label{prop-type}
Suppose $f$ and $g$ both are of a periodic marked combinatorial type. Suppose $\rho(f)=\rho(g)$ and $f$ and $g$ have the same values of $w_m$ (\ref{eq:type}). Then 
$$\cC(f)=\cC(g).$$
  \end{prop}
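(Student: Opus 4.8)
The plan is to show that the data $\rho(f)$ together with the sequence of position-indices $w_m$ completely determines the marked combinatorial type $\cC(f)$, via the convergence in \thmref{conj-yoc}. The marked combinatorial type is, by definition, the pair $(\cK(f),\cA(f))$ recorded at the conjugacy $\phi$; since $f$ and $g$ are bi-critical with the same rotation number, the order data $\cK$ is determined only up to the cyclic permutation encoded by which of the two critical points is labelled $c_0$, and this ambiguity is precisely what is resolved by the $w_m$'s. So the substance of the proof is the claim about $\cA$: the lengths $a_0,a_1$ of the two arcs into which $\phi(0),\phi(c)$ cut the circle are determined by $\rho(f)$ and $(w_m)$.

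First I would fix $\phi$, the topological conjugacy to $R_{\rho(f)}$, and recall that $\phi$ maps the $n$-th dynamical partition $\cP_n(f)$ to the corresponding dynamical partition of the rigid rotation $R_{\rho(f)}$ — this is immediate since $\phi$ conjugates $f$ to $R_{\rho(f)}$ and the partition is defined purely in terms of the orbit of $0$. The intervals of $\cP_n(R_{\rho})$ depend only on $\rho$, so they are literally the same for $f$ and for $g$. Next, for each $m$ with $0\le m\le p-1$, consider $\cR^{j+m}f$: its $2$-nd dynamical partition has $v_m$ intervals, and by hypothesis $c_{j+m}$ sits in the $w_m$-th of them, independently of $j$. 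Pulling this information all the way back to the original circle via the renormalization microscopes, the position of $\phi(c_{j+m})$ among the images of $\phi(\cP_{\ast}(f))$ is pinned down in terms of $w_m$ and $\rho$ alone. Letting $j\to\infty$ and using \thmref{conj-yoc} (the partition intervals shrink to zero), the nested sequence of such intervals has intersection a single point, which must be $\phi(c_1)$ (resp.\ $\phi(c_0)=\phi(0)$, which is already fixed). Hence $\phi(c_1)$ is determined by $\rho$ and $(w_m)$, and therefore so is $a_1 = $ (length of the arc from $\phi(0)$ to $\phi(c_1)$), and $a_0 = 1-a_1$.

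Finally, combining: $\cK(f)=\{k_0,k_1\}$ with the same labelling convention as $\cK(g)$ — here one uses condition (2) in the definition of periodic marked type, which guarantees the two critical points are genuinely separated in a fixed-depth dynamical partition, so the cyclic order of $(0,c_1)$ relative to the partition, and hence the correct matching of $k_0$ with $a_0$ and $k_1$ with $a_1$, is unambiguous and agrees for $f$ and $g$. Thus $\cC(f)=(\cK(f),\cA(f))=(\cK(g),\cA(g))=\cC(g)$.

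The main obstacle I anticipate is bookkeeping rather than conceptual: one must carefully track how the $2$-nd dynamical partition of a deep renormalization $\cR^{j+m}f$, together with the index $w_m$, translates into a concrete interval of a dynamical partition of $f$ itself on the original scale, and verify that this interval depends on $(f,j)$ only through $\rho$ and $(w_m)$. This requires spelling out the identification of the dynamical partition of $\cR^{j}f$ with a subfamily of $\cP_{\ast}(f)$ rescaled, which is standard for (multi)critical circle maps but slightly delicate to state cleanly across $p$ residue classes mod the period. Once that dictionary is in place, the conclusion follows from \thmref{conj-yoc} by the shrinking-nested-intervals argument, exactly as the excerpt hints; indeed the paper itself says the equivalence of the two definitions of periodic marked type, which is the technical heart of this dictionary, is ``left to the reader,'' so I would invoke it here and keep the remaining argument short.
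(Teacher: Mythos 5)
Your proof fills in what the paper omits (the paper gives no argument at all, only remarking that Proposition~\ref{prop-type} is ``another straightforward consequence of Theorem~\ref{conj-yoc}''), and it does so along the intended lines: the indices $w_m$ localize $\phi(c_1)$ in a nested sequence of partition intervals of the rigid rotation $R_{\rho}$, whose lengths shrink by Theorem~\ref{conj-yoc}, hence $\phi(c_1)$ and therefore $\cA(f)$ is determined by $\rho$ and $(w_m)$. One slightly muddled point is the remark that the $w_m$'s resolve a ``cyclic permutation ambiguity'' in $\cK$ — the marked type always places $c_0=0$, so there is no such ambiguity; $\cK(f)=\cK(g)$ is simply the (implicit, and in the bi-cubic setting automatic) equality of the critical orders, and the entire content of the proposition is the equality $\cA(f)=\cA(g)$, which you handle correctly.
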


  \subsection{Renormalization theorems for bi-cubic critical circle maps}
  In the following two statements we assume that the critical circle map has two critical points, both of which are of order $3$. We call such maps {\it bi-cubic}.

  \begin{thm}
    \label{main1}
    Let $\cC$ be a periodic marked combinatorial type with period $m$. Then there exists a commuting pair $\zeta_*\in C^\omega$ such that the following properties hold:
    \begin{enumerate}
    \item $\cC(\zeta_*)=\cC$;
      \item $\cR^m(\zeta_*)=\zeta_*$;
      \item for all $\zeta\in C^\omega$ such that $\cC(\zeta)=\cC$ we have
        $$\cR^{mj}\zeta\underset{j\to\infty}{\longrightarrow}\zeta_*$$
        in the uniform sense.
    \end{enumerate}
 \end{thm}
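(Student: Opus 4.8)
The plan is to follow the classical Sullivan--McMullen--Lyubich strategy for renormalization horseshoes, adapted to the multicritical setting using the complex \emph{a priori} bounds of \thmref{thm:bounds}. The first step is to construct a space of ``renormalization-invariant'' complex extensions of commuting pairs of the prescribed combinatorial type $\cC$. Using the complex bounds, every pair $\cR^{mj}\zeta$ with $\cC(\zeta)=\cC$ eventually admits a holomorphic pair extension with a definite (beau) modulus; one collects these into a space $\cB$ of holomorphic commuting pairs, topologized by the Carath\'eodory (or Teichm\"uller-type) convergence of the underlying domains together with uniform convergence of the maps on a fixed neighborhood of the real slice. The key structural facts to establish here are that $\cB$ is relatively compact (again from the complex bounds, via Montel and control of the moduli), that $\cR^m$ acts on $\cB$ and is \emph{precompact}, i.e.\ $\cR^m(\cB)$ is relatively compact in $\cB$, and that $\cR^m$ is continuous on $\cB$. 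Restriction to the bi-cubic case enters precisely here: fixing the orders $k_0=k_1=3$ and the combinatorial type $\cC$ pins down the decomposition in \thmref{realbounds} (each $p_j(x)=x^3$), so the ``shape'' of the limiting pairs is rigid.

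The second step is to promote precompactness of $\cR^m$ on $\cB$ to a genuine hyperbolic-type contraction on a suitable quotient. This is where one invokes the McMullen--Lyubich rigidity/contraction mechanism: holomorphic pairs with definite modulus and the same combinatorial type that are close on the real line are exponentially close on a smaller neighborhood after one renormalization, because $\cR^m$ is, in an appropriate sense, \emph{uniformly contracting in the Teichm\"uller metric} on the space of such extensions. Concretely I would show: (i) by the real \emph{a priori} bounds (\thmref{realbounds}, \thmref{realbounds2}) the sequence $\cR^{mj}\zeta$ is precompact in $C^0$ on the real slice with all limits of type $\cC$; (ii) by the complex bounds these real limits lift to holomorphic pairs in $\cB$; (iii) the contraction estimate forces all such limits to coincide. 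Combining (i)--(iii) yields a unique limit $\zeta_*$ of $\cR^{mj}\zeta$ independent of $\zeta$, which by construction lies in $C^\omega$, has combinatorial type $\cC$, and satisfies $\cR^m\zeta_*=\zeta_*$ by continuity of $\cR^m$ and the semigroup relation $\cR^{mj}\circ\cR^m=\cR^{m(j+1)}$. This simultaneously gives existence of the fixed point (items (1),(2)) and the global convergence (item~(3)).

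The technical heart --- and the step I expect to be the main obstacle --- is the contraction estimate (ii)--(iii), i.e.\ upgrading relative compactness of $\cR^m$ to strict contraction on the relevant Teichm\"uller-type space of holomorphic pair extensions. In the mono-critical case this is handled by viewing renormalization as a holomorphic operation on a Banach manifold of analytic pairs and applying the Schwarz Lemma to a fixed-modulus subdomain, exploiting that $\cR^m$ strictly increases the modulus of the ``collar'' around the real slice; one then needs a small-orbits argument to rule out distinct accumulation points. With two critical points the new difficulty is keeping \emph{both} critical points, and the combinatorial data $w_m$ controlling their relative positions, under uniform control along the orbit: one must verify that the complex bounds of \thmref{thm:bounds} are uniform not just near $0$ but near the second critical point $c_j$, and that the pair-extension domains can be chosen compatibly at both critical ends. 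Once the complex bounds are shown to be beau \emph{simultaneously at all critical points}, the contraction argument proceeds essentially as in \cite{Ya1}, \cite{Ya3}, and the rest of the proof is bookkeeping: checking that the enveloping combinatorial type is preserved in the limit (\propref{prop-type}, \thmref{conj-yoc}) and that the limiting object is a bona fide $C^\omega$ commuting pair rather than a degenerate one (the definite modulus bound prevents collapse).
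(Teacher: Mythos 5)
Your proposal correctly identifies the outer scaffold (complex bounds $\Rightarrow$ precompactness of holomorphic pair extensions $\Rightarrow$ some rigidity mechanism $\Rightarrow$ unique fixed point and convergence), but the \emph{rigidity mechanism} you invoke is not the one the paper actually uses, and your version of it has a genuine gap. You appeal to a ``McMullen--Lyubich Teichm\"uller contraction'' via ``applying the Schwarz Lemma to a fixed-modulus subdomain, exploiting that $\cR^m$ strictly increases the modulus of the collar,'' and claim that once the complex bounds are beau at both critical points ``the contraction argument proceeds essentially as in [Ya1], [Ya3].'' This conflates compactness with rigidity. Complex bounds and a pull-back argument give you a $K$-quasiconformal conjugacy between the holomorphic pair extensions of any two bi-cubic pairs of the same combinatorial type (Theorem~\ref{qcconj} in the paper), but to run McMullen's Tower Rigidity argument and conclude convergence to a \emph{single} fixed point you must show the qc conjugacy carries no holomorphically deformable data on the filled Julia set, i.e.\ every $\RR$-symmetric $\cH$-invariant Beltrami differential supported on $K(\cH)$ is trivial (Theorem~\ref{hybrid}). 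That statement does not follow from any modulus-increase or Schwarz argument; it is a genuine rigidity input, and in the mono-critical case it has always required an explicit model (Blaschke product / Epstein class) argument, not a soft contraction.

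The paper's actual route to Theorem~\ref{hybrid} is the genuinely new ingredient you are missing: Zakeri's degree-5 Blaschke product models for bi-cubic circle maps. Theorem~\ref{thm:zakeri2} gives, for each irrational rotation number and each combinatorial parameter $c\in(0,1)$, a \emph{unique} Blaschke product $B$ in the family~(\ref{eq:blaschke}) realizing that data, and Theorem~\ref{thm:zakeri1} pins down the symmetry group. Lemma~\ref{lem:deform} then runs a one-parameter deformation $\lambda_t=t\lambda_1$ through Ahlfors--Bers: the resulting family $\psi_t\circ B\circ\psi_t^{-1}$ must stay inside the Blaschke family, and Zakeri's uniqueness forces it to be constant, so $\psi_t$ fixes the dense backward orbit of the critical point and hence all of $J(B)$, killing the Beltrami differential. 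This rigidity is first proved on the Blaschke model and then transferred to a general bi-cubic map via the qc conjugacy of Theorem~\ref{qcconj}. Without identifying this (or some equivalent model-uniqueness input), your ``contraction estimate (ii)--(iii)'' is exactly the unproved step, and the proposal does not constitute a proof of Theorem~\ref{main1}.
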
   

  The next theorem talks about the hyperbolicity properties of the renormalization operator at the periodic point $\zeta_*$. We will see in section \ref{section: cyl ren} how to make a {\it canonical} choice of the analytic diffeomorphism $\phi=\phi_\zeta$ in (\ref{fphi}), so that the correspondence
  \begin{equation}
    \label{eq-cylren1}
    f\mapsto \zeta\equiv \cR^N f\mapsto \phi_\zeta\circ F_\zeta\circ \phi_\zeta^{-1}
    \end{equation}
  becomes a well-defined renormalization operator on the space of bi-cubic circle maps, rather than commuting pairs.  
We will then show:
  
  \begin{thm}
    \label{main2a} 
    There exists a Banach manifold structure ${\mathbb C}_U$ on the space of analytic  bi-cubic critical circle maps, such that the following holds.
    The Banach structure is compatible with the topology of uniform convergence. The renormalization operator defined by (\ref{eq-cylren1}) is an analytic operator, defined on an open set in this Banach manifold. The point $f_*=\phi_{\zeta_*}\circ\zeta_*\circ\phi_{\zeta_*}^{-1}$ is a hyperbolic periodic point of this operator, with two unstable eigendirections.
    \end{thm}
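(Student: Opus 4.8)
The plan is to deduce Theorem~\ref{main2a} from the convergence statement of Theorem~\ref{main1} by a fairly standard package of complex-analytic renormalization technology, combined with an explicit count of expanding directions coming from the transfer operator / horseshoe structure for bi-critical maps. First I would set up the Banach manifold ${\mathbb C}_U$: using the cylinder renormalization construction to be developed in section~\ref{section: cyl ren}, a bi-cubic map $f$ near $f_*$ is encoded by an analytic map of a fixed annular neighborhood of the equator, fixing the two critical points and conjugate to a rotation; the relevant Banach space is (a slice of) the space of holomorphic vector fields or, equivalently, holomorphic functions on that annulus, with the sup norm. One must check that the coordinates are chosen so that (i) the topology is that of uniform convergence on a neighborhood of $\TT$, (ii) the cylinder renormalization operator $\cR_{\text{cyl}}$ is well-defined and analytic (compact, even, by Montel once one checks the image sits in a smaller annulus with room to spare). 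Analyticity here is the usual argument: $\cR_{\text{cyl}}$ is built from finitely many compositions, inversions, and a conformal change of chart, each of which is analytic on the relevant open set of the Banach manifold; the only subtlety is the change of chart, for which one invokes the canonical choice $\phi_\zeta$ and checks it depends analytically on $\zeta$ (this is where a Bers-type / measurable Riemann mapping dependence argument, or an implicit function theorem on the uniformization of the quotient torus, enters).

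Next I would establish that $f_*$ is a periodic point: this is immediate from $\cR^m\zeta_*=\zeta_*$ together with the canonicity of $\phi_{\zeta_*}$, so $\cR_{\text{cyl}}^m f_* = f_*$. To get hyperbolicity, I would first prove that $D\cR_{\text{cyl}}^m$ at $f_*$ is a compact operator --- it maps the Banach manifold into maps holomorphic on a strictly larger annulus, so the derivative factors through a compact inclusion --- hence its spectrum is a sequence of eigenvalues accumulating only at $0$. The stable manifold of $f_*$ then has finite codimension, and the global convergence statement (item (3) of Theorem~\ref{main1}), which holds for \emph{all} $C^3$ pairs of the given combinatorial type, forces the stable set to contain the whole codimension-$d$ submanifold $\{f : \cC(f) = \cC\}$ of maps of that combinatorics; counting: the combinatorial type is rigid (Proposition~\ref{prop-type}) so the ``moduli'' of bi-cubic maps not pinned down by combinatorics are exactly two real parameters --- corresponding to the freedom in the two critical values, or in the pair $(\cA(f))$ modulo the constraint --- giving two transverse directions along which renormalization need not contract. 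This is the heuristic that there are exactly two unstable eigendirections: one per critical point, as in the mono-critical theory there is exactly one.

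To turn the heuristic into a theorem I would argue on both sides. For the upper bound on the number of expanding eigenvalues: show directly that the codimension-two submanifold $\cS$ of maps of combinatorial type $\cC$ is $\cR_{\text{cyl}}^m$-invariant and that $\cR_{\text{cyl}}^{mj}|_{\cS}\to f_*$ uniformly on a neighborhood (from Theorem~\ref{main1}(3) plus the real and complex \emph{a priori} bounds of Theorem~\ref{thm:bounds}, which give precompactness and control of the analytic extension), hence no eigenvalue of modulus $\geq 1$ can have eigenvector tangent to $\cS$; since $\cS$ has codimension two, at most two eigenvalues lie outside the unit disk. For the lower bound: exhibit two explicit analytic families transverse to $\cS$ --- e.g. families changing the height or the relative position of the second critical point --- along which the rotation number, or the combinatorial type, varies, and use the fact that $G$ (the Gauss map) is expanding at the periodic point $\rho(f_*)$ together with a transversality/transfer-operator computation to show $D\cR_{\text{cyl}}^m$ genuinely expands two independent directions; the cleanest route is to complexify the rotation number and combinatorial parameters and show the derivative of the ``combinatorics map'' composed with renormalization is invertible and expanding, exactly as in the mono-critical bi-cubic case but now two-dimensional because there are two independent critical data. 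The main obstacle I expect is precisely the analytic dependence of the canonical chart $\phi_\zeta$ and hence the analyticity and compactness of $\cR_{\text{cyl}}$ on a genuine \emph{open} set of the Banach manifold (not just along the real slice), since the complex \emph{a priori} bounds must be strong enough to guarantee a \emph{definite} annulus of holomorphy for all maps in a neighborhood of $f_*$; once that is in hand, the hyperbolicity with the correct count of two unstable directions follows from the interplay of Theorem~\ref{main1}(3), compactness, and the transversality of the two-parameter combinatorial deformation.
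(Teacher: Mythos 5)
Your overall plan matches the structure of the paper: build a Banach manifold from the cylinder renormalization picture, use compactness of $D\cR_{\text{cyl}}^m$ (the paper's Proposition~\ref{prop-comp-lin}), obtain an upper bound on the number of expanding eigendirections from the codimension of the stable set together with Theorem~\ref{main1}(3), and obtain the matching lower bound from two explicit expanding cone fields (the paper's Propositions~\ref{dir1} and~\ref{dir2}). The analyticity of $\cR_{\text{cyl}}$ via the Bers--Royden extension of holomorphic motions (Proposition~\ref{analytic dependence}) also appears in your sketch. So you are on the right track, but two of the hardest steps are treated as if they were routine, and they are not.

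First, the construction of the Banach manifold $\mathbb{C}_U$ is itself a genuine piece of the theorem, and your description of it --- ``a slice of the space of holomorphic functions on the annulus'' --- cannot be right as stated. The set of maps with exactly two cubic critical points is not an open subset of any Banach space of holomorphic maps of an annulus: an arbitrary analytic perturbation destroys the double zeros of the derivative, so the bi-cubic condition defines a positive-codimension subvariety, and iterating renormalization on such a slice would not stay inside it. The paper solves this by parametrizing bi-cubic maps as a genuine open set of a Banach space of \emph{triples}: $\mathbf{C}_\cU$ consists of $(\phi_1,\phi_2,\phi_3)$ with each $\phi_i$ univalent near $\TT$, and the map $g = \phi_3 \circ \mathbf{f} \circ \phi_2 \circ \mathbf{f} \circ \phi_1$, where $\mathbf{f}(z) = z - \tfrac{1}{2\pi}\sin(2\pi z)$ is the fixed cubic model, is automatically bi-cubic. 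Uniqueness of the decomposition (Proposition~\ref{uniquedecomp}) then makes $\cR_{\text{cyl}}$ a well-defined analytic operator on $\mathbf{C}_\cU$. Without some device of this kind the statement ``Banach manifold structure on the space of bi-cubic maps'' does not make sense.

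Second, the codimension-two property of $S_* = \{\cC(f) = \cC\}$, which you invoke to cap the number of expanding eigenvalues, is precisely the content of Theorems~\ref{stable1} and~\ref{stable manifold} and is far from formal. The paper proves it in two stages, first showing the rotation-number level set $D_*$ has codimension one by approximating it with the finite-codimension submanifolds $D_k$ (maps with a $q_k$-periodic orbit through $0$), using the cone field $\cC$ (Lemma~\ref{cone not in tk}) to get a uniformly transverse direction, and then passing to a limit via Hahn--Banach and Alaoglu to produce a limiting hyperplane; and then repeating the argument one level down with $D_{k,n}$ (colliding critical orbits) and the second cone field $\cC_1$ (Propositions~\ref{propconj2} and~\ref{cone3}) to cut $S_*$ out of $D_*$. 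Your heuristic count (``one moduli per critical point'') identifies the right answer, but the actual submanifold statement needs the cone-field plus compactness argument, and it is the same two cone fields that furnish the lower bound on the expanding directions --- so the two halves of the count are not independent in the way your sketch suggests.
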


\section{Holomorphic commuting pairs}\label{sec:holpairs}
Below we define holomorphic extensions of analytic commuting pairs which generalize de~Faria's definition from \cite{dF2}
for the case of multicritical circle maps.
We say that
a multicritical commuting pair $\zeta=(\eta|_{I_\eta},\xi|_{I_\xi})$  extends to a {\it
holomorphic commuting
pair} $\cH$ if there exist 
three simply-connected $\RR$-symmetric domains $D, U, V\subset\CC$ whose intersections with the real line are denoted by $I_U=U\cap\RR$, $I_V=V\cap\RR$, $I_D=D\cap\RR$, and a simply connected $\RR$-symmetric Jordan domain $\Delta$, such that

\begin{itemize}
\item the endpoints of $I_U$, $I_V$ are critical points of $\eta$, $\xi$ respectively;

\item $\bar D,\; \bar U,\; \bar V\subset \Delta$;
 $\bar U\cap \bar V=\{ 0\}\subset D$; the sets
  $U\setminus D$,  $V\setminus D$, $D\setminus U$, and $D\setminus V$ 
  are nonempty, connected, and simply-connected; 
  $I_\eta\subset I_U\cup\{0\}$, $I_\xi\subset I_V\cup\{0\}$;
\item  the sets $U\cap\HH, V\cap\HH, D\cap\HH$ are Jordan domains;

\item the maps $\eta$ and $\xi$ have analytic extensions to $U$ and $V$ respectively, so that $\eta$ is a branched covering map of $U$ onto $(\Delta\setminus\RR)\cup\eta(I_U)$, and $\xi$ is a branched covering map of $V$ onto $(\Delta\setminus\RR)\cup\xi(I_V)$, with all the critical points of both maps contained in the real line;

\item the maps $\eta\colon U\to\Delta$ and $\xi\colon V\to\Delta$ can be further extended to analytic maps $\hat{\eta}\colon U\cup D\to\Delta$ and $\hat{\xi}\colon V\cup D\to\Delta$, so that the map $\nu=\hat{\eta}\circ\hat{\xi}=\hat{\xi}\circ\hat{\eta}$ is defined in $D$ and is a  branched covering of $D$ onto $(\Delta\setminus \RR)\cup\nu(I_D)$ with only real branched points.

\end{itemize}

\noindent
We shall identify a holomorphic pair $\cH$ with a triple of maps $\cH=(\eta,\xi, \nu)$, where $\eta\colon U\to\Delta$, $\xi\colon V\to\Delta$ and $\nu\colon D\to\Delta$. We shall also call $\zeta$ the {\it commuting pair underlying $\cH$}, and write $\zeta\equiv \zeta_\cH$. When no confusion is possible, we will use the same letters $\eta$ and $\xi$ to denote both the maps of the commuting pair $\zeta_\cH$ and their analytic extensions to the corresponding domains $U$ and $V$.

The sets $\Omega_\cH=D\cup U\cup V$ and $\Delta\equiv\Delta_\cH$ will be called \textit{the domain} and \textit{the range} of a holomorphic pair $\cH$. We will sometimes write $\Omega$ instead of $\Omega_\cH$, when this does not cause any confusion. 

We can associate to a holomorphic pair $\cH$  a piecewise defined map $S_\cH\colon\Omega\to\Delta$:
\begin{equation*}
S_\cH(z)=\begin{cases}
\eta(z),&\text{ if } z\in U,\\
\xi(z),&\text{ if } z\in V,\\
\nu(z),&\text{ if } z\in\Omega\setminus(U\cup V).
\end{cases}
\end{equation*}
De~Faria \cite{dF2} calls $S_\cH$ the {\it shadow} of the holomorphic pair $\cH$.

We can naturally view a holomorphic pair $\cH$ as three triples
$$(U,\xi(0),\eta),\;(V,\eta(0),\xi),\;(D,0,\nu).$$
We say that a sequence of holomorphic pairs converges in the sense of Carath{\'e}odory convergence, if the corresponding triples do.
We denote the space of triples equipped with this notion of convergence by $\hol$.

We let the modulus of a holomorphic commuting pair $\cH$, which we denote by $\mod(\cH)$ to be the modulus of the largest annulus $A\subset \Delta$,
which separates $\CC\setminus\Delta$ from $\overline\Omega$.

\begin{defn}\label{H_mu_def}
For $\mu\in(0,1)$ let $\hol(\mu)\subset\hol$ denote the space of holomorphic commuting pairs
${\cH}:\Omega_{{\cH}}\to \Delta_{\cH}$, with the following properties:
\begin{enumerate}
\item $\mod (\cH)\ge\mu$;
\item {$|I_\eta|=1$, $|I_\xi|\ge\mu$} and $|\eta^{-1}(0)|\ge\mu$; %$\min(|I_\eta|,|I_\xi|)\ge\mu$; 
\item $\dist(\eta(0),\partial V_\cH)/\diam V_\cH\ge\mu$ and $\dist(\xi(0),\partial U_\cH)/\diam U_\cH\ge\mu$;
\item {the domains $\Delta_\cH$, $U_\cH\cap\HH$, $V_\cH\cap\HH$ and $D_\cH\cap\HH$ are $(1/\mu)$-quasidisks.}
\item$\diam(\Delta_{\cH})\le 1/\mu$;
\end{enumerate}
\end{defn}

Let the {\it degree} of a holomorphic pair $\cH$ denote the maximal topological degree of the covering maps constituting the pair. Denote $\hol^K(\mu)$ the subset of $\hol(\mu)$ consisting of pairs whose degree is bounded by $K$. The following is an easy generalization of Lemma 2.17 of \cite{Ya4}:  

\begin{lem}
\label{bounds compactness}
For each $K\geq 3$ and $\mu\in(0,1)$ the space $\hol^K(\mu)$ is sequentially compact.
\end{lem}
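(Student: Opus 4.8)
The plan is to imitate the proof of Lemma 2.17 in \cite{Ya4}, viewing a holomorphic pair as the triple of pointed domains $(U,\xi(0),\eta)$, $(V,\eta(0),\xi)$, $(D,0,\nu)$ together with the range $\Delta$, and extracting a convergent subsequence in the Carath\'eodory topology. So let $\cH_n=(\eta_n,\xi_n,\nu_n)$ be a sequence in $\hol^K(\mu)$, with domains $D_n,U_n,V_n$, ranges $\Delta_n$, and Jordan domains $\Delta_n\supset\bar\Omega_n$. First I would use the normalization conditions (2) and (5) of Definition~\ref{H_mu_def} to pin down the geometry: $|I_{\eta_n}|=1$ with $0$ an endpoint, $\diam\Delta_n\le1/\mu$, and $\mod(\cH_n)\ge\mu$, so all the domains live inside a fixed bounded region of $\CC$ and are uniformly separated from $\CC\setminus\Delta_n$; condition (3) keeps $\eta_n(0)$ and $\xi_n(0)$ uniformly inside $V_n$ and $U_n$ respectively, and condition (4) makes $\Delta_n$, $U_n\cap\HH$, $V_n\cap\HH$, $D_n\cap\HH$ all $(1/\mu)$-quasidisks. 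The quasidisk bound is the key compactness input: a sequence of $(1/\mu)$-quasidisks, normalized in size and position, has a subsequence converging in the Hausdorff topology on the boundary (equivalently in the Carath\'eodory topology with respect to a fixed interior basepoint) to a $(1/\mu)$-quasidisk, because the defining ``three-point'' condition passes to limits and in particular forbids boundary degeneration (cusps, slits). Using $\RR$-symmetry I would run this for the upper halves $U_n\cap\HH$ etc.\ and then reflect; the real traces $I_{U_n}, I_{V_n}, I_{D_n}$ converge to nondegenerate intervals because of (2) and (3), so the full domains $U_n,V_n,D_n$ and the range $\Delta_n$ converge in the Carath\'eodory sense to $\RR$-symmetric limits $U,V,D,\Delta$ of the required type, with $\bar U\cap\bar V=\{0\}$ and the various complementary pieces still nonempty, connected and simply connected (these are open conditions that survive under Carath\'eodory limits given the uniform moduli).

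Next I would handle the maps. Each of $\eta_n\colon U_n\to\Delta_n$, $\xi_n\colon V_n\to\Delta_n$, $\nu_n\colon D_n\to\Delta_n$ is a branched covering of degree at most $K$ onto $(\Delta_n\setminus\RR)\cup(\text{real image})$, so in particular it is a proper holomorphic map of bounded degree between domains of uniformly bounded diameter. I would fix basepoints in the (converging) upper halves of the domains, note that the families $\{\eta_n\},\{\xi_n\},\{\nu_n\}$ are uniformly bounded (values lie in $\Delta_n\subset$ a fixed disk), hence normal on compact subsets of the Carath\'eodory kernels by Montel; pass to a subsequence so that all three converge locally uniformly to holomorphic maps $\eta,\xi,\nu$ on $U,V,D$. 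The limiting maps are nonconstant: the normalizations $|I_{\eta_n}|=1$, $|I_{\xi_n}|\ge\mu$, $|\eta_n^{-1}(0)|\ge\mu$ force $\eta,\xi$ to move real intervals of definite length, and commutation $\eta_n\circ\xi_n=\xi_n\circ\eta_n$ (together with $\hat\eta_n\circ\hat\xi_n=\hat\xi_n\circ\hat\eta_n=\nu_n$ on $D_n$) passes to the limit. By the argument principle / Hurwitz, a locally uniform limit of degree-$\le K$ proper branched covers onto a converging target is again a proper branched cover of degree $\le K$ onto the limiting target, and the branch points (being real, by hypothesis, and finite in number) accumulate only on the real line; this gives that $(\eta,\xi,\nu)$ satisfies all the defining axioms of a holomorphic commuting pair. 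Finally I would check that the limit lies in $\hol^K(\mu)$ and not merely in its closure: conditions (1)--(5) are all either non-strict inequalities ($\mod\ge\mu$, $|I_\xi|\ge\mu$, $|\eta^{-1}(0)|\ge\mu$, the distance ratios $\ge\mu$, $\diam\Delta\le1/\mu$) or the closed condition of being a $(1/\mu)$-quasidisk, and each is preserved under the convergence just established (the modulus of a separating annulus is upper semicontinuous under Carath\'eodory convergence, which is the right direction here). Hence $\cH_n$ has a subsequence converging in $\hol$ to a pair in $\hol^K(\mu)$, proving sequential compactness.

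The main obstacle is the passage to the limit for the \emph{domains}, specifically ruling out boundary degeneration so that the Carath\'eodory kernels are nondegenerate and the limiting sets are genuine quasidisks with $\bar U\cap\bar V=\{0\}$ rather than, say, $U$ and $V$ pinching together along an arc or a component like $D\setminus U$ collapsing. This is exactly where the $(1/\mu)$-quasidisk hypothesis (4), the modulus bound (1), and the interior-distance bounds (3) are used in concert: quasidisks are stable under Hausdorff limits with a uniform constant, and the annulus of modulus $\ge\mu$ inside $\Delta_n$ keeps $\partial\Omega_n$ uniformly away from $\partial\Delta_n$. Everything else---normality of the maps, Hurwitz for the degree, limits of commutation relations---is routine once the domains are under control, which is why this is only a mild generalization of \cite[Lemma 2.17]{Ya4}: the sole new feature is that $\eta,\xi,\nu$ may each carry several critical points instead of one, but since all of them are real and their total number is controlled by the degree bound $K$, the compactness argument is unaffected.
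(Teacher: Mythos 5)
The paper gives no proof of this lemma, stating only that it is ``an easy generalization of Lemma~2.17 of \cite{Ya4},'' and your proposal is precisely a correct fleshing-out of that generalization: Carath\'eodory compactness of the normalized quasidisk configurations via conditions (1)--(5), normality of the branched covers via Montel, and passage of the degree bound and commutation relations to the limit via Hurwitz. The one point worth keeping an eye on --- and you flag it yourself --- is ruling out boundary degeneration (e.g.\ $\overline U$ and $\overline V$ pinching together along more than $\{0\}$, or $D\setminus U$ collapsing), which is where the uniform quasidisk constant, the modulus lower bound, and the interior-distance bounds in (3) are used; your treatment of that obstacle is the same one needed in the uni-critical case of \cite{Ya4}, and the multi-critical feature (finitely many real branch points, total number controlled by $K$) genuinely does not disturb it.
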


\noindent
We say that a real commuting pair $\zeta=(\eta,\xi)$ with
an irrational rotation number has
{\it complex {\rm a priori} bounds}, if there exists $\mu>0$ such that all renormalizations of $\zeta=(\eta,\xi)$ extend to 
holomorphic commuting pairs in $\hol(\mu)$.
The existense of complex {\it a priori} bounds is a key analytic issue 
 of renormalization theory. 

\begin{defn}\label{A_r_com_pair_def}
For a set $S\subset\CC$, and $r>0$, we let $N_r(S)$ stand for the $r$-neighborhood of $S$ in $\CC$. 
For each $r>0$ we introduce a class $\cA_r$ consisting of pairs   
$(\eta,\xi)$ such that the following holds:
\begin{itemize}
\item $\eta$, $\xi$ are real-symmetric analytic maps defined in the domains 
$$N_r([0,1])\text{ and }N_{r|\eta(0)|}([0,\eta(0)])$$
respectively, and continuous up to the boundary of the corresponding domains;
\item the pair $$\zeta\equiv (\eta|_{[0,1]},\xi|_{[0,\eta(0)]})$$
is a multicritical commuting pair.
\end{itemize}
\end{defn}

\noindent
For simplicity, if $\zeta$ is as above, we will write $\zeta\in\cA_r$. But it is important to note that viewing our multicritical commuting pair $\zeta$ as an element of $\cA_r$ imposes restrictions on where we are allowed to iterate it. Specifically, we view such $\zeta$ as undefined at any point $z\notin N_r([0,\xi(0)])\cup N_r([0,\eta(0)])$ (even if $\zeta$ can be analytically continued to $z$). Similarly, when we talk about iterates of $\zeta\in\cA_r$ we iterate the restrictions $\eta|_{N_r([0,\xi(0)])}$ and $\xi|_{N_{r|\eta(0)|}([0,\eta(0)]}$.
In particular, we say that the first and second elements of $p\cR\zeta=(\eta^r\circ\xi,\eta)$ are defined in the maximal domains, where the corresponding iterates are defined in the above sense.

For a domain $\Omega\subset\CC$, we denote by $\mathfrak D(\Omega)$ the complex Banach space of analytic functions in $\Omega$, continuous up to the boundary of $\Omega$ and equipped with the sup norm. Consider the mapping $i\colon\cA_r\to\mathfrak D(N_r([0,1]))\times\mathfrak D(N_r([0,1]))$ defined by 
\begin{equation}\label{norm_inclusion_eq}
i(\eta,\xi)=(\eta, h\circ\xi\circ h^{-1}),\quad\text{ where }\quad h(z)=z/\eta(0).
\end{equation}
The map $i$ is injective, since $\eta(0)$ completely determins the rescaling $h$. Hence, this map induces a metric on $\cA_r$ from the direct product of sup-norms on $\mathfrak D(N_r([0,1]))\times\mathfrak D(N_r([0,1]))$. This metric on $\cA_r$ will be denoted by $\dist_r(\cdot,\cdot)$.

We will denote $\cA_r^L$ the subset of $\cA_r$ consisting of pairs whose degree is bounded by $L$.

\begin{thm}[{\bf Complex bounds for periodic type}]
  \label{thm:bounds}
Let $\cC$ be a periodic combinatorial type and $L\geq 3$. 
  There exists a constant $\mu>0$  such that
the following holds. For every positive real number $r>0$ and every pre-compact family $S\subset\mathcal A_r^L$ of multicritical commuting pairs, there exists $N=N(r, S)\in\NN$ such that if $\zeta\in S$ is
a commuting pair with  $\cC(\zeta)=\cC$, then
$p\cR^n\zeta$ restricts to a holomorphic commuting pair $\cH_n:\Omega_n\to\Delta_n$ with $\Delta_n\subset N_r(I_\eta)\cup N_r(I_\xi)$. 
Furthermore, the range $\Delta_n$ is a Euclidean disk,  and the appropriate affine rescaling of $\cH_n$ is in $\hol(\mu)$.
\end{thm}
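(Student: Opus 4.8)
The plan is to obtain the complex bounds by combining the real \emph{a priori} bounds (Theorem~\ref{realbounds}, Theorem~\ref{realbounds2}) with a pull-back/extension argument for the high renormalizations, together with a compactness argument that upgrades the bounds to a uniform $\mu$. I would proceed in several stages.

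First, I would fix the periodic combinatorial type $\cC$ and the degree bound $L$, and start from the pre-compact family $S\subset\cA_r^L$. Since $S$ is pre-compact in the $\dist_r$-metric, the maps $\eta,\xi$ of pairs in $S$ are uniformly bounded and equicontinuous on fixed neighborhoods $N_r([0,1])$, $N_{r|\eta(0)|}([0,\eta(0)])$; hence the family is normal, and every sequence has a subsequence converging to a limiting pair, which again lies in $\cA_{r'}$ for any $r'<r$ and is a genuine critical commuting pair (the commutation relation and the critical-point structure pass to the limit). This is the standard setup that lets one argue by contradiction: if the theorem failed, there would be a sequence $\zeta^{(k)}\in S$ with $\cC(\zeta^{(k)})=\cC$ for which the conclusion fails at renormalization levels $n_k\to\infty$ (or with moduli tending to $0$), and one extracts a convergent subsequence $\zeta^{(k)}\to\zeta_\infty$.

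Second — and this is the analytic heart — I would construct the holomorphic extension of $p\cR^n\zeta$ with range a round disk inside $N_r(I_\eta)\cup N_r(I_\xi)$. The key input is that for a pair in $\cA_r$, the generating maps $\eta,\xi$ of $p\cR^n\zeta$ are \emph{compositions of branches of $\eta$ and $\xi$ of the original pair}, and by the real bounds (Theorem~\ref{realbounds}) each such composition factors as uniformly-bounded-distortion diffeomorphisms alternating with odd power maps $x\mapsto x^{b_j}$, with the number of power-law factors bounded by $l+1$ (here $\le 2$ for bi-cubic, but the argument works in general). Because the real intervals $I_n$ shrink (Theorem~\ref{conj-yoc}) and adjacent partition intervals are $K$-commensurable with $K$ beau (Theorem~\ref{realbounds2}), for $n$ large the relevant dynamical intervals on which $p\cR^n\zeta$ acts sit deep inside $N_r([0,1])$ with definite room; one then pulls back a round disk neighborhood $\Delta$ of the rescaled real range under the factored maps. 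Each bounded-distortion diffeomorphism distorts a disk by a bounded amount (Koebe), and each power map $z\mapsto z^{b}$ pulls a slit disk back to a domain that is a $C$-quasidisk with controlled geometry; composing finitely many such operations (number bounded by the combinatorial type) yields domains $U,V,D$ that are quasidisks with all the shape bounds of Definition~\ref{H_mu_def}, and the nesting $\bar D,\bar U,\bar V\subset\Delta$, $\bar U\cap\bar V=\{0\}$, etc., is forced by the combinatorics of the dynamical partition. The branched-covering property of $\eta,\xi,\nu$ onto $(\Delta\setminus\RR)\cup(\text{real image})$ follows because the only critical points are the real ones and the real dynamics is a homeomorphism. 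This produces, for each fixed $\zeta\in S$ of type $\cC$, some $N(\zeta)$ and a constant $\mu(\zeta)>0$; the periodicity of $\cC$ under $G$ (so the combinatorics of $p\cR^{n}\zeta$ repeats with period $m$) is what keeps the number of power-law factors and the distortion constants from growing with $n$.

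Third, I would make the constant $\mu$ uniform over $S$ and the level $N$ uniform via the compactness argument. Given the contradicting sequence $\zeta^{(k)}\to\zeta_\infty\in\cA_{r'}$, the limit pair is infinitely renormalizable of type $\cC$, so by the construction above $p\cR^{n}\zeta_\infty$ admits a holomorphic extension in $\hol(\mu_\infty)$ for all large $n$ with range inside $N_{r'}(I_\eta)\cup N_{r'}(I_\xi)$ and in fact, by the openness of all the conditions defining $\hol(\mu)$ and the round-disk range, slightly better: in $\hol(2\mu_\infty)$ with range inside $N_{r'/2}(\cdots)$. Carathéodory/uniform convergence $\zeta^{(k)}\to\zeta_\infty$ then forces the same extension to persist for $\zeta^{(k)}$ with $k$ large at those same levels $n$, with modulus $\ge\mu_\infty$ — contradicting the assumed failure. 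Finally, Lemma~\ref{bounds compactness} (sequential compactness of $\hol^K(\mu)$) and the fact that the renormalization of a holomorphic pair in $\hol(\mu)$ lies in $\hol(\mu')$ with $\mu'$ beau (this is the multicritical analogue of de~Faria's improvement-of-moduli, which I would invoke or sketch) let me replace $\mu$ by a single universal constant depending only on $\cC$ and $L$, independent of $r$ and $S$; the level $N=N(r,S)$ remains, as stated, allowed to depend on $r$ and $S$.

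\textbf{Main obstacle.} The delicate point is the second stage: controlling the geometry of the pulled-back domains $U,V,D$ — in particular verifying $\bar U\cap\bar V=\{0\}$, the connectivity/simple-connectivity of $U\setminus D$, $D\setminus U$, etc., and the quasidisk bounds — \emph{uniformly} in $n$. This requires carefully tracking how the alternating composition of Koebe-controlled diffeomorphisms and odd power maps acts on a round disk, using the commensurability of adjacent partition intervals to guarantee the power maps are applied with the branch point at a definite relative distance from the domain boundary, so that no cusps or pinchings form in the limit. The periodicity of the combinatorial type is essential here: it bounds, once and for all, the combinatorial length of these compositions and the shape of the pieces, which is precisely why the hypothesis $\cC$ periodic (rather than merely bounded type) appears.
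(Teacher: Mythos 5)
Your proposal takes a genuinely different route from the paper. The paper reduces Theorem~\ref{thm:bounds} to the power-law estimate Theorem~\ref{thm:power}, which is in turn derived from Lemma~\ref{lin}: a pointwise contraction estimate for inverse orbits $z_0,z_{-1},\dots$ shadowing the dynamical partition intervals $J_0,J_{-1},\dots$. The machinery is Sullivan--Yampolsky style hyperbolic (Poincar\'e) neighborhoods of intervals in the slit plane, the Schwarz-lemma invariance (Lemma~\ref{lem:inv}) and its quasi-invariance refinement (Lemma~\ref{quasi1}), the ``good angle'' Lemma~\ref{good angle}, and the inductive Lemmas~\ref{cb1} and~\ref{cb2} that control what happens at each transition between dynamical scales $I_m\to I_{m+1}$. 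Your proposal instead uses de~Faria's real decomposition (Theorem~\ref{realbounds}) of $f^{q_n}|_{I_{n+1}}$ into bounded-distortion diffeomorphisms alternating with power maps, and then tries to pull back a round disk through that factorization, using Koebe for the diffeomorphic pieces and explicit geometry for the power-map pieces.

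The central step of your second stage has a genuine gap. Theorem~\ref{realbounds} is a statement about \emph{real} distortion: the $\psi_j$ are interval diffeomorphisms with beau-bounded distortion on the real interval, and that is all the theorem provides. To ``pull back a round disk'' and ``distort a disk by a bounded amount (Koebe)'' you need these $\psi_j$ to be univalent on complex disks of definite modulus around the relevant real intervals. That is not given and is not automatic: the $\psi_j$ are long compositions of iterates of $f$ (of length comparable to $q_n$), and controlling their complex domains of univalence as $n\to\infty$ is precisely the content of the complex {\it a priori} bounds. Koebe controls the distortion of a univalent map \emph{once} you have a definite round domain of univalence; it does not supply that domain. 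So as written your argument assumes the conclusion at its key step. To make this approach work you would need an inductive control of the complex pullbacks along the orbit (telescope of moduli, \`a la Graczyk--\'Swi\k{a}tek or Levin--van~Strien), at which point the difficulty is essentially the same as the paper's backward-orbit argument; the factorization of Theorem~\ref{realbounds} by itself does not shortcut it.

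Two secondary issues. First, you attribute the role of periodicity of $\cC$ to ``keeping the number of power-law factors from growing with $n$''; but Theorem~\ref{realbounds} already bounds the number of power-law factors by $l+1$ for \emph{any} irrational rotation number, and the core estimate Theorem~\ref{thm:power} is stated without the periodicity hypothesis. Second, your final compactness step leans on an unproved ``improvement of moduli'' lemma for multicritical holomorphic pairs; this is plausible (and true in the unicritical case) but would itself require a proof and should not be waved through. The overall plan — contradiction via pre-compactness in $\cA_r^L$, Carath\'eodory convergence, and Lemma~\ref{bounds compactness} — is reasonable framing, but the analytic core is not established.
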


\noindent
In the next section we will give a proof of this theorem which generalizes our proof in \cite{Ya1}. For simplicity of notation, we will assume that the pair $\zeta$ is a pre-renormalization of a multicritical circle map $f$, that is
$$\zeta=(f^{q_n},f^{q_{n+1}}).$$
This will allow us to write explicit formulas for long compositions of terms $\eta$ and $\xi$, which will greatly streamline the exposition.
Theorem~\ref{thm:bounds} follows from Theorem~\ref{thm:power} which we state and prove in the following section.

\section{Complex {\it a priori} bounds}
\subsection{Power law estimate}

Let $f$ be an analytic multicritical circle map with a periodic combinatorial type $\cC$ as above. Let $U$ be a $\TT$-symmetric annulus which is contained in the domain of analyticity of $f$. 
Suppose $0$ is a critical point of $f$ of an odd integer order $k=2m+1$ and denote $\cR^n f$ the $n$-th renormalization of $ f$ at $0$ as above. Note that in what follows, rather than working with the map of the circle $f$, we will work with the appropriately translated lifts of $f$ (\ref{real1}), so all of our estimates will be for maps defined in a neighborhood of the real line rather than $\TT$.

Given an interval $ J\subset \Bbb R$, let
$\CC_{J}\equiv \Bbb C\backslash (\Bbb R\backslash {\rm J})$ denote
the plane slit along two rays. Let  $\overline{{ {\Bbb C}}_J}$
 denote the completion of
this domain in the path metric in ${\Bbb C}_{J}$ (which means that we add to
 ${\Bbb C}_{J}$
 the banks of the slits).

The following theorem is a generalization of the main estimate of \cite{Ya1}:
\begin{thm}\label{thm:power}
  There exist  positive constants $R_0, b, c$ depending only on $\cC$ such that the following holds. Let $f$ be as above, and $R\geq R_0$. Then there exists $n_0\in\NN$ which depends on $f$ and $R$ such that for all $n\geq n_0$ we have the following.

  Denote $$\cR^n(f)\equiv(\eta,\xi).$$
  There exist subdomains
  $V_\eta\supset \overset{\circ}{I}_\eta$, $V_\xi\supset\overset{\circ}{I}_\xi$ such that the map
  $\eta$ is a branched covering $$V_\eta\longrightarrow \DD_R\cap \CC_{\eta(I_\eta)},$$
  and similarly for $\xi$. Furthermore, for any $z\in V_\eta\cup V_\xi$ we have 
\begin{equation}
\label{key estimate}
|\cR^nf(z)|\geq c|z|^k+b,
\end{equation}
where the left-hand side stands for $\eta$ on $V_\eta$ and $\xi$ on $V_\xi$.

Finally, the constant $n_0$ can be chosen to depend only on $R$ in any family of maps which is pre-compact in the compact-open topology on $U$.
\end{thm}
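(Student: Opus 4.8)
\textbf{Proof proposal for Theorem~\ref{thm:power}.}

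The plan is to follow the strategy of \cite{Ya1}, now carried out in the multicritical setting. The starting point is the real \emph{a priori} bounds (Theorem~\ref{realbounds} and its Corollary~\ref{realbounds2}): after passing to a deep renormalization $\cR^n f=(\eta,\xi)$, the long iterates $\eta=f^{q_{n+1}}|_{I_n}$, $\xi=f^{q_n}|_{I_{n+1}}$ decompose into a uniformly bounded number of factors, each of which is either a power law $x\mapsto x^{b_j}$ or a diffeomorphism with beau-bounded distortion. The key geometric input is that, by Theorem~\ref{realbounds2}, consecutive intervals of the dynamical partition are $K$-commensurable with $K$ beau; this controls the shapes of the images of slit planes under the diffeomorphic factors. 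First I would set up the relevant slit-plane domains: for each factor in the decomposition, the natural domain is a plane $\CC_J$ slit along $\RR\setminus J$ for the appropriate real interval $J$, and the natural target after applying a diffeomorphic factor $\psi_j$ is comparable (via the distortion bound and Koebe) to another slit plane $\CC_{J'}$. The basic sublemma is that a diffeomorphism with bounded distortion on an interval $J$ extends to map $\DD_R\cap\CC_{\psi(J)}$ (pulled back) into a region squeezed between two slit planes of comparable scale; this is where commensurability of adjacent partition intervals enters.

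The heart of the argument is the interaction between the diffeomorphic factors and the power-law factors $p_j(x)=x^{b_j}$. The map $z\mapsto z^{b}$ sends $\CC_{[0,1]}$ (say) onto a sector-type domain, and the key quantitative fact is a \emph{sector lemma} of the type \renewcommand{\thesectl}{} (the paper reserves a \texttt{sectl} environment for exactly this): the preimage under $z\mapsto z^b$ of a large disk $\DD_{R}$ intersected with the slit plane contains, for $R\geq R_0$, a definite slit-plane neighborhood of the unit interval, with a multiplicative gain in ``how slit-plane-like'' the domain is — i.e. iterating the power map repeatedly only improves the domain. Composing the bounded number ($m\leq l+1$) of diffeomorphic and power factors, one shows by induction on the factors that the full composition $\eta$ (resp.\ $\xi$) is a branched covering from some domain $V_\eta\supset\circ I_\eta$ (resp.\ $V_\xi$) onto $\DD_R\cap\CC_{\eta(I_\eta)}$, all critical points being real because each power factor is centered at a real critical point. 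The pointwise lower bound \eqref{key estimate} $|\cR^kf(z)|\ge c|z|^k+b$ then follows by tracking, through the composition, how the modulus grows: the innermost power factor at $0$ (of order $k=2m+1$) contributes the $|z|^k$ growth, the diffeomorphic factors contribute only bounded multiplicative constants by the distortion bound, and the outer factors, being proper onto a large disk, contribute the additive constant $b$; combining gives the stated form with $b,c$ universal.

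For the uniformity statement — that $n_0$ depends only on $R$ within a family precompact in the compact-open topology on $U$ — I would argue by contradiction/compactness: if $n_0$ could not be chosen uniformly, extract a sequence $f_i$ in the family and $n_i\to\infty$ witnessing failure of the conclusion at level $n_i$; by precompactness pass to a limit, and note that the real bounds of Theorems~\ref{realbounds}--\ref{realbounds2} are beau, hence the decomposition data (number of factors, powers $b_j$, distortion bounds) stabilize to universal constants after finitely many renormalizations, independently of which map in the family we started from. This forces the conclusion to hold for all large $i$ with a uniform $n_0=n_0(R)$, a contradiction. The main obstacle I anticipate is the bookkeeping in the sector lemma when several power-law factors of different odd orders $b_j$ are interleaved with distortion-bounded diffeomorphisms: one must ensure that the ``slit-plane quality'' lost when passing through a diffeomorphism (controlled but not improved) is always more than compensated by the gain through the next power map, uniformly, which is exactly the step requiring $R\geq R_0$ for a universal $R_0$. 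The commensurability of adjacent partition intervals (Theorem~\ref{realbounds2}) is what makes this balancing possible; verifying it propagates correctly through all $m\le l+1$ compositions, with constants independent of $l$ after deep renormalization, is the delicate point.
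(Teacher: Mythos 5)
Your proposal does not follow the paper's (or \cite{Ya1}'s) strategy, despite the opening sentence. The paper proves Theorem~\ref{thm:power} by reducing it to Lemma~\ref{lin}: one fixes a point $z$ in a reference disk $D_M$, pulls it back along the inverse orbit (\ref{z-orbit}) corresponding to the backward orbit (\ref{J-orbit}) of $I_n$, and shows by induction that the normalized distance $\dist(z_{-i},J_{-i})/|J_{-i}|$ grows by at most a bounded factor. The engine is the Schwarz-lemma invariance of Poincar{\'e} neighborhoods $D_\theta(J)$ (Lemmas~\ref{lem:inv}, \ref{quasi1}, \ref{quasi2}), together with the dichotomy lemmas~\ref{cb1} and~\ref{cb2}: at each return to $I_m$ or transition to $I_{m+1}$ the pulled-back point either stays inside the next disk $D_{m'}$ or acquires a good angle ($\eps$-jumps) and is caught by Lemma~\ref{good angle}. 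The multicritical novelty is localized: the iterates in question now have up to $l$ interior critical points, which only breaks the monotone intervals into commensurable pieces (using Theorem~\ref{realbounds2}) and changes constants. That is the whole proof; de~Faria's decomposition from Theorem~\ref{realbounds} plays no role in it.

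Your route instead proposes to take the decomposition $f^{q_n}|_{I_{n+1}}=\psi_{m+1}\circ p_m\circ\cdots\circ p_0\circ\psi_0$, extend each factor to a slit-plane domain, and compose, using a sector lemma for the power maps and a ``basic sublemma'' for the diffeomorphic factors. The gap is precisely that sublemma. The maps $\psi_j$ in Theorem~\ref{realbounds} are themselves long compositions of order $q_n$ branches of $f$; ``beau-bounded distortion'' there is a \emph{real} estimate on an interval. It gives you no free control over the complex extension of $\psi_j$ to a slit plane or large disk: a priori $\psi_j$ need not be univalent on any definite complex neighborhood of its interval of definition, let alone carry $\DD_R\cap\CC_J$ into a comparable slit region. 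Establishing such control is exactly the hard analytic content of complex {\it a priori} bounds, and the only known mechanism for it is the inverse-orbit/Poincar{\'e}-neighborhood machinery you bypassed. In other words, your proposal packages the entire difficulty into an unproved sublemma whose proof would itself need Lemmas~\ref{quasi2}, \ref{good angle}, \ref{cb1}, \ref{cb2}. Until that sublemma is supplied (and I do not see how to supply it without re-deriving the paper's Lemma~\ref{lin}), the argument does not close. The compactness argument for uniformity of $n_0$ at the end is fine as far as it goes, but it rests on the unproven main step.
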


Fix an integer $M>0$ and $n\geq M$. 
Let
$$H_m\equiv [f^{q_{m+1}}(0),f^{q_{m}-q_{m+1}}(0)],$$
and let $D_m$ denote the Euclidean disc 
$D(H_m)$.
Assume that $D_M\subset U$.
 Consider the  inverse orbit:
\begin{equation}
\label{J-orbit}
J_0\equiv f^{q_{n+1}}(I_n),J_{-1}\equiv f^{q_{n+1}-1}(I_n),\ldots,
J_{-(q_{n+1}-1)}\equiv f(I_n).
\end{equation}
For a point $z\in D_M$ we say that
\begin{equation}
\label{z-orbit}
z_0\equiv z,z_{-1},\ldots,
z_{-(q_{n+1}-1)}
\end{equation}
is a {\it corresponding} inverse orbit if each $z_{-(k+1)}$ is obtained by applying to $z_{-k}$ a univalent inverse branch of $f|_U$ where
$U$ is a sub-interval of $J_{-(k+1)}$.

We derive Theorem~\ref{thm:power} from the following assertion:
\begin{lem}
\label{lin}
There exists $M>0$ which depends only on $f$, and can be chosen uniformly in a compact family, such that for all $n>M$ 
 and any $z \in\C{f^{q_{n+1}}(I_n)}\cap D_M$ the following estimate holds:
\begin{equation}
\label{linear}  
\frac{\dist(z_{-(q_{n+1}-1)},f(I_n))}{|f(I_n)|}\leq C
\frac{\dist(z,I_n)}
{|I_n|},
\end{equation}
where $z_n$ is as in (\ref{z-orbit}) and $C$ is a universal constant.
\end{lem}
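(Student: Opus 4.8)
The plan is to establish the linear contraction estimate \eqref{linear} by combining the real \emph{a priori} bounds (Theorems~\ref{realbounds} and \ref{realbounds2}) with the Schwarz Lemma / Koebe distortion machinery along the inverse orbit \eqref{J-orbit}. The underlying philosophy is that pulling back a point $z$ sitting over the interval $I_n$ along the orbit $I_n\to f(I_n)\to\cdots\to f^{q_{n+1}}(I_n)$ is governed by the univalent inverse branches of $f$, and that the relative position $\dist(z,I_n)/|I_n|$ of $z$ with respect to the interval is essentially a conformal invariant that can only improve (up to a universal factor) under these pullbacks, because the image intervals $J_{-k}$ are \emph{deep inside} a definite conformal neighborhood — this is where the real bounds are used.

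First I would set up the combinatorial bookkeeping: the orbit segment $I_n, f(I_n),\ldots, f^{q_{n+1}-1}(I_n)$ consists of intervals of the $n$-th dynamical partition $\cP_n(f)$, so by Theorem~\ref{realbounds2} any two adjacent ones are $K$-commensurable with $K$ beau, and by Theorem~\ref{conj-yoc} their sizes tend to $0$. Next, for each step of the inverse orbit I want a definite-size ``Koebe space'': I would show that $f$ restricted to a suitable sub-interval $U\subset J_{-(k+1)}$ extends univalently to a disk or a slit plane $\C{\,\cdot\,}$ whose modulus of the collar around $f(U)\subset J_{-k}$ is bounded below by a beau constant — this follows from the decomposition in Theorem~\ref{realbounds}, separating the at-most-$l+1$ places where a critical pullback occurs (handled by the power-law factors $p_j$ and the slit-plane geometry encoded by $\C{f^{q_{n+1}}(I_n)}$ in the statement) from the diffeomorphic pieces $\psi_j$ of beau-bounded distortion. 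On the diffeomorphic pieces Koebe distortion gives $\dist(z_{-(k+1)},J_{-(k+1)})/|J_{-(k+1)}| \le (1+o(1))\,\dist(z_{-k},J_{-k})/|J_{-k}|$ with a multiplicative error that is summable over the orbit; at the finitely many critical pullbacks the slit-plane estimate (a Schwarz-type lemma for $\C{J}\to\C{f(J)}$ composed with the power law, as in \cite{Ya1}) contributes a bounded multiplicative factor each. Composing all of these and using that only boundedly many ``bad'' steps occur yields the constant $C$, which — crucially — is universal because the distortion bounds are beau and the number of critical steps is $\le l+1$, independent of $n$ once $n>M$.

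The main obstacle I expect is controlling the pullback precisely at the steps where the inverse branch passes near a critical point: there the map is not univalent in a full neighborhood, the slit $\C{f^{q_{n+1}}(I_n)}$ must be tracked so that the inverse branch stays in the cut plane, and the naive Koebe estimate fails. This is exactly the point where the slit-plane completion $\overline{\C{J}}$ and a Schwarz lemma for maps between slit planes (together with the power law $x\mapsto x^{b_j}$, which \emph{contracts} the relative distance near $0$ for $b_j\ge 3$) must be invoked; getting a clean multiplicative bound there, uniform over the combinatorial type, is the technical heart. A secondary difficulty is the ``corresponding inverse orbit'' being defined only along a sub-interval $U$ of each $J_{-(k+1)}$ rather than all of it, so I must verify that at each stage the relevant point $z_{-k}$ indeed lands in the domain of the chosen univalent branch — this is guaranteed inductively by the fact that $\dist(z_{-k},J_{-k})/|J_{-k}|$ stays bounded (which is what we are proving), so the argument is really an induction on $k$ where the estimate \eqref{linear} and the ``staying in the domain'' claim are proved simultaneously. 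Finally I would note that $M$ depends only on $f$ through the point at which the real bounds become beau, hence can be chosen uniformly on a compact family, and that $C$ can be taken universal.
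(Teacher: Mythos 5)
The broad philosophy — Schwarz/Koebe machinery combined with real \emph{a priori} bounds, applied along the backward orbit — is in the right spirit, but your plan misses the key structural mechanism that actually makes the paper's proof close, and it contains a genuine gap.

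First, the gap. You claim that on the diffeomorphic pieces Koebe distortion gives a multiplicative error ``$(1+o(1))$ that is summable over the orbit.'' The orbit $J_0, J_{-1},\ldots, J_{-(q_{n+1}-1)}$ has $q_{n+1}-1$ steps, which is unbounded in $n$, and the relative conformal collar around a given $J_{-k}$ is only \emph{beau}-bounded, not growing; so a per-step Koebe estimate gives a definite multiplicative loss, not a vanishing one, and the product of these losses has no a priori reason to be bounded. You have not supplied any argument for summability, and indeed this is the wrong mechanism: the correct statement (Lemma~\ref{lem:inv}) is that inverse branches map the double-slit plane $\CC_{J_{-k}}$ into $\CC_{J_{-(k+1)}}$, so the Schwarz Lemma gives \emph{exact} invariance of Poincar\'e neighborhoods $D_\theta(\cdot)$, and the quasi-invariant version (Lemma~\ref{quasi2}) degrades by a factor $K_n\to 1$ only under the additional ``deep enough inside $U$'' control. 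No per-step Koebe bookkeeping replaces this.

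Second, and more fundamentally, you have not identified the structure of the induction. The paper's proof is \emph{not} an induction on the orbit step $k$ (as you propose at the end) but an induction on the \emph{scale} $m$: one tracks the backward orbit as it passes through the nested intervals $I_m\supset I_{m+1}\supset\cdots$ and the associated disks $D_m\supset D_{m+1}\supset\cdots$, and at each scale one has a dichotomy (Lemmas~\ref{cb1} and~\ref{cb2}): either the corresponding point $\zeta'$ stays inside the next disk $D_m$ (resp.\ $D_{m+1}$) and one descends to the next scale, or the point \emph{$\eps$-jumps} — acquires a good angle $\ang{\zeta'}{J'}>\eps$ at a bounded normalized distance — in which case Lemma~\ref{good angle} (the ``good angle implies bounded growth of normalized distance'' lemma, built directly on the Poincar\'e-disk invariance) immediately gives the estimate for the rest of the orbit. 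This angle dichotomy is what actually closes the argument, and nothing in your proposal plays its role. The slit-plane/power-law estimate at critical pullbacks that you highlight as the technical heart is indeed needed (it enters in the proofs of Lemmas~\ref{cb1}, \ref{cb2}), but it is a sub-ingredient, not the spine of the proof. Without the scale induction and the good-angle escape mechanism, your argument cannot produce a universal constant $C$.
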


\subsection{Poincar{\'e} neighborhoods}
By symmetry, $J$ is a hyperbolic geodesic in ${\Bbb C}_{J}$.
Consider the {\it hyperbolic neighborhood} of $J$  of radius
$r$ that is the 
set of all points in ${\Bbb C}_{J}$
 whose hyperbolic distance to $J$ 
 is less than $r$. One verifies directly that a hyperbolic neighborhood is
the union of two Euclidean discs symmetric to each other with respect
to the real axis, with a common chord $J$.
We will  denote such a neighborhood $D_\theta(J)$, where $\theta=\theta(r)$
is the outer angle the boundaries of the two discs form with the real
axis. An elementary computation yields:
\begin{equation}
  \label{rtheta}
  r=\log\cot(\theta/4).
\end{equation}
Note,  that in particular the Euclidean disc $D(J)\equiv D_{\pi/2}(J)$
can  be interpreted as a hyperbolic neighborhood of $J$.
These hyperbolic neighborhoods were introduced into the subject by Sullivan
\cite{Sullivan-bounds}. They are a key tool for proving complex bounds
due to the following version of the Schwarz Lemma:

\begin{lem}
\label{lem:inv}
  Let us consider two intervals  
$J'\subset J\subset {\Bbb R}$. 
Let $\phi:{\Bbb C}_{J}\rightarrow{\Bbb C}_{J'}$ be an analytic map such that
$\phi(J)\subset J'$. Then for any $\theta\in (0, \pi),$ 
$\phi (D_\theta (J))\subset D_\theta (J')$.
\end{lem}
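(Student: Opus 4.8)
\textbf{Proof proposal for Lemma~\ref{lem:inv}.}

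The plan is to reduce the statement to the ordinary Schwarz Lemma for the hyperbolic metrics of the slit planes $\CC_J$ and $\CC_{J'}$. The key observation is that the hyperbolic (Poincar\'e) neighborhood $D_\theta(J)$ has a purely metric description: by definition it is exactly the set of points $z\in\CC_J$ whose hyperbolic distance to $J$ in $\CC_J$ is less than $r=r(\theta)=\log\cot(\theta/4)$. So the conclusion $\phi(D_\theta(J))\subset D_\theta(J')$ is equivalent to the inequality
$$\dist_{\CC_{J'}}(\phi(z),J')<r\quad\text{whenever}\quad\dist_{\CC_J}(z,J)<r,$$
for the same value $r$, and this is what I would prove.

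First I would invoke the Schwarz--Pick Lemma: since $\phi\colon\CC_J\to\CC_{J'}$ is analytic (here one uses that the slit plane $\CC_{J'}$, being conformally a disk, carries a complete hyperbolic metric, and that $J'\subsetneq J$ only guarantees $\CC_J\supset\CC_{J'}$ is a genuine subdomain so the inclusion is distance-nonincreasing too, but we do not even need that), $\phi$ is weakly contracting from the hyperbolic metric of $\CC_J$ to the hyperbolic metric of $\CC_{J'}$:
$$\dist_{\CC_{J'}}(\phi(z_1),\phi(z_2))\le\dist_{\CC_J}(z_1,z_2)\quad\text{for all }z_1,z_2\in\CC_J.$$
Second, given $z\in D_\theta(J)$, pick a point $w\in J$ with $\dist_{\CC_J}(z,w)<r$; this is possible since $J$ is a (closed) subset realizing the distance, or simply by the definition of the neighborhood. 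By hypothesis $\phi(J)\subset J'$, so $\phi(w)\in J'$. Applying Schwarz--Pick with $z_1=z$, $z_2=w$ gives
$$\dist_{\CC_{J'}}(\phi(z),J')\le\dist_{\CC_{J'}}(\phi(z),\phi(w))\le\dist_{\CC_J}(z,w)<r,$$
hence $\phi(z)\in D_\theta(J')$. Finally I would combine this with the explicit identification of hyperbolic neighborhoods with unions of two symmetric Euclidean disks meeting along $J$ at outer angle $\theta$, together with formula (\ref{rtheta}) $r=\log\cot(\theta/4)$, to translate the metric statement back into the geometric one stated in the lemma, noting that the relation between $r$ and $\theta$ is the \emph{same} for $J$ and for $J'$ (it depends only on the conformal type of the slit plane, not on the size of the slit), which is exactly why the same $\theta$ appears on both sides.

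The only mild subtlety — and the point I would be most careful about — is that $\CC_J$ is not simply connected as written but rather its completion $\overline{\CC_J}$ in the path metric (adding the banks of the slits) is the natural object, and one must check that $\phi$ extends continuously so that the condition $\phi(J)\subset J'$ makes sense on the banks, and that the hyperbolic metric extends to this completion; but this is routine since $\overline{\CC_J}$ is conformally a closed disk and $J$ becomes a geodesic arc in its interior by the $\RR$-symmetry (as already noted in the text). No genuine obstacle arises: the statement is essentially a repackaging of Schwarz--Pick once the metric characterization of $D_\theta(J)$ is in hand.
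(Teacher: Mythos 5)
Your proof is correct, and it is precisely the standard Schwarz--Pick argument that the paper is tacitly appealing to: the paper states Lemma~\ref{lem:inv} without proof, labeling it ``the following version of the Schwarz Lemma'' and crediting Sullivan for introducing the hyperbolic-neighborhood technique, and your reduction (metric characterization of $D_\theta(J)$ via $r=\log\cot(\theta/4)$, then Schwarz--Pick contraction of the hyperbolic metrics of $\CC_J\to\CC_{J'}$, using $\phi(J)\subset J'$ to control the distance to $J'$) is exactly that standard proof. One small remark: the ``mild subtlety'' you flag at the end is a non-issue; the banks added to form $\overline{\CC_J}$ are the two sides of the slits $\RR\setminus J$, not of $J$ itself, so $J$ already sits in the interior of $\CC_J$, the hypothesis $\phi(J)\subset J'$ is directly meaningful, and no boundary extension of $\phi$ is required. (And indeed you correctly observe that the identification $r\leftrightarrow\theta$ is independent of $|J|$, because a real-affine change of variables carries $(\CC_J,J)$ conformally onto $(\CC_{J'},J')$ and preserves the outer angle $\theta$ --- which is the reason the \emph{same} $\theta$ can appear on both sides.)
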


\begin{figure}[ht]
  \caption{\label{fig:invariance}An illustration of Lemma~\ref{lem:inv}}
  \centerline{\includegraphics[width=0.65\textwidth]{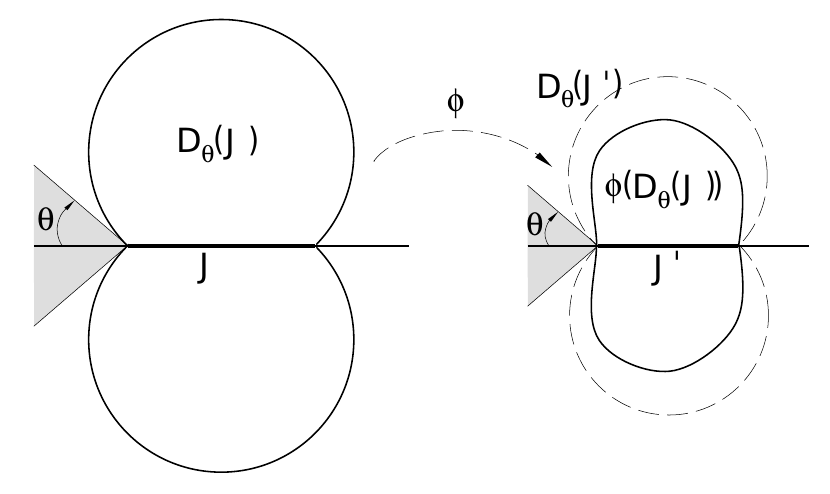}}
\end{figure}

\noindent
The above invariance statement becomes quasi-invariance if the domain of $\phi$ is not the whole double-slit plane, yet is large enough relative to the size of $J$. Let us state this as a Lemma below. Denote $\DD_R=\{|z|<R\}$.

\begin{lem}
  \label{quasi1}
  For every sufficiently large $R>0$ there exists $\theta(R)>0$ with $\theta(R)\to 0$ and $(R\theta(R))^{-1}\to 0$ as $R\to\infty$ such that the following holds. Let
  $$\phi:\DD_R\cap \CC_{[-1,1]}\to\CC$$ be a univalent and real-symmetric function satisfying $\phi(\pm 1)=\pm 1$. Then for every $\theta\geq \theta(R)$ we have
  $$\phi(D_\theta([-1,1]))\subset D_{(1-R^{-1-\delta})\theta}([-1,1]),$$
  where $\delta\in(0,1)$ is a fixed constant.

\end{lem}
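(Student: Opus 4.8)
\textbf{Proof proposal for Lemma~\ref{quasi1}.}

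The plan is to factor the map $\phi$ through the double-slit plane and apply the (exact) Schwarz-type invariance of Lemma~\ref{lem:inv}, at the cost of first extending $\phi$ — or rather a controlled modification of it — from the truncated domain $\DD_R\cap\CC_{[-1,1]}$ to the full double-slit plane $\CC_{[-1,1]}$. Concretely, I would argue as follows. Since $\phi$ is univalent on $\DD_R\cap\CC_{[-1,1]}$, real-symmetric, and fixes $\pm1$, the image $\phi(\DD_R\cap\CC_{[-1,1]})$ is a simply-connected real-symmetric domain containing the interval $[-1,1]$; call it $\Omega$. The Riemann map $\Psi\colon\CC_{[-1,1]}\to \CC_{\Omega\cap\RR}$ normalized to fix $\pm1$ and to be real-symmetric realizes $\CC_{\Omega\cap\RR}$ as a conformal image of the double slit plane, and then $\Psi^{-1}\circ\phi\colon \DD_R\cap\CC_{[-1,1]}\to\CC_{[-1,1]}$ is a univalent self-map of (part of) the double-slit plane fixing $\pm1$. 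The heart of the matter is to show that this composed map is a $(1+O(R^{-1-\delta}))$-quasiconformal perturbation of the identity near $[-1,1]$, or more precisely that it contracts the Poincar\'e neighborhood $D_\theta([-1,1])$ into $D_{(1-R^{-1-\delta})\theta}([-1,1])$ for all $\theta\ge\theta(R)$.

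The key steps, in order, would be: (1) Use the Koebe distortion theorem together with the bound $\phi(\DD_R\cap\CC_{[-1,1]})\supset$ a definite neighborhood of $[-1,1]$ — which follows because $\phi$ is univalent on a domain of conformal size $\sim R$ relative to $[-1,1]$ and fixes $\pm1$ — to conclude that $\phi$ restricted to a fixed Poincar\'e neighborhood $D_{\theta_1}([-1,1])$ with $\theta_1$ bounded away from $0$ is close to the identity in the $C^1$ sense, with error $O(1/R)$; this is where the hypothesis "univalent on $\DD_R\cap\CC_{[-1,1]}$" is used in an essential way. (2) Quantify this: show that $\phi$ maps $D_\theta([-1,1])$ into $D_{\theta'}([-1,1])$ with $\theta' \le \theta + C\theta/R$ for $\theta$ not too large, using the relation (\ref{rtheta}) between hyperbolic radius and outer angle together with the Schwarz lemma applied to the inclusion $D_{\theta_1}([-1,1])\hookrightarrow \phi(\DD_R\cap\CC_{[-1,1]})$ composed with $\phi^{-1}$. (3) Improve the naive loss $C\theta/R$ to the claimed gain $-R^{-1-\delta}\theta$ by combining the two directions: $\phi$ is univalent \emph{and} $\phi(\DD_R\cap\CC_{[-1,1]})$ is strictly larger than $\DD_{R'}\cap\CC_{[-1,1]}$ for some $R' = R(1-o(1))$, so the contraction coming from the target being "not all of $\CC_{[-1,1]}$" is what produces the strict inequality — more precisely, one applies Lemma~\ref{lem:inv} to the inclusion of $\phi(\DD_R\cap\CC_{[-1,1]})$, suitably uniformized, into $\CC_{[-1,1]}$, which already loses a definite proportion of the angle, and then checks that the near-identity character of $\phi$ from step (1) cannot undo more than an $O(R^{-1})$ fraction of that loss. (4) Finally, choose $\theta(R)$: the constraints $\theta(R)\to 0$ and $(R\,\theta(R))^{-1}\to 0$ force $\theta(R)$ to decay slower than $1/R$ (e.g.\ $\theta(R)\sim R^{-1/2}$), and one checks the estimates of steps (1)--(3) are uniform for $\theta\ge\theta(R)$ with this choice, with the exponent $\delta\in(0,1)$ absorbing the various $o(1)$ losses.

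The main obstacle I anticipate is step (3): getting the \emph{strict} contraction with the quantitatively correct rate $R^{-1-\delta}$, rather than merely $\theta'\le\theta(1+o(1))$ or even $\theta'\le\theta$. The positive term $+C\theta/R$ coming from the distortion of $\phi$ (step (2)) and the negative term coming from the Schwarz-lemma deficit of the target domain (which is of size comparable to $1/R$ as well, since $\phi(\DD_R\cap\CC_{[-1,1]})$ only fails to be the full slit plane at scale $R$) are a priori of the same order, so one must track constants carefully and exploit the fact that the domain $\DD_R\cap\CC_{[-1,1]}$ sits \emph{strictly inside} $\CC_{[-1,1]}$ by a definite amount at every scale up to $R$, whereas the distortion of $\phi$ only affects a fixed-size Poincar\'e neighborhood. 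A clean way to organize this is to pass to logarithmic coordinates $w = \log\frac{z-1}{z+1}$ (or the Joukowski coordinate), in which $\CC_{[-1,1]}$ becomes a half-strip, Poincar\'e neighborhoods become horizontal strips $\{|\Im w| < $ const$\}$, and both the contraction and the near-identity estimate become statements about univalent maps of strips that fix the boundary at $\pm\infty$; there the subordination principle gives the required inequality with explicit constants.
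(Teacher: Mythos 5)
Your step (1) is not correct, and this breaks the proposed proof. A univalent, real-symmetric map $\phi\colon\DD_R\cap\CC_{[-1,1]}\to\CC$ fixing $\pm 1$ need not be $C^1$-close to the identity for any $R$, however large. Concretely, let $\Psi$ be the Riemann map from $\CC_{[-1,1]}$ onto $\DD$ sending $[-1,1]$ to $(-1,1)$, and take $\phi=\Psi^{-1}\circ M_a\circ\Psi$ where $M_a(w)=\frac{w+a}{1+aw}$ for some fixed $a\in(-1,1)$. This $\phi$ is a conformal automorphism of the full double-slit plane $\CC_{[-1,1]}$, so it is univalent on $\DD_R\cap\CC_{[-1,1]}$ for every $R$; it is real-symmetric and fixes $\pm 1$; yet $\phi(0)=\Psi^{-1}(a)$ can be arbitrarily close to $\pm 1$. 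So the conclusion of your step (1) — that Koebe distortion plus the hypothesis of univalence on a large domain pins $\phi$ down to within $O(1/R)$ of the identity — is false. For this same family the lemma's conclusion is of course trivially true, since an automorphism of $\CC_{[-1,1]}$ preserving $[-1,1]$ preserves every Poincar\'e neighborhood $D_\theta([-1,1])$; this is a hint that the correct argument must be formulated in conformally invariant terms, which your step (1) is not.

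Once step (1) collapses, the ``competition'' you set up in steps (2)--(3) between a positive distortion term $+C\theta/R$ from $\phi$ and a negative Schwarz-deficit term has no basis: there is no positive term to beat. The paper's proof makes no attempt to compare $\phi$ to the identity. Instead, it works entirely with the domains: it uniformizes $\CC_{[-1,1]}$ onto $\DD$ by an explicit map $\Psi=-F\circ\sqrt{\phantom{x}}\circ F$ with $F(z)=\frac{1-z}{1+z}$, observes that $\Psi(\DD_R\cap\CC_{[-1,1]})$ is the disk minus $O(R^{-1})$-neighborhoods of $\pm i$, and then uses a Warschawski-type estimate (Lemma~\ref{lem:war} and the derivative bound~(\ref{eq:wars})) to show that near $[-1,1]$ the hyperbolic metric of the truncated domain exceeds that of $\CC_{[-1,1]}$ by at most a factor $1+O(R^{-1})$. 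Since $\phi$ is a conformal isomorphism onto its image and its image lies in $\CC_{[-1,1]}$, the Schwarz Lemma then gives $\dist_{\CC_{[-1,1]}}(\phi(z),[-1,1])\le\dist_{\DD_R\cap\CC_{[-1,1]}}(z,[-1,1])$, and the factor $1+O(R^{-1})$ on the right is what produces the controlled change in $\theta$ via~(\ref{rtheta}). The automorphism example passes harmlessly through this chain of inequalities, which your closeness-to-identity scheme cannot handle. Your suggestion at the end to pass to strip coordinates is reasonable as a device for making the hyperbolic-metric comparison explicit, but it does not rescue the identification of $\phi$ with a near-identity map; the argument must be reorganized around the domain comparison rather than the map.
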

While superficially similar to \cite[Lemma~2.4]{dFdM2}, it is a much stronger statement: we do not assume that $\phi$ is conformal in all of $\DD_R$ (since this will clearly not be the case in our applications).
Before proving this statement, let us formulate a lemma (see \cite[Lemma3]{Wars}):
\begin{lem}
  \label{lem:war}
  Suppose $\Omega$ is a Jordan subdomain of $\DD$ such that
  $$\partial\Omega\subset \{|z|\geq 1-\eps\}\text{ where }\eps>\frac{1}{2}.$$
  There exists $C>0$ such that the following holds.
  Let $h:\Omega\to\DD$ be the Riemann mapping normalized by $h(0)=0$ and $h'(0)>0$. Then
  $$|h^{-1}(z)-z|<C\eps,$$
  for all $|z|<1/2$.
\end{lem}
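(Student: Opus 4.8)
\textbf{Plan of proof for Lemma~\ref{lem:war}.}

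The strategy is to apply the Koebe distortion theorem together with a normal-families / Carath\'eodory kernel argument. First I would observe that since $\Omega\supset\{0\}$ is a Jordan subdomain of $\DD$ whose boundary lies in the thin annulus $\{1-\eps\le|z|<1\}$ with $\eps>\tfrac12$, the domain $\Omega$ contains the disk $\{|z|<1-\eps\}$ only in a weak sense — what is actually true is that $\Omega$ is ``close'' to $\DD$ in the Carath\'eodory sense, with an error controlled linearly by $\eps$. Concretely, I would first bound the derivative $h'(0)$ of the normalized Riemann map $h:\Omega\to\DD$: by the Schwarz lemma applied to the inclusion $\Omega\hookrightarrow\DD$ one gets $h'(0)\ge 1$, and by Koebe's $1/4$-theorem applied to $h^{-1}:\DD\to\Omega\subset\DD$ one gets $h^{-1}(\{|w|<1/4\})\subset\Omega\subset\DD$, hence $\tfrac14/h'(0)\le 1$, i.e.\ $1\le h'(0)\le 4$; a sharper bound $h'(0)=1+O(\eps)$ follows because $\partial\Omega$ is within $\eps$ of $\partial\DD$ (e.g.\ compare areas, or use that $\Omega$ contains a disk of radius $1-\eps$ about some point — in fact about $0$ if $\eps>1/2$ forces $0\in\Omega$, which it does).

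The core estimate is then a Koebe-type bound on $h^{-1}-\mathrm{id}$. Write $g=h^{-1}:\DD\to\Omega$, so $g(0)=0$, $g'(0)=1/h'(0)\in[1/4,1]$. Since $g$ is univalent into $\DD$, the function $g(w)/g'(0)$ is a normalized univalent map ($S$-class), so by the Koebe distortion theorem $|g(w)-g'(0)w|\le g'(0)\cdot\frac{|w|^2\cdot C}{(1-|w|)^3}$-type bounds hold; restricting to $|w|<1/2$ makes the denominator bounded below by a universal constant, giving $|g(w)-g'(0)w|\le C_1|w|^2\le C_1/4$ with $C_1$ universal. Combining with $|g'(0)-1|\le C_2\eps$ (from the derivative bound above) yields $|g(w)-w|\le |g(w)-g'(0)w|+|g'(0)-1||w|\le C_1|w|^2 + C_2\eps|w|$ for $|w|<1/2$. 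This is \emph{not yet} the claimed bound, since $C_1|w|^2$ need not be $O(\eps)$ — so the Koebe distortion estimate alone is insufficient, and one must exploit that $\Omega$ is genuinely close to the whole disk, not just univalently embedded.

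To get the genuine $O(\eps)$ bound I would instead argue by \emph{compactness/contradiction}: suppose the statement fails, so there is a sequence $\eps_j\to 0$, domains $\Omega_j$ with $\partial\Omega_j\subset\{|z|\ge 1-\eps_j\}$, points $w_j$ with $|w_j|<1/2$, and $|h_j^{-1}(w_j)-w_j|\ge j\,\eps_j$ — or, to make the target a fixed constant, normalize differently and suppose $|h_j^{-1}(w_j)-w_j|\ge\delta_0>0$. Passing to a subsequence, $\Omega_j\to\DD$ in the Carath\'eodory kernel sense (since $\partial\Omega_j$ converges to $\partial\DD$), hence by the Carath\'eodory kernel theorem the normalized Riemann maps $h_j^{-1}\to\mathrm{id}$ locally uniformly on $\DD$, in particular uniformly on $\{|w|\le 1/2\}$, contradicting $|h_j^{-1}(w_j)-w_j|\ge\delta_0$. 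This shows $\sup_{|w|<1/2}|h^{-1}(w)-w|\to 0$ as $\eps\to 0$; to upgrade the rate to \emph{linear} in $\eps$ one reruns the argument quantitatively: the Carath\'eodory-closeness of $\Omega$ to $\DD$ is $O(\eps)$ (every boundary point of $\Omega$ is within $\eps$ of $\partial\DD$), and the dependence of the Riemann map on the domain is Lipschitz in this regime — this can be made explicit by writing $h^{-1}=\mathrm{id}+u$ where $u$ is analytic on $\DD$, estimating $u$ on $\partial\Omega$ (where $h^{-1}$ maps into $\partial\DD$, so $|u|\le\eps$ there, roughly) and using the maximum principle / Schwarz-type bound on the subdomain to propagate the bound inward to $\{|w|<1/2\}$ with a universal loss factor. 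The cleanest route is probably: $h^{-1}(\partial\DD)=\partial\Omega\subset N_\eps(\partial\DD)$, so $|h^{-1}(w)|\ge 1-\eps$ for $|w|$ near $1$; combined with $h^{-1}(0)=0$ and univalence, a two-point Schwarz-Pick estimate forces $h^{-1}$ to be within $O(\eps)$ of a rotation on $\{|w|<1/2\}$, and the normalization $h'(0)>0$ kills the rotation, leaving $|h^{-1}(w)-w|<C\eps$.

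\textbf{Main obstacle.} The delicate point is obtaining the \emph{linear} rate in $\eps$ rather than merely $o(1)$: the soft compactness argument gives qualitative convergence for free, but quantifying the modulus of continuity of the Riemann map as a function of the domain requires either an explicit Schwarz-lemma / maximum-principle estimate on the annular region $\Omega\setminus\{|w|<1/2\}$ (using $\eps>1/2$ to guarantee this region is genuinely ``thick'' on the inside and ``thin'' on the outside), or citing a standard stability estimate for conformal maps onto nearly-circular domains. I expect the bookkeeping of constants in that step — ensuring $C$ is universal and the estimate is uniform over all admissible $\Omega$ — to be the part requiring the most care, though it is routine in spirit. (This is presumably why the authors cite \cite{Wars} for it.)
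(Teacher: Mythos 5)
The paper does not prove this lemma; it quotes it from Warschawski \cite[Lemma3]{Wars}, so there is no internal proof to compare against. Your sketch ends with essentially the right idea, but the route you propose just before the ``cleanest route'' would fail, and it is worth being explicit about why before confirming that the last route does work.

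You suggest writing $h^{-1}(w)=w+u(w)$, bounding $u$ on the boundary, and propagating inward by the maximum principle. This cannot work. For $|w|=1$ you only know that $|h^{-1}(w)|\in[1-\eps,1]$ (and only after invoking Carath\'eodory's boundary extension); this says nothing about $|h^{-1}(w)-w|$, because the boundary correspondence of a conformal map onto a nearly-circular Jordan domain can distort the \emph{argument} of a boundary point by $O(1)$ even when the modulus is pinned to within $\eps$ --- a thin radial slit is the standard example. So $|u|$ on $\partial\DD$ is generically of order $1$, not $O(\eps)$, and the maximum principle gives nothing. What \emph{does} work is your final ``Schwarz--Pick'' route, once it is made precise, and it is worth recording that it genuinely delivers the linear rate you were worried about. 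Write $g=h^{-1}$. For $\eps<1/2$ (the hypothesis ``$\eps>\tfrac{1}{2}$'' in the statement is surely a typo for ``$\eps<\tfrac{1}{2}$'', as the application to Lemma~\ref{quasi1} takes $\eps=O(R^{-1})$) the hypothesis forces $\{|z|<1-\eps\}\subset\Omega\subset\DD$, and two applications of the Schwarz lemma to these inclusions pin $a:=g'(0)=1/h'(0)$ to the interval $[1-\eps,1]$. Now $\psi(w)=g(w)/w$ is a holomorphic self-map of $\DD$ with $\psi(0)=a$, and Schwarz--Pick gives $\bigl|\tfrac{\psi(w)-a}{1-a\psi(w)}\bigr|\le|w|$. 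For $|w|\le 1/2$, using $1-a\psi=(1-a)(1+a)+a(a-\psi)$ one obtains $|\psi(w)-a|\le 2(1-a)\le 2\eps$, hence $|\psi(w)-1|\le 3\eps$ and $|g(w)-w|=|w|\,|\psi(w)-1|\le\tfrac{3}{2}\eps$. This is purely interior --- no boundary extension, no compactness. Your compactness step gives only qualitative $o(1)$ (as you note), the plain Koebe estimate leaves an uncontrolled $|w|^2$ term, and the boundary-value-of-$u$ step fails outright; only this last route survives.
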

Combining this with the Cauchy Integral Formula, we obtain
\begin{equation}
  \label{eq:wars}
  g'(0)<1+D\eps,
\end{equation}
where $D$ is a universal constant.

\begin{proof}[Proof of Lemma \ref{quasi1}]
  We start with an exercise in conformal mapping. Let $G(z)$ denote the branch of $\sqrt{z}$ which is defined in $\CC\setminus \RR_{<0}$ and preserves $\RR_{>0}$; and let
  $$F(z)=\frac{1-z}{1+z}.$$
  Then, the composition $\Psi\equiv -F\circ G\circ F$ is a conformal mapping of $\CC_{[-1,1]}$ onto $\DD$ which preserves the interval $[-1,1]$.
  Easy estimates show that the image of $\partial\DD_R$ under $\Psi$ is contained in $\eps$-neighborhoods of $\pm i$ with $\eps=O(R^{-1})$.
  Let us denote $$\Omega\equiv \Psi(\DD_R\cap\CC_{[-1,1]}).$$
  The hyperbolic metric of $\Omega$ at a point $a$ is obtained by the pull-back of the Euclidean metric by the composition
  $h_a\circ M_a$ where $M_a$ is the M{\"o}bius map $(z-a)/(1-\bar a z)$ and $h_a$ is the conformal map of $\Omega_a\equiv M_a(\Omega)$ onto the unit disk
  with $h_a(0)=0$, $h_a'(0)>0$.
  
  Fix $K>0$. For $z$ with $\dist_{\bar \DD}(z,[-1,1])<K$,
  $$\partial\Omega_a\subset \{|z|>\eps(a)\}\text{ with }\eps(a)=O(R^{-1})$$
  (this estimate could be much improved for $z$ near $\pm 1$, but we will not require it).
  Using (\ref{eq:wars}), we see that $K$-neighborhood of $[-1,1]$ in the hyperbolic metric of $\Omega$ is contained in the $K'$-neighborhood of $[-1,1]$ in the
  hyperbolic metric of $\DD$ with
  $$K'=K(1+O(R^{-1})).$$
    Together with \ref{rtheta} and the Schwarz Lemma, this implies the claim.

\end{proof}
Easy induction combined with real {\it a priori} bounds implies (cf. \cite[Lemma 2.5]{dFdM2}):
\begin{lem}
  \label{quasi2}
  For each $n\geq 1$ there exist $K_n\geq 1$ and $\theta_n>0$ with $K_n\to 1$ and $\theta_n\to 0$ as $n\to\infty$ such that the following holds. Let $\theta\geq \theta_n$, and let  $0\leq i<j\leq q_{n+1}$ be such that the restriction
  $$f^{j-i}:f^i(I_n)\to f^j(I_n)$$
  is a diffeomorphism on the interior. Then the inverse branch $f^{-(j-i)}|_{f^j(I_n)}$ is well-defined over $D_\theta(f^j(I_n))$ and maps it univalently into the
 Poincar{\'e}  neighborhood $D_{\theta/K_n}(f^i(I_n))$.
\end{lem}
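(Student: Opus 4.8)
Lemma \ref{quasi2} — proof proposal.

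The plan is to run an induction on $j-i$, using Lemma \ref{lem:inv} (exact invariance) on the ``long'' diffeomorphic step and the real \emph{a priori} bounds to organize the composition into controlled pieces. The key observation is the standard one from \cite{dFdM2}: the iterate $f^{j-i}$ restricted to $f^i(I_n)$ can be decomposed so that \emph{most} of the inverse steps are univalent on the entire double-slit plane $\CC_J$ over the relevant interval $J$, for which Lemma \ref{lem:inv} gives $D_\theta\mapsto D_\theta$ with \emph{no loss}; only a bounded number of steps pass through a neighborhood of a critical point, and for those one invokes Lemma \ref{quasi1} (with the quasidisk $\DD_R\cap\CC_{[-1,1]}$ rescaled affinely to match the interval in question) to get a loss of a definite factor $1-R^{-1-\delta}<1$ at each such step. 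Since $R$ is fixed, each bad step costs a fixed factor, and the number of bad steps along the backward orbit $f^j(I_n)\to f^i(I_n)$ with $0\le i<j\le q_{n+1}$ is bounded by a constant depending only on the number of critical points $l+1$; hence $K_n$ is bounded above for all $n$. To see $K_n\to 1$: the real bounds (Theorem \ref{realbounds2}) tell us that an inverse branch defined over $D_\theta(f^j(I_n))$ actually extends, with definite room, over $D_\theta(\hat J)$ for a \emph{larger} interval $\hat J\supset f^j(I_n)$ obtained by adjoining neighboring partition intervals; by \ref{rtheta} this means the Poincar\'e neighborhood we start from, when measured in $\CC_{\hat J}$ rather than $\CC_{f^j(I_n)}$, corresponds to a \emph{smaller} angle, and the contraction of Lemma \ref{lem:inv} applied in the larger slit plane translates into an actual definite gain that compensates the finitely many lossy steps; as $n\to\infty$ the real bounds become \emph{beau}, and a more careful bookkeeping (as in \cite[Lemma 2.5]{dFdM2}) shows the net distortion of the angle tends to $1$.

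Concretely I would proceed as follows. First, fix $n$ and decompose the backward trajectory $f^j(I_n),f^{j-1}(I_n),\dots,f^i(I_n)$ into maximal blocks on which a single inverse branch of $f$ is univalent over the double-slit plane of the containing partition interval, versus the (boundedly many) individual steps where the image interval abuts a critical value. Second, on a univalent block, compose the conclusion of Lemma \ref{lem:inv} across the block to propagate $D_\theta$ with the angle non-increasing. Third, at a critical step, affinely normalize the interval $f^{s}(I_n)$ to $[-1,1]$, observe that the relevant inverse branch is univalent on a domain of the form $\DD_R\cap\CC_{[-1,1]}$ up to a $K$-quasiconformal correction coming from the geometry of the partition (bounded by the real bounds, beau as $n\to\infty$), and apply Lemma \ref{quasi1} to pick up the factor $(1-R^{-1-\delta})$. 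Fourth, multiply all the factors: the product over the univalent blocks is $\le 1$, the product over the critical steps is bounded below by $(1-R^{-1-\delta})^{l+1}$, so $K_n^{-1}\ge c(l,R)>0$ uniformly, giving a uniform bound $K_n\le K(l,R)$; then invoke the beau improvement of the real bounds to upgrade this to $K_n\to 1$. Set $\theta_n=\max\{\theta(R),\theta_{\mathrm{real}}(n)\}$ where $\theta_{\mathrm{real}}(n)\to 0$ is the threshold below which Lemma \ref{quasi1} and the partition geometry are applicable at scale $n$; this gives $\theta_n\to 0$.

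The main obstacle is not the existence of a uniform bound on $K_n$ — that follows cleanly from the finite number of critical points and the fixed choice of $R$ — but rather proving the \emph{convergence} $K_n\to 1$. The naive estimate only gives $K_n\le (1-R^{-1-\delta})^{-(l+1)}$, which is a constant strictly bigger than $1$. To get convergence one must exploit that the lossy steps are \emph{compensated}: the inverse branch through a critical value is, by the real bounds, not merely defined over $D_\theta(f^s(I_n))$ but over $D_\theta$ of a \emph{definitely larger} interval, and in the larger slit plane the starting Poincar\'e neighborhood has a strictly smaller outer angle; combining this angle gain with Lemma \ref{lem:inv} on the larger domain yields a genuine contraction per critical step whose strength improves to the universal value as the real bounds become beau. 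Making this bookkeeping precise — tracking which interval's slit plane each Poincar\'e neighborhood lives in, and how the angles transform under passing to a larger slit plane (again via \ref{rtheta}) — is the technical heart, and is exactly the point at which one quotes the structure of \cite[Lemma 2.5]{dFdM2} adapted to the multicritical setting via Theorem \ref{realbounds}.
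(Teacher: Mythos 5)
The paper does not actually prove Lemma~\ref{quasi2}: it is stated as following by ``easy induction'' from Lemma~\ref{quasi1} and the real \emph{a priori} bounds, with a pointer to \cite[Lemma~2.5]{dFdM2}. So your proposal is filling a gap rather than competing with an internal argument. Your overall structure --- decompose the backward orbit into diffeomorphic blocks controlled by Lemma~\ref{lem:inv} and a bounded number of critical steps controlled by Lemma~\ref{quasi1}, with the real bounds (Theorems~\ref{realbounds}, \ref{realbounds2}) supplying the geometry --- is the right one, and the uniform bound on $K_n$ you extract from it is correct.

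However, your mechanism for $K_n\to1$ contains a genuine error of sign that undermines the compensation argument. You assert that $D_\theta(f^j(I_n))$, viewed in the slit plane of a larger interval $\hat J\supset f^j(I_n)$, ``corresponds to a \emph{smaller} angle.'' The monotonicity is the opposite: since $\CC_J\subset\CC_{\hat J}$ and $J\subset\hat J$, for every $z$ one has $d_{\CC_{\hat J}}(z,\hat J)\le d_{\CC_{\hat J}}(z,J)\le d_{\CC_J}(z,J)$, so the hyperbolic radius can only \emph{decrease} and the best $\theta'$ with $D_\theta(J)\subset D_{\theta'}(\hat J)$ satisfies $\theta'\ge\theta$. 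Furthermore, the gain thus obtained is undone to first order when one passes back from the pulled-back $D_{\theta'}(\hat J')$ to a Poincar\'e neighborhood of the smaller target $J'$, because by Theorem~\ref{realbounds2} the commensurability ratios $|\hat J|/|J|$ and $|\hat J'|/|J'|$ are comparable. So enlarging the slit plane around a lossy step does not by itself produce the required net contraction of the angle.

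What actually drives $K_n\to1$ --- and what makes the paper's ``easy induction'' honest --- is that the parameter $R$ in Lemma~\ref{quasi1} is \emph{not} fixed but can be taken $R=R_n\to\infty$. By Theorem~\ref{conj-yoc} the intervals of $\cP_n(f)$ shrink to zero, while $f$ is analytic in a fixed annular neighborhood $U$ of the circle; after affinely normalizing $f^k(I_n)$ to $[-1,1]$, the inverse branch at a lossy step is univalent on $\DD_{R_n}\cap\CC_{[-1,1]}$ with $R_n\gtrsim\dist(f^k(I_n),\partial U)/|f^k(I_n)|\to\infty$. (The critical value $f(c_j)$ lies on the slit rather than in the interval, because $f^{j-i}|_{f^i(I_n)}$ is a diffeomorphism, so the branch is well defined on the full slit disk.) Each of the boundedly many lossy factors $(1-R_n^{-1-\delta})$ then tends to $1$, and so does their product. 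This is exactly the possibility you dismiss when you write that ``the naive estimate only gives $K_n\le(1-R^{-1-\delta})^{-(l+1)}$'' with $R$ fixed. One further caveat: Lemma~\ref{lem:inv} cannot be invoked literally on the non-critical blocks, since $f^{-1}$ is only defined on a bounded domain, not on the entire slit plane; these steps too should be handled via Lemma~\ref{quasi1}, again with $R_n\to\infty$, which keeps the bookkeeping uniform.
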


\subsection{Proof of complex {\it a priori} bounds}
Let $J=[a,b]$, $a<b$.  
For a point $z\in {\overline{\Bbb C}}_J$, the 
{\it angle between $z$ and 
 $J$}, $\ang{z}{J}$
is the least of the angles  between the intervals $[a,z]$, $[ b, z]$  and 
the corresponding rays $(a,-\infty]$, $[b, +\infty)$ of the real line,
    measured in the range $ 0\leq \theta\leq \pi$. In what follows, a beau bound $\eps>0$ on the angle will appear; we will say that
    a point $z_{-i}$ of (\ref{z-orbit}) {\it $\eps$-jumps} if $\ang{z_{-i}}{J_{-i}}>\eps$. We will also say that such points have a {\it good angle}.

The next statement shows that for points with a good angle the normalized distance to the intervals of (\ref{J-orbit}) cannot increase by more than a fixed multiple. It is taken from \cite{Ya1}. However, its proof is somewhat modified, since for a multicritical circle map
the orbit (\ref{J-orbit}) may pass through 
critical values. 

\begin{lem}\label{good angle} We work with the notations of Lemma~\ref{lin}. Fix $\eps_1>0$, $B>0$. Let $J=J_{-i}$, $J'=J_{-k}$ be two intervals of the inverse orbit (\ref{J-orbit}) for $k>i$, and let $z$, $z'$ be the corresponding points of (\ref{z-orbit}).
Assume that
$\ang{z}{J}\geq\epsilon_1$ and $\dist(z,J)\geq B|J|$. Then
$${\dist(z', J')\over |J'|}\leq C{\dist(z, J)\over |J|}$$ 
for some constant $C=C(\epsilon_1,B)$
\end{lem}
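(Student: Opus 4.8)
\textbf{Proof plan for Lemma~\ref{good angle}.}

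The plan is to follow the strategy of \cite{Ya1}, tracking the inverse orbit $z=z_{-i},\ldots,z'=z_{-k}$ step by step and estimating how the normalized distance $\dist(z_{-j},J_{-j})/|J_{-j}|$ evolves. The key point is that a single backward step $f^{-1}$ restricted to a suitable interval is the composition of a real-analytic diffeomorphism with bounded distortion and, possibly, an inverse power law $x\mapsto x^{1/b}$ at a critical value; in either case the push-forward of a Poincar\'e neighborhood $D_\theta(J_{-j})$ controls the position of $z_{-j}$. First I would dispose of the ``distortion only'' steps: if the relevant branch of $f$ over the interval in question is a diffeomorphism, then by Lemma~\ref{lem:inv} (applied to the double-slit planes $\CC_{J_{-j}}\to\CC_{J_{-j-1}}$ after extending the diffeomorphism, using real \textit{a priori} bounds Theorem~\ref{realbounds2} to control the size of the slit plane where it is defined, and Lemma~\ref{quasi2} for the quasi-invariance constant $K_n\to1$) the hyperbolic neighborhoods map into hyperbolic neighborhoods with angle loss by at most a beau factor; translating via (\ref{rtheta}) this means the angle $\ang{z_{-j}}{J_{-j}}$ stays bounded below and the normalized distance changes by a bounded factor.

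Next I would handle the steps where the orbit $J_{-j}$ straddles a critical value of $f$. Here the relevant inverse branch factors through a map of the form $x\mapsto x^{1/b}$ with $b\in 2\ZZ+1$ (the order of the corresponding critical point), conjugated by bounded-distortion diffeomorphisms; by Theorem~\ref{realbounds} only boundedly many (at most $l+1$) such steps occur along the whole orbit. For each such step I would use the hypothesis $\ang{z_{-j}}{J_{-j}}\geq\eps_1$ together with $\dist(z_{-j},J_{-j})\geq B|J_{-j}|$: since the point has a definite good angle and is not too close to $J_{-j}$, its image under the root map stays within a fixed Poincar\'e neighborhood of the image interval (a root map shrinks angles toward $[a,b]$ but by a controlled amount when the starting angle is already bounded away from $0$ and $\pi$), so the normalized distance again changes only by a factor depending on $\eps_1,B,b$. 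Since there are at most $l+1$ critical values and $b\leq\max\cK(f)$, the total contribution of these steps is a bounded factor. Combining the two types of steps (infinitely many diffeomorphic steps each contributing a factor $\to1$ in the tail, finitely many power-law steps each contributing a bounded factor) and using that the product of the diffeomorphic factors converges, I obtain the desired bound with $C=C(\eps_1,B)$.

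The main obstacle, and the place where the argument genuinely departs from \cite{Ya1}, is the bookkeeping of the power-law steps: in the uni-critical case there is essentially one such step, whereas here the inverse orbit may pass through several distinct critical values, and one must ensure that (a) the ``good angle'' and ``not too close'' hypotheses are preserved \emph{into} each such step — i.e.\ that the diffeomorphic steps between consecutive critical-value passages do not destroy the lower bound on the angle — and (b) the root map, while it can decrease the angle, cannot do so catastrophically given the quantitative lower bound $\eps_1$ already in force. Point (a) is where I expect the most care: I would argue that Lemma~\ref{lem:inv}/Lemma~\ref{quasi2} give angle-preservation up to a beau factor, so the angle can only degrade from $\eps$ to some smaller beau $\eps'$, and then re-run the dichotomy with $\eps'$ in place of $\eps_1$; since there are only finitely many critical-value passages, only finitely many such downgrades occur and the final constant remains finite, depending only on $\eps_1$, $B$, and the combinatorial data $\cK(f)$ and the real bounds (which are beau).
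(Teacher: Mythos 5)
Your plan is a step-by-step tracking of the inverse orbit, separating ``diffeomorphic'' steps from ``root-map'' steps and worrying (rightly, in your framework) about how the good-angle hypothesis propagates from one critical-value passage to the next. The paper takes a genuinely different and considerably slicker route that sidesteps the whole propagation issue. Rather than walking the orbit one step at a time, it observes that since the inverse orbit (\ref{J-orbit}) lies inside the $n$-th dynamical partition, the entire composition $f^{k-i}|_{J'}$ has at most $l$ interior critical points (where $l$ is the number of critical points of $f$ other than $0$). Real \emph{a priori} bounds (Theorem~\ref{realbounds2}) then produce a sub-interval $K'\subset J'$, \emph{commensurable} with $J'$, on which $f^{k-i}\colon K'\to K$ is a diffeomorphism. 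The good-angle and distance hypotheses at $z$ are applied once, to the image interval $K$, yielding the smallest closed Poincar\'e neighborhood $\cl D_\theta(K)\ni z$ with $\diam D_\theta(K)\le C(\eps_1,B)\dist(z,K)$; Lemma~\ref{quasi2} then pulls this back in a single shot to a Poincar\'e neighborhood of $K'$, and commensurability of $K'$ with $J'$ converts the result into the stated normalized bound.

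The contrast is worth appreciating. Your step-by-step scheme has to control an angle bound through possibly very many intermediate diffeomorphic steps (the orbit length is of order $q_{n+1}$), and the quasi-invariance constants of Lemma~\ref{quasi1}/\ref{quasi2} are not designed to be multiplied along individual steps of $f$ — they are stated for entire diffeomorphic blocks $f^{j-i}$ between returns to renormalization intervals. Making your ``product of the diffeomorphic factors converges'' assertion rigorous would require re-proving a step-by-step version of those lemmas, and the ``re-run the dichotomy with a smaller $\eps'$'' device would then have to be shown not to compound across the $\le l$ power-law passages into an uncontrolled constant. All of this is bypassed by the paper's choice to extract one diffeomorphic sub-interval $K'$ of beau relative size and apply the Poincar\'e neighborhood/Schwarz machinery exactly once. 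You do identify the correct obstruction (multi-criticality introduces several root-map steps), but the paper's resolution is to avoid touching those steps directly rather than to estimate each one.
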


\begin{pf} 
Suppose, $f$ has $l$ critical points, not counting $0$.
Since the orbit (\ref{J-orbit}) forms a part of the dynamical partition of the circle of level $n$, there are at most $l$ critical points
for the iterate $f^{k-i}|_{J'}$. Hence, there exists a sub-interval $K'\subset J'$ which is commensurable with $J'$ such that
$f^{k-i}:{K'}\mapsto K$ is a diffeomorphism. 
By the same reasoning as in \cite{Ya1},  the smallest closed hyperbolic neighborhood 
$\cl D_{\theta}(K)$ enclosing $z$ satisfies 
$\diam D_{\theta}(K)\leq C(\epsilon,B)\dist (z, K)$.
The claim now follows by Lemma~\ref{quasi2}.
\end{pf}

We now formulate an analogue of Lemma~4.2 of \cite{Ya1}.
\begin{lem}
\label{cb1}
Let $J\equiv J_{-k},J_{-k-q_{m+1}}\equiv J'$ be two consecutive returns of the 
backward orbit (\ref{J-orbit}) to $I_m$, for $m\geq M$, and let $\zeta$ and
$\zeta'$ be the corresponding points of the orbit (\ref{z-orbit}).
Suppose $\zeta\in D_m$, then 
either $\zeta'\in D_m$, or
$\ang{\zeta'}{J'}>\eps$, and $\dist({\zeta'},{J'})<C|I_m|$,
where the quantifiers $\eps$ and $C$ are beau.
\end{lem}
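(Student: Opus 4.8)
The plan is to follow the strategy of Lemma~4.2 of \cite{Ya1}, adapting it to the multicritical setting by exploiting the fact that each return map $f^{q_{m+1}}$ along the orbit (\ref{J-orbit}) factors through only finitely many critical branches. First I would set up the geometry: the interval $J' = J_{-k-q_{m+1}}$ maps to $J = J_{-k}$ under the forward iterate $f^{q_{m+1}}$, and both $J$ and $J'$ are intervals of the dynamical partition $\cP_n$ lying inside $I_m$. The point $\zeta$ lies in the Euclidean disk $D_m = D(H_m)$, where $H_m = [f^{q_{m+1}}(0), f^{q_m - q_{m+1}}(0)]$. By real \emph{a priori} bounds (Theorem~\ref{realbounds2}), $H_m$ is commensurable with $I_m$ and with $J$, so $\zeta$ lies in a hyperbolic neighborhood $D_\theta(\hat J)$ of a slightly enlarged interval $\hat J \supset J$, $\hat J \subset I_{m-1}$ say, with $\theta$ beau-bounded below. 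This is the step that replaces the elementary observation in \cite{Ya1}.

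Next I would take the inverse branch. The iterate $f^{q_{m+1}}: \hat J' \to \hat J$ (for suitably enlarged $\hat J'$) is a composition of a diffeomorphic part and the critical branches at the $\leq l$ critical points other than $0$ that the orbit passes through during this block of length $q_{m+1}$; note that $0$ itself is not in the interior of these intervals since the orbit (\ref{J-orbit}) is part of a dynamical partition. For the diffeomorphic pieces I would invoke Lemma~\ref{quasi2} (with the enlargement to $I_{m-1}$ absorbed into the beau constants) to get quasi-invariance of Poincar\'e neighborhoods. For the finitely many critical power-law pieces $p_j(x) = x^{b_j}$ from the decomposition of Theorem~\ref{realbounds}, I would use the explicit fact that the branch $z \mapsto z^{1/b_j}$ of a real power, suitably normalized on a slit plane, maps $D_\theta(J)$ into $D_{\theta'}(J^{1/b_j})$ with $\theta'$ controlled, together with the bounded-distortion diffeomorphisms $\psi_j$ which again obey Lemma~\ref{lem:inv}/Lemma~\ref{quasi2}-type invariance. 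Composing the finitely many pieces, the pullback of $\zeta$ lands in a hyperbolic neighborhood $D_{\tilde\theta}(\hat J')$ with $\tilde\theta$ beau.

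Now I would run the dichotomy. If the pullback neighborhood $D_{\tilde\theta}(\hat J')$, restricted back to $J'$, still sits inside $D_m$, we are in the first alternative $\zeta' \in D_m$ and there is nothing more to prove. Otherwise $\zeta'$ escapes $D_m$; since $\tilde\theta$ is beau and $\hat J' \supset J'$ with $|\hat J'|/|J'|$ beau, escaping the round disk $D_m$ forces the angle $\ang{\zeta'}{J'}$ to be bounded below by a beau constant $\eps$ (a point close to $J'$ in the double-slit-plane hyperbolic metric and outside $D_m$ must be ``above'' $J'$ at definite angle). The distance bound $\dist(\zeta', J') < C|I_m|$ follows because $\zeta' \in D_{\tilde\theta}(\hat J')$ and $\diam D_{\tilde\theta}(\hat J') \leq C' |\hat J'| \leq C|I_m|$ by the beau commensurability of $\hat J'$ with $I_m$ and the beau lower bound on $\tilde\theta$.

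The main obstacle I anticipate is controlling the critical power-law branches uniformly: the inverse $z^{1/b_j}$ of a power map is not defined on a full double-slit plane, and one must check that the image of the relevant hyperbolic neighborhood stays a definite angle away from the two slits after taking the root and that the beau distortion bounds of the $\psi_j$'s (Theorem~\ref{realbounds}) are enough to glue the $m+1 \leq l+2$ pieces without the angle degenerating to zero. This is exactly where the ``somewhat modified'' proof alluded to before Lemma~\ref{good angle} is needed, and where Lemma~\ref{good angle} itself gets used inductively to guarantee that at each critical value the incoming point already has a good angle, so that the normalized distance does not blow up across the critical branch.
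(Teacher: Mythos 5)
Your approach diverges from the paper's at the very first step and in a way that opens a genuine gap. The paper's proof pulls back the Euclidean disk $D_m$ itself, splitting the return block $f^{q_{m+1}}|_{H_m}$ into the three sub-blocks $f^{q_m-1}$, then the single iterate $f$ through the renormalization critical point $0$, then $f^{q_{m+1}-q_m}$; the auxiliary critical points $c_1,\dots,c_l$ inside the diffeomorphic-type blocks are absorbed by subdividing the interval at those points into commensurable pieces and applying Lemma~\ref{quasi2}. The decisive step is the middle one: because $0$ is an \emph{odd} critical point of order $k\ge 3$ sitting at an \emph{endpoint} of $\hat H_m=[f^{q_{m+1}}(0),0]$, the pull-back $\hat D_m$ acquires a beak with boundary angle $\pi/k<\pi/3$, which forces a two-piece covering $\hat D_m\subset D_{\theta_2}(\cdot)\cup D_{15\pi/24}([\tilde a_1,0])$. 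After the third block this becomes $D'_m\subset D_{\theta_3}([0,a_2])\cup D_{13\pi/24}([a_1,f^{q_m-q_{m+1}}(0)])$, with $\theta_3$ beau; the dichotomy of the lemma is literally the two cases of this union, because the $D_{13\pi/24}$ piece (angle $>\pi/2$) is inside $D_m$ and the $D_{\theta_3}$ piece gives the good angle and the distance bound.

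Your version replaces this union by a single Poincar\'e disk $D_{\tilde\theta}(\hat J')$ and hopes to recover the dichotomy from $\zeta'\notin D_m$. This does not close. First, your size bookkeeping is internally inconsistent: to have $D_m\subset D_\theta(\hat J)$ with $\theta$ beau you must take $\hat J$ commensurable with $H_m\asymp I_m$, not with $J$ (the claim ``$H_m$ is commensurable with $J$'' is simply false, since $J\in\cP_n$ with $n\gg m$ can be much shorter than $I_m$). Consequently the pulled-back $\hat J'$ is commensurable with $I_m$, not a bounded enlargement of $J'$, and the assertion ``$|\hat J'|/|J'|$ beau'' — which is exactly what you lean on to pass from a lower bound on $\ang{\zeta'}{\hat J'}$ to one on $\ang{\zeta'}{J'}$ — fails. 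Second, even granting a beau $\tilde\theta$, being in a single $D_{\tilde\theta}(\hat J')$ with $|\hat J'|\asymp |I_m|$ is much weaker than the paper's covering: the lens is large, and $\zeta'$ may lie close to the real line deep inside $\hat J'$ yet far from $J'$; ruling this out requires precisely the case analysis against $\partial H_m$ that the paper's two-piece covering encodes, and which you have not supplied.

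Finally, the remedy you propose at the end — invoking Lemma~\ref{good angle} ``inductively'' to ensure a good angle at each critical branch — is circular. Lemma~\ref{good angle} \emph{consumes} a good angle as a hypothesis; good angles are \emph{produced} by Lemmas~\ref{cb1} and \ref{cb2}, which is what we are trying to prove. What actually makes the argument go is the distinguished role of the critical point at $0$: after the correct subdivision of the return block it sits at the endpoint of the pulled-back interval, so its odd power law of order $\ge 3$ shoves the near-$0$ portion of the pull-back into a Poincar\'e disk of angle $>\pi/2$. The other critical points, being interior to the intervals being pulled back, only affect the beau constants through the commensurable subdivision and never require a separate power-law-invariance lemma.
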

\begin{pf}
  The proof is a very slight modification of the proof of Lemma~4.2 of \cite{Ya1}, the only difference being that the iterate $f^{q_{m+1}}$ restricted to $H_m$ may have at most $l$ critical points in the interior.
  We refere reader to figure~\ref{fig:bounds1} for an illustration.

  \begin{figure}[ht]
    \caption{\label{fig:bounds1}An illustration to the proof of Lemma \ref{cb1}}
    \includegraphics[width=\textwidth]{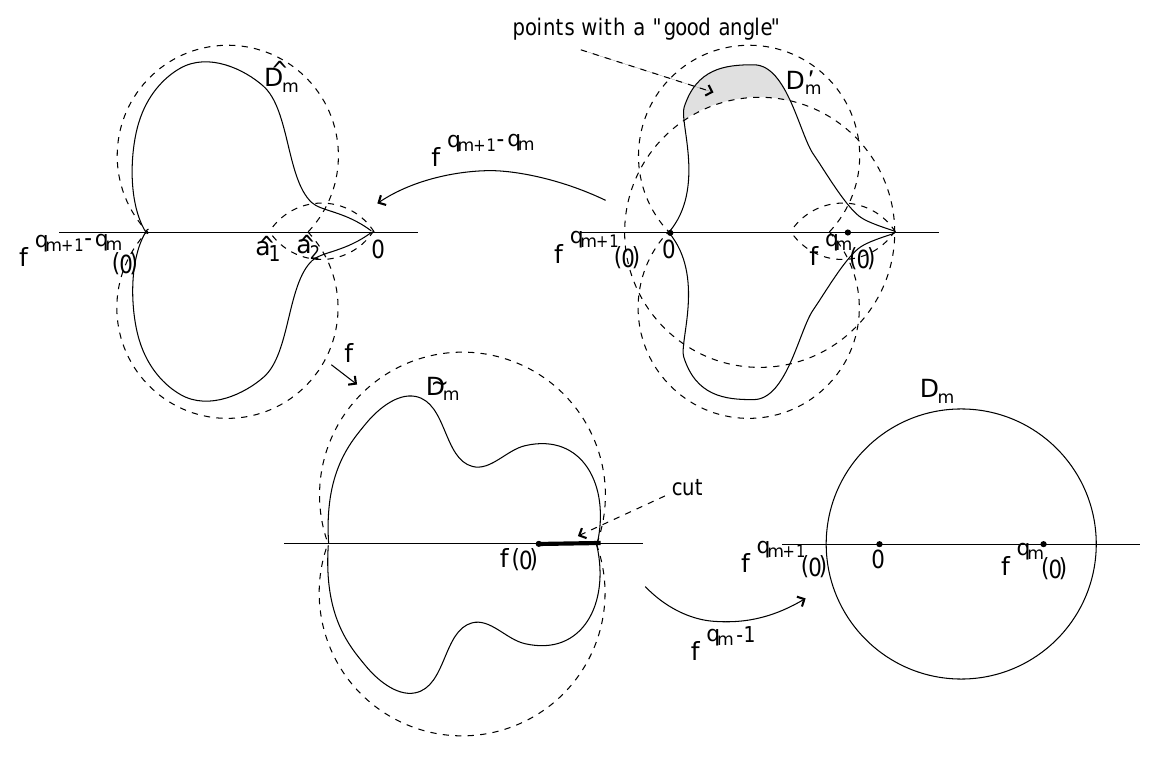}
   \end{figure}
  
  Let $$\tl H_m=f^{-(q_m-1)}(H_m)$$
  and let $\tl D_m$ be the connected component of $f^{-(q_m-1)}(D_m)$ which intersects
  the real line over the interior of $\tl H_m$. Note that the iterate $f^{q_m-1}$
  has at most $l$ critical points in $\tl H_m$ and they divide it into commensurable parts. Hence, by Lemma~\ref{quasi2}, the domain $\tl D_m\subset D_{\theta_1}(\tl H_m)$ where $\theta_1$ is beau. 

  To fix the ideas, suppose that $f^{-q_m+1}(0)<f(0)$. Let $\hat H_m$ denote the interval $[f^{q_{m+1}}(0),0]$ and let $\hat D_m$ be the 
  domain which intersects the real line over the interior of $\hat H_m$ and such that $$f:\hat D_m\to\tl D_m\setminus \RR_{\geq f(0)}$$ is a branched covering (this is a precise way of saying that $\hat D_m$ is a pull-back of $\tl D_m$ by the ``branch'' of $f^{-1}$ defined in $\CC_{[f^{q_{m+1}-q_m+1}(0),f(0)]}$).
  Since $f$ has a critical point of order $k\geq 3$ at $0$, the boundary of $\hat D_m$ forms angles $\pi/k\leq\pi/3$ with the negative real axis. In particular, there exist $\tl a_1$, $\tl a_2$ in $\hat H_m$ which divide it in commensurable parts, so that
  $$\hat D_m\subset D_{\theta_2}([f^{q_{m+1}-q_m}(0),\tl a_2])\cup D_{5\pi/8}([\tl a_1, 0]),$$
  where $\theta_2$ is beau.

  Again using Lemma~\ref{quasi2} for the pull-back $D_m'$ of $\hat D_m$ by the iterate
  $f^{q_{m+1}-q_m}$ along the real line, we see that there exist $a_1$, $a_2$ which divide $[0,f^{q_m-q_{m+1}}(0)]$ into commensurable sub-intervals such that
  $$D'_m\subset D_{\theta_3}([0,a_2])\cup D_{13\pi/24}([a_1,f^{q_{m}-q_{m+1}}(0)]),$$
  where $\theta_3$ is beau, and the proof is completed.

\end{pf}

\noindent
The next statement is a direct analogue of Lemma 4.4 of \cite{Ya1}. The proof from \cite{Ya1} is again adapted {\it mutatis mutandis} to
our situation, in the same way as the preceding proof. We will leave the details to the reader.

\begin{lem}
\label{cb2}
Let $J$ be  the last return of the backward  orbit (\ref{J-orbit})
to the interval $I_m$ before the first return to $I_{m+1}$,
and let $J'$ and $J''$ be the first two returns of (\ref{J-orbit})
 to $I_{m+1}$. Let $\zeta$, $\zeta'$, $\zeta''$ be the corresponding moments
in the backward orbit (\ref{z-orbit}),  $\zeta =f^{q_m}(\zeta')$, 
$\zeta' =f^{q_{m+2}}(\zeta'')$.

Suppose $\zeta\in D_m$. Then either 
$\ang{\zeta''}{I_{m+1}}>\eps$ and $\dist(\zeta'',J'')<
C|I_{m+1}|$, or $\zeta''\in D_{m+1}$; where the constants $C$ and $\eps$ are beau.
\end{lem}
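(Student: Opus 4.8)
\textbf{Proof proposal for Lemma~\ref{cb2}.}
The plan is to follow the structure of the proof of Lemma~\ref{cb1}, tracking the backward orbit through one full passage from $I_m$ to $I_{m+1}$ while controlling the shape of the pulled-back domains by Poincar\'e-neighborhood estimates, and accounting for the at most $l$ extra critical points of $f$ at each stage. The starting hypothesis $\zeta\in D_m$ means $\zeta$ lies in the Euclidean disc $D(H_m)$, and we must show that after passing to $\zeta''$ (via $\zeta=f^{q_m}(\zeta')$ and then $\zeta'=f^{q_{m+2}}(\zeta'')$), the point $\zeta''$ either lands in $D_{m+1}$ or has a good angle together with a bounded normalized distance to $J''$.

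First I would set up the relevant intervals: since $J=J_{-k}$ is the last return to $I_m$ before the first return $J'$ to $I_{m+1}$, the iterate carrying $J'$ to $J$ (respectively $J''$ to $J'$) is, up to commensurable adjustment of endpoints, a diffeomorphism onto its image except for at most $l$ critical points, and these critical points divide the relevant intervals into commensurable pieces by the real {\it a priori} bounds (Theorem~\ref{realbounds2}). This is exactly the point where the argument departs from the bi-critical case of \cite{Ya1}: one invokes Lemma~\ref{quasi2} on each of the $O(l)$ commensurable subintervals separately, obtaining that the pull-back of the disc $D(H_m)\supset \zeta$ is contained in a union of Poincar\'e neighborhoods $D_{\theta}(\cdot)$ with $\theta$ beau, rather than in a single such neighborhood. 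The composition of beau-bounded-distortion factors (Theorem~\ref{realbounds}, decomposing the long iterate into power laws $p_j$ and diffeomorphisms $\psi_j$) keeps $\theta$ beau after finitely many (namely $\le l+1$) such steps.

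Next I would run the dichotomy. Pulling $\zeta$ back by $f^{q_m}$ to $\zeta'$: if at some stage the point escapes the disc $D(H_{m+1})$, then by the geometry of the Poincar\'e neighborhoods (a point outside the Euclidean disc $D(J)$ but inside $D_\theta(J)$ for small $\theta$ automatically has angle close to $\pi$, hence $>\eps$) together with Lemma~\ref{good angle}, the good angle and the bound $\dist(\zeta'',J'')<C|I_{m+1}|$ propagate and we are done; this uses that once the angle is good, Lemma~\ref{good angle} shows the normalized distance can only grow by a fixed multiple, and the normalized distance was bounded because we started inside $D(H_m)$, which is commensurable with $I_m$ and hence with $I_{m+1}$. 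Otherwise $\zeta'\in D(H_{m+1})=D_{m+1}$, and pulling back once more by $f^{q_{m+2}}$ we repeat the same reasoning with the roles shifted; the handling of the critical point of order $k\ge 3$ at $0$, if it is among the branch points encountered, is exactly as in Lemma~\ref{cb1}: the boundary of the pull-back makes angle $\pi/k<\pi/3$ with the real axis there, which is harmless since it only shrinks the angle by a definite beau factor (one passes from $D_{15\pi/24}$-type neighborhoods to $D_{13\pi/24}$-type, say).

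The main obstacle is bookkeeping: ensuring that the finitely many (bounded by $l+1$ or so) applications of Lemma~\ref{quasi2} on commensurable subintervals compose to give a beau bound on the final angle, and that the commensurability constants from Theorem~\ref{realbounds2} do not degrade when we split at critical points. Concretely, one must check that the ``smallest enclosing Poincar\'e neighborhood'' estimate of the form $\diam D_\theta(K)\le C\dist(z,K)$ used in Lemma~\ref{good angle} applies uniformly across all the commensurable pieces, and that the univalent inverse branches of Lemma~\ref{quasi2} are genuinely defined over the whole $D_\theta$ of each piece --- this is where the decomposition of Theorem~\ref{realbounds} into power laws and controlled diffeomorphisms is essential, since the power-law factors are not univalent but their effect on Poincar\'e neighborhoods is still controlled by the Sector Lemma / the angle-$\pi/k$ estimate. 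All the per-step estimates are identical in form to those in \cite{Ya1}; only their count and the branching at $0$ differ, which is why the statement follows {\it mutatis mutandis} as asserted.
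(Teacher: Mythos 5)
Your proposal follows exactly the strategy the paper actually uses: the paper gives no proof of Lemma~\ref{cb2} beyond the assertion that it is obtained from Lemma 4.4 of \cite{Ya1} ``mutatis mutandis, in the same way as the preceding proof'' (i.e.\ as the proof of Lemma~\ref{cb1}), and your sketch is a reasonable fleshing-out of that adaptation. The two ingredients you flag --- splitting intervals at the $\leq l$ extra critical points into commensurable pieces via Theorem~\ref{realbounds2} and applying Lemma~\ref{quasi2} piecewise, and treating the order-$k$ critical point at $0$ by the $\pi/k < \pi/3$ boundary-angle estimate --- are precisely the modifications the paper makes in the proof of Lemma~\ref{cb1}. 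Structuring the argument as a pull-back by $f^{q_m}$ (landing near $D_{m+1}$ or acquiring a good angle) followed by a pull-back by $f^{q_{m+2}}$ (the return time of $I_{m+1}$, i.e.\ essentially Lemma~\ref{cb1} shifted one level) is the right decomposition.

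One piece of your reasoning is stated backwards, though it does not affect the conclusion. You write that a point outside the Euclidean disc $D(J)$ but inside $D_\theta(J)$ for small $\theta$ ``automatically has angle close to $\pi$.'' In fact, small $\theta$ means $D_\theta(J)$ is a \emph{large} hyperbolic neighborhood, and the angle $\ang{z}{J}$ for $z\in D_\theta(J)$ is only bounded \emph{below} by a quantity of order $\theta$ (on $\partial D(J)$ itself it lies in $[\pi/2,3\pi/4]$, and decreases towards $\theta$ as one moves out toward $\partial D_\theta(J)$); it is not close to $\pi$. What you actually need, and what holds, is the lower bound $\ang{z}{J}\gtrsim\theta$ with $\theta$ beau, which suffices to conclude $\ang{z}{J}>\eps$ and, via $\diam D_\theta(K)\leq C(\theta)|K|$, the bound $\dist(\zeta'',J'')<C|I_{m+1}|$. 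With that correction, your proposal is a faithful rendering of the intended mutatis-mutandis argument.
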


\begin{proof}[Proof of Lemma \ref{lin}]
We start with a point $z\in D_M$. Consider the largest $m$ such that
$D_m$ contains $z$. We will carry out induction in $m$.
Let $P_0,\dots,P_{-k}$ be the consecutive returns of the backward orbit
(\ref{J-orbit}) to the interval $I_m$ until the first return to $I_{m+1}$,
and denote by $z=\zeta_0,\dots,\zeta_{-k}=\zeta'$ the corresponding points
of the orbit (\ref{z-orbit}). Note that the intervals $P_{-i}$ are commensurable with $J_0$.
By \lemref{cb1}  $\zeta_{-i}$ either $\eps$-jumps
%%at a moment when $P_{-i}$ is commensurable with $J_0$,
and 
$\dist(\zeta_{-i},P_{-i})\leq C|I_m|$, or 
$\zeta'\in D_m$.

 In the former case we are done by \lemref{good angle}.
In the latter case consider the point $\zeta''$ which corresponds to the
second return of the orbit (\ref{J-orbit}) to $I_{m+1}$. By \lemref{cb2},
either $\ang{\zeta''}{I_{m+1}}>\eps$, and $\dist({\zeta''},{I_{m+1}})\leq
C|I_{m+1}|$, or $\zeta''\in D_{m+1}$.

In the first case we are done again by \lemref{good angle}. In the 
second case, the argument is completed by induction in $m$.

\end{proof}

\section{Proof of Theorem \ref{main1}}
\label{sec:proofmain1}
\subsection{Quasiconformal conjugacies between holomorphic pairs}

Let us first quote the following (see \cite{Herm,dFbounds}):
\begin{thm}
  \label{qsconj}
  Suppose $\zeta_1$, $\zeta_2$ are two commuting pairs with
  $$\rho(\zeta_1)=\rho(\zeta_2)\notin\QQ\text{ and }\cC(\zeta_1)=\cC(\zeta_2).$$
  Then for $n\geq 1$, the renormalizations $\cR^n\zeta_i$ are $K_n$-quasi-symmetrically conjugate, and the constant $K_n$ is beau.
\end{thm}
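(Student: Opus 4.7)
The plan is to build a topological conjugacy from a combinatorial identification of dynamical partitions, then upgrade it to a quasi-symmetric homeomorphism using the real \emph{a priori} bounds. Since $\rho(\zeta_1)=\rho(\zeta_2)$ is irrational and $\cC(\zeta_1)=\cC(\zeta_2)$, passing through the identification \eqref{fphi} yields, for each $N\ge 1$, a canonical dynamics-respecting identification between the sets of endpoints $\overline{\cP_N}$ of the level-$N$ dynamical partitions of the corresponding multicritical circle maps $f_i = f^{\phi_i}$.

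I would realize $h$ as a uniform limit of piecewise-affine approximants. Define $h_N\colon\TT\to\TT$ by sending $\overline{\cP_N}(f_1)$ onto $\overline{\cP_N}(f_2)$ according to this combinatorial identification and extending affinely over each interval of $\cP_N(f_1)$. Theorem~\ref{conj-yoc} forces the mesh of $\cP_N(f_i)$ to zero, so $\{h_N\}$ is uniformly Cauchy; its limit $h$ is a topological conjugacy. For the quasi-symmetric estimate, take three consecutive points $x<y<z$ with $|y-x|=|z-y|$ and choose the smallest $N$ so that $\{x,y,z\}$ lies in a union of a uniformly bounded number of adjacent intervals of $\cP_N(f_1)$. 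By Theorem~\ref{realbounds2}, adjacent intervals of $\cP_N(f_i)$ are commensurable with a beau constant on both sides, so
\[
\frac{|h(z)-h(y)|}{|h(y)-h(x)|}\le K,
\]
with $K$ depending only on the beau constants above. Pulling $h$ back to the commuting pair level via \eqref{fphi} gives a qs-conjugacy between $\zeta_1$ and $\zeta_2$ with the same quality of bound, and running the construction on the renormalizations $\cR^n\zeta_i$ in place of $\zeta_i$ produces the stated estimate.

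The principal obstacle is ensuring that the ``eventually universal'' part of the beau estimate genuinely propagates through $h$. This is where the power-law factorization of Theorem~\ref{realbounds} becomes essential: piecewise-affine interpolation on intervals straddling a critical point can distort arbitrarily unless one first absorbs the power-law pieces $p_j(x)=x^{b_j}$. Since the critical orders, and hence the exponents $b_j$, are matched on the two sides by the combinatorial identification, the remaining diffeomorphic factors $\psi_j$ have beau-bounded distortion, and a standard Koebe-type comparison shows that $K$ becomes universal for $n$ past some threshold, yielding $K_n$ beau.
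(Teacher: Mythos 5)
First, note that the paper does not prove this statement at all: it is quoted from the literature (\cite{Herm,dFbounds}), so your proposal is being measured against the standard arguments of Herman/\'Swi\c{a}tek and de~Faria--de~Melo rather than against anything in the text. Your overall strategy (match the grids $\overline{\cP_N}$ combinatorially, pass to the limit to get a topological conjugacy via Theorem~\ref{conj-yoc}, then use the real bounds to get quasisymmetry) is indeed the right skeleton, but the crucial step is not justified as written. Knowing that $x<y<z$ with $|y-x|=|z-y|$ lie in a bounded number of adjacent intervals of $\cP_N(f_1)$, each pair comparable by Theorem~\ref{realbounds2}, does \emph{not} bound $|h(z)-h(y)|/|h(y)-h(x)|$: the conjugacy $h$ is not affine on those intervals, and the positions of $x,y,z$ inside them are only resolved at deeper levels of the hierarchy. (Also, as stated you want the \emph{largest} such $N$, not the smallest; coarse partitions always cover $[x,z]$ by few intervals.) The standard way to make this work is to show that a homeomorphism carrying one ``fine grid'' to another is quasisymmetric, which requires that each interval of level $N$ be subdivided at level $N+1$ into a \emph{bounded} number of pairwise comparable children. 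For the dynamical partitions $\cP_N\to\cP_{N+1}$ this fails precisely when the continued fraction coefficients are unbounded: an interval $f^j(I_N)$ is cut into roughly $a_{N+2}$ pieces, whose lengths may range over an unbounded factor even though adjacent ones are comparable. Your sketch silently assumes bounded type.

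Closing this gap is the real content of the quoted theorem: one either interpolates between $\cP_N$ and $\cP_{N+1}$ with intermediate ``balanced'' partitions (using the almost-geometric decay of the intervals $f^{j+kq_{N+1}}(I_{N+1})$ inside a level-$N$ interval, which itself is a consequence of the real bounds and of the critical points, not merely of Theorem~\ref{realbounds2}), so that each refinement step has boundedly many comparable children and the fine-grid criterion applies; or one argues directly with cross-ratio distortion inequalities \`a la \'Swi\c{a}tek. Your final paragraph about absorbing the power-law factors $p_j$ of Theorem~\ref{realbounds} gestures in the right direction for why the two sides have matching geometry near the (order-matched) critical points, but it does not substitute for the missing fine-grid/cross-ratio argument, which is also where the ``eventually universal'' (beau) nature of $K_n$ actually comes from.
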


Combining this  with Theorem~\ref{thm:bounds} and using the standard pull-back argument (see e.g. \cite{dF2,Ya1}), we obtain:
\begin{thm}
  \label{qcconj}
Let $\cC$ be a periodic combinatorial type and $L\geq 3$. 
  There exists a constant $K>0$  such that
  the following holds. For every positive real number $r>0$ and every pre-compact family $S\subset\mathcal A_r^L$ of critical commuting pairs, there exists $N=N(r, S)\in\NN$ and $K>1$ such that the following holds. Let $\zeta_1,\zeta_2\in S$ be two commuting pairs with
  $$\rho(\zeta_1)=\rho(\zeta_2)\notin\QQ\text{ and }\cC(\zeta_1)=\cC(\zeta_2)=\cC,$$
  and $\cH_n^i$ is the holomoprhic commuting pair extension of $p\cR^n\zeta_i$ constructed in Theorem~\ref{thm:bounds}, then
  $\cH_n^1$ is $K$-quasiconformally conjugate to $\cH_n^2$.
\end{thm}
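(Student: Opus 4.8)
\textbf{Proof plan for Theorem~\ref{qcconj}.}
The plan is to combine the complex \emph{a priori} bounds of Theorem~\ref{thm:bounds} with the quasi-symmetric rigidity of Theorem~\ref{qsconj} by means of the standard pull-back construction. First I would apply Theorem~\ref{thm:bounds} to the pre-compact family $S\subset\cA_r^L$ and the periodic type $\cC$: this produces a single $\mu>0$ and an $N=N(r,S)$ such that for all $n\geq N$ both pre-renormalizations $p\cR^n\zeta_i$ extend to holomorphic commuting pairs $\cH_n^i:\Omega_n^i\to\Delta_n^i$ whose affine rescalings lie in $\hol(\mu)$. In particular, by Lemma~\ref{bounds compactness} (or just by the definition of $\hol(\mu)$), all the geometric data entering $\cH_n^i$ — the moduli of the annuli $\Delta_n^i\setminus\overline{\Omega_n^i}$, the quasidisk constants of the domains, the relative positions and sizes of $I_\eta$, $I_\xi$, $\eta(0)$, $\xi(0)$ — are controlled by $\mu$ alone, uniformly in $n\geq N$ and in $\zeta_i\in S$. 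This uniform geometry is what makes the quasiconformal constant in the conclusion independent of $n$.

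Next I would set up the boundary data for the pull-back. The real commuting pairs $\cR^n\zeta_1$ and $\cR^n\zeta_2$ have equal irrational rotation number and equal marked combinatorial type (both are $\cR^n$ applied to pairs with the prescribed periodic type, and the Gauss relation \eqref{Gauss} keeps $\rho$ periodic), so Theorem~\ref{qsconj} gives a $K_0$-quasi-symmetric conjugacy $h_n:\cR^n\zeta_1\to\cR^n\zeta_2$ with $K_0$ beau, hence $K_0$ depending only on $\cC$ (and eventually universal). Rescaling back to the pre-renormalization scale, $h_n$ conjugates the underlying pairs of $\cH_n^1$ and $\cH_n^2$ on the real line. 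The map $h_n$ then extends, via the $\beta$-quasiconformal reflection available in a quasidisk, to a quasiconformal homeomorphism of a neighborhood of $\RR$ conjugating the boundary dynamics $\partial\Delta_n^1\to\partial\Delta_n^2$ together with $\partial\Omega_n^1\to\partial\Omega_n^2$, with dilatation bounded in terms of $K_0$ and $\mu$. Concretely: extend $h_n$ from the real segments to $\partial\Delta_n^i$ using that these are $(1/\mu)$-quasidisks, and similarly on the boundaries of $U_n^i\cap\HH$, $V_n^i\cap\HH$, $D_n^i\cap\HH$; the commutation relations among $\eta$, $\xi$, $\nu$ guarantee these choices are compatible where the domains overlap, at the cost of enlarging the dilatation by a $\mu$-controlled factor.

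Now I would run the pull-back argument proper, as in \cite{dF2,Ya1}. Starting from the initial quasiconformal map $H_0$ on $\Delta_n^1\to\Delta_n^2$ that agrees with $h_n$ on $\partial\Delta_n^1\cup(\Omega_n^1\cap\RR)$ and conjugates the dynamics on $\partial\Omega_n^1$, one lifts successively through the branched coverings $\eta$, $\xi$, $\nu$: on each preimage piece, $H_{j+1}$ is defined to be the lift of $H_j$ making the relevant square commute, and is glued to $h_n$ on the real slices. Because $S_{\cH_n^1}$ and $S_{\cH_n^2}$ are eventually expanding in the Poincar\'e/hyperbolic metric (a consequence of the $\mu$-bounds, via the Schwarz-type Lemmas~\ref{lem:inv}, \ref{quasi1}, \ref{quasi2}), the nested domains shrink to the non-escaping set, whose intersection with $\RR$ carries no area and whose complement is exhausted; the sequence $H_j$ converges uniformly to a quasiconformal $H$ with $\|\mu_H\|_\infty$ bounded by the same constant as $\|\mu_{H_0}\|_\infty$, since no lifting step increases the dilatation. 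This $H$ conjugates $\cH_n^1$ to $\cH_n^2$, and its dilatation bound $K$ depends only on $K_0$ and $\mu$, hence only on $\cC$ and $L$ — which is exactly the uniformity claimed.

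\textbf{Main obstacle.} The delicate point is the \emph{uniformity in $n$} of the quasiconformal constant, i.e. checking that neither the quasiconformal extension of $h_n$ off the real line nor the iterated lifting degrades the dilatation as $n\to\infty$. The extension step is safe precisely because Theorem~\ref{thm:bounds} delivers a single $\mu$ for all $n\geq N$ (so the quasidisk constants do not blow up), and the lifting step is safe because lifting through a branched cover is a local homeomorphism off the critical set and does not change Beltrami coefficients; the only real work is the careful bookkeeping of how the finitely many critical points — there may be up to $L$ of them, unlike the mono-critical case of \cite{Ya1} — are matched by $h_n$ and how the domains $U,V,D$ fit together near $0$, so that the glued map $H_0$ is genuinely quasiconformal across the seams. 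Once that gluing is done with $\mu$-controlled dilatation, the convergence and the final bound follow verbatim from \cite{dF2,Ya1}.
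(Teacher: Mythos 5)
Your proposal is correct and follows the same route the paper takes: the paper's own ``proof'' is a one-line reference to the standard pull-back argument of \cite{dF2,Ya1}, combining Theorem~\ref{qsconj} (the quasi-symmetric conjugacy on the real line) with Theorem~\ref{thm:bounds} (the uniform $\mu$-bounds making the domains $\mu$-quasidisks), and that is precisely what you have sketched in detail. Your identification of the key uniformity point — that the dilatation bound is $n$-independent because a single $\mu$ works for all $n\geq N$, so the quasidisk extension constant and the lifting both remain controlled — matches the intended argument.
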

For bi-cubic critical circle maps we can refine this statement further by showing that the quasiconformal conjugacy is actually conformal on the filled Julia set:
\begin{thm}
  \label{hybrid}
  Suppose $f$ is a bi-cubic critical circle map, $\rho(f)\notin\QQ$, and let $\cH$ be a holomorphic pair extension of $\cR^n f$. Then any Beltrami differential $\mu$ which is $\RR$-symmetric, $\cH$-invariant, and with support $\text{supp}(\mu)\subset K(\cH)$ is necessarily trivial:
  $$\mu\underset{a.e.}{=}0.$$
\end{thm}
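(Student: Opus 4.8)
The plan is to use the standard rigidity mechanism for bi-cubic maps: the two cubic critical points give two (asymptotically) geometrically shrinking neighborhoods, and an $\mathcal{H}$-invariant Beltrami differential supported on the filled Julia set must be invariant under the dynamics and hence ``infinitely renormalizable,'' which forces it to vanish by a Lebesgue density argument. First I would set up the dynamical picture on $K(\mathcal{H})$: by the complex \emph{a priori} bounds (Theorem~\ref{thm:bounds}, applied along the deep renormalizations $\cR^{n+j}f$), the filled Julia set of $\mathcal{H}$ carries a nested sequence of dynamically defined pieces — namely, the pull-backs of the range $\Delta$ under $S_\cH$ along the returns to $I_m$ — whose shapes are uniformly bounded quasidisks with beau moduli between consecutive levels. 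Because the map is bi-cubic, away from the two critical points the shadow $S_\cH$ is a local diffeomorphism, and a $K$-quasiconformal pull-back argument as in \cite{dF2, Ya1} produces a sequence of univalent (or bounded-degree-branched) maps on these pieces with uniformly bounded distortion.

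Next I would invoke the invariance of $\mu$. Since $\mu$ is $\cH$-invariant and supported on $K(\cH)$, for each piece $Q$ at level $m$ the restriction $\mu|_Q$ is pulled back, up to the action of the bounded-degree branched covering $\nu = \hat\eta\circ\hat\xi$, from a piece at a bounded lower level. The key analytic input is that these pieces are \emph{nested and shrinking} (by the real \emph{a priori} bounds, Theorems~\ref{realbounds}, \ref{realbounds2}, and Yoccoz's Theorem~\ref{conj-yoc}), and they have uniformly bounded eccentricity; hence they can be used as a Vitali-type family at almost every point of $K(\cH)$. At a Lebesgue density point $x$ of $\{\,|\mu| \ge \delta\,\}$ (for some $\delta > 0$, if $\mu \not\equiv 0$), choose a deep piece $Q_m \ni x$ that is almost entirely covered, in measure, by this set; pull $Q_m$ forward by the appropriate iterate of $S_\cH$ of bounded distortion onto a definite-size piece at a bounded level. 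Bounded distortion and bounded degree imply the pushed-forward density of $\{\,|\mu| \ge \delta\,\}$ is still close to $1$, while the pulled-back Beltrami coefficient still has modulus $\ge \delta$ on that large-density set. But there are only finitely many pieces at each bounded level, their geometry is controlled, and — crucially — the two cubic critical points provide enough ``room'' that a standard argument (e.g.\ comparing with the situation where $\mu$ would descend to a univalent chart and using that $\|\mu\|_\infty < 1$) yields a contradiction with $\delta$-density arbitrarily close to $1$; equivalently, $\mu$ would have to be $S_\cH^*$-invariant on a full-measure conformal structure and hence locally constant along leaves, which on $K(\cH)$ of a bi-cubic pair forces $\mu = 0$ a.e. The cubic (rather than quadratic) order of both critical points enters here exactly as in the bi-quadratic/bi-cubic dichotomy: it guarantees the relevant pull-back pieces around the critical orbit are genuine (branched but bounded-degree) and shrink, so the density argument closes.

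I would organize the write-up as: (i) recall the pull-back construction of the nested pieces $\{Q_m\}$ on $K(\cH)$ from Theorem~\ref{thm:bounds} and note their beau geometry and the bounded-degree/bounded-distortion return maps; (ii) state precisely the $\cH$-invariance of $\mu$ in terms of $S_\cH$ and the commuting relation $\nu = \hat\eta\circ\hat\xi = \hat\xi\circ\hat\eta$; (iii) run the Lebesgue density / distortion argument to conclude $\mu = 0$ a.e.

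\textbf{Main obstacle.} The hard part will be step (iii) near the critical orbit: one must verify that the nested pieces containing the critical points $c_j$ of the deep renormalizations still shrink and remain of bounded shape, so that the Vitali argument applies at \emph{every} almost-every point, including those on the postcritical set. This is where the bi-cubic hypothesis is genuinely used and where the estimates from the complex bounds (Theorem~\ref{thm:power}, the power-law estimate \eqref{key estimate}, and the separation of critical points by an interval of $\cP_N$) must be combined carefully; controlling the possible loss of density under the branched pull-backs of degree $\le L$ at those critical pieces is the delicate point.
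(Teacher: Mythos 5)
Your proposal takes a completely different route from the paper, and it has a genuine gap at the decisive step. The paper does not run a Lebesgue-density/Vitali argument at all. Instead it invokes Zakeri's classification (Theorems~\ref{thm:zakeri1} and \ref{thm:zakeri2}) of degree-$5$ Blaschke products with two cubic critical points on the circle: for each irrational rotation number and each unmarked combinatorial type there is a \emph{unique} such Blaschke product $B$ up to rigid rotation. Given an invariant Beltrami differential $\mu$ on $K(\cH)$, the paper first reduces (via Theorem~\ref{qcconj}) to the case where the holomorphic pair comes from a renormalization of $B$, spreads $\mu$ over all of $J(B)$ by the full dynamics of $B$, completes it by $\sigma_0$ on the Fatou set, and integrates the path $\lambda_t=t\lambda_1$ via Ahlfors--Bers--Boyarski to get a continuous family of normalized qc maps $\psi_t$. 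Each $\psi_t\circ B\circ\psi_t^{-1}$ is again an $S^1$-symmetric Blaschke product with the same rotation number and combinatorics, so by Zakeri's rigidity it equals $B$. Continuity in $t$ then forces $\psi_t$ to fix the backward orbit of $1$ pointwise, hence to be the identity on $J(B)$, and the Bers Sewing Lemma gives $\bar\partial\psi_t=0$ a.e. This is the place where the bi-cubic hypothesis genuinely enters: it is needed for the existence and uniqueness of the Zakeri model, not for any shrinking or density property.

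The gap in your sketch is in the final step (``yields a contradiction with $\delta$-density arbitrarily close to $1$; equivalently, $\mu$ would have to be $S_\cH^*$-invariant on a full-measure conformal structure and hence locally constant along leaves, which ... forces $\mu=0$''). Having a piece of definite size in which $\{|\mu|\ge\delta\}$ has density close to $1$ is not in itself a contradiction: an invariant Beltrami differential supported on a positive-measure set of full local density is exactly what a genuine invariant line field looks like, and this is precisely what the theorem must rule out, not what it can assume to be impossible. The Vitali/bounded-distortion machinery is well suited to proving statements such as ``the Julia set of a holomorphic pair has zero Lebesgue measure'' or ``the dynamics is ergodic,'' but it does not by itself exclude an invariant conformal structure when the support may a priori have positive measure. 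To close that gap you would need an additional rigidity input --- and that input is exactly Zakeri's uniqueness theorem (the analogue of Herman's theorem for unicritical maps), which converts the putative deformation of the conformal structure into a deformation of the Blaschke model that cannot exist. In short: your step (iii) does not go through as stated; the missing ingredient is the Blaschke product model and its parameter rigidity, not a sharper version of the density estimate. Your concern about the postcritical pieces is legitimate but secondary; it is not where the argument breaks.
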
  
Theorem \ref{main1} follows from the two above statements and McMullen's Tower Rigidity argument (see \cite{dFdM2,Ya4}).
The proof of Theorem~\ref{hybrid} will occupy the rest of this section.

\subsection{Zakeri's models for bi-cubic circle maps}
Note that for a bi-cubic critical circle map $f$ the combinatorial type $\cC(f)$ is expressed by a single number.
Consider the family of degree $5$ Blaschke products
\begin{equation}
  \label{eq:blaschke}
B(z)=e^{2\pi it}z^3\left( \frac{z-p}{1-\bar pz}\right)\left(\frac{z-q}{1-\bar q z} \right),\text{ where }|p|>1,\;|q|>1.
\end{equation}
The following is shown in \cite{Zakeri}:
\begin{thm}
  \label{thm:zakeri1}
  Any Blaschke product which is topologically conjugate to an element of the family (\ref{eq:blaschke}) by a conjugacy which
  preserves $0$, $\infty$, and $S^1$  has the form $B\circ R$ where $B$ is an element of the family (\ref{eq:blaschke}) and $R(z)=e^{ir}z,\; r\in\RR$.
  \end{thm}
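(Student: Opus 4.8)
\textbf{Proof proposal for Theorem~\ref{thm:zakeri1}.}

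The plan is to follow the standard rigidity scheme for Blaschke product models with an indifferent fixed point, adapted to the degree $5$ case with a cubic critical point at the origin. Suppose $B_1$ is a Blaschke product topologically conjugate to some $B_0$ of the form (\ref{eq:blaschke}) via a homeomorphism $h$ with $h(0)=0$, $h(\infty)=\infty$, $h(S^1)=S^1$. First I would record the constraints this conjugacy forces on $B_1$: since $B_0$ has degree $5$, so does $B_1$; since $h$ preserves $S^1$, the map $B_1$ preserves $S^1$ and therefore is itself a finite Blaschke product of degree $5$, up to post-composition with the reflection $z\mapsto 1/\bar z$ (which we exclude since $h$ is orientation-compatible on $S^1$ with $B_0$); since $h$ preserves $0$ and $\infty$ and these are the super-attracting/super-repelling points of $B_0$ of local degree $3$, the point $0$ is a critical point of $B_1$ of local multiplicity $3$ fixed by $B_1$, and $\infty$ is its reflected partner. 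Hence $B_1$ has the form $e^{i\alpha}z^3\,\frac{(z-p_1)(z-q_1)}{(1-\bar p_1 z)(1-\bar q_1 z)}$ with $|p_1|,|q_1|>1$: this is almost the desired normal form, \emph{except} that the leading coefficient $e^{i\alpha}$ need not equal the specific $e^{2\pi i t}$ attached to $B_0$, and the parameters $p_1,q_1$ are a priori free.

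Next I would pin down the remaining freedom. The reflection symmetry $B_1(1/\bar z)=1/\overline{B_1(z)}$ is automatic for a degree $5$ Blaschke product with the stated factorisation, so the zeros off $S^1$ come in the pair $\{p_1,q_1\}$ with poles at $\{1/\bar p_1, 1/\bar q_1\}$, consistent with (\ref{eq:blaschke}). The only genuine ambiguity is the unimodular constant. Writing $B_1(z)=e^{i\alpha}\,z^3\,\frac{(z-p_1)(z-q_1)}{(1-\bar p_1 z)(1-\bar q_1 z)}$ and $B_0\circ R(z)=e^{2\pi i t}e^{3ir}\,z^3\,\frac{(z-e^{-ir}p)(z-e^{-ir}q)}{(1-\overline{e^{-ir}p}\,z)(1-\overline{e^{-ir}q}\,z)}$, one sees that the two families coincide: choosing $R(z)=e^{ir}z$ with $e^{-ir}p=p_1$ (or $=q_1$) rescales the zeros, and $r$ can then be used to absorb the discrepancy between $e^{i\alpha}$ and $e^{2\pi i t}e^{3ir}$ — that is, the parametrisation $(t,p,q)\mapsto B_0$ composed with rotations $R$ surjects onto all degree $5$ Blaschke products of the above shape. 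So the content of the theorem is really just the identification of the shape of $B_1$ carried out in the first step, and then a bookkeeping check that the orbit of (\ref{eq:blaschke}) under $z\mapsto e^{ir}z$ exhausts that shape.

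The main obstacle is the first step: justifying that $B_1$ must be a \emph{finite} Blaschke product of degree exactly $5$ with a cubic critical fixed point at $0$. That a circle-preserving rational map of the sphere with $h(S^1)=S^1$ is, up to the anti-holomorphic reflection, a finite Blaschke product is classical (a rational map that maps the unit circle to itself is conjugate by the symmetry $z\mapsto 1/\bar z$ to one of the two standard forms). The degree being preserved under topological conjugacy is standard. The delicate point is the local behaviour at $0$: the conjugacy $h$ is only a homeomorphism, not quasiconformal a priori, so one cannot directly transport the local degree. Here I would argue dynamically: $0$ is a fixed point of $B_0$ whose immediate basin contains a critical point, and the local degree of $B_0$ at $0$ equals $3$; topological conjugacy preserves the fixed point, preserves the property of being a critical point (a point where the map is locally $k$-to-$1$ for $k>1$ is detected topologically as a branch point), and preserves the local degree. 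Since $B_1$ is a Blaschke product with a fixed critical point at $0$ of local degree $3$ on a degree $5$ map, the reflection forces the matching critical point of local degree $3$ at $\infty$, leaving exactly one conjugate pair of zeros $\{p_1,q_1\}$ outside the disk; this is the factorisation (\ref{eq:blaschke}) up to the unimodular constant, and the previous paragraph finishes. I would cite \cite{Zakeri} for the detailed verification of these local normalisations.
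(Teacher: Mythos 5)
The paper does not prove this statement---it is quoted from \cite{Zakeri}---so there is no internal argument to compare your proposal against; what follows is a check of the proposal on its own terms.

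Your skeleton is the right one: topological degree is an invariant, so $B_1$ has degree $5$; $h$ carries the fixed critical points $0,\infty$ of $B_0$ to those of $B_1$ with the same local multiplicities; Blaschke reflection then gives the factorisation $B_1(z)=e^{i\alpha}z^3\frac{(z-p_1)(z-q_1)}{(1-\bar p_1 z)(1-\bar q_1 z)}$. But there is a genuine gap at the step where you place the remaining two zeros outside the disc. You write that reflection ``forces\ldots leaving exactly one conjugate pair of zeros $\{p_1,q_1\}$ outside the disk,'' but reflection symmetry only pairs a zero $a$ with the pole $1/\bar a$; it does not pair two zeros with each other, and it does not decide on which side of $S^1$ either lies. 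Nothing in your argument rules out $|p_1|<1$, which would give a degree-$5$ Blaschke product with cubic critical fixed points at $0$ and $\infty$ but \emph{not} of the form (\ref{eq:blaschke}); and this is really the only nontrivial content of the theorem, since the rest is bookkeeping. The fix uses the normalisation once more: $h(0)=0$ together with $h(S^1)=S^1$ forces $h(\DD)=\DD$, so $h$ carries the two finite poles $1/\bar p,1/\bar q\in\DD$ of $B_0$ to the two finite poles of $B_1$ in $\DD$, whence $|p_1|,|q_1|>1$. (Equivalently: the winding number of $B_1|_{S^1}$ equals $1+2k$ where $k$ is the number of $p_1,q_1$ lying in $\DD$; since $h$ conjugates $B_0|_{S^1}$ to $B_1|_{S^1}$ this winding number must be $1$, forcing $k=0$.)

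Two minor points. The exponent in $B_0\circ R$ should be $e^{5ir}$, not $e^{3ir}$, since each of the two numerator factors contributes an $e^{ir}$ in addition to the $e^{3ir}$ from $z^3$. And the final paragraph is more work than needed: since $t$ is an arbitrary real, the family (\ref{eq:blaschke}) is already closed under precomposition with rotations, so once $B_1$ is in the normal form with $|p_1|,|q_1|>1$ it has the stated shape with $R=\operatorname{Id}$; no absorbing of constants is required.
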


  \begin{thm}\label{thm:zakeri2}
  Let $\rho\notin \QQ$ and $c\in(0,1)$. Then there exists a unique Blaschke product $B(z)$ of type (\ref{eq:blaschke}) such that the following is true:
  \begin{itemize}
  \item $\rho(B|_{S^1})=\rho$;
  \item  $B(z)$ is a bi-cubic critical circle map (one of the critical points is at $e^{2\pi i 0}=1$) and the unmarked combinatorial type of $B$ is given by $(\rho, \{3,3\}, \{c,1-c\})$.
    \end{itemize}
\end{thm}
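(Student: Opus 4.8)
The plan is to follow the standard quasiconformal surgery argument, originally due to Douady--Herman--Świątek for the single-critical case, adapted here to the bi-cubic setting exactly as Zakeri does in \cite{Zakeri}; I will sketch the construction and indicate the uniqueness argument separately.

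\smallskip
\emph{Existence.} First I would fix the target rotation number $\rho\notin\QQ$ and the unmarked combinatorial type $c\in(0,1)$. The idea is to build the Blaschke product in two moves. \emph{Realizing the rotation number:} for a Blaschke product $B$ of the form (\ref{eq:blaschke}) with critical point fixed at $1$, the restriction $B|_{S^1}$ is an analytic bi-cubic circle map whose rotation number depends continuously (in fact monotonically, once the other parameters are held fixed) on the argument parameter $t$; by an intermediate value / monotonicity argument in the $t$-direction, together with a choice of the pole parameters $p,q$ dictated by the combinatorial data, one can hit rotation number exactly $\rho$. \emph{Realizing the combinatorial type:} since a bi-cubic map has its combinatorial type encoded by the single number $c$ (the relative position of the second critical point, as noted right before the statement), and since this number varies continuously with $(p,q)$ and takes all values in $(0,1)$ in the relevant parameter region, one adjusts $(p,q)$ to prescribe $c$ while re-adjusting $t$ to keep $\rho(B|_{S^1})=\rho$. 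The fact that both conditions can be met simultaneously is where one needs the two-parameter family to be "rich enough"; this is precisely the content of Zakeri's analysis of the parameter space of (\ref{eq:blaschke}). Once such a $B$ exists on $S^1$ with the correct rotation number, one knows by Yoccoz (Theorem~\ref{conj2}) that $B|_{S^1}$ is topologically conjugate to $R_\rho$, so the second bullet (bi-cubic with critical point at $1$, unmarked type $c$) holds by construction. No surgery is even needed for this direction if one works directly inside the Blaschke family; the genuinely analytic input is the continuity/monotonicity of $\rho$ and $c$ as functions of the parameters, plus the identification of the attainable range.

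\smallskip
\emph{Uniqueness.} Suppose $B_1$ and $B_2$ are two Blaschke products of type (\ref{eq:blaschke}), each with a critical point at $1$, with $\rho(B_i|_{S^1})=\rho$ and the same unmarked combinatorial type $c$. By Theorem~\ref{conj2} both circle maps are topologically conjugate to $R_\rho$, and since they share the same (unmarked) combinatorial type, $B_1|_{S^1}$ and $B_2|_{S^1}$ are topologically conjugate to each other by a circle homeomorphism $h$ respecting the marked data. Using complex \emph{a priori} bounds (Theorem~\ref{thm:bounds}) together with the quasiconformal conjugacy statement (Theorem~\ref{qcconj}) — or, more directly for bi-cubic maps, Theorem~\ref{hybrid} — one upgrades $h$ to a quasiconformal conjugacy of the global dynamics on a neighborhood of $S^1$, then to a quasiconformal conjugacy of the full Blaschke dynamics on $\CC$ (extending across the disk by the dynamics, which is possible because both maps have the same local degree $3$ at $1$ and degree $5$ overall). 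The resulting conjugacy $\Phi$ carries an invariant Beltrami differential which, by Theorem~\ref{hybrid} applied to the holomorphic-pair extensions (the support being pushed into the filled Julia set by iteration), is trivial; hence $\Phi$ is conformal, i.e. a Möbius transformation. A Möbius transformation fixing $0$, $\infty$, $S^1$ and $1$ is the identity, so $B_1=B_2$. Modulo the precomposition ambiguity $R(z)=e^{ir}z$ from Theorem~\ref{thm:zakeri1} (which is killed by insisting the critical point sits exactly at $e^{2\pi i\cdot 0}=1$), this gives uniqueness.

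\smallskip
\emph{Main obstacle.} The delicate point in the existence half is establishing that the map $(t,p,q)\mapsto(\rho(B|_{S^1}),\,c,\,\text{"critical point at }1\text{"})$ is surjective onto $(\RR\setminus\QQ)\times(0,1)$ after imposing the normalization — i.e., that the two-(complex-)parameter pole family is exactly large enough, with no obstructions, to realize every admissible combinatorial type at every irrational rotation number. This is a global degree / connectivity argument in parameter space rather than a soft continuity statement, and it is the technical heart of Zakeri's paper; in the write-up I would simply cite \cite{Zakeri} for it. The uniqueness half is comparatively routine given the renormalization machinery already assembled: the only thing to be careful about is that Theorem~\ref{hybrid} is stated for holomorphic-pair extensions of renormalizations, so one must first renormalize enough times, apply rigidity there, and then propagate conformality back out along the (conjugated) dynamics to conclude the global conjugacy is Möbius.
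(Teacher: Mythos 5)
The paper does not prove Theorem~\ref{thm:zakeri2}: both it and Theorem~\ref{thm:zakeri1} are imported verbatim from \cite{Zakeri}, so there is no in-paper argument to compare your attempt against. What does merit flagging is that your uniqueness argument, as written, is \emph{circular} when read inside this paper's logical structure. You invoke Theorem~\ref{hybrid} (with Theorem~\ref{qcconj} and the complex bounds behind it) to conclude that the invariant Beltrami differential is trivial and hence the conjugacy is M\"obius. But look at how the paper actually establishes Theorem~\ref{hybrid}: it reduces to Lemma~\ref{lem:deform}, and the proof of Lemma~\ref{lem:deform} explicitly applies Theorems~\ref{thm:zakeri1} \emph{and} \ref{thm:zakeri2} to deduce $G=B$. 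So Theorem~\ref{thm:zakeri2} is an \emph{input} to Theorem~\ref{hybrid}, not a consequence of it, and a proof of Theorem~\ref{thm:zakeri2} cannot lean on it. Zakeri's uniqueness argument has to be, and is, self-contained at the level of the Blaschke family: build a quasiconformal conjugacy of $B_1$ and $B_2$ on $\hat\CC$ by the usual surgery (a quasisymmetric conjugacy on $S^1$ from the Herman--Swiatek theory, extended by the dynamics into the basins and interpolated elsewhere), pull back $\sigma_0$ to obtain a $B_1$-invariant Beltrami differential, and then kill it by a rigidity argument internal to the Blaschke setting --- not by citing a renormalization-rigidity theorem that itself presupposes the statement being proved.

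On the existence side your sketch is plausible, but the step you explicitly defer to \cite{Zakeri} --- surjectivity of the parameter map $(t,p,q)\mapsto(\rho,c)$ onto $(\RR\setminus\QQ)\times(0,1)$ under the normalization --- is essentially the whole content of the theorem, so the sketch does not amount to a proof. One further point to handle with care: imposing a critical point at $1$ removes a real degree of freedom, so the remaining parameter count has to be carried out in the reduced slice of the $(t,p,q)$ family rather than in the full family; the monotonicity/intermediate-value step is genuinely two-dimensional (rotation number and combinatorial type must be hit simultaneously), and this is where the nontrivial global connectivity argument of \cite{Zakeri} enters.
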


  We now prove:
  \begin{lem}
    \label{lem:deform}
    Theorem \ref{hybrid} holds under the assumption that 
    $f=B$ is as in Theorem~\ref{thm:zakeri2}.
  \end{lem}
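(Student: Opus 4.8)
\textbf{Proof proposal for Lemma~\ref{lem:deform}.}

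The plan is to exploit the global dynamics of the Blaschke product $B$ from Theorem~\ref{thm:zakeri2} rather than just the local holomorphic pair $\cH$. The key point is that $B\colon\overline{\DD}\to\overline{\DD}$ is a proper degree-$5$ rational map, holomorphic on a neighbourhood of $\overline{\DD}$, and the restriction $B|_{S^1}$ is a bi-cubic critical circle map with irrational rotation number $\rho$; its renormalizations are represented by the holomorphic pair $\cH$, whose filled Julia set $K(\cH)$ is naturally identified (up to the dynamical rescaling used to pass from $f$ to $\cR^n f$) with a forward-invariant piece of the dynamics of $B$ sitting inside $\overline{\DD}$. The heart of the matter is that, since $\rho\notin\QQ$ and $B$ is a (finite) Blaschke product, $B$ has \emph{no} wandering Fatou components and no attracting, parabolic, or Siegel cycle off the unit circle; concretely, by Fatou's theorem for rational maps together with the fact that all critical points of $B$ lie on $S^1\cup\{0,\infty\}$ (and $0,\infty$ are superattracting fixed points whose basins are $\DD$ and $\widehat\CC\setminus\overline{\DD}$), every point of $\DD$ is attracted to $0$. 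Hence the Julia set of $B$ as a rational map is exactly $S^1$, and the post-critical set relevant to the circle dynamics has measure zero by the real \textit{a priori} bounds (Theorem~\ref{realbounds}) and Yoccoz's theorem (Theorem~\ref{conj-yoc}).

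The argument then runs as follows. First I would promote an $\cH$-invariant, $\RR$-symmetric Beltrami differential $\mu$ with $\mathrm{supp}(\mu)\subset K(\cH)$ to a genuinely $B$-invariant Beltrami differential $\tilde\mu$ on $\widehat\CC$: spread $\mu$ around by the (global) dynamics of $B$, using that the forward orbit under $B$ of the support of $\mu$ stays in a region on which the dynamics is modelled by $\cH$, and set $\tilde\mu=0$ outside the grand orbit. One must check consistency on overlaps and that the sup norm is preserved, i.e. $\|\tilde\mu\|_\infty=\|\mu\|_\infty<1$; this uses the $\cH$-invariance of $\mu$ and the fact that the different branches of $S_\cH$ glue up to the single map $B$ (this is exactly what the commuting-pair structure, and the choice of $\cH$ as an extension of a renormalization of $B$, guarantees). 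Next, by the Measurable Riemann Mapping Theorem solve for a quasiconformal $h_s$ with complex dilatation $s\tilde\mu$, $s\in\DD$, normalized to fix $0$, $1$, $\infty$; then $B_s=h_s\circ B\circ h_s^{-1}$ is again a degree-$5$ rational map, holomorphic, with a superattracting fixed point at $0$ and at $\infty$ and with $h_s(S^1)$ an invariant quasicircle, so (after a further normalization) $B_s$ is again a Blaschke product of the form (\ref{eq:blaschke}) — here is where I invoke Zakeri's rigidity Theorem~\ref{thm:zakeri1} to conclude $B_s = B\circ R_{r(s)}$ for some rotation $R_{r(s)}$, and Theorem~\ref{thm:zakeri2} to see that the combinatorial data $(\rho,c)$ are preserved under the deformation, since a qc conjugacy preserves rotation number and unmarked combinatorial type.

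Finally I would run the standard rigidity/parameter-count argument: the map $s\mapsto B_s$ lands (modulo the rotation factor, which is pinned down by demanding the conjugacy fix the marked critical point) in the two-real-parameter family (\ref{eq:blaschke}) with \emph{fixed} $(\rho,c)$, which by Theorem~\ref{thm:zakeri2} consists of a single point. Hence $B_s=B$ for all $s$, so $h_s$ is a conformal automorphism of $\widehat\CC$ fixing $0,1,\infty$, i.e.\ $h_s=\mathrm{id}$, which forces $s\tilde\mu=0$ a.e.\ and therefore $\tilde\mu=0$, hence $\mu=0$ a.e. The main obstacle I expect is the first step: carefully setting up the globalization $\mu\rightsquigarrow\tilde\mu$ so that it is genuinely $B$-invariant and measurable with the right support, which requires knowing that (i) the support $K(\cH)$, pushed through the dynamical rescaling, sits inside the filled Julia set of $B$ relative to $\DD$ (equivalently, inside $\overline\DD$ in a $B$-forward-invariant way), and (ii) the grand orbit of $\mathrm{supp}(\mu)$ does not ``leak'' — this is where one needs no wandering domains and no rotation domains off $S^1$ for $B$, together with the fact that the critical orbit of $B$ on the circle has zero Lebesgue measure, so that the complement of the grand orbit can safely be assigned $\tilde\mu=0$ without destroying invariance. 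Once the globalized Beltrami differential is in hand, the rest is the by-now routine qc-deformation-plus-rigidity machinery.
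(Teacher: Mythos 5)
Your setup — spread $\mu$ around by the dynamics of $B$ to get a $B$-invariant Beltrami differential supported on $J(B)$, solve the Beltrami equation for a normalized qc map, observe by $S^1$-symmetry that the straightened map is a Blaschke product, and invoke Theorems~\ref{thm:zakeri1} and~\ref{thm:zakeri2} to conclude it equals $B$ — is exactly what the paper does. But your concluding step has a genuine gap, and it is precisely the crux of the argument. From ``$B_s=B$ for all $s$'' you conclude ``$h_s$ is a conformal automorphism of $\widehat\CC$ fixing $0,1,\infty$, hence the identity.'' This does not follow. The map $h_s$ is a normalized quasiconformal \emph{self}-conjugacy of $B$ with complex dilatation $s\tilde\mu$, supported on $J(B)$; a priori $J(B)$ may have positive Lebesgue measure, and rational maps can admit nontrivial measurable invariant line fields on their Julia sets (the Latt\`es examples are the standard cautionary tale). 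That $h_s$ conjugates $B$ to itself is therefore no obstruction to $h_s$ being genuinely quasiconformal. Ruling out such a nontrivial self-conjugacy is exactly what Theorem~\ref{hybrid} asserts, so as written your last step assumes the conclusion.

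The paper supplies the missing idea: run a \emph{one-real-parameter} family $\psi_t$, $t\in[0,1]$, with $\psi_0=\operatorname{Id}$, normalized to fix $0,1,\infty$. Each $\psi_t$ conjugates $B$ to itself, hence permutes the discrete finite set $B^{-m}(1)$ for every $m$ (because $\psi_t(1)=1$ and $B\circ\psi_t=\psi_t\circ B$). By continuity of $t\mapsto\psi_t$ and discreteness of $B^{-m}(1)$, starting from $\psi_0=\operatorname{Id}$ the map $\psi_t$ must fix every point of $B^{-m}(1)$ for all $t$. Since $\bigcup_m B^{-m}(1)$ is dense in $J(B)$, continuity gives $\psi_t|_{J(B)}=\operatorname{Id}$. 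Now the Bers Sewing Lemma (or a density-point argument for qc maps agreeing on a set) shows $\bar\partial\psi_t=0$ a.e.\ on $J(B)$, hence $t\lambda_1=0$ a.e.\ everywhere, proving triviality. This continuity-plus-density argument is the essential content you are missing; Zakeri rigidity alone only gets you to the point where both proofs agree.

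A smaller remark: your aside that the Julia set of $B$ is exactly $S^1$ is not clearly correct and should not be leaned on. The Blaschke products in family (\ref{eq:blaschke}) have $|p|,|q|>1$, so $B$ does \emph{not} map $\DD$ into $\DD$ (it has poles at $1/\bar p, 1/\bar q\in\DD$), and the basin-of-$0$ picture is more delicate than for a classical inner function. If $J(B)$ were $S^1$ the whole lemma would be trivial (a Beltrami differential supported on a measure-zero set vanishes a.e.), which is a further sign that this identification should not be taken for granted. The paper's proof wisely avoids needing any claim about $J(B)$ beyond the density of $\bigcup_m B^{-m}(1)$ in it, and you should do the same.
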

  \begin{proof}
Extend $\mu$ by the dynamics of $B$ to a $B$-invariant, $S^1$-symmetric Beltrami differential supported on 
$$\overline{\cup_{k\geq 0}B^{-k}(K(\cH))}=J(B)$$
(the equality is implied by the standard properties of Julia sets, cf. e.g. \cite{Mil}). Complete it to a $B$-invariant Beltrami diffential $\lambda_1$ in $\hat\CC$ by setting it equal to $\sigma_0$ on the complement of the Julia set of $B$. For $t\in[0,1]$, set
$$\lambda_t=t\lambda_1,$$
so that $\lambda_0=\sigma_0$. By Ahlfors-Bers-Boyarski Theorem, there exists a unique solution $\psi_t:\hat\CC\to\hat\CC$ of the Beltrami equation
$$\psi_t^*\sigma_0=\lambda_t$$
which fixes the points $0$, $1$, and $\infty$ and continuously depends on $t$. By $S^1$-symmetry, the rational map
$$G\equiv \psi_t\circ B\circ\psi_t^{-1}$$
is a Blaschke product. By Theorem~\ref{thm:zakeri1} and Theorem~\ref{thm:zakeri2},
$$G=B.$$
Hence, for every $t$ and $m$, the map $\psi_t$ permutes the discrete set $B^{-m}(1)$. Since $\psi_0=\text{Id}$ and $\psi_t$ continuously depends on $t$, $\psi_t$ fixes the points in each  $B^{-m}(1)$. By the standard properties of Julia sets,
$$\overline{\cup_{m\geq 0}B^{-m}(1)}=J(f).$$
Hence, for all $t\in[0,1]$, 
$$\psi_t|_{J(B)}=\text{Id}.$$
By Bers Sewing Lemma,
$$\bar\partial\psi_t=t\lambda_1=0\text{ a.e.}$$

    \end{proof}
\begin{proof}[Proof of Theorem~\ref{hybrid}]
By Theorem~\ref{thm:zakeri2}, and complex {\it a priori} bounds, there exists a critical circle map $B$ such that the renormalization $\cR^k(B)$ extends to a holomorphic commuting pair $\cG$ which is $K$-quasiconformally conjugate to $\cH$. Denote $\psi$ such a conjugacy. The claim will follow from the above considerations if we can show that $\bar\partial\psi=0$ a.e. on $K(\cH)$. Indeed, assume the contrary. Then 
$(\psi^{-1})^*\sigma_0$ is a non-trivial invariant Beltrami differential on $K(\cG)$, which contradicts Lemma~\ref{lem:deform}.
\end{proof}

\section{Cylinder renormalization of bi-cubic maps}
\label{section: cyl ren}

\subsection{Some functional spaces}
Firstly, let us denote $\pi$ the natural projection $\CC\to\CC/\ZZ$. For an equatorial annulus $U\subset \CC/\ZZ$
let ${\aaa A}_U$ be the space of bounded analytic maps $\phi:U\to \CC/\ZZ$ continuous up to the boundary, such that 
$\phi(\TT)$ is homotopic to $\TT$, equipped with the uniform metric.
We shall turn ${\aaa A}_U$ into a real-symmetric complex Banach manifold as follows. 
Denote $\tl U$ the lift $\pi^{-1}(U)\subset \CC$. The space of functions $\tl\phi:\tl U\to\CC$ which are analytic,
continuous up to the boundary, and $1$-periodic, $\tl\phi(z+1)=\tl\phi(z)$, becomes a Banach space when endowed
with the sup norm. Denote that space $\tl{\aaa A}_U$. For a function $\phi:U\to\CC/\ZZ$ denote
$\check\phi$ an arbitrarily chosen lift $\pi(\check\phi(\pi^{-1}(z)))=\phi$.
Observe that $\phi\in{\aaa A}_U$ if and only if $\tl\phi=\check\phi-\operatorname{Id}\in\tl{\aaa A}_U$.
We use the local homeomorphism between $\tl{\aaa A}_U$ and ${\aaa A}_U$ given by
$$\tl\phi\mapsto \pi\circ(\tl\phi+\operatorname{Id})\circ\pi^{-1}$$ 
to define the atlas on  ${\aaa A}_U$. The coordinate change transformations
are given by $\tl\phi(z)\mapsto \tl\phi(z+n)+m$ for $n,m\in\ZZ$, therefore with this atlas 
${\aaa A}_U$ is a real-symmetric complex Banach manifold.

Let us denote $\mathbf f$ the critical circle map
$$\mathbf f(z)=z-\frac{1}{2\pi}\sin(2\pi z) \mod \ZZ.$$

\noindent
\begin{defn}
  Let $\cU=(U_1,U_2,U_3)$ be an ordered triple of $\RR/\ZZ$-symmetric equatorial annuli. We denote
$\mathbf C_\cU$ the space of triples $\phi_i\in{\aaa A}_{U_i}$, $i=1,2,3$ such that:
  \begin{itemize}
  \item $\mathbf f\circ \phi_1(U_1)\Subset U_2$ and $\mathbf f\circ \phi_2\circ \mathbf f\circ \phi_1(U_1)\Subset U_3$;
    \item $\phi_1(0)=0$;
    \item and each $\phi_i$ is univalent in some neighborhood of the circle.
    \end{itemize}

\end{defn}
Clearly, $\mathbf C_\cU$ is an open subset of ${\aaa A}_{U_1}\times {\aaa A}_{U_2}\times {\aaa A}_{U_3}$, and thus possesses a natural product Banach manifold structure. Note that the composition
\begin{equation}
  \label{cyl-map}
g_{(\phi_i)}\equiv \phi_3\circ\mathbf f\circ \phi_2\circ \mathbf f\circ \phi_1:U_1\to U_3
  \end{equation}
is a bi-cubic circle map in ${\aaa A}_{U_1}$. We note:
\begin{prop}
  \label{projection1}
  The correspondence $\Gamma:\mathbf C_\cU\to {\aaa A}_{U_1}$, given by
  $$(\phi_i)\mapsto g_{(\phi_i)}$$
  is analytic.
  
\end{prop}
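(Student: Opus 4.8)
\textbf{Proof proposal for Proposition~\ref{projection1}.}
The plan is to reduce analyticity of $\Gamma$ to analyticity of the two elementary operations out of which it is built: post-composition by the fixed analytic map $\mathbf f$, and composition of two varying analytic maps, all viewed in the Banach-manifold charts described above. Concretely, $g_{(\phi_i)}$ is the composite $\phi_3\circ\mathbf f\circ\phi_2\circ\mathbf f\circ\phi_1$, so $\Gamma$ factors as a finite succession of partial-composition maps, and it suffices to show each such map is analytic between the relevant Banach manifolds (Banach analyticity is preserved under composition, and ``analytic'' for a map into a Banach manifold means analytic in each coordinate chart).

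First I would record the local picture: working in the periodic lifts $\tl{\aaa A}_{U_i}$ introduced above, a point of $\mathbf C_\cU$ is represented by a triple $(\tl\phi_1,\tl\phi_2,\tl\phi_3)$ of $1$-periodic bounded analytic functions, and the condition $\mathbf f\circ\phi_1(U_1)\Subset U_2$ etc.\ guarantees that all the compositions in (\ref{cyl-map}) stay inside domains strictly larger than the targets, so there is a uniform gap $\delta>0$ between the image of each intermediate map and the boundary of the next domain; this is exactly the openness already noted. On such an open set the composition operator is well-defined into $\tl{\aaa A}_{U_1}$.

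Next comes the analytic dependence, which I would establish by the two standard lemmas. (i) \emph{Right composition is analytic:} for fixed analytic $F$ (here $F=\mathbf f$ or $F=\operatorname{Id}$ combined with the varying outer map) the map $\psi\mapsto F\circ\psi$, defined on the open set of $\psi$ whose image lies at distance $\ge\delta$ from $\partial(\mathrm{dom}\,F)$, is analytic: it is in fact given locally by a convergent Taylor series, since $F\circ(\psi+h)=\sum_{k\ge0}\frac{1}{k!}F^{(k)}(\psi)\,h^k$ converges in sup norm for $\|h\|<\delta$ by the Cauchy estimates $\|F^{(k)}(\psi)\|\le k!\,\|F\|_{\delta}\,\delta^{-k}$. (ii) \emph{Left composition by a varying map is analytic:} the map $(\chi,\psi)\mapsto \chi\circ\psi$ is jointly analytic on the open set where $\psi(\mathrm{dom}\,\psi)\Subset \mathrm{dom}\,\chi$; analyticity in $\psi$ is case (i) applied to $\chi$, analyticity in $\chi$ is linear hence trivial, and joint analyticity follows from separate analyticity plus local boundedness (Hartogs' theorem for Banach spaces, or a direct bilinear-remainder estimate). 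Chaining: $\Gamma$ is the composite
$$(\phi_1,\phi_2,\phi_3)\ \mapsto\ (\mathbf f\circ\phi_1,\phi_2,\phi_3)\ \mapsto\ (\phi_2\circ(\mathbf f\circ\phi_1),\phi_3)\ \mapsto\ \mathbf f\circ(\phi_2\circ\mathbf f\circ\phi_1)\ \mapsto\ \phi_3\circ(\mathbf f\circ\phi_2\circ\mathbf f\circ\phi_1),$$
each arrow being analytic by (i) or (ii) on the relevant open set, and one checks that the output is $1$-periodic, bounded, and has image $\Subset U_3$, hence lies in $\tl{\aaa A}_{U_1}$; projecting back to ${\aaa A}_{U_1}$ and observing that the coordinate changes $\tl\phi(z)\mapsto\tl\phi(z+n)+m$ are affine (hence analytic) completes the argument.

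The only genuine point requiring care — the ``main obstacle'' — is the \emph{loss of domain} under composition: $\chi\circ\psi$ is a priori only defined where $\psi$ maps into $\mathrm{dom}\,\chi$, so one must verify that the $\Subset$ conditions in the definition of $\mathbf C_\cU$ propagate through all four compositions with a uniform gap, and that this gap is locally uniform in $(\phi_i)$ so that the Taylor expansions in (i) converge on a fixed-radius ball. This is a compactness/continuity bookkeeping matter: $\mathbf f$ is fixed and smooth, the domains $U_i$ are fixed, and $\Subset$ is an open condition, so on a small enough neighborhood of any base point the gap can be taken constant; with that in hand everything reduces to the two composition lemmas above, which are routine.
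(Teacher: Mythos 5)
The paper gives no proof of Proposition~\ref{projection1}; the author evidently treats it as a standard Banach-space analyticity lemma. Your argument is the standard one and it is correct: you factor $\Gamma$ into elementary composition steps, prove analyticity of post-composition by a fixed analytic map via the Cauchy-estimate/Taylor-series argument, handle the varying outer map by separate analyticity (linear in the outer map, analytic in the inner by the fixed-map lemma) together with local boundedness and Hartogs for Banach spaces, and then chain. The one place worth tightening the exposition is the chart computation: in the lift coordinate $\tl\phi=\check\phi-\operatorname{Id}$ the composition $\chi\circ\psi$ reads $\tl\psi(z)+\tl\chi\bigl(z+\tl\psi(z)\bigr)$, so the operator is literally ``linear term plus right-composition by the $1$-periodic function $\tl\chi$ evaluated at $\operatorname{Id}+\tl\psi$,'' and this is exactly the form to which your lemmas (i) and (ii) apply; spelling that out removes any ambiguity about what ``composition in the chart'' means. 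Also, the parenthetical ``$F=\operatorname{Id}$ combined with the varying outer map'' in step (i) is a bit misleading — the varying outer map is handled entirely in step (ii), not in (i) — but this does not affect the correctness of the chain. Finally, your observation that the $\Subset$ gaps in the definition of $\mathbf C_\cU$ provide the uniform $\delta$ needed for the Cauchy estimates, and persist on a neighborhood because $\Subset$ is open, is precisely the bookkeeping that makes the local Taylor expansions converge on a fixed ball; this is the genuine content, and you have identified it correctly.
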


\noindent
\begin{defn}
  \label{cylren}
Given an element $\mathbf v\in \mathbf C_\cU$ we say that 
it is {\it cylinder renormalizable} with period $k$ if the following holds.
Denote $f=\Gamma(\mathbf v)$. Then, 
\begin{itemize}
\item there exists $m\in \NN$ such that the rotation number $\rho(f)$ has at least $m+1$ digits in its continued fraction expansion, and
  $k=q_m$;
\item there exist repelling periodic points $p_1$, $p_2$ of $f$ in $U_1$ with periods $k$
and a simple arc $l$ connecting them such that $f^k(l)$ is a simple arc, and  $f^k(l)\cap l=\{p_1,p_2\}$;
\item the iterate $f^k$  is defined and univalent in the domain $C_f$  bounded by $l$ and $f^k(l)$,
the corresponding inverse branch $f^{-k}|_{f^k(C_f)}$ univalently extends to $C_f$;
and the quotient of $\overline{C_f\cup f^k(C_f)}\sm\{p_1,p_2\}$ by the action of $f^k$ is a 
Riemann surface conformally isomorphic to the cylinder $\CC/\ZZ$ 
(we will  call a domain $C_f$ with these properties
a {\it fundamental crescent of $f^k$});
\item for a point $z\in \bar C_f$ with $\{f^j(z)\}_{j\in\NN}\cap \bar C_f\ne \emptyset$,
set $R_{C_f}(z)=f^{n(z)}(z)$ where $n(z)\in\NN$ is the smallest value for which $f^{n(z)}(z)\in\bar C_f$.
We further require that 
there exists a point $c$ in the domain of $R_{C_f}$ such that $f^m(c)=0$ for some $m<n(c)$.
\end{itemize}

Denote $\hat f$ the projection of $R_{C_f}$ to $\CC/\ZZ$ with $c\mapsto 0$. Evidently, it is a bi-cubic circle map.
We will say that $\hat f$ is a {\it cylinder renormalization} of $f$
with period $k$.
\end{defn}

\noindent
The relation of the cylinder renormalization procedure to critical circle maps is easy to see:
\begin{prop}[\cite{Ya3}]
  \label{prop cylren1}
Suppose $f$ is a critical circle map with rotation number $\rho(f)\in \RR\setminus\QQ$.
Assume that it is cylinder renormalizable with period $q_n$. Then the corresponding renormalization
$\hat f$ is also a critical circle map with rotation number $G^n (\rho(f))$. Also, 
$\cR\hat f$ (in the sense of pairs) is analytically conjugate to $\cR^{n+1}f$.
\end{prop}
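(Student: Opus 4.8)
The plan is to verify the three claimed properties of the cylinder renormalization $\hat f$ directly from Definition~\ref{cylren}, reducing each to the corresponding property of the classical (commuting-pair) renormalization $\cR^{n+1}f$ via an explicit identification of the fundamental crescent construction with the renormalization microscope. The key conceptual point is that a fundamental crescent $C_f$ of $f^{q_n}$ together with its return map $R_{C_f}$ is, up to analytic conjugacy, nothing but the holomorphic data carried by the iterates $f^{q_n}$ and $f^{q_{n+1}}$ on neighborhoods of $I_n$ and $I_{n+1}$: the quotient cylinder $\overline{C_f\cup f^{q_n}(C_f)}\sm\{p_1,p_2\}$ by $f^{q_n}$ is the natural ``linearizing'' cylinder for the $n$-th level of the dynamical partition, and the first-return map $R_{C_f}$ projects to precisely the pair $(f^{q_{n+1}}|I_n, f^{q_n}|I_{n+1})$ after reading off the circle as $I_n\cup I_{n+1}$ with the usual gluing. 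This last identification is the content that makes Proposition~\ref{prop cylren1} true, and I would isolate it as a lemma.

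\smallskip

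First I would fix a fundamental crescent $C_f$ witnessing cylinder renormalizability with period $q_n$, and analyze the first-return map $R_{C_f}\colon \operatorname{dom}(R_{C_f})\to \overline{C_f}$. The return times $n(z)$ take finitely many values (here exactly the combinatorially forced ones), and on each piece of the partition of $\operatorname{dom}(R_{C_f})$ by return time, $R_{C_f}$ is an iterate of $f$ of the form $f^{q_{n+1}}$ or $f^{q_n}$ (this is the standard ``first return to $I_n$'' dichotomy for a rigid-rotation-like combinatorics). Second, pushing forward by the uniformization $\varpi\colon \overline{C_f\cup f^{q_n}(C_f)}\sm\{p_1,p_2\}/f^{q_n}\to\CC/\ZZ$ normalized by $c\mapsto 0$, the map $R_{C_f}$ becomes a circle endomorphism $\hat f$; since $f$ has precisely the two cubic critical points and one of their preimages is the point $c$ in $\operatorname{dom}(R_{C_f})$ with $f^m(c)=0$, the map $\hat f$ is an analytic bi-cubic circle homeomorphism. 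Third, I would compute $\rho(\hat f)$: the crescent quotient realizes the circle $I_n\cup I_{n+1}$ and $\hat f$ realizes $F_\zeta$ for the commuting pair $\zeta=(f^{q_{n+1}}|I_n, f^{q_n}|I_{n+1})$, whose rotation number by Proposition~\ref{rotation number} and the iterated-Gauss-map formula \eqref{Gauss} (applied $n$ times, cf.\ \eqref{real2}) equals $G^n(\rho(f))$. Fourth, for the analytic-conjugacy statement: $\cR\hat f$ in the sense of pairs is obtained from $\hat f$ by one more step of the microscope, i.e.\ by restricting $\hat f$ and an appropriate iterate to the first two intervals $\hat I_1, \hat I_2$ of its own dynamical partition; unwinding through $\varpi$, these are carried analytically to neighborhoods of $I_{n+1}$ and $I_{n+2}$ on which $f^{q_{n+2}}$ and $f^{q_{n+1}}$ act, which is exactly the (non-rescaled) pair underlying $\cR^{n+1}f$. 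The conjugating map is $\varpi$ (a conformal isomorphism), post-composed with the affine rescalings that are built into the definition of $\cR$; since rescalings are affine, the conjugacy between $\cR\hat f$ and $\cR^{n+1}f$ is analytic (indeed affine on the nose once one matches normalizations).

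\smallskip

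\emph{The main obstacle} will be the third and fourth steps done carefully: making the passage ``crescent $\to$ commuting pair'' fully rigorous requires checking that the uniformization $\varpi$ sends the arcs $l$ and $f^{q_n}(l)$ to a common vertical segment, that the gluing of $\eta(0)$ to $\xi\circ\eta(0)$ in the definition of $F_\zeta$ matches the identification of $C_f$'s boundary arcs under $f^{q_n}$, and that the point $c$ and its forward orbit to $0$ make $\varpi(c)$ land where the renormalization chart expects the critical point — i.e.\ that the \emph{marked} combinatorics line up, not just the rotation number. Much of this is routine once the right picture is drawn, and the cleanest route is to observe that both $\cR\hat f$ and $\cR^{n+1}f$ are analytic critical circle maps with the same irrational rotation number $G^{n+1}(\rho(f))$ and the same marked combinatorial type (by construction, since $\hat f$'s dynamical partition pulls back under $\varpi$ to a sub-partition of $f$'s level-$n$ partition), and then invoke Theorem~\ref{conj2} together with the rigidity of the renormalization chart to upgrade the resulting quasisymmetric conjugacy to the analytic one on the relevant interval. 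In fact I would most likely present the proof in exactly this order — construct $\varpi$, read off $\rho(\hat f)=G^n(\rho f)$ and the bi-cubic/combinatorial data, and then deduce the conjugacy of the once-more-renormalized pairs — citing \cite{Ya3} for the parts of this dictionary that were established there and only supplying the multicritical bookkeeping that is new here.
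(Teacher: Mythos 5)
The paper does not prove this proposition --- it is cited from \cite{Ya3}, where the cylinder renormalization construction originates --- so there is no in-paper argument to compare against. Your outline, however, captures the correct argument: the quotient of the crescent $\overline{C_f\cup f^{q_n}(C_f)}\setminus\{p_1,p_2\}$ by $f^{q_n}$ produces a copy of $\CC/\ZZ$, the first-return map $R_{C_f}$ projects to a circle map that is, via the uniformizing coordinate $\varpi$, analytically identified with $F_\zeta$ for the pre-renormalized pair $\zeta=(f^{q_{n+1}}|_{I_n},\,f^{q_n}|_{I_{n+1}})$ (whose circle is glued by $\xi=f^{q_n}$, exactly matching the crescent identification by $f^{q_n}$), and the rotation number then follows from Proposition~\ref{rotation number} and iterating \eqref{Gauss}. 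The analytic conjugacy between $\cR\hat f$ and $\cR^{n+1}f$ is furnished directly by $\varpi$ composed with the affine rescalings built into $\cR$.

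One genuine misstep: in your ``main obstacle'' paragraph you suggest, as the cleanest route to the last claim, matching rotation numbers and combinatorial types and then invoking Theorem~\ref{conj2} together with ``rigidity of the renormalization chart'' to upgrade a quasisymmetric conjugacy to an analytic one. This does not work. Theorem~\ref{conj2} yields only a topological conjugacy, and the kind of rigidity that would upgrade same-rotation-number-and-combinatorics to an \emph{analytic} conjugacy is precisely what the renormalization hyperbolicity program is trying to establish --- using it here would be circular, and in general such rigidity fails (two analytic critical circle maps with the same irrational rotation number need not be analytically conjugate). Fortunately you do not need this route: the conjugacy is explicit. It is $\varpi$, a conformal isomorphism of Riemann surfaces, and remains analytic after post-composing with the affine rescalings in the definition of $\cR$. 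Drop the fallback and carry out the bookkeeping in your ``fourth'' step carefully (that $\varpi$ sends $\hat I_1,\hat I_2$ to the level-$(n+1)$ intervals of $f$, and that the normalization $c\mapsto 0$ aligns the marked critical points); that is a complete proof.
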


\noindent
The next property of  cylinder renormalization is the following:
\begin{prop}[\cite{Ya3}]
\label{ren open set}
Let $f=\Gamma(\mathbf v)$ be renormalizable with period $k=q_n$, with a cylinder renormalization $\hat f$ corresponding to the fundamental 
crescent $C_f$, and let $W$ be any equatorial annulus compactly contained in the domain $U_1$ of $f$.
Then there is an open neighborhood $G$ of $\mathbf v$ such that every $\mathbf u\in G$ the map
$g\equiv\Gamma(\mathbf u)$ is 
renormalizable, with a fundamental crescent $C_g\subset U$ which depends continuously on $\mathbf u$ in the
Hausdorff sense. Moreover, there exists a holomorphic motion 
$\chi_{\mathbf u}:\partial C_f\mapsto \partial C_g$ over $G$, such that 
$\chi_g(f(z))=g(\chi_g(z))$.
And finally, the renormalization $\hat g$ is contained in ${\aaa A}_W$.
\end{prop}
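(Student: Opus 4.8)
The statement (Proposition~\ref{ren open set}) asserts that cylinder renormalizability with period $k=q_n$ is an open condition in $\mathbf C_\cU$, that the fundamental crescent moves continuously, that $\partial C_f$ moves holomorphically and equivariantly, and that the renormalized map lands in ${\aaa A}_W$ for a prescribed inner annulus $W$. I would organize the proof around the \emph{persistence of the combinatorial data} that enters the construction of a fundamental crescent: the two repelling periodic points $p_1,p_2$ of period $k$, the arc $l$ connecting them, the disjointness $f^k(l)\cap l=\{p_1,p_2\}$, and the univalence of $f^{-k}$ on $f^k(C_f)$.

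\medskip
\noindent\textbf{Step 1: persistence of the periodic points and their holomorphic motion.} Since $p_1,p_2$ are repelling (hence hyperbolic) fixed points of the analytic map $f^k$, and $\Gamma$ is analytic (Proposition~\ref{projection1}), the implicit function theorem in the Banach manifold $\mathbf C_\cU$ produces, for $\bar u$ near $\bar v$, unique repelling periodic points $p_1(\bar u),p_2(\bar u)$ of $g=\Gamma(\bar u)$ of period $k$, depending analytically on $\bar u$; this is the germ of the holomorphic motion $\chi_{\bar u}$ restricted to $\{p_1,p_2\}$.

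\medskip
\noindent\textbf{Step 2: extending the motion to $\partial C_f$.} I would build the holomorphic motion of the whole boundary $\partial C_f$ by expressing $\partial C_f$ as the union of the arc $l$, the arc $f^k(l)$, and the two corner points. Because $f^{-k}$ is univalent on a neighborhood of $\overline{f^k(C_f)}$ for $\bar v$, this univalent branch persists under perturbation (no critical values enter, by openness of univalence and compactness of the relevant closures, using that the compositions $\mathbf f\circ\phi_i$ vary continuously in the uniform metric). One then defines $\chi_{\bar u}$ on $l$ by transporting $l$ via the persistent linearizing/holonomy structure at $p_1,p_2$ (or simply by pulling $l$ along with the continuously-varying inverse branch anchored at the moving endpoints), and sets $\chi_{\bar u}(f(z))=g(\chi_{\bar u}(z))$ on $f^k(l)$ — this is exactly the asserted equivariance and forces $f^k(l)\cap l=\{p_1,p_2\}$ to persist for $\bar u$ close enough. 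The $\lambda$-lemma upgrades continuity of this motion in $\bar u$ to a genuine holomorphic motion over a (possibly smaller) neighborhood $G$.

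\medskip
\noindent\textbf{Step 3: the crescent is a fundamental domain and $\hat g\in{\aaa A}_W$.} The region $C_g$ bounded by $l(\bar u)$ and $f^k(l(\bar u))$ depends continuously on $\bar u$ in the Hausdorff sense because its boundary does; the quotient $\overline{C_g\cup g^k(C_g)}\sm\{p_1,p_2\}$ is conformally a cylinder because this property is invariant under the holomorphic motion (the uniformizing coordinate varies holomorphically). Finally, to get $\hat g\in{\aaa A}_W$: the renormalized map $\hat g$ is the projection of the first-return map $R_{C_g}$ to $\CC/\ZZ$; since $C_f$ is compactly contained in $U_1$ and $W\Subset U_1$ is prescribed, one chooses $G$ small enough that the uniformized crescent $\hat g$ is defined on an annulus containing (a conformal copy of) $W$ — this uses that the modulus of the cylinder obtained from $C_g$ is bounded below uniformly near $\bar v$, again by continuity of the uniformization under the motion, together with the fact that $R_{C_f}$ is a composition of finitely many branches of $g$ all of which persist. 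The normalization $c\mapsto 0$ persists because the point $c$ with $f^m(c)=0$ moves continuously and the condition $m<n(c)$ is open.

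\medskip
\noindent\textbf{Main obstacle.} The delicate point is \emph{not} the persistence of the periodic points (routine IFT) but controlling the first-return map $R_{C_g}$ uniformly: the return time $n(z)$ is locally constant but may be large, and one must ensure that every branch of $g$ composing $R_{C_g}$ stays univalent on the relevant domain and that no critical point of $g$ other than the prescribed one interferes, for \emph{all} $\bar u\in G$ simultaneously. This is handled by compactness of $\overline{C_f}$ and of the orbit segments involved, plus the uniform (compact-open) convergence built into the metric on ${\aaa A}_{U_i}$, but it is where the real work lies; it is also the place where one must shrink $G$ a final time to guarantee $\hat g\in{\aaa A}_W$ rather than merely $\hat g\in{\aaa A}_{W'}$ for some smaller $W'$.
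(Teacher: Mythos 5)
The paper does not prove this proposition; it is quoted from \cite{Ya3} (Yampolsky, \emph{Hyperbolicity of renormalization of critical circle maps}) and imported without proof, so there is no in-text argument to compare against. Your outline matches what the proof in that source and its descendants do, with two spots that need repair.

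First, the $\lambda$-lemma does not ``upgrade continuity of this motion in $\bar u$ to a genuine holomorphic motion'' --- the $\lambda$-lemma (and Bers--Royden, which the paper uses later in Proposition~\ref{analytic dependence}) \emph{extend} a motion that is already holomorphic from a smaller set to a larger one; they cannot manufacture holomorphy in the parameter out of mere continuity. The correct way to get the holomorphic motion of $\partial C_f$ is to \emph{build} it holomorphically: by the implicit function theorem $p_1(\bar u),p_2(\bar u)$ are analytic in $\bar u$; declare $\chi_{\bar u}$ on the free arc $l$ to be any explicitly holomorphic interpolation matching those moving endpoints (an affine rescaling of $l$, say, or --- your other suggestion --- the inverse branch pulled along the moving data); then define $\chi_{\bar u}$ on $g^k(l)$ by the equivariance $\chi_{\bar u}(f^k(z))=g^k(\chi_{\bar u}(z))$, which is automatically holomorphic in $\bar u$ because $g^k=\Gamma(\bar u)^k$ depends analytically on $\bar u$ (Proposition~\ref{projection1}). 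Injectivity of the resulting motion --- and the persistence of $g^k(l)\cap l=\{p_1,p_2\}$ --- then follows by compactness and continuity for $\bar u$ in a small enough $G$. The $\lambda$-lemma/Bers--Royden enters only afterwards, to extend this already-holomorphic boundary motion into the interior of the crescent, which is exactly how the paper proceeds in the proof of Proposition~\ref{analytic dependence}.

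Second, ``the quotient is conformally a cylinder because this property is invariant under the holomorphic motion'' is not the right justification --- conformal type is a rigid invariant, not something one inherits from a motion. What actually makes the quotient $\overline{C_g\cup g^k(C_g)}\setminus\{p_1(\bar u),p_2(\bar u)\}/g^k$ biholomorphic to $\CC/\ZZ$ is that $p_1(\bar u),p_2(\bar u)$ remain \emph{repelling}: near a repelling fixed point the quotient has an end of infinite modulus, and a doubly-infinite-modulus annulus is conformally $\CC/\ZZ$. With those two corrections your Steps~1--3 and your identification of the real obstacle (uniform control of all the branches composing $R_{C_g}$, so that $\hat g$ lands in ${\aaa A}_W$ and not just some smaller ${\aaa A}_{W'}$) are on target.
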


To define cylinder renormalization as an operator acting on triples in $\mathbf C_\cU$, let us first note the following simple statement:
\begin{prop}
  \label{uniquedecomp}
  Set $\hat\cU=(\hat U_1,\hat U_2,\hat U_3)$, and suppose, that there exists a triple $\hat {\mathbf v}\in\mathbf C_{\hat \cU}$ such that
  $\Gamma(\hat{\mathbf v})=\hat f$ as above. Then such an element $\hat{\mathbf v}$ is unique.
\end{prop}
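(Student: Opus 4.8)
The plan is to prove the stronger assertion that $\Gamma$ is injective on $\mathbf C_{\hat\cU}$; uniqueness of $\hat{\mathbf v}$ then follows at once. So let $\hat{\mathbf v}=(\phi_1,\phi_2,\phi_3)$ and $\hat{\mathbf v}'=(\psi_1,\psi_2,\psi_3)$ in $\mathbf C_{\hat\cU}$ satisfy $\Gamma(\hat{\mathbf v})=\Gamma(\hat{\mathbf v}')=\hat f$, and I will show $\phi_i=\psi_i$.

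The first steps are forced by the critical data of $\hat f$, which is intrinsic. Since $\mathbf f$ has a single critical point on $\TT$, namely $0$ of order $3$, and every factor is univalent near $\TT$, the map $\hat f$ has exactly two critical points on $\TT$: the point $0=\phi_1^{-1}(0)$ (here we use the normalization $\phi_1(0)=0$; note also $\phi_2(0)\ne 0$, since otherwise $\hat f$ would be of order $\ge 9$ at $0$, contradicting bi-cubicity), and the point $c$ characterized by $\phi_2(\mathbf f(\phi_1(c)))=0$, which is unique since $\phi_2\circ\mathbf f\circ\phi_1$ is injective on $\TT$. As $\mathbf f(0)=0$, evaluation at $c$ gives $\hat f(c)=\phi_3(\mathbf f(\phi_2(\mathbf f(\phi_1(c)))))=\phi_3(\mathbf f(0))=\phi_3(0)$, and likewise $\psi_3(0)=\hat f(c)$; hence $\phi_3(0)=\psi_3(0)$. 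Setting $A=\psi_3^{-1}\circ\phi_3$ and $B=\psi_1\circ\phi_1^{-1}$ (both univalent near $\TT$), the equalities $\phi_3(0)=\psi_3(0)$ and $\phi_1(0)=\psi_1(0)=0$ force $A(0)=B(0)=0$. Substituting $\phi_3=\psi_3\circ A$, $\psi_1=B\circ\phi_1$ into $\Gamma(\hat{\mathbf v})=\Gamma(\hat{\mathbf v}')$ and cancelling the homeomorphisms $\psi_3$ (on the left) and $\phi_1$ (on the right) over $\TT$ turns the identity into the functional equation
\[
A\circ\mathbf f\circ\phi_2\circ\mathbf f \;=\; \mathbf f\circ\psi_2\circ\mathbf f\circ B, \qquad A(0)=B(0)=0,
\]
valid near $\TT$ and hence, by analytic continuation, wherever all maps involved are defined.

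It remains to extract $A=B=\mathrm{id}$ and $\phi_2=\psi_2$ from this equation, and this is where I expect the real work, and the main obstacle, to lie. Both sides are branched covers near $\TT$ carrying two critical points of order $3$; identifying their critical points, their critical values, and demanding that $A=(\mathbf f\circ\psi_2\circ\mathbf f\circ B)\circ(\mathbf f\circ\phi_2\circ\mathbf f)^{-1}$ be unramified forces the ramification of the two degree-$3$ factors to be matched cube-root-for-cube-root over both critical values; peeling one further layer — using once more that the only ramification of $\mathbf f$ sits over $0$ and that $\phi_2,\psi_2$ are univalent near $\TT$ — should collapse the equation to $A=B=\mathrm{id}$, $\phi_2=\psi_2$ on $\TT$, after which analytic continuation and real-symmetry give equality on the annuli of $\hat\cU$. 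The delicate point is that the germ-level analysis by itself leaves a one-parameter scaling ambiguity in $(A,B)$; ruling it out requires the global input that $A$, $B$ extend to maps compatible with the nesting conditions $\mathbf f\circ\phi_1(\hat U_1)\Subset\hat U_2$ and $\mathbf f\circ\phi_2\circ\mathbf f\circ\phi_1(\hat U_1)\Subset\hat U_3$ built into $\mathbf C_{\hat\cU}$, which is precisely why the hypothesis that both triples lie in the \emph{same} $\mathbf C_{\hat\cU}$ is essential.
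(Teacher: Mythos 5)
Your setup is correct: reducing to the functional equation $A\circ\mathbf f\circ\phi_2\circ\mathbf f = \mathbf f\circ\psi_2\circ\mathbf f\circ B$ with $A(0)=B(0)=0$ (via the observation $\phi_2(0)\ne 0$, the identification of the second critical point $c$, and $\phi_3(0)=\psi_3(0)=\hat f(c)$) is the right first step. But the sketch you offer for the remaining step is internally inconsistent and, more importantly, does not actually close. You claim that matching the cube-roots over the two critical values ``should collapse the equation to $A=B=\mathrm{id}$, $\phi_2=\psi_2$,'' but a sentence later you concede that this germ-level analysis ``leaves a one-parameter scaling ambiguity.'' The second claim is the correct one: passing to the intermediate equation $C\circ\mathbf f=\mathbf f\circ B$ (with $C$ the conjugate of $B$ across the inner copy of $\mathbf f$) and expanding in power series at $0$ determines $B$ only up to the free parameter $B'(0)$, and the obstruction is not an artifact of working with formal germs. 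Concretely, take any real-analytic circle vector field $w$ with $w(0)=0$, set $v=\mathbf f^*w=(w\circ\mathbf f)/\mathbf f'$ — this is again an analytic circle vector field vanishing at $0$, precisely because $\mathbf f$ has a cubic critical point — and let $B$, $C$ be the time-one flows of $v$, $w$. Then $\mathbf f\circ B=C\circ\mathbf f$ near $\TT$, $B(0)=C(0)=0$, $B'(0)=e^{w'(0)/3}$, so $B\ne\mathrm{id}$ whenever $w'(0)\ne 0$. Thus no amount of cube-root bookkeeping at the critical points can by itself produce $A=B=\mathrm{id}$.

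That leaves your appeal to the nesting conditions $\mathbf f\circ\phi_1(\hat U_1)\Subset\hat U_2$, etc., as the sole remaining input, and this is exactly where the proof has a genuine gap: you assert that they rule out the ambiguity but derive nothing from them. Note that these are open conditions, so a small nontrivial $(B,C)$ as above will typically still satisfy them when substituted into $(B\circ\phi_1,\ \phi_2\circ C^{-1},\ \phi_3)$; the nesting conditions as such do not obviously rigidify anything. The constraint that actually has teeth is the requirement that $\psi_2=\phi_2\circ C^{-1}$ itself belong to $\mathbf{A}_{\hat U_2}$ — that is, the composition must be defined and bounded on the fixed annulus $\hat U_2$, which forces strong compatibility between $C$ and $\hat U_2$ and should ultimately reduce to a conformal-automorphism/Schwarz-lemma statement about the annulus. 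But that argument is not sketched, let alone carried out, in your proposal; the step you yourself call ``the real work'' is precisely the step that is missing.
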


\noindent
The next proposition shows that $\mathbf u\mapsto \hat g$ is well-defined:
\begin{prop}
\label{well-defined}
In the notations of of \propref{ren open set}, let $C'_g$ be a different family of 
fundamental crescents, which also depends continuously on $\mathbf u\in G$ in the Hausdorff
sense. Then there exists an open neighborhood $G'\subset G$ of $f$, such that 
the cylinder renormalization of $\mathbf u\in G'$ corresponding to $C'_g$ is also $\hat g$.
\end{prop}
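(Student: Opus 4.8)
\textbf{Proof proposal for Proposition~\ref{well-defined}.}

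The plan is to show that two continuously-varying families of fundamental crescents $C_g$ and $C'_g$ produce, after passing to a possibly smaller neighborhood, the \emph{same} cylinder renormalization. The key geometric fact is Proposition~\ref{prop cylren1} (really the construction behind it): a fundamental crescent of $f^k$ determines a first-return map $R_{C_f}$ whose projection to $\CC/\ZZ$ is conjugate, near the circle, to $\cR^{n+1}f$ (as a pair); in particular the renormalized map $\hat f$ depends on $C_f$ only through the conformal isomorphism $\overline{C_f\cup f^k(C_f)}\sm\{p_1,p_2\}\to\CC/\ZZ$, and different crescents bounded by arcs joining the \emph{same} pair of period-$k$ points $p_1,p_2$ and spanning the same homotopy class of the quotient give isomorphic cylinders. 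So first I would reduce to the case where $C_g$ and $C'_g$ are cut out by arcs $l,l'$ with common endpoints: by shrinking $G$ I may assume the repelling periodic points $p_1(\bar u),p_2(\bar u)$ are the same for both families (they are the unique period-$k$ points near $p_1(f),p_2(f)$ and depend analytically on $\bar u$), and that both $l(\bar u)$ and $l'(\bar u)$ lie in $W\Subset U_1$.

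Next I would use the rigidity of the quotient construction. Both crescents embed into the same dynamical picture of $g=\Gamma(\bar u)$; the quotient surfaces are each isomorphic to $\CC/\ZZ$, and the two uniformizations differ by an automorphism of $\CC/\ZZ$, i.e.\ a rotation composed (possibly) with $z\mapsto -z$, which real-symmetry and orientation rule out except for a rotation. The normalization $c\mapsto 0$ in Definition~\ref{cylren}, where $c$ is the point with $g^{m}(c)=0$, pins down that rotation: the marked point $c$ is intrinsic to the dynamics of $g$ (it is determined by the orbit landing on the critical point), not to the choice of crescent, so both renormalizations send the \emph{same} dynamical point to $0$. Hence the two projected first-return maps agree as germs near the circle. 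Finally, since $\hat g\in{\aaa A}_W$ and $\hat g'\in{\aaa A}_{W'}$ are each determined by the underlying commuting pair $\cR^{n+1}g$ together with the canonical choice of chart (the content of Proposition~\ref{uniquedecomp} and the setup in~(\ref{eq-cylren1})), agreement of germs near $\TT$ forces $\hat g=\hat g'$ on the common domain; then Proposition~\ref{uniquedecomp} gives equality of the triples $\hat{\mathbf v}$, i.e.\ the two cylinder renormalizations coincide on $G'=G\cap G'_{\text{shrunk}}$.

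The main obstacle is the bookkeeping around the normalization point $c$ and showing it is genuinely independent of the crescent: a priori $C_g$ and $C'_g$ see different parts of the orbit of the critical point, so one must check that the condition ``there exists $c$ in the domain of $R_{C_g}$ with $g^m(c)=0$ for some $m<n(c)$'' picks out, after projection, the same point of $\CC/\ZZ$ regardless of which first-return map is used. This follows because the two first-return maps are restrictions of the same global dynamics and the grand orbit of $0$ is a single set; but making the identification of the two quotient charts precise, and confirming that real-symmetry plus the marking leaves no residual rotation, is where the real work lies. The continuity in $\bar u$ and the shrinking of $G$ to $G'$ are then routine, using the holomorphic motion $\chi_{\bar u}$ from Proposition~\ref{ren open set} applied to both families of crescents.
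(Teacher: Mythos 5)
The paper does not itself prove this proposition (it is presumably imported from \cite{Ya3}), so I am evaluating your argument on its own merits. Your overall strategy---identify the two quotient cylinders dynamically, observe that the induced map between them is a rotation by real-symmetry and orientation, and use the normalization $c\mapsto 0$ to kill that rotation---is the right one, and it mirrors the Liouville argument the paper gives for Proposition~\ref{conj-maps}. But there is a genuine gap, which you yourself flag at the end: the natural identification of the two quotient cylinders is \emph{asserted} (``the two uniformizations differ by an automorphism of $\CC/\ZZ$'') rather than established. The crescents $C_g$ and $C'_g$ are distinct regions of the dynamical plane; to compare the two quotients one must invoke the grand-orbit equivalence of $g^k$ near $\TT$, showing it induces a well-defined analytic isomorphism between annular neighborhoods of the core in each quotient cylinder that intertwines the two first-return maps. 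Without this, ``differ by an automorphism of $\CC/\ZZ$'' has no content, since the two uniformizers do not have a common source. You correctly identify this as ``where the real work lies,'' and then do not do it.

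Two further points. The reduction to a common pair of periodic points is not justified: $C_f$ and $C'_f$ are distinct crescents whose boundary arcs need not join the same pair of period-$k$ points, so ``they are the unique period-$k$ points near $p_1(f),p_2(f)$'' presumes what it would need to prove. Fortunately that reduction is also unnecessary, because the grand-orbit identification near $\TT$ is insensitive to the crescent ends; but as written the proof leans on an unfounded assumption. Secondly, the claim that the marked point $c$ is ``intrinsic to the dynamics, not to the choice of crescent'' needs more care: $c$ and $c'$ are preimages of $0$ under possibly different iterates of $g$, hence lie in the same $g$-grand orbit but not necessarily the same $g^k$-grand orbit. What one must show is that each projects to the unique critical point of the quotient first-return map that lies over $0$ (as opposed to the second critical point of a bi-cubic map), and that the grand-orbit isomorphism carries one to the other. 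Once the identification and the marking are pinned down, the absence of a residual rotation follows from the irrationality of $\rho(\hat g)$ and the fixed marked point, as you indicate, and the passage to the neighborhood $G'$ is routine.
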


\noindent
A key property of the cylinder renormalization is the
following:

\begin{prop}
\label{analytic dependence}
In the notations of \propref{ren open set}, the dependence $\mathbf u\mapsto \hat g$ is 
an analytic map $G\to {\aaa A}_W$.
\end{prop}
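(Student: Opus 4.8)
The plan is to deduce the analyticity of $\bar u\mapsto \hat g$ from three ingredients already assembled: the holomorphic motion $\chi_{\bar u}$ of $\partial C_f$ provided by \propref{ren open set}, the analytic dependence of $g=\Gamma(\bar u)$ on $\bar u$ from \propref{projection1}, and the uniqueness statements of \propref{uniquedecomp} and \propref{well-defined} which guarantee that the output triple $\hat{\mathbf v}\in\mathbf C_{\hat\cU}$ is canonically determined. The strategy is the standard one for proving that a renormalization operator is analytic on a Banach manifold of analytic maps: realize $\hat g$ (or rather its coordinate representative $\check{\hat g}-\operatorname{Id}\in\tl{\aaa A}_W$) as a function of $\bar u$ given by an explicit contour-integral / composition formula, and verify Gateaux analyticity plus local boundedness, which together give analyticity in the Banach sense.

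First I would fix the reference crescent $C_f$ and an equatorial annulus $W\Subset U_1$ as in \propref{ren open set}, and use the holomorphic motion $\chi_{\bar u}:\partial C_f\to\partial C_g$ to produce, for each $\bar u\in G$, the uniformizing coordinate. Concretely: the quotient of $\overline{C_g\cup g^k(C_g)}\setminus\{p_1,p_2\}$ by $g^k$ is conformally a cylinder; one builds the uniformization $\Phi_{\bar u}:C_g\to$ (a strip in $\CC/\ZZ$) by solving a Riemann mapping / Beltrami problem whose data (the boundary arcs $l$, $g^k(l)$, the gluing map $g^k$) depend holomorphically on $\bar u$ through $\chi_{\bar u}$ and $g=\Gamma(\bar u)$. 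Since $\chi_{\bar u}$ is a holomorphic motion and $\Gamma$ is analytic (\propref{projection1}), the Ahlfors–Bers machinery (analytic dependence of solutions of the Beltrami equation on parameters, together with analytic dependence of Riemann maps on their domains, as in \cite{dFdM2} and \cite{Ya4}) gives that $\Phi_{\bar u}$, and hence the first-return map $R_{C_g}$ transported to the cylinder with the marked point $c$ sent to $0$, depends analytically on $\bar u$. Composing with the canonical decomposition of $\hat g$ into the triple $\hat{\mathbf v}$ — well-posed and unique by \propref{uniquedecomp} and \propref{well-defined} — yields an analytic map $G\to\mathbf C_{\hat\cU}$, whence $\bar u\mapsto\hat g=\Gamma(\hat{\mathbf v})$ is analytic into ${\aaa A}_W$.

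In practice I would not prove Banach analyticity directly; instead I would check the two standard hypotheses. (i) \emph{Local boundedness / continuity}: the renormalized maps $\hat g$ all lie in ${\aaa A}_W$ with a uniform bound, which follows from complex {\it a priori} bounds (\thmref{thm:bounds}) and the continuity of the crescent construction in the Hausdorff sense from \propref{ren open set}. (ii) \emph{Gateaux analyticity}: for any $\bar v\in G$, any tangent vector, and any bounded linear functional on $\tl{\aaa A}_W$, the resulting scalar function of the one complex parameter is holomorphic — this reduces to holomorphy in one variable of the uniformizing coordinate and the first-return time, which one gets from Hartogs / Morera applied to the explicit integral representation of $\Phi_{\bar u}$ and from the fact that the return time $n(z)$ is locally constant on the (open) domain of $R_{C_g}$ so no combinatorial jumps occur on $G'$. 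By the standard fact that a locally bounded Gateaux-analytic map between complex Banach spaces is analytic, the claim follows.

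The main obstacle I anticipate is (ii) at the level of the \emph{marked point} and the \emph{return time}: one must ensure that throughout the neighborhood $G$ the combinatorics are rigid, i.e. the integer $m$ with $g^m(c)=0$ and the return time $n(c)$ do not change, so that the formula defining $\hat g$ is a single analytic expression rather than a piecewise one; this is exactly what \propref{well-defined} (and the openness in \propref{ren open set}) is there to guarantee, but it has to be invoked carefully so that the marked critical point of $\hat g$ varies holomorphically. A secondary technical point is checking that the passage ``first-return map on the crescent $\longrightarrow$ map of the quotient cylinder $\longrightarrow$ representative in $\tl{\aaa A}_W$'' is analytic and not merely continuous; this is where the holomorphic-motion hypothesis of \propref{ren open set} does the essential work, via the $\lambda$-lemma and analytic dependence of the uniformization, and I would cite the analogous arguments in \cite{Ya4} rather than redo them.
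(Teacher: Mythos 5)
Your proposal follows essentially the same route as the paper: extend the holomorphic motion of $\partial C_f$ into the crescent and apply Ahlfors--Bers to get analytic dependence of the quotient uniformization on $\bar u$, then transport an iterate of $g$ through it. Two refinements are worth noting: the paper invokes the Bers--Royden extension theorem \cite{BR} rather than the bare $\lambda$-lemma, precisely because the extended motion must remain \emph{equivariant} under the gluing dynamics $f^k\to g^k$ in order for the induced maps between quotient cylinders to form an analytic family; and the paper avoids your return-time worry entirely by fixing once and for all a compact $D\subset C_f\cup f^{-k}(C_f)$ and a single $n\in\NN$ with $W\Subset\pi_f(D)$ and $\hat f=\pi_f\circ f^n|_D$, so that $\hat g=\pi_g\circ g^n|_D$ for all $g$ near $f$ without ever needing to argue that $n(z)$ or the marked index is locally constant.
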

\begin{pf}
By the Theorem of Bers and Royden \cite{BR}, 
the holomorphic motion $\chi_g$ extends to a holomorphic
motion $\chi_g:C_f\cup f^k(C_f)\to C_g\cup g^k(C_g)$ 
over a smaller open neighborhood $G'$ with the same equivariance property.
As shown in \cite{MSS}, the holomorphic motion induces  an analytic family
of quasiconformal maps 
$$\Psi_g:\CC/\ZZ=(C_f\cup f^k(C_f))/f^k\to \CC/\ZZ=(C_g\cup g^k(C_g))/g^k.$$
Applying the theorem of Ahlfors and Bers, we see that the projection
$\pi_g:\overline{C_g}\cup g^k(C_g)\to\CC/\ZZ$ depends analytically on $g$. 
Let  $D\subset C_f\cup f^{-k}(C_f)$ and $n\in \NN$ be such that 
$W\Subset \pi_f(D)$, and $\hat f\in{\aaa C}_W$ is the projection of the iterate 
$f^n|_D$. The iterate $g^n|_D$ projects to 
$\pi_g\circ g^n=\hat g\in{\aaa C}_W$ for all $g$ sufficiently close to $f$, and the claim follows.
\end{pf}

\noindent
The following is an immediate corollary:

\begin{prop}
  \label{analytic dependence2}
  In the above notation,
 the correspondence $\mathbf v\mapsto\hat{\mathbf v}$ is a locally analytic operator from a neighborhood of $\mathbf v\in\mathbf C_{\cU}$ to
    $\mathbf C_{\hat \cU}$.
   
\end{prop}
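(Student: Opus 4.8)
\textbf{Proof plan for Proposition~\ref{analytic dependence2}.}
The statement is essentially a bookkeeping consequence of \propref{analytic dependence} together with \propref{uniquedecomp}, so the plan is to unwind the definitions and chain together the analyticities already established. Recall that the correspondence $v\mapsto\mathbf v$ decomposes as follows: starting from $\mathbf v=(\phi_1,\phi_2,\phi_3)\in\mathbf C_\cU$, one forms $f=\Gamma(\mathbf v)$ (analytic in $\mathbf v$ by \propref{projection1}), then forms the cylinder renormalization $\hat f=\hat g$ (analytic in $\mathbf v$ as a map into ${\aaa A}_W$ by \propref{analytic dependence}, using that $\Gamma$ is analytic and that $\bar u\mapsto\hat g$ is analytic in $\bar u$), and finally one must produce the unique triple $\hat{\mathbf v}\in\mathbf C_{\hat\cU}$ with $\Gamma(\hat{\mathbf v})=\hat f$ guaranteed by \propref{uniquedecomp}. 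Thus the content of the proposition is precisely that the assignment $\hat f\mapsto\hat{\mathbf v}$, i.e. the local inverse of $\Gamma$ restricted to $\mathbf C_{\hat\cU}$, is itself analytic; composing this with the already-established analytic map $\mathbf v\mapsto\hat f$ then gives the claim.

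First I would verify that $\Gamma:\mathbf C_{\hat\cU}\to{\aaa A}_{\hat U_1}$ is, locally near $\hat{\mathbf v}_*$ (the value arising from $\hat f_*$), a biholomorphism onto its image, so that its local inverse is analytic. For this it suffices to check that the derivative $D\Gamma$ at $\hat{\mathbf v}_*$ is a Banach-space isomorphism onto a closed complemented subspace, or more simply to exhibit an explicit analytic left inverse. Concretely: given $g=\phi_3\circ\mathbf f\circ\phi_2\circ\mathbf f\circ\phi_1$ in a small neighborhood of $g_{(\phi_i^*)}$ inside ${\aaa A}_{\hat U_1}$, one recovers the individual factors $\phi_i$ from $g$ by the same kind of argument used to prove \propref{uniquedecomp} — the decomposition of a map through the fixed reference map $\mathbf f$ is rigid because $\mathbf f$ has prescribed critical points and the normalization $\phi_1(0)=0$ pins down the remaining freedom. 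Since in \propref{uniquedecomp} the triple is shown to be unique, it only remains to observe that the dependence of the recovered $\phi_i$ on $g$ is holomorphic; this follows because each step of the recovery is given by composing $g$ (and the fixed $\mathbf f$, its inverse branches, etc.) which are all analytic operations on the relevant ${\aaa A}_U$-spaces, together with the implicit function theorem in Banach spaces applied to the analytic equation $\Gamma(\hat{\mathbf v})=g$.

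Having established that $\Gamma^{-1}$ is locally analytic on $\mathbf C_{\hat\cU}$, the proof concludes by composition: $v\mapsto\mathbf v$, namely
$$\mathbf v\;\longmapsto\; \Gamma(\mathbf v)=f\;\longmapsto\; \hat f\;\longmapsto\; \Gamma^{-1}(\hat f)=\hat{\mathbf v},$$
is a composition of analytic maps between (open subsets of) Banach manifolds, hence analytic; one also checks that the target indeed lands in $\mathbf C_{\hat\cU}$, which is immediate since $\hat{\mathbf v}=\Gamma^{-1}(\hat f)$ was constructed to satisfy the defining conditions of that space and these are open conditions.

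\textbf{Main obstacle.} The one genuinely nontrivial point is the local invertibility of $\Gamma$ on $\mathbf C_{\hat\cU}$, i.e. promoting the set-theoretic uniqueness of \propref{uniquedecomp} to analytic dependence of the decomposition on the composed map. This requires understanding $D\Gamma$ well enough to apply the Banach implicit/inverse function theorem — in particular checking that the derivative is an isomorphism (not merely injective) near the relevant point, which uses the univalence of each $\phi_i$ near the circle and the fact that $\mathbf f$ is a genuine branched cover. Everything else is a routine chaining of analyticities already proved in \propref{projection1}, \propref{analytic dependence}, and the discussion preceding them.
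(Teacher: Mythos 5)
The weak link in your plan is the step ``$\hat f\mapsto\hat{\mathbf v}$,'' which you try to realize as an analytic local inverse of $\Gamma|_{\mathbf C_{\hat\cU}}$. Proposition~\ref{uniquedecomp} gives only set-theoretic uniqueness of the decomposition; it is \emph{not} a constructive recovery, and your alternative ``explicit recovery procedure'' does not actually exist: to peel $\phi_1$ off of $g=\phi_3\circ\mathbf f\circ\phi_2\circ\mathbf f\circ\phi_1$ by applying $\mathbf f^{-1}\circ\phi_2^{-1}\circ\mathbf f^{-1}\circ\phi_3^{-1}$ you would already need to know $\phi_2$ and $\phi_3$, and near its critical points $\mathbf f$ has no single-valued inverse branch. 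Likewise, to invoke the Banach inverse/implicit function theorem you would have to show that $D\Gamma$ at $\hat{\mathbf v}_*$ is a \emph{split injection}: injective with closed, complemented image. You flag this as the main obstacle but give no argument for it, and it is not obvious (note that $D\Gamma$ involves multiplication by $\mathbf f'$, which vanishes at the critical points, so injectivity requires some care). Until this is established, the map $\hat f\mapsto\hat{\mathbf v}$ has not been shown to be analytic, and the chain of compositions does not close.

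The intended (and simpler) route does not pass through inverting $\Gamma$ at all: one constructs $\hat{\mathbf v}$ directly from $\mathbf v$. The proof of Proposition~\ref{analytic dependence} produces the uniformizing coordinate $\pi_g$ of the fundamental crescent analytically in $g=\Gamma(\mathbf v)$. The first return map $g^{n}$ is a long composition of $\mathbf f$'s and $\phi_i$'s, and exactly two of the $\mathbf f$-factors are evaluated at $0$ (namely, the ones producing the two cubic critical points of the renormalization); grouping the remaining, univalent, stretches of the composition together with $\pi_g$ and $\pi_g^{-1}$ at the ends yields explicit formulas for $\hat\phi_1,\hat\phi_2,\hat\phi_3$ as finite compositions of $\pi_g^{\pm1}$, the original $\phi_i$, and noncritical applications of $\mathbf f$. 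Each of these is manifestly an analytic function of $\mathbf v$, and $\hat\phi_1(0)=0$ by construction. Proposition~\ref{uniquedecomp} is then used only to certify that this triple is \emph{the} decomposition of $\hat f$, i.e.\ that the operator $\mathbf v\mapsto\hat{\mathbf v}$ is well defined independently of bookkeeping choices. This makes the proposition genuinely ``immediate'' from Proposition~\ref{analytic dependence}, whereas your route requires a nontrivial transversality statement about $D\Gamma$ that is not in the paper and that you have not supplied.
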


\begin{prop}
  \label{conj-maps}
  Let $f$ and $g$ be two bi-critical circle maps which are analytically conjugate
  $$\phi\circ f=g\circ \phi$$
  in a neighborhood $U$ of the circle. Suppose that $f$ is cylinder renormalizable with period $k=q_n$, and let $C_f$ be a corresponding fundamental crescent of $f$. Suppose $C_f\cup f(C_f)\Subset U$. Then, $g$ is also cylinder renormalizable with the same period and with a fundamental crescent  $C_g=\phi(C_f)$; and denoting $\hat f$ and $\hat g$ the corresponding cylinder renormalizations of $f$ and $g$ respectively, we have
  $$\hat f\equiv\hat g.$$
\end{prop}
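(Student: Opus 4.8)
The plan is to show that the entire cylinder renormalization construction is intrinsic, depending only on the conjugacy class of the dynamical system near the circle, and that an analytic conjugacy transports all the relevant objects. First I would observe that since $\phi$ conjugates $f$ to $g$ on $U$, it carries repelling periodic points of $f$ of period $k$ to repelling periodic points of $g$ of the same period; in particular the two points $p_1,p_2$ go to $\phi(p_1),\phi(p_2)$, and the arc $l$ connecting them goes to the simple arc $\phi(l)$. Because $\phi$ is a homeomorphism (indeed biholomorphism) and intertwines $f^k$ with $g^k$, the arc $f^k(l)$ maps to $g^k(\phi(l))$, and the intersection condition $f^k(l)\cap l=\{p_1,p_2\}$ is preserved. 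Thus $\phi(l)$ and $g^k(\phi(l))$ bound the domain $\phi(C_f)$, which I will call $C_g$; the hypothesis $C_f\cup f(C_f)\Subset U$ guarantees that $\phi$ is defined and univalent on a neighborhood of $\overline{C_f\cup f^k(C_f)}$ (note $f(C_f)\Subset U$ together with $f$-invariance-type control gives $f^k(C_f)\Subset U$; one should spell this out, using that $C_f$ is a fundamental crescent so that $f^k|_{C_f}$ is univalent into $U$), so $C_g$ is genuinely a subdomain of the domain of $g$.

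Next I would verify that $C_g$ is a fundamental crescent for $g^k$: the iterate $g^k=\phi\circ f^k\circ\phi^{-1}$ is defined and univalent on $C_g$ since $f^k$ is defined and univalent on $C_f$, the inverse branch $g^{-k}|_{g^k(C_g)}$ extends univalently to $C_g$ as the $\phi$-conjugate of $f^{-k}|_{f^k(C_f)}$, and the quotient $\overline{C_g\cup g^k(C_g)}\setminus\{\phi(p_1),\phi(p_2)\}$ by $g^k$ is biholomorphic, via the map induced by $\phi$, to the corresponding quotient for $f$, hence to $\CC/\ZZ$. Then I would compare the first-return maps: since $\phi$ conjugates $f$ to $g$, for any point $z\in\overline{C_f}$ with an orbit returning to $\overline{C_f}$, the point $\phi(z)$ has orbit under $g$ returning to $\overline{C_g}$ with the same return time $n(z)=n(\phi(z))$, so $\phi\circ R_{C_f}=R_{C_g}\circ\phi$ on the common domain. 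In particular the marked point $c$ with $f^m(c)=0$ for some $m<n(c)$ is carried to $\phi(c)$, which satisfies $g^m(\phi(c))=\phi(0)$; after noting that the cylinder renormalization is defined up to the choice of which preimage of the critical point becomes $0$ — and that this choice is combinatorially rigid — we get that $\phi$ descends to a biholomorphism of the renormalized cylinders intertwining $R_{C_f}$ with $R_{C_g}$ and sending the marked point to the marked point, i.e. $\hat f$ and $\hat g$ are the same critical circle map. Finally I would invoke \propref{well-defined}, which tells us the cylinder renormalization is independent of the particular choice of fundamental crescent, to conclude that $\hat g$ as computed from $C_g=\phi(C_f)$ agrees with $\hat g$ computed from any other admissible crescent for $g$, so $\hat f\equiv\hat g$ unambiguously.

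The main obstacle I anticipate is bookkeeping around the marked point and the normalization $\phi_1(0)=0$: the cylinder renormalization produces a circle map only after choosing a preimage of the critical point of the original map to serve as the origin, and one must check that the conjugacy $\phi$ respects this choice. Since $\phi$ fixes neither the circle pointwise nor necessarily $0$, the cleanest route is to argue that the renormalized map $\hat f$ is, intrinsically, the first-return map $R_{C_f}$ projected to its quotient cylinder with the marked critical orbit point sent to $0$; this description is manifestly transported by $\phi$, since $\phi$ conjugates $f$ to $g$, sends $C_f$ to $C_g$, and sends the marked point $c$ to a point $\phi(c)$ lying on the grand orbit of the critical point of $g$ that is combinatorially forced to be the marked point for $g$. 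Once this identification of $\hat f$ and $\hat g$ as literally the same element of the relevant ${\aaa A}_{U}$ is in place, the statement $\hat f\equiv\hat g$ follows, and the uniqueness clause of \propref{well-defined} removes any residual dependence on the choice of crescent. A secondary, more routine point is to confirm that $C_f\cup f(C_f)\Subset U$ really does place $\overline{C_f\cup f^k(C_f)}$ inside the domain of $\phi$, which I would handle by the same fundamental-crescent estimates that \propref{ren open set} relies on.
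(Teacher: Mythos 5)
Your proof follows the same approach as the paper's (which is considerably more compressed, essentially three sentences): carry the crescent data over by $\phi$, descend $\phi$ to the quotient cylinders, and conclude via Liouville's theorem that the descended conjugacy is the identity. The one place you are too quick is the inference from ``$\phi$ descends to a biholomorphism of the renormalized cylinders $\ldots$ sending the marked point to the marked point'' to ``$\hat f$ and $\hat g$ are the same critical circle map.'' As written, this only gives that $\hat f$ and $\hat g$ are conjugate by some automorphism of $\CC/\ZZ$ fixing $0$ --- not yet that they are equal. The missing (and only genuinely non-trivial) ingredient is Liouville's theorem: every biholomorphism of $\CC/\ZZ$ has the form $z\mapsto\pm z+a$, so a real-symmetric, orientation-preserving conjugacy fixing $0$ is forced to be the identity. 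That is exactly what the paper invokes. Your later remark that the normalized uniformization of the quotient cylinder is unique --- so that ``this description is manifestly transported by $\phi$'' --- is the right idea, but that uniqueness \emph{is} the Liouville statement and should be named rather than left implicit. The rest of your argument (transporting the repelling periodic points, the arc $l$, the univalence of $g^k$ on $C_g$, the domain bookkeeping) is correct but is exactly what the paper compresses into ``$C_g$ is a fundamental crescent by definition''; the appeal to \propref{well-defined} at the end is unnecessary since the statement already fixes $C_g=\phi(C_f)$.
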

\begin{proof}
  The domain $C_g$ is a fundamental crescent by definition. Denote $C_f'$, $C_g'$ the closures of the two fundamental crescents minus the endpoints.
  The analytic conjugacy $\phi$ projects to an analytic isomorphism
  $$\hat\phi:C'_f\cup f^k(C'_f)/f^k\simeq\CC/\ZZ\longrightarrow C'_g\cup g^k(C'_g)/g^k\simeq\CC/\ZZ,$$
  which conjugates $\hat f$ to $\hat g$. By Liouville's Theorem, $\hat\phi$ is a translation; since it fixes the origin, it is the identity.
\end{proof}

\subsection{Cylinder renormalization operator}

The cylinder renormalization of a bi-cubic commuting pair is defined in the same way as that of an analytic bi-cubic map.:
\begin{defn}
Let $\zeta=(\eta,\xi)$ be a bi-cubic holomorphic commuting pair which is at least $n$ times renormalizable for some $n\in\NN$.
Denote $D_\eta$ the domain of the pair $\zeta$.
Denote $$\zeta_n=(\eta_n,\xi_n)\equiv p\cR^n(\zeta):D_{\eta_n}\to\CC.$$
We say that $\zeta$ is cylinder renormalizable with period $k=q^n$ if:
 \begin{itemize}
\item there exist two compex conjugate fixed points $p^+_n$, $p^-_n$ of the map $\eta_n$ with $p^\pm_n\in\pm\HH$,
and an $\RR$-symmetric simple arc $l$ connecting these points such that $l\subset {D_{\eta_n}}$ and 
$\eta_n(l)\cap l=\{p^+_n,p^-_n\}$;
\item denoting $C$ the $\RR$-symmetric domain bounded by $l$ and $\eta(l)$ we have $C\subset D_{\eta_n}$,
and the quotient of $\overline{C\cup\eta(C)}\sm\{p^+_n,p^-_n\}$ 
by the action of $\zeta_n$ is a Riemann surface homeomorphic
to the cylinder $\CC/\ZZ$ (that is, $C$ is a fundamental crescent of $\eta$).
\end{itemize}
Clearly, the action of $\zeta$ induces a bi-critical circle map $f$ with rotation number $\rho(\cR^n\zeta)$ on the quotient $$(\overline{C_\zeta\cup\eta(C_\zeta)}\sm\{p^+_n,p^-_n\})/\zeta_n\simeq \CC/\ZZ;$$
we call $f$ the cylinder renormalization of $\zeta$.

\end{defn}

\begin{thm}
  \label{exist crescent}
  
  Fix a periodic marked combinatorial type $\cC$, and let $f\in{\aaa A}_U$ be a bi-cubic map of this type. Then, there exists $N=N(\cC,U)$
% and an equatorial annulus $U_1=U_1(\cC)$
  such that the following holds. For every $n\geq N$, setting $k=q_n$, $f$ is cylinder renormalizable with period $k$. Moreover, there exists a fundamental crescent $C_f\Subset U$ (which moves analytically with $f$) with period $k$ such that $f^k(C_f)\subset U$, and, denoting $\hat f$ the corresponding cylinder renormalization of $f$, we have
$$\hat f\in{\aaa A}_U.$$

Similarly, let $\zeta$ be any bi-cubic holomorphic commuting pair  of type $\cC$. Then, there exists $k=k(\zeta)=q_n$ such that $\zeta$ is cylinder renormalizable with period $k$, and the cylinder renormalization is a bi-cubic map of type $\cC$.

\end{thm}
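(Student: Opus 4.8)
The plan is to construct the fundamental crescent directly from the geometry of the dynamical partitions, using complex \textit{a priori} bounds (Theorem~\ref{thm:bounds}) as the source of a definite amount of room around the relevant periodic orbit. First I would recall the combinatorial picture: since $\cC$ is a periodic marked combinatorial type, the rotation number $\rho(f)$ is of periodic type, so the two critical points of $f$ are, from some level on, always separated by an interval of a bounded-level dynamical partition. Pick $n$ large enough that $\cR^n f$ is close to a fixed geometry; the interval $I_n=[0,f^{q_n}(0)]$ together with its forward images under $f$ constitute the partition $\cP_n(f)$, and by Theorem~\ref{realbounds2} adjacent intervals are beau-commensurable. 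The key object is a repelling periodic point $p$ of period $q_n$ lying just outside the circle: the $q_n$-th iterate of $f$, restricted to a neighborhood of the interval $I_{n-1}\supset I_n$ adjacent to $0$, behaves like an expanding map fixing the endpoint $f^{q_n}(0)$ direction, and, because $f$ is a real-analytic circle diffeomorphism off the critical points, it has a complex-analytic extension with a genuine repelling complex fixed point $p^+_n\in\HH$ (and its conjugate $p^-_n\in -\HH$) at definite distance $\asymp|I_n|$ from the real axis. This is exactly the set-up of cylinder renormalization in \cite{Ya3}; the role of the present theorem is to certify that for a \textit{periodic} combinatorial type the requisite fundamental crescent can be taken with $C_f\Subset U$ and $f^{q_n}(C_f)\subset U$, with $U_1$ and $N$ depending only on $\cC$ (and $U$).

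Next I would carry out the following steps. (1) Using Theorem~\ref{thm:bounds}, extend $p\cR^n f$ to a holomorphic commuting pair $\cH_n\in\hol(\mu)$ with $\mu=\mu(\cC)$; rescale back so that all estimates are in the original coordinate, obtaining definite complex bounds around $I_n$ and its iterates at scale $|I_n|$. (2) Locate the repelling periodic points $p^\pm_n$: consider the branch of $f^{-q_n}$ fixing the relevant endpoint; on $\CC_{I_{n-1}}$ this branch is a contraction by Lemma~\ref{lem:inv}/Lemma~\ref{quasi2}, hence has a unique attracting fixed point, which is the repelling fixed point $p^+_n$ of $f^{q_n}$; the complex bounds give $\dist(p^+_n,\RR)\asymp|I_n|$ and that $p^+_n$ lies in the domain of analyticity. (3) Build the arc $l$: take the $\RR$-symmetric arc through $I_n$ joining $p^+_n$ to $p^-_n$, e.g.\ the boundary of a Poincar\'e neighborhood $D_\theta(I_n')$ of a suitable slightly enlarged interval $I_n'$ with $\theta$ beau, truncated at the periodic points; one checks $l\subset D_{\eta}$ and $\eta(l)\cap l=\{p^+_n,p^-_n\}$ using the invariance Lemma~\ref{lem:inv} and the fact that $f^{q_n}$ pushes $l$ outward. (4) Verify that $C_f$, the region between $l$ and $f^{q_n}(l)$, is a fundamental crescent: $f^{q_n}$ is univalent on $C_f$ with inverse extending across it (Lemma~\ref{quasi2}), and the quotient $\overline{C_f\cup f^{q_n}(C_f)}\setminus\{p^\pm_n\}$ by $f^{q_n}$ is conformally a torus minus the two ends, i.e.\ $\CC/\ZZ$ — this is the standard linearization near the repelling periodic orbit together with the expansivity along the real segment. (5) Check the marking condition: there is a point $c$ in the domain of the first-return map $R_{C_f}$ with $f^m(c)=0$, $m<n(c)$, which holds because the orbit of the second critical point enters $I_n$ (here periodicity of the combinatorial type guarantees this return happens at a bounded level relative to $n$, hence within $C_f$). (6) Control the size: because $\mu$ is uniform and adjacent partition intervals are beau-commensurable, the crescent $C_f$ and its image $f^{q_n}(C_f)$ have diameter $\asymp|I_n|\to 0$, so for $n\geq N(\cC,U)$ they are compactly inside $U$; the resulting $\hat f=\pi\circ R_{C_f}$ is, after the affine normalization built into the cylinder coordinate, an element of $\cal A_{U}$ with $U_1=U_1(\cC)$ the equatorial annulus coming from the uniform modulus $\mu$.

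For the commuting-pair statement I would run the identical construction with $\eta_n$ in place of $f^{q_n}$: a bi-cubic holomorphic pair of type $\cC$ already carries complex bounds, so the repelling fixed points $p^\pm_n$ of $\eta_n$ exist in $\pm\HH$, the arc $l$ and crescent $C$ are built as above, and the quotient of $\overline{C\cup\eta_n(C)}\setminus\{p^\pm_n\}$ by the action of the pair $\zeta_n$ is $\CC/\ZZ$; the induced circle map has rotation number $\rho(\cR^n\zeta)$, which by the Gauss-map relation (\ref{Gauss}) and periodicity of $\rho$ equals $\rho(\zeta)$ up to a shift within the period, so the cylinder renormalization is again bi-cubic of type $\cC$ — here one invokes Proposition~\ref{prop-type} (equal rotation number and equal $w_m$'s force $\cC(\hat\zeta)=\cC$), the equality of the $w_m$'s being immediate from the combinatorial identification of $\cR^n\zeta$ with a renormalization of $f$ via Proposition~\ref{prop cylren1}.

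The main obstacle I expect is Step~(4)–(5): showing that the quotient surface is genuinely $\CC/\ZZ$ (an annulus with the two punctures at the ends glued to give a full cylinder) requires the arc $l$ to be positioned so that $f^{q_n}(l)$ and $l$ bound a crescent on which $f^{q_n}$ is univalent \emph{and} that the return map $R_{C_f}$ has the critical point $0$ of $\cR^n f$ in its domain at a controlled return time — in other words, reconciling the \emph{dynamical} requirement (the crescent must ``see'' the orbit of the critical points) with the \emph{geometric} requirement (the crescent must fit inside the region where the complex bounds give univalence). For a periodic combinatorial type this is what makes $N$ and $U_1$ depend only on $\cC$: the relevant return times and the scale at which the second critical point re-enters $I_n$ are bounded in terms of the period, so a single choice of Poincar\'e-neighborhood angle $\theta=\theta(\mu)$ works simultaneously for the univalence and for capturing the critical orbit. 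Making this simultaneous choice explicit, and checking the arc $l$ can be chosen $\RR$-symmetric and smooth with $\eta(l)\cap l=\{p^\pm_n\}$ exactly, is the technical heart of the argument.
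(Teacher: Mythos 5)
Your proposal is correct and takes exactly the approach the paper adopts: the paper does not reproduce the argument, stating only that the result follows ``mutatis mutandis'' from the complex \emph{a priori} bounds via the proof of Lemma 4.10 of \cite{GorYa}, and your steps (1)--(6) are precisely that construction — locating the complex-conjugate repelling periodic points of $f^{q_n}$ via the Schwarz-type contraction furnished by Theorem~\ref{thm:bounds}, bounding a crescent by a Poincar\'e-neighborhood arc through $I_n$, and taking the quotient — with the one genuinely new, bi-cubic ingredient correctly identified: the crescent must capture the orbit of the second critical point at a bounded return time, which is exactly what periodicity of the marked type $\cC$ guarantees and what makes $N$ and $U_1$ depend only on $\cC$ (and $U$).
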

The proof is a direct consequence of complex {\it a priori} bounds, and follows {\it mutatis mutandis} the proof of Lemma 4.10 of \cite{GorYa}, so we will not reproduce it here.

\begin{prop}
  \label{renorm-operator}
  Let $\cC$ be a periodic marked type with period $p$ and let $\zeta_*$ be a periodic point of $\cR$ with this period. Then, there exist $n\in\NN$, an equatorial annulus $U_*$, and a triple $\cU=(U_*,U_2,U_3)$ such that the following holds:
  \begin{enumerate}
  \item there exists $m$ such that $\zeta_*$ is cylinder renormalizable with period $m$ and the renormalization $f_*\in {\aaa A}_{U_*}$ has combinatorial type $\cC$;
  \item the bi-cubic map $f_*$ is cylinder renormalizable with period $k$; its renormalization is equal to $f_*$ and is defined in $U_1\Supset U_*$;
    \item the map $f_*$ has a decomposition ${\mathbf v}_*\in \mathbf C_\cU$.

    \end{enumerate}
  \end{prop}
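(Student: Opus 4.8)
The plan is to bootstrap from Theorem~\ref{main1} and Theorem~\ref{exist crescent}, combined with the self-consistency of periodic renormalization combinatorics. First I would invoke Theorem~\ref{main1} to fix the analytic periodic pair $\zeta_*$ with $\cR^p\zeta_*=\zeta_*$ and $\cC(\zeta_*)=\cC$, and note that since $\zeta_*$ is a \emph{bi-cubic} holomorphic commuting pair (its existence with a holomorphic extension in $\hol(\mu)$ is guaranteed by the complex {\it a priori} bounds, Theorem~\ref{thm:bounds}), the second part of Theorem~\ref{exist crescent} applies: there is $m=q_n$ (for a suitable $n$, which we may take to be a multiple of $p$ by replacing $m$ with a larger return time of the periodic combinatorics) such that $\zeta_*$ is cylinder renormalizable with period $m$, and the cylinder renormalization $f_*$ is a bi-cubic map of type $\cC$. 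This gives item~(1), with $U_*$ the equatorial annulus supplied by Theorem~\ref{exist crescent} for the pair $\zeta_*$.

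Next I would establish item~(2) by applying the \emph{first} part of Theorem~\ref{exist crescent} to $f_*\in{\aaa A}_{U_*}$: since $f_*$ has periodic marked combinatorial type $\cC$, there is $N$ and an equatorial annulus $U_1=U_1(\cC)$ so that for all $k=q_{n'}\geq q_N$ along the orbit of $\cC$, $f_*$ is cylinder renormalizable with period $k$, with a fundamental crescent $C_{f_*}\Subset U_*$ satisfying $f_*^k(C_{f_*})\subset U_*$, and $\hat{f_*}\in{\aaa A}_{U_*}$. The crucial point is to identify $\hat{f_*}$ with $f_*$ itself: by Proposition~\ref{prop cylren1}, $\cR\hat{f_*}$ (as a pair) is analytically conjugate to $\cR^{n'+1}$ applied to the pair underlying $f_*$, which by the first step is $\cR$-conjugate to $\cR^{m+n'+1}\zeta_*$; choosing $k$ so that the total return time $m+n'$ is a multiple of the period $p$ of $\zeta_*$ under $\cR$, this equals $\zeta_*$, hence $\cR\hat{f_*}$ and $\cR f_*$ are analytically conjugate as pairs. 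Then Proposition~\ref{conj-maps} (applied in the form that analytically conjugate cylinder-renormalizable maps have identical cylinder renormalizations) upgrades this to $\hat{f_*}\equiv f_*$ as points of ${\aaa A}_{U_*}$. The inclusion $U_1\Supset U_*$ is exactly the output of the first part of Theorem~\ref{exist crescent}, since the renormalization lands in ${\aaa A}_{U_*}$ while being \emph{defined} on the larger annulus $U_1$ on which one performs the return construction.

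For item~(3), recall the Zakeri model: by Theorem~\ref{thm:zakeri2} there is a Blaschke product $B$ of type~(\ref{eq:blaschke}) which is a bi-cubic critical circle map with rotation number $\rho(\zeta_*)$ and unmarked combinatorial type $\cC$; $B$ is a composition of the standard model $\mathbf f$ (up to analytic coordinate changes) with Möbius factors, so near the circle $f_*$ — being, by Theorem~\ref{main1}(3) and Proposition~\ref{prop cylren1}, the limit of renormalizations of a map analytically conjugate to $B$, or more directly the canonical analytic representative of a fixed combinatorial type — can be written as $\phi_3\circ\mathbf f\circ\phi_2\circ\mathbf f\circ\phi_1$ on some equatorial annulus. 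Concretely I would take $U_2,U_3$ to be equatorial annuli chosen so that $\mathbf f\circ\phi_1(U_*)\Subset U_2$ and $\mathbf f\circ\phi_2\circ\mathbf f\circ\phi_1(U_*)\Subset U_3$ — this is a matter of shrinking $U_*$ if necessary, which is harmless — and normalize $\phi_1(0)=0$, so that $\bar{\mathbf v}_*=(\phi_1,\phi_2,\phi_3)\in\mathbf C_\cU$ with $\Gamma(\bar{\mathbf v}_*)=f_*$ by Proposition~\ref{projection1} and Proposition~\ref{uniquedecomp} (uniqueness of the decomposition).

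The main obstacle will be the bookkeeping in the second step: making the three return times line up so that the composite return map is genuinely the \emph{same} point $f_*$ rather than merely another point of the same combinatorial type. This requires choosing the cylinder period $k$ for $f_*$ so that $m + k$-level return composed with the $p$-periodicity of $\cR$ closes up exactly, and then transporting the ``$\cR$-conjugate as pairs'' statement to an ``$\equiv$ as cylinder renormalizations'' statement via Proposition~\ref{conj-maps}; one must check that the fundamental crescents and the annuli $U_*\subset U_1$ are compatible with the hypothesis $C_{f_*}\cup f_*^k(C_{f_*})\Subset U_*$ of that proposition. Everything else is a routine concatenation of the cited propositions, plus the observation that all the annuli involved can be taken equatorial and $\RR/\ZZ$-symmetric because every construction (Theorem~\ref{exist crescent}, Zakeri's models, the decomposition of $\mathbf f$) is real-symmetric.
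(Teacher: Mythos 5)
The paper actually gives no written proof of Proposition~\ref{renorm-operator}; it is stated as a bookkeeping consequence of Theorem~\ref{exist crescent} together with the earlier cylinder-renormalization machinery, so what follows is an assessment of your reconstruction on its own merits. Your overall plan is the right one and assembles exactly the ingredients that the paper is implicitly invoking (Theorem~\ref{main1}, Theorem~\ref{exist crescent}, Propositions~\ref{prop cylren1}, \ref{conj-maps}, \ref{uniquedecomp}). Item~(1) is fine modulo the observation that the pair version of Theorem~\ref{exist crescent} states existence of \emph{one} admissible $k=k(\zeta)$, so you should justify that you may take it to be a multiple of $p$ (this follows from the proof, which works for all sufficiently deep $q_n$'s, but it is worth noting).

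The gap is in your step~(2). Proposition~\ref{conj-maps} requires an analytic conjugacy between the \emph{maps} $f$ and $g$ on a neighborhood of the circle containing the fundamental crescent and its image; it then concludes $\hat f\equiv\hat g$ via the Liouville argument. You have instead produced a conjugacy (as commuting pairs) between $\cR\hat f_*$ and $\cR f_*$ — one renormalization level down — and then try to ``upgrade'' this to $\hat f_*\equiv f_*$ by citing Proposition~\ref{conj-maps}. That proposition does not run in that direction: equality of pair classes of the renormalizations does not, by itself, furnish a conjugacy of the cylinder maps themselves, since a cylinder map carries more information than the conformal class of its underlying pair (the uniformizing coordinate of the cylinder is part of the data). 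The correct route is to argue directly that $\hat f_*$ and $f_*$ are analytically conjugate as cylinder maps: since $\cR^p\zeta_*=\zeta_*$, the pre-renormalizations $p\cR^m\zeta_*$ and $p\cR^{m+k'}\zeta_*$ (with $m$ and $k'$ multiples of $p$) are linearly conjugate, so the two fundamental-crescent constructions are carried into one another by that linear map; passing to the quotient cylinders this gives a conformal isomorphism of $\CC/\ZZ$ fixing $0$ and conjugating $\hat f_*$ to $f_*$, which is then the identity by Liouville. Proposition~\ref{well-defined} is also useful here to rule out dependence on the particular choice of crescent.

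Item~(3) also needs an actual argument rather than the appeal to Zakeri's models. The existence of $(\phi_1,\phi_2,\phi_3)$ with $f_*=\phi_3\circ\mathbf f\circ\phi_2\circ\mathbf f\circ\phi_1$ near the circle is a local-normal-form statement: near each cubic critical point of $f_*$ one can find a real-symmetric conformal chart in which $f_*$ is $z\mapsto z^3$ up to pre- and post-composition with univalent maps, and $\mathbf f$ itself supplies the cubic. Concretely, pick $\phi_1$ univalent near the circle, real-symmetric, with $\phi_1(0)=0$; then $\mathbf f\circ\phi_1$ is a degree-one circle map with a cubic critical point at $0$, and one chooses $\phi_2$ univalent with $\phi_2(\mathbf f\circ\phi_1(c))=0$ so that the second critical point is absorbed by the next $\mathbf f$; finally $\phi_3=f_*\circ(\mathbf f\circ\phi_2\circ\mathbf f\circ\phi_1)^{-1}$ is univalent near the circle precisely because the two cubic critical behaviours cancel. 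Shrinking to annuli $U_2,U_3$ as you suggest then lands the triple in $\mathbf C_\cU$, and Proposition~\ref{uniquedecomp} gives uniqueness. Your sketch leaves out the cancellation argument, which is the only nontrivial point.
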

The last property implies that there exists a neighborhood $V( {\mathbf v}_*)\subset \mathbf C_\cU$ such that cylinder renormalization with period $k$ is an analytic operator to ${\mathbf C}_\cU$. We call this operator {\it cylinder renormalization operator} corresponding to the type $\cC$ and denote it
$$\cren:V( {\mathbf v}_*)\longrightarrow \mathbf C_\cU.$$

\section{Constructing the stable manifold of $\mathbf v_*$}
We will now specialize to real-symmetric maps, and consider the real slice $\curr$ of the space ${\aaa C}_\cU$ consisting of maps with real symmetry, with the induced real Banach manifold structure (cf. \cite{Ya3,GorYa}).
We proceed with the above notation. Let $\rho_*$ be the rotation number of the combinatorial class $\cC$
and define $$D_*=\{ \mathbf v\in\curr,\text{ such that }\rho(\Gamma(\mathbf v))=\rho_*\},$$
and
$$S_*=\{\mathbf v\in\curr,\text{ such that }\cC(\Gamma(\mathbf v))=\cC\}\subset D_*.$$
As we have shown,
\begin{prop}
\label{local stable}
There exists a neighborhood $Y$ of $\fxpt$ in $\curr$ such that for every 
${\mathbf v}\in Y\cap S_*$
$\cren^j({\mathbf v})$ is defined for all $j$, and $$\cren^j({\mathbf v})\longrightarrow\fxpt$$ uniformly in $Y$.
\end{prop}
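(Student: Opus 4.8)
\textbf{Proof proposal for Proposition \ref{local stable}.}

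The plan is to show that on the slice $S_*$, the cylinder renormalization operator $\cren$ is uniformly contracting towards $\fxpt$, using the convergence result of Theorem~\ref{main1} (and the quasiconformal rigidity that underlies it) as the source of the contraction, and the analyticity of $\cren$ (Proposition~\ref{renorm-operator} together with Proposition~\ref{analytic dependence2}) to upgrade pointwise convergence to uniform convergence on a neighborhood. First I would recall that $S_*$ is, in a neighborhood $Y_0$ of $\fxpt$ in $\cu$, a (closed, real-analytic) submanifold: membership $\cC(\Gamma(\fxpt))=\cC$ is a locally closed condition because the combinatorial type is locally constant among maps with the same (periodic-type) rotation number, and $\rho_*$ is itself locally constant on $D_*$ by the standard continuity/rigidity of rotation numbers of periodic combinatorial type; Proposition~\ref{ren open set} and Theorem~\ref{exist crescent} guarantee that $\cren$ is defined and analytic on a neighborhood of $\fxpt$ and, crucially, maps $Y_0\cap S_*$ into $S_*$ (cylinder renormalization preserves the combinatorial type, by Theorem~\ref{exist crescent} and Proposition~\ref{conj-maps}).

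Next I would establish pointwise convergence $\cren^j(\bfv)\to\fxpt$ for each fixed $\bfv\in Y_0\cap S_*$. This is where Theorem~\ref{main1} enters: the bi-cubic map $f=\Gamma(\bfv)$ has periodic combinatorial type $\cC$, so the (pair) renormalizations $\cR^{pj}\zeta_f$ converge to $\zeta_*$; by Proposition~\ref{prop cylren1}, the cylinder renormalization $\cren^{j}(\bfv)$ is, up to an analytic conjugacy that itself converges to the identity (by complex {\it a priori} bounds, Theorem~\ref{thm:bounds}, which give compactness in $\hol(\mu)$ and control of the conjugating charts via the canonical choice of $\phi_\zeta$ in (\ref{eq-cylren1})), conjugate to $\cR^{m+pj}\zeta_f$. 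Combining the uniform convergence of $\cR^{mj}\zeta\to\zeta_*$ on the compact family with Proposition~\ref{uniquedecomp} — uniqueness of the triple decomposition — and the continuity of the decomposition map $v\mapsto\mathbf v$ (Proposition~\ref{analytic dependence2}), one gets $\cren^{j}(\bfv)\to\fxpt$ in $\cu$, first along a subsequence by compactness of $\hol^K(\mu)$ (Lemma~\ref{bounds compactness}), then for the full sequence since the only possible limit is $\fxpt$.

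Finally I would promote this to uniform convergence on a (possibly smaller) neighborhood $Y\subset Y_0$. Fix a large $j_0$ so that $\cren^{j_0}$ maps a fixed compact neighborhood $\overline{Y_1}\cap S_*$ of $\fxpt$ into a small ball $B(\fxpt,\delta)$; this is possible by the pointwise convergence together with the equicontinuity/analyticity of the iterates $\cren^{j}$ on $\overline{Y_1}$ (the family $\{\cren^j\}$ is normal on the compact set, being uniformly bounded by Lemma~\ref{bounds compactness}, so Vitali/Montel forces uniform convergence of any subsequential limit, and all limits equal the constant $\fxpt$ on $S_*$). Then $Y=\cren^{-j_0}(B(\fxpt,\delta))\cap Y_1$ works: iterating from any point of $Y\cap S_*$ stays inside $B(\fxpt,\delta)$ after $j_0$ steps, and by shrinking $\delta$ one forces $\cren^{j}(\bfv)\to\fxpt$ uniformly. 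The main obstacle I expect is the bookkeeping in the second step — matching the cylinder renormalization $\cren^{j}(\bfv)$ with the pair renormalization $\cR^{m+pj}\zeta_f$ and showing that the intertwining analytic conjugacies and the triple decompositions converge uniformly (not just pointwise), i.e. that the convergence in Theorem~\ref{main1} genuinely transports through the $F_\zeta\mapsto\phi_\zeta\circ F_\zeta\circ\phi_\zeta^{-1}$ construction into the Banach manifold $\cu$; this requires that the canonical charts $\phi_\zeta$ depend continuously (indeed analytically) on $\zeta$ near $\zeta_*$, which is exactly what the cylinder-renormalization machinery of Section~\ref{section: cyl ren} was set up to provide.
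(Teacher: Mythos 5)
Your overall strategy matches what the paper is pointing to with the phrase ``as we have shown'': Theorem~\ref{main1}(3) supplies the convergence of the pair renormalizations, Theorem~\ref{thm:bounds} and Lemma~\ref{bounds compactness} supply the compactness/pre-compactness needed to make that convergence transfer to the Banach manifold $\cu$ through the decomposition machinery of \S\ref{section: cyl ren} (Propositions~\ref{prop cylren1}, \ref{uniquedecomp}, \ref{analytic dependence2}, \ref{renorm-operator}), and Theorem~\ref{exist crescent} together with Proposition~\ref{conj-maps} guarantee invariance of $S_*$ under $\cren$. So the skeleton is right and the references are the appropriate ones.

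The one step that does not hold as written is the upgrade from pointwise to uniform convergence via Vitali/Montel. Two things go wrong. First, $S_*$ is a thin set of real codimension two in $\cu$, and the pointwise convergence you have is only on $S_*$; classical Vitali converts pointwise convergence on a set with a limit point into uniform convergence on all of $Y_1$, which is both more than you want and false here (on $D_*\setminus S_*$ the iterates converge to a different combinatorial class, and off $D_*$ they need not be defined for all $j$). Second, even if you restrict attention to the submanifold $S_*$, it is only a real-analytic submanifold, so Montel-type normality for holomorphic families is not directly available, and Lemma~\ref{bounds compactness} gives precompactness of the image in $\hol^K(\mu)$ but not equicontinuity of the iterated operators $\{\cren^j\}$ on a ball. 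The cleaner route, and the one the paper's machinery is designed for, is to observe that the constants in Theorem~\ref{thm:bounds} are \emph{beau}, i.e.\ uniform over any pre-compact family $S\subset\cA_r^L$ after some $N=N(r,S)$, so all the holomorphic pair extensions of $p\cR^n\zeta$ for $\zeta$ in a pre-compact $S_*$-slice of a neighborhood of $\fxpt$ land in the same $\hol^K(\mu)$ with a uniform $\mu$. Feeding this into McMullen's tower rigidity argument (cited for the proof of Theorem~\ref{main1}, cf.\ \cite{dFdM2,Ya4}) yields not just pointwise convergence but a geometric contraction rate that depends only on $\mu$; pushing that rate through the uniformly controlled canonical charts $\phi_\zeta$ of \S\ref{section: cyl ren} then gives the uniform convergence $\cren^j(\bfv)\to\fxpt$ on $Y\cap S_*$ directly, without any appeal to normal-families arguments.
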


\noindent
Below we shall demonstrate that the local stable set of $\fxpt$ is a submanifold of $\mfld$:
\begin{thm}
\label{stable manifold}
There is an open neighborhood $W\subset \curr$ of $\fxpt$ such that $S_*\cap W$
is a smooth submanifold of $\mfld$ of codimension $2$. Moreover, for $\bfv\in S_*$ denote $f=\Gamma(\bfv)$, and let $\phi_f$ be the unique
smooth conjugacy between $f$ and $f_*$ which fixes $0$. Then the dependence $\bfv\mapsto \phi_f$ is smooth.
\end{thm}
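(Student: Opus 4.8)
The plan is to identify $S_*\cap W$ with the local stable set of $\fxpt$ for the analytic operator $\cren$, to read off the submanifold structure from the compactness of $\cren$ together with two explicit unstable directions, and to build the conjugacies $\phi_f$ by a telescoping pull--back construction whose ingredients all depend analytically on $\bfv$. Replace $\cren$ by its $p$-th iterate so that $\cren(\fxpt)=\fxpt$. \propref{local stable} already gives $S_*\cap Y\subset W^s_{\mathrm{loc}}(\fxpt):=\{\bfv:\cren^j(\bfv)\text{ is defined for all }j\text{ and }\cren^j(\bfv)\to\fxpt\}$. For the reverse inclusion one uses that the rotation number $\rho$ and the unmarked combinatorial parameter $c$ of $\Gamma(\bfv)$ evolve under $\cren$ by dynamics for which $(\rho_*,c_*)$ is a \emph{repelling} fixed point: for $\rho$ this is the expansion of the relevant iterate of the Gauss map at the periodic point $\rho_*$ (cf.\ \eqref{Gauss} and \propref{prop cylren1}), while for $c$ it follows from the way the two critical points separate through the dynamical partitions under renormalization (\thmref{conj-yoc}, \propref{prop-type}). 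Hence $\cren^j(\bfv)\to\fxpt$ forces $(\rho,c)(\Gamma(\bfv))=(\rho_*,c_*)$, i.e.\ $\bfv\in S_*$; shrinking $W\subset Y$ yields $W^s_{\mathrm{loc}}(\fxpt)=S_*\cap W$.

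Next, because cylinder renormalization strictly enlarges the domain of definition (\propref{ren open set}, \thmref{exist crescent}), the derivative $D\cren(\fxpt)$ is a \emph{compact} operator on the tangent space to $\cu$, so its spectrum is discrete and accumulates only at $0$. Granting the hyperbolicity input discussed below --- no spectrum on the unit circle --- the Banach stable manifold theorem makes $W^s_{\mathrm{loc}}(\fxpt)$ a smooth (indeed analytic) submanifold of codimension $d:=\dim E^u<\infty$ with $T_{\fxpt}W^s_{\mathrm{loc}}=E^s$; together with the previous paragraph this already proves the submanifold assertion. To get $d=2$ I would exhibit two independent unstable directions: (i) perturb $\fxpt$ inside a one-parameter analytic family of bi-cubic maps along which $\rho$ is strictly monotone --- by the Gauss-map magnification above the rotation numbers of $\cren^j$ of the perturbation leave a neighborhood of $\rho_*$, so the tangent is not in $E^s$; (ii) for $\rho$ frozen at $\rho_*$, Zakeri's curve $c\mapsto B_{\rho_*,c}$ of \thmref{thm:zakeri2} is transverse to $S_*$ and varies $c$, and the analogous magnification for the combinatorial coordinate places its tangent outside $E^s$, independently of (i). There should be no further unstable directions because, by the first paragraph, $W^s_{\mathrm{loc}}(\fxpt)=S_*$ is cut out precisely by the two conditions $\rho=\rho_*$, $c=c_*$: the unstable manifold $W^u_{\mathrm{loc}}$ is transverse to $S_*$ and maps injectively onto a neighborhood of $(\rho_*,c_*)$ under $\bfv\mapsto(\rho,c)(\Gamma\bfv)$ (the injectivity being the delicate point), whence $d\le 2$.

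For the conjugacies, fix $\bfv\in S_*\cap W$ and put $f=\Gamma(\bfv)$. Since $\cC(f)=\cC=\cC(f_*)$ and $\rho(f)=\rho_*\notin\QQ$, \thmref{qcconj} together with \thmref{hybrid} (valid because $f$ is bi-cubic) and the standard pull-back argument produce, for each large $n$, a $K$-quasiconformal conjugacy $\Phi_n$ between the holomorphic-pair extensions of $\cR^n f$ and $\cR^n f_*$ which is conformal on the filled Julia set, with $K$ beau. Conjugating $\Phi_n$ by the inverses of the cylinder-renormalization changes of coordinates of \secref{section: cyl ren} --- each analytic in the underlying triple (\propref{analytic dependence}, \propref{analytic dependence2}) and of beau-bounded distortion by the complex bounds --- gives conjugacies between $f$ and $f_*$ on a fixed neighborhood of the circle. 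As $n\to\infty$ one has $\cren^n(\bfv)\to\fxpt$ and $\cR^{np}f\to\zeta_*$ (\thmref{main1}), the $\Phi_n$ converge, and these telescoped conjugacies stabilise to the unique $\phi_f$ fixing $0$; the geometric decay of the per-scale corrections makes $\phi_f$ smooth by the usual Koebe/bounded-geometry telescoping. All the ingredients --- the $\Phi_n$ (Ahlfors--Bers dependence on the invariant Beltrami field, itself analytic in $\bfv$) and the changes of coordinates --- depend smoothly on $\bfv$, uniformly in $n$ since $\cren^n\to\fxpt$ uniformly on $S_*\cap Y$ by \propref{local stable}; hence $\bfv\mapsto\phi_f$ is smooth.

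The genuinely hard part is the hyperbolicity input of the second paragraph: ruling out eigenvalues of $D\cren(\fxpt)$ on the unit circle, and controlling $W^u_{\mathrm{loc}}$ sharply enough to conclude $d\le 2$ rather than merely $d\ge 2$. Everything else is formal once one has the compactness of cylinder renormalization, the combinatorial rigidity of \thmref{qcconj}--\thmref{hybrid}, and the analytic-dependence results of \secref{section: cyl ren}; I expect the bulk of the work in the actual proof to be organized around that spectral obstacle, which is also precisely what feeds into \thmref{main2a}.
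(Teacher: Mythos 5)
Your proposal hinges on the Banach stable manifold theorem, which needs spectral hyperbolicity of $D\cren(\fxpt)$ as an \emph{input} (no spectrum on the unit circle, plus control of $\dim E^u$). You correctly flag this as "the genuinely hard part," but this is not a peripheral difficulty you can defer: in the paper the logical order is the reverse. Theorem~\ref{stable manifold} is proved first, by a purely geometric argument that needs no spectral information whatsoever, and only afterwards is hyperbolicity (Theorem~\ref{cor-hyperb}) deduced \emph{from} Theorem~\ref{stable manifold} together with the two explicit expanding cone-field estimates of Propositions~\ref{dir1} and~\ref{dir2}. So as written your route is circular with respect to the paper's architecture, and the part you leave open is exactly the substance of the theorem.

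The paper's actual proof avoids any spectral discussion. First it shows (Theorem~\ref{stable1}) that $D_*=\{\rho=\rho_*\}$ is a codimension-$1$ submanifold, by taking the analytic codimension-$1$ graphs $D_k=\{\rho=p_k/q_k,\ 0\text{ periodic with period }q_k\}$, showing via the cone field $\cC$ and Lemma~\ref{cone not in tk} that their tangent spaces stay uniformly transverse to a fixed direction $v$, and extracting a $C^1$-limit graph by Hahn--Banach/Alaoglu compactness. It then repeats the same scheme one level down: the submanifolds $D_{k,n}\subset D_k$ (same combinatorics up to level $n$, $\cR^n f$ uni-critical) are codimension-$1$ inside $D_k$ and transverse to a second cone field $\cC_1$ (Proposition~\ref{propconj2}), so the same limiting argument gives $S_*$ as a codimension-$2$ graph. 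The smooth dependence $\bfv\mapsto\phi_f$ also comes from this approximation: on each $D_{k,n}$ the maps are \emph{uni-critically} renormalizable after $n$ steps, so the existing rigidity results of \cite{Ya4} (Theorem~\ref{conj4}) give an analytic $f\mapsto\phi_f$ there, and one passes to the limit. Your telescoping Ahlfors--Bers construction of $\phi_f$ is a reasonable alternative sketch, but it neither supplies the needed uniformity to conclude smooth dependence on $\bfv$ nor can it substitute for the missing hyperbolicity input in the submanifold part of the argument.
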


\noindent
Our first step will be to show that
\begin{thm}\label{stable1}
  The set $D_*$ is a local submanifold of $\curr$ of codimension $1$.
  \end{thm}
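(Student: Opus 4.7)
The plan is to realize $D_*$ locally as the zero set of a real-analytic function on $\cu$ obtained from a natural rotation family. First I would construct an analytic one-parameter family of local transformations $\mathbf r_\tau : \cu \to \cu$ with $\mathbf r_0 = \operatorname{Id}$, implementing rigid rotations at the level of $\Gamma$: $\Gamma(\mathbf r_\tau \bfv) = R_\tau \circ \Gamma(\bfv)$, where $R_\tau(z) = z + \tau \pmod{\ZZ}$. Explicitly, writing $\bfv = (\phi_1, \phi_2, \phi_3)$, set $\mathbf r_\tau \bfv = (\phi_1, \phi_2, R_\tau \circ \phi_3)$; this preserves the normalization $\phi_1(0) = 0$ and hence stays in $\cu$. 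Since $\tau \mapsto R_\tau \circ \phi_3$ is affine in $\tau$, the family is jointly analytic in $(\bfv, \tau)$, and its infinitesimal generator $v_\rho := \partial_\tau \mathbf r_\tau \fxpt|_{\tau=0}$ is a distinguished nonzero tangent vector in $T_\fxpt \cu$.

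Next, the rotation number along this family is strictly monotone at $\fxpt$: since $\Gamma(\fxpt) = f_*$ is a critical circle map with irrational rotation number $\rho_*$, it has no invariant arcs, so any rigid rotation perturbation strictly shifts $\rho$. Combined with continuity of $\rho$ on $\cu$, there exist a neighborhood $W \ni \fxpt$ and a continuous function $\tau : W \to \RR$ with $\tau(\fxpt) = 0$ such that $\rho(\Gamma(\mathbf r_{\tau(\bfv)} \bfv)) = \rho_*$ for all $\bfv \in W$. Then $D_* \cap W = \tau^{-1}(0)$ set-theoretically, and the assignment $(\bfv, s) \mapsto \mathbf r_s \bfv$ gives a continuous bijection $(D_* \cap W) \times (-\eps, \eps) \to W$, identifying $D_* \cap W$ as a continuous codimension-one slice transverse to the $\mathbf r$-orbits.

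The main obstacle is to upgrade $\tau$ from a continuous to a real-analytic function, and this is where the rigidity machinery built up in the preceding sections enters. I would combine the complex a priori bounds (Theorem~\ref{thm:bounds}), which provide uniform holomorphic commuting pair extensions of $p\cR^n \Gamma(\bfv)$ for $\bfv \in W$, with the quasiconformal rigidity result Theorem~\ref{qcconj}. Together with the $\lambda$-lemma of Ma\~n\'e--Sad--Sullivan applied to dynamically defined periodic points of these holomorphic pair extensions, the hybrid conjugacies between $\Gamma(\bfv)$ and $f_*$ can be shown to depend analytically on $\bfv$. Tracing this analyticity back to the rotational alignment produces a real-analytic functional $\widetilde\tau : W \to \RR$ which measures the rotational mismatch; by the uniqueness of $\tau$ established in the previous step, $\widetilde\tau \equiv \tau$, so $\tau$ is real-analytic. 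Since $\partial_{v_\rho} \tau \ne 0$ at $\fxpt$, the implicit function theorem identifies $D_* \cap W$ as a real-analytic codimension-one submanifold of $\cu$, completing the proof of the theorem.
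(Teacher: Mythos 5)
Your first step---introducing the rotation family $\mathbf r_\tau\bfv=(\phi_1,\phi_2,R_\tau\circ\phi_3)$ and observing that the rotation number moves strictly through the irrational value $\rho_*$ along it---is sound and is essentially the paper's transversality input: the generator of your family lies in the cone field $\cC=\{\nu:\inf_x D\Gamma\nu(x)>0\}$ that the paper uses, and the paper likewise uses exactly the points $\bar{\mathbf v}_k=(\phi_1,\phi_2,R_{\theta_k}\circ\phi_3)$ produced by this family. But from there you only obtain a \emph{continuous} function $\tau$ with $D_*\cap W=\tau^{-1}(0)$, and a level set of a continuous function is not a submanifold; the entire content of the theorem is the regularity upgrade, and your proposed mechanism for it does not work. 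Hybrid/quasiconformal conjugacies to $f_*$ exist only for maps that already lie in the combinatorial class (i.e.\ on the set whose structure you are trying to determine), so ``analytic dependence of the hybrid conjugacy on $\bfv$'' is either vacuous off $D_*$ or presupposes a parametrization of $D_*$---the argument is circular. The $\lambda$-lemma is also not applicable here: it requires a holomorphic motion over a \emph{complex} parameter domain, whereas $D_*$ is cut out by the real, non-differentiable condition $\rho=\rho_*$; recall that $\tau\mapsto\rho(\Gamma(\mathbf r_\tau\bfv))$ is a devil's staircase (for critical circle maps it increases only on a set of parameters of zero Lebesgue measure), so no analytic functional $\widetilde\tau$ measuring ``rotational mismatch'' can exist. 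Indeed your conclusion is stronger than the theorem: $D_*$ is not claimed to be real-analytic at this stage, only a $C^1$ (smooth) local submanifold.

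The paper's route around this is worth internalizing. One replaces $D_*$ by the sets $D_k=\{\bfv:\rho(\Gamma(\bfv))=p_k/q_k$ and $0$ is periodic of period $q_k\}$, which \emph{are} genuine analytic codimension-one submanifolds by the Implicit Function Theorem (the defining equation $f^{q_k}(0)=0$ is analytic). Lemma~\ref{cone not in tk} shows each tangent space $T_{\bar{\mathbf v}_k}D_k$ misses the cone $\cC$; Hahn--Banach and Alaoglu then produce a limiting hyperplane $T$ transverse to a fixed $v\in\cC$, the Mean Value Theorem shows each $D_k$ is a graph over a \emph{uniform} neighborhood in $T$, and a $C^1$-convergent subsequence of these analytic graphs yields a smooth graph $G$ which is shown to coincide with $D_*$ near $\fxpt$. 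If you want to salvage your approach, you must supply an analogue of this compactness-of-graphs step; the rigidity theorems of Section~5 enter the proof of Theorem~\ref{main1}, not the proof of Theorem~\ref{stable1}.
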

As before denote $p_k/q_k$ the reduced form of the $k$-th
continued fraction convergent of $\rho_*$. Furthermore, 
define $D_k$ as the set of ${\mathbf v}\in\cu$ for which $\rho(g)=p_k/q_k$ and 
$0$ is a periodic point of $g$ with period $q_k$, where $g=\Gamma({\mathbf v}).$
As follows from the Implicit Function Theorem, this is a codimension $1$ submanifold of $\cu$.
Let us define a cone field $\cE$ in the tangent bundle of $\cu$ given by the condition:
$$\cE=\{\nu\in T\cu|\; \inf_{x\in\RR}D\Gamma \nu (x)>0\}.$$
\begin{lem}
\label{cone not in tk}
Let ${\mathbf v}\in D_k$, and denote $T_{{\mathbf v}}D_k$ the tangent space to $D_k$ at this point.
Then $T_{{\mathbf v}}D_k\cap \cE=\emptyset$.
\end{lem}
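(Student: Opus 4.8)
The plan is to argue by contradiction, using the fact that points of $D_k$ correspond to maps for which $0$ is a periodic point of period $q_k$ with rotation number $p_k/q_k$. Suppose $\nu\in T_{\bar{\mathbf v}}D_k\cap\cC$. Since $\bar{\mathbf v}\in D_k$, the map $g=\Gamma(\bar{\mathbf v})$ has $g^{q_k}(0)=0$ (as a lift, $\bar g^{q_k}(0)=p_k$), and the condition defining $D_k$ persists infinitesimally along $\nu$: differentiating the defining equation of $D_k$ in the direction $\nu$ gives that the first variation of $\bar g^{q_k}(0)-p_k$ vanishes. The key point is that this first variation can be computed by the chain rule along the orbit $0,\bar g(0),\ldots,\bar g^{q_k-1}(0)$, and it expresses as a sum of terms of the form $(D\bar g^{q_k-1-i})(\bar g^{i+1}(0))\cdot (D\Gamma\,\nu)(\bar g^i(0))$ over $i=0,\ldots,q_k-1$. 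Because $g$ is a circle homeomorphism all the factors $D\bar g^{j}$ are strictly positive, and because $\nu\in\cC$ every term $(D\Gamma\,\nu)(\bar g^i(0))$ is bounded below by a fixed positive constant $\inf_x D\Gamma\nu(x)>0$. Hence the whole sum is strictly positive, contradicting the vanishing of the first variation.

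Concretely, I would first set up the lift picture: fix the lift $\bar g$ of $g=\Gamma(\bar{\mathbf v})$ normalized as in the paper, so that membership in $D_k$ is the equation $\Phi(\bar{\mathbf u})\equiv \overline{\Gamma(\bar{\mathbf u})}{}^{\,q_k}(0)-p_k=0$ near $\bar{\mathbf v}$ (together with the combinatorial/rotation-number condition, which is locally automatic on this component). Then $T_{\bar{\mathbf v}}D_k\subset\Ker D\Phi(\bar{\mathbf v})$, so it suffices to show $D\Phi(\bar{\mathbf v})\nu\neq 0$ for $\nu\in\cC$. Second, I would write out $D\Phi(\bar{\mathbf v})\nu$ by differentiating the $q_k$-fold composition, obtaining the telescoping sum above; here one uses that $\bar{\mathbf u}\mapsto\overline{\Gamma(\bar{\mathbf u})}$ is analytic (Proposition~\ref{projection1}) so that the variation $D\Gamma\,\nu$ is a well-defined analytic vector field along the circle, and that the orbit points $\bar g^i(0)$ are real. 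Third, I would invoke positivity: $D\bar g^j>0$ everywhere since $g$ is an orientation-preserving circle homeomorphism, and $(D\Gamma\,\nu)(\bar g^i(0))\geq \inf_{x\in\RR}(D\Gamma\,\nu)(x)>0$ by the definition of the cone $\cC$. Summing the $q_k\geq 1$ strictly positive terms yields $D\Phi(\bar{\mathbf v})\nu>0$, whence $\nu\notin T_{\bar{\mathbf v}}D_k$.

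The main obstacle, and the step deserving the most care, is the bookkeeping of the first-variation formula: one must make sure that the derivative of the composition $\bar g^{q_k}$ with respect to the \emph{parameter} $\bar{\mathbf u}$ (not just the point) is correctly assembled, i.e.\ that each of the $q_k$ occurrences of $g$ in the composition contributes a term in which the parameter-variation $D\Gamma\,\nu$ is evaluated at the appropriate orbit point and then pushed forward by the remaining $D\bar g$'s. This is a standard cocycle/telescoping computation, but because $\Gamma$ is itself a nontrivial analytic map $\mathbf C_\cU\to{\aaa A}_{U_1}$, one should phrase it cleanly as: $D\Phi(\bar{\mathbf v})\nu=\sum_{i=0}^{q_k-1} \bigl(D\bar g^{\,q_k-1-i}\bigr)\!\bigl(\bar g^{i+1}(0)\bigr)\cdot\bigl(D\Gamma\,\nu\bigr)\!\bigl(\bar g^{i}(0)\bigr)$, and then the positivity argument is immediate. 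A minor secondary point is to note that along $D_k$ the combinatorial constraints in the definition of $\cu$ (and the rotation number $p_k/q_k$) are automatically satisfied on a neighborhood, so that $D_k$ is genuinely cut out near $\bar{\mathbf v}$ by the single scalar equation $\Phi=0$ with $D\Phi(\bar{\mathbf v})\neq 0$ — which is exactly the Implicit Function Theorem statement already recorded before the lemma, and which we are here sharpening by identifying a cone transverse to the kernel.
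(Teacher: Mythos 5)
Your proof is correct, and it rests on the same underlying idea as the paper's: the pointwise positivity of $D\Gamma\,\nu$ (the cone condition) forces the orbit value $\bar g^{q_k}(0)$ to move strictly. The difference is one of presentation rather than of substance. The paper argues qualitatively: for any curve $\bar{\mathbf v}_t=\bar{\mathbf v}+t\nu+o(t)$ and small $t>0$, the lift satisfies $\pi^{-1}\circ f_t\circ\pi>\pi^{-1}\circ f\circ\pi$ pointwise (since $\inf D\Gamma\nu>0$ dominates the $o(t)$ term), and then by monotonicity of composition with increasing maps $\bar f_t^{\,q_k}(0)>\bar f^{\,q_k}(0)=p_k$, so the curve leaves $D_k$ immediately and $\nu$ cannot be tangent. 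You instead write out the first variation $D\Phi(\bar{\mathbf v})\nu$ of the defining equation $\Phi=\bar g^{q_k}(0)-p_k$ as a telescoping sum $\sum_{i=0}^{q_k-1}(D\bar g^{\,q_k-1-i})(\bar g^{i+1}(0))\cdot(D\Gamma\nu)(\bar g^i(0))$, each term of which is strictly positive, and conclude $\nu\notin\Ker D\Phi=T_{\bar{\mathbf v}}D_k$. Your version makes the identification of $T_{\bar{\mathbf v}}D_k$ with $\Ker D\Phi$ fully explicit and gives a quantitative lower bound on the transversality (useful later when such bounds are extracted via Hahn--Banach), at the cost of the cocycle bookkeeping; the paper's version avoids that bookkeeping entirely by appealing to monotonicity. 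Both arguments are sound.
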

\begin{pf}
Let $\nu\in \cE$ and suppose $\{ {\mathbf v}_t\}\subset \cu$ is a one-parameter family
such that  $${\mathbf v}_t={\mathbf v}+t\nu+o(t),$$
and set $f=\Gamma({\mathbf v})$ and $f_t=\Gamma({\mathbf v}_t)$, so that
$$\pi^{-1}\circ f_t\circ\pi=\pi^{-1}\circ f\circ\pi+tD\Gamma(\nu)+o(t).$$
Then for sufficiently small values of $t$, 
$\pi^{-1}\circ f_t\circ \pi>\pi^{-1}\circ f\circ\pi$. Hence $f_t^{q_k}(0)\neq f^{q_k}(0)$
and thus $f_t\notin D_k$.
\end{pf}

%%%%%%%%%%%%%%%%%%%%%%%%%%%%%%%%%%%%%%%%%%%%%%%%%%%%%%%%%%%%%%%%%%%%%%%%%%%%%%%%%%%%

\noindent
Elementary considerations of the Intermediate Value Theorem imply that 
for every $k$ there exists a value of $\theta\in(0,1)$ such that 
the map $f_\theta=R_{\theta}\circ \Gamma(\fxpt)\in D_k$. 
Moreover,
if we denote $\theta_k$ the angle  with the smallest absolute value satisfying this 
property, then $\theta_k\to 0$.
Set $${\mathbf v}_k=(\phi_1,\phi_2,R_{\theta_k}\circ \phi_3).$$
For $k$ large enough, ${\mathbf v}_k\in\cu\cap D_k$.
Let $T_k=T_{{\mathbf v}_k}D_k$.
Fix $v\in\cE$. By  \lemref{cone not in tk} and the Hahn-Banach Theorem there exists $\eps>0$
such that for every $k$ there exists a linear functional $h_k\in (T\cu)^*$
with $||h_k||=1$,
such that $\operatorname{Ker}h_k=T_k$ and $h_k(v)>\eps$.
By the Alaoglu Theorem, we may select a subsequence
$h_{n_k}$ weakly converging to $h\in (T\cu)^*$. Necessarily 
$v\notin \operatorname{Ker}h$, so
$h\not\equiv 0$. Set $T=\operatorname{Ker}h$.

\smallskip
\noindent
{\it Proof of \thmref{stable1}.} 
By the above, we may select a splitting $T\cu=T\oplus v\cdot\RR$.
Denote $p:T\cu\to T$ the corresponding projection, and let
$\psi:\cu\to T\cu$ be a local chart at $\fxpt$.
\lemref{cone not in tk} together with the Mean Value Theorem imply that
$p\circ \psi:D_k\to T$ is an isomorphism onto the image,
and there exists an open neighborhood $\cU$ of the origin in $T$, such that 
$p\circ \psi(D_k)\supset \cU$. Since the graphs $D_{k}$ are analytic, we may
select a $C^1$-converging subsequence $D_{k_j}$ whose limit is a smooth graph $G$ over $\cU$.
 Necessarily, for every $g\in G$, $\rho(g)=\rho_*$. As we have seen above,
every point $g\in D_*$ in a sufficiently small neighborhood of $\fxpt$ is in
$G$, and thus $G$ is an open neighborhood in $D_*$.
$\Box$

We now proceed to describing the manifold structure of $S_*\subset D_*$. Let $c_f\neq 0$ denote the non-zero critical point of $f$. 
Define a cone field $\cE_1$ in $T\cu$ consisting of tangent vectors $\nu$ in $T_\bfv\cu$ such that setting $\upsilon=D\Gamma\nu$, we have
$$\upsilon'(c_f)>0.$$
Furthermore, for $k>n+2$, let $D_{k,n}\subset D_k$  consist of maps $f$ such that
$f$ has the same combinatorics as $\cE$ up to the level $n$, and the $n$-th renormalization $\cR^n(f)$ is uni-critical (that is, two critical orbits collide). Elementary considerations again imply that for every $f\in S_*$ there exists a sequence $\{f_{k,n(k)}\}$ converging to it.

Since the $n$-th renormalization of $f\in D_{k,n}$ is a uni-critical circle mapping, we can apply the results of \cite{Ya4} to conclude:
\begin{thm}
  \label{conj4}
  All maps $f\in\Gamma(D_{k,n})$ are smoothly conjugate. Moreover, let $\hat f$ be an arbitrary base point in $D_{k,n}$. Then, for $f\in D_{k,n}$  the conjugacy $\phi_f$ which realizes
  $$\phi_f\circ f\circ\phi_f^{-1}=\hat f$$
 and sends $0$ to $0$ can be chosen so that the dependence $f\mapsto \phi_f$ is analytic in $S_n$.
  \end{thm}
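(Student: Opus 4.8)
The plan is to establish \thmref{conj4} by reducing it to the corresponding statement for uni-critical circle maps, which is available in \cite{Ya4}. First I would unwind the definition of $D_{k,n}$: for $f\in D_{k,n}$ the renormalization $\cR^n(f)$ has two colliding critical orbits, so it is (conjugate to) a uni-critical analytic circle map, and moreover all such maps share the rational rotation number $p_k/q_k$ for $\rho(f)$ together with the same combinatorics $\cC$ up through level $n$. In particular, the maps $\Gamma(f)$ for $f\in D_{k,n}$ all have the \emph{same} rotation number and combinatorial type, so their dynamics on the circle is topologically conjugate. The goal is to promote topological conjugacy to a smooth one and to make the conjugacy depend analytically on the base point, using the uni-critical renormalization theory.

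The key steps, in order, would be: (1) Fix a base point $\hat f\in D_{k,n}$ and observe that for $f\in D_{k,n}$, the $n$-th renormalizations $\cR^n(f)$ and $\cR^n(\hat f)$ are uni-critical circle maps of the same combinatorial type and rotation number; apply the hyperbolicity/rigidity results of \cite{Ya4} (the uni-critical analogue of Theorems~\ref{main1}--\ref{main2a}) to obtain that $\cR^n(f)$ and $\cR^n(\hat f)$ are smoothly --- indeed $C^\omega$ after finitely many further renormalizations --- conjugate, with the conjugacy depending analytically on $f$ through the analytic dependence of the cylinder renormalization operator (\propref{analytic dependence}, \propref{analytic dependence2}) and the analytic dependence of the linearizing / conjugating coordinates on the uni-critical renormalization horseshoe established in \cite{Ya4}. (2) Pull this conjugacy back through the finitely many renormalization steps: since $\rho(f)$ is \emph{rational} here, $f$ is finitely renormalizable and the cylinder renormalization tower relating $f$ to $\cR^n(f)$ consists of finitely many analytic operations (\propref{prop cylren1}, \propref{conj-maps}), each of which carries a smooth conjugacy upstairs to a smooth conjugacy downstairs and preserves analytic dependence on parameters. (3) Normalize the conjugacy to fix $0$: any two conjugacies between $f$ and $\hat f$ differ by an element of the (finite) centralizer of $\hat f$, and requiring $\phi_f(0)=0$ together with orientation-preservation pins down a unique choice, so the normalized family $f\mapsto\phi_f$ inherits analyticity. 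Restricting to the slice $S_n$ (the germ of $D_{k,n}$ inside $S_*$, or the relevant local submanifold) gives the stated analytic dependence.

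The main obstacle I anticipate is Step (2): controlling the passage from the deep renormalization $\cR^n(f)$ back up to $f$ itself while retaining both smoothness of the conjugacy and, crucially, its analytic dependence on $f$. Although each individual cylinder-renormalization step is analytic by \propref{analytic dependence}, one must check that the conjugating maps compose correctly --- the conjugacy between $f$ and $\hat f$ is obtained by sewing together the conjugacy on the fundamental crescent (coming from the upstairs conjugacy via the projection $\pi_f$) with its translates under the dynamics, and one needs the sewn map to be genuinely smooth across the seam and to vary analytically. Here the Bers--Royden extension of holomorphic motions and the Ahlfors--Bers machinery already invoked in the proof of \propref{analytic dependence} do the analytic bookkeeping, while smoothness across the seam follows because the two pieces agree to infinite order there (the conjugacy intertwines $f$ and $\hat f$, both analytic, on an open set). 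A secondary technical point is that \cite{Ya4} must supply not just smooth rigidity for a single uni-critical map but \emph{uniform, parametrized} rigidity over the family $\cR^n(D_{k,n})$; this is exactly the content of the hyperbolic renormalization picture for uni-critical maps, so I would cite it directly rather than re-prove it. Finally, one should remark that $\Gamma$ is analytic (\propref{projection1}) so that statements about $D_{k,n}\subset\cu$ and about $\Gamma(D_{k,n})$ are interchangeable without loss.
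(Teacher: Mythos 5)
The paper's own proof of \thmref{conj4} is a single sentence: since $\cR^n(f)$ is uni-critical, the results of \cite{Ya4} apply. Your proposal takes the same starting point, which is fine, but the place where you try to supply the missing details --- your step (2), the ``pull back'' --- is where I see a real gap.

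The propositions you cite in support of step (2) go in the \emph{opposite} direction from what you need. \propref{conj-maps} says: if $f$ and $g$ are analytically conjugate, then their cylinder renormalizations coincide (conjugacy upstairs $\Rightarrow$ equality downstairs). Likewise \propref{prop cylren1} tells you how the renormalization of a given map relates to the map, not how to recover the map from its renormalization. What you actually need is the reverse implication: from a smooth conjugacy between $p\cR^n f$ and $p\cR^n\hat f$ (defined on the fundamental interval $I_n\cup I_{n+1}$), build a smooth conjugacy between $f$ and $\hat f$ on the whole circle. That is a genuine \emph{lifting} problem. The natural construction is to spread the given conjugacy along the dynamical partition $\cP_n$ --- on each piece $f^j(I_n)$ define $\phi$ by $\hat f^{\,j}\circ\psi\circ f^{-j}$ --- and then verify that the pieces match to all orders across the partition boundary points (which include the critical orbit). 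That verification is exactly what turns a collection of local conjugacies into one smooth circle conjugacy, and it is not supplied by the cylinder-renormalization propositions; the ``agree to infinite order across the seam'' remark in your last paragraph gestures at it without establishing it. Since $\cR^n$ is far from injective (even up to conjugacy), conjugacy at level $n$ simply does not imply conjugacy at level $0$ without this kind of argument.

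A secondary point: your phrase ``$C^\omega$ after finitely many further renormalizations'' imports the irrational renormalization-horseshoe picture, but here $\rho(f)=p_k/q_k$ is rational and the renormalization tower for $f$ terminates after finitely many levels. The source of analyticity and of the uniqueness of the $0$-fixing conjugacy in the present setting is the B{\"o}ttcher-type normal form at the superattracting periodic critical orbit (together with the collision of the second critical orbit with it), not convergence towards a renormalization fixed point. This is presumably what the relevant part of \cite{Ya4} actually supplies, so your argument should be phrased in those terms rather than in the language of deep renormalization asymptotics.
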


We note:
\begin{prop}\label{propconj2}
  The set $D_{k,n}$ is a local submanifold of $D_k$ of codimension $1$, and the cone field $\cE_1$ lies outside of its tangent bundle.
  \end{prop}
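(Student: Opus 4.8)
The plan is to realize $D_{k,n}$ locally as the zero set of a single analytic real functional on $D_k$, and to use the cone field $\cC_1$ to check at one stroke that this functional restricts to a submersion on $D_k$ --- which yields the codimension-$1$ claim via the Implicit Function Theorem --- and that $\cC_1$ avoids its kernel. This mirrors the proof that $D_k$ is a codimension-$1$ submanifold of $\cu$ together with Lemma~\ref{cone not in tk}, now carried out one renormalization level deeper, following the second critical point in place of the point $0$.

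First I would set up the defining functional. Fix $\bfv_0\in D_k$ with $f_0=\Gamma(\bfv_0)\in D_{k,n}$. By complex {\it a priori} bounds (Theorem~\ref{thm:bounds}) and continuity of renormalization, on a neighborhood of $\bfv_0$ in $D_k$ the map $f=\Gamma(\bfv)$ keeps the same combinatorics up to level $n$, the renormalization $\cR^n f$ is well defined and varies analytically with $\bfv$, and the second critical point $c_f=c_f(\bfv)$ --- always a nondegenerate cubic critical point inherited from the middle copy of $\mathbf f$ --- varies analytically with $\bfv$. Since $0$ is periodic for $f\in D_k$ of period $q_k$, the pair $\cR^n f$ is uni-critical exactly when the forward orbit of $c_f$ meets the cycle of $0$; by the frozen level-$n$ combinatorics this happens at one prescribed moment, i.e. when the analytic $\RR$-valued functional
$$\Phi(\bfv)=c_f(\bfv)-f^{\,s}(0),\qquad s=s(\cC,n)\in\{1,\dots,q_k-1\},$$
vanishes, so that $D_{k,n}=\{\bfv:\Phi(\bfv)=0\}$ near $\bfv_0$.

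Next I would run a first-variation argument in the style of Lemma~\ref{cone not in tk}. Choose $\nu=(\dot\phi_1,\dot\phi_2,\dot\phi_3)\in\cC_1$ localized near the second critical point: take $\dot\phi_1=\dot\phi_3=0$ and $\dot\phi_2$ supported so narrowly near $\phi_2^{-1}(0)$ that $\upsilon=D\Gamma\nu$ (which then equals $\phi_3'\cdot(\mathbf f'\circ u_3)\cdot(\dot\phi_2\circ u_2)$, in the notation $u_1=\phi_1(z)$, $u_2=\mathbf f(u_1)$, $u_3=\phi_2(u_2)$) is supported in a neighborhood of $c_f$ disjoint from the other points of the cycle of $0$. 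Because $\mathbf f$ has a cubic critical point at $0$ and $u_3$ vanishes simply at $c_f$, the factor $\mathbf f'\circ u_3$ forces $\upsilon$ to vanish to order exactly two at $c_f$; in particular $\upsilon(c_f)=0$, so $f^{\,q_k}_t(0)$ and $f^{\,s}_t(0)$ are unchanged to first order along $\bfv_t=\bfv_0+t\nu+o(t)$, which keeps $\bfv_t$ tangent to $D_k$. Differentiating the identity $f_t'(c_{f_t})\equiv0$ (together with $f_t''(c_{f_t})\equiv0$) then shows that the second critical point drifts with first-order speed $\dot c_f$ proportional to the leading coefficient of $\upsilon$ at $c_f$ --- exactly the quantity whose sign the condition $\upsilon'(c_f)>0$ prescribes --- and $\dot c_f\ne0$. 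Hence $\frac{d}{dt}\Phi(\bfv_t)\big|_{0}=\dot c_f\ne0$. This shows simultaneously that $\Phi|_{D_k}$ is a submersion near $\bfv_0$, so that by the Implicit Function Theorem $D_{k,n}$ is a local submanifold of $D_k$ of codimension $1$ with $T_{\bfv_0}D_{k,n}=\Ker\big(D(\Phi|_{D_k})\big)$, and that any $\nu\in\cC_1\cap T_{\bfv_0}D_{k,n}$ must satisfy $D\Phi(\nu)=0$, so the cone field $\cC_1$ lies outside the tangent bundle of $D_{k,n}$.

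The main obstacle is the variational bookkeeping at the collision: the critical orbit of $c_f$ runs through the other critical point $0$, so the naive formulas for $\frac{d}{dt}f^{\,s}_t(0)$ and for $\dot c_f$ degenerate, and one must invoke the precise local normal form of $\mathbf f$ at its cubic critical point --- equivalently, the genuine order of the critical point that $\cR^n f$ acquires at the collapse --- to isolate the surviving term and match it with the defining quantity of $\cC_1$. I would handle this as in the analysis underlying Theorem~\ref{stable1} and the corresponding arguments of \cite{Ya4}: localize near $c_f$, control the relevant inverse branches by the real {\it a priori} bounds, and transport the estimate through the renormalization operator.
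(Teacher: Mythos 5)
Your proposal diverges from the paper's proof and, as written, contains a genuine gap in the cone-field half of the statement.

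The paper's argument for the second assertion is brief but structurally different from yours: it invokes Theorem~\ref{conj4} to say that all maps in $\Gamma(D_{k,n})$ are smoothly conjugate with conjugacy depending analytically on the base point, so any tangent vector to $D_{k,n}$ pushes forward under $D\Gamma$ to a vector field $\upsilon$ that is a coboundary, $\upsilon=\omega\circ f-f'\omega$. Differentiating at $c_f$ and using that $c_f$ is a \emph{cubic} critical point, so $f'(c_f)=f''(c_f)=0$, one gets $\upsilon'(c_f)=0$. Since $\cC_1$ is defined by $\upsilon'(c_f)>0$, it is disjoint from $T D_{k,n}$. That is the entire content of the second part.

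Your variational attempt has an internal contradiction. You choose $\nu=(0,\dot\phi_2,0)$ localized near $\phi_2^{-1}(0)$ and then correctly compute that $\upsilon=D\Gamma\nu$ vanishes to order exactly two at $c_f$ because $\mathbf f'\circ u_3$ has a double zero there. But order-two vanishing means $\upsilon(c_f)=0$ \emph{and} $\upsilon'(c_f)=0$, so this $\nu$ is not in $\cC_1$ --- contrary to your opening claim ``Choose $\nu\in\cC_1$''. The subsequent identification of ``the leading coefficient of $\upsilon$ at $c_f$'' with ``the quantity whose sign $\upsilon'(c_f)>0$ prescribes'' is therefore off by one order: the drift $\dot c_f$ you compute is governed by $\upsilon''(c_f)$, not $\upsilon'(c_f)$, and has no direct relation to the defining condition of $\cC_1$. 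Finally, the last sentence of your argument draws the wanted conclusion from ``any $\nu\in\cC_1\cap T_{\bfv_0}D_{k,n}$ satisfies $D\Phi(\nu)=0$'', but that is automatic for any vector tangent to the zero set of $\Phi$ and proves nothing; what is needed is the converse implication $\nu\in\cC_1\Rightarrow D\Phi(\nu)\ne 0$, which your construction does not supply. Your functional $\Phi=c_f-f^s(0)$ is a reasonable candidate for establishing the codimension-$1$ claim (which the paper dispatches with ``Implicit Function Theorem considerations''), but the submersion property would have to be verified by a test vector that does \emph{not} kill $\upsilon$ to second order at $c_f$, and the cone-field claim is more cleanly obtained from the cohomological-equation rigidity as in the paper rather than by an explicit perturbation.
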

\begin{proof}
  The first statement is a straightforward consequence of Implicit Function Theorem considerations. By Theorem~\ref{conj4}, a vector field $\upsilon\in D\Gamma T_\bfv D_{k,n}$
  satisfies the cohomological equation
  $$\upsilon=\omega\circ f-f'\omega$$
  where $f=\Gamma(\bfv)$ and $\omega$ is a smooth vector field on the circle. In particular, $\upsilon'(c_f)=0$.
\end{proof}

We also note:
\begin{prop}
  \label{cone3}
  The cone field $\cE_1$ intersects $T_\bfv D_*$ at every point $\bfv$.
\end{prop}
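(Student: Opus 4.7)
The plan is to exhibit, at each $\bfv=(\phi_1,\phi_2,\phi_3)\in\cu$, an explicit tangent vector in $\cC_1\cap T_\bfv\cu$. Identifying $T_\bfv\cu$ with an open subset of the product Banach space $\tl{\aaa A}_{U_1}\oplus\tl{\aaa A}_{U_2}\oplus\tl{\aaa A}_{U_3}$ (subject to the single constraint $\psi_1(0)=0$), the task becomes to construct a triple $\nu=(\psi_1,\psi_2,\psi_3)$ realizing $(D\Gamma\nu)'(c_f)>0$. Because the cone condition is open and linear, it suffices to show that the real linear functional $L:\nu\mapsto (D\Gamma\nu)'(c_f)$ is not identically zero on $T_\bfv\cu$; the infinite dimensionality of the $\tl{\aaa A}_{U_i}$ then produces a $\nu$ on which $L$ takes any prescribed sign.

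The computation of $L$ proceeds directly by the chain rule applied to the one-parameter family $g_t = (\phi_3+t\psi_3)\circ\mathbf f\circ(\phi_2+t\psi_2)\circ\mathbf f\circ(\phi_1+t\psi_1)$, separating the three contributions. This yields an explicit decomposition $L(\nu)=L_1(\psi_1)+L_2(\psi_2)+L_3(\psi_3)$, with each $L_i$ involving values and derivatives of $\psi_i$ at specific orbit points through the $\phi_i$'s. Each $L_i$ is a concrete polynomial expression in the derivatives of $\mathbf f$ at iterates of $c_f$ and the derivatives of the $\phi_j$ at those iterates; its structure is completely dictated by the defining identity $\phi_2(\mathbf f(\phi_1(c_f)))\in\ZZ$ characterizing the non-zero critical point $c_f$.

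The main anticipated obstacle is to verify that $L\not\equiv 0$ despite the simultaneous vanishings $\mathbf f'(0)=\mathbf f''(0)=0$ at the cubic critical point of $\mathbf f$, which propagate through the composition to kill many natural chain-rule terms. One must isolate a surviving contribution, typically supported by $\mathbf f'''(0)\ne 0$ combined with evaluations of the $\phi_j'$'s and $\psi_i$'s at points \emph{off} the critical orbit, where the univalence of each $\phi_j$ guarantees non-vanishing derivatives, and where $\phi_1(c_f)\ne 0 \pmod\ZZ$ (since $\phi_1$ is a diffeomorphism fixing $0$ with $c_f\ne 0$) guarantees $\mathbf f'(\phi_1(c_f))\ne 0$. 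Once such a non-zero coefficient is exhibited for at least one $L_i$, choosing $\psi_i$ in $\tl{\aaa A}_{U_i}$ to take any prescribed value (resp.\ jet) at the relevant orbit point --- possible by density of trigonometric polynomial approximants --- produces a $\nu$ with $L(\nu)>0$, yielding the desired element of $\cC_1\cap T_\bfv\cu$ and completing the proof.
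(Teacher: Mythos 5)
Your overall strategy --- show that the real linear functional $L(\nu)=(D\Gamma\nu)'(c_f)$ is not identically zero on $T_\bfv\cu$ --- is already different from the paper's, which instead argues by intersecting $\cC_1$ with the previously introduced monotone cone $\cC$ and its opposite $-\cC$. More importantly, your argument does not close at its only nontrivial point, and the chain-rule computation you postpone actually goes the wrong way. Write $f=\phi_3\circ\mathbf f\circ\phi_2\circ\mathbf f\circ\phi_1$ and recall that the second critical point $c_f$ is the solution of $\phi_2(\mathbf f(\phi_1(x)))\in\ZZ$; since the $\phi_i$ are univalent near the circle and $\mathbf f$ has cubic critical points, $c_f$ is a \emph{cubic} critical point of $f$, so $f'(c_f)=f''(c_f)=0$. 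Now carry out your decomposition $L=L_1+L_2+L_3$. For $\psi_1$ one gets $\upsilon=(F'\circ\phi_1)\,\psi_1$ with $F=\phi_3\circ\mathbf f\circ\phi_2\circ\mathbf f$, whence $\upsilon'(c_f)=\frac{f''(c_f)}{\phi_1'(c_f)}\,\psi_1(c_f)=0$. For $\psi_3$ one gets $\upsilon=\psi_3\circ G$ with $G=\mathbf f\circ\phi_2\circ\mathbf f\circ\phi_1$, and $G'(c_f)=f'(c_f)/\phi_3'(G(c_f))=0$, so again $\upsilon'(c_f)=0$. For $\psi_2$, $\upsilon=P\cdot(\psi_2\circ\mathbf f\circ\phi_1)$ with $P=(\phi_3\circ\mathbf f)'\circ\phi_2\circ\mathbf f\circ\phi_1$, and both $P(c_f)$ and $P'(c_f)$ vanish because $\mathbf f'$ and $\mathbf f''$ both vanish at the integer point $\phi_2(\mathbf f(\phi_1(c_f)))$. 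Hence $L\equiv 0$ on all of $T_\bfv\cu$: there is no ``surviving contribution supported by $\mathbf f'''(0)\neq 0$ at points off the critical orbit,'' because every term of $L$ is evaluated exactly at $c_f$ and its images under the composition, not off them.

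This is not a presentational detail: it shows that the condition $\upsilon'(c_f)>0$, read literally for $\upsilon=D\Gamma\nu=\partial_t f_t|_{t=0}$ with $c_f$ the degenerate critical point, cannot be produced by any tangent vector --- along any family of bi-cubic maps the critical point persists and remains degenerate, so $f_t'(c_{f_0})=O(t^2)$. Before any nonvanishing can be established you must first pin down what quantity the cone $\cC_1$ is actually meant to measure (a functional genuinely transversal to the critical-collision locus $D_{k,n}$, e.g.\ the first-order displacement of the second critical orbit relative to the first), which your proposal does not do. As written, the computation you outline proves the opposite of what you need.
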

\begin{proof}
Evidently, $\cE_1$ has non-zero intersection with $\cE$ and $-\cE$ at every point $\bfv$. The claim follows.
  \end{proof}

Applying the proof of Theorem~\ref{stable1} {\it mutatis mutandis} (replacing $D_k\to D_*$ with $D_{k,n(k)}\to S_*$), we obtain the statement of Theorem~\ref{stable manifold}.

\section{Hyperbolicity of $\bfv_*$}
\label{sec-hyperb}
Let us denote
$$\cL\equiv D\cren|_{\bfv_*}.$$
We have (see \cite{GorYa}):
\begin{prop}
  \label{prop-comp-lin}
The linear operator $\cL$ is compact.
  \end{prop}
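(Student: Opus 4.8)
The plan is to establish compactness of $\cL = D\cren|_{\bfv_*}$ by the standard device used for renormalization operators of this kind: exhibit $\cren$ (near $\fxpt$) as a composition of a bounded linear operator with a \emph{compact inclusion} between Banach spaces of analytic functions on nested annuli. The key point is that cylinder renormalization, by Theorem~\ref{exist crescent} and Proposition~\ref{renorm-operator}, produces maps $\hat g \in {\aaa A}_W$ that are analytic on an equatorial annulus $W$ strictly larger than the annulus $U_1$ on which $f_*$ and its neighborhood live — in the notation of Proposition~\ref{renorm-operator}, the renormalization of $f_*$ is defined in $U_1\Supset U_*$. So the renormalization lands in $\mathbf C_{\hat\cU}$ with each coordinate annulus compactly containing the corresponding coordinate annulus of $\cU$.

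First I would recall that, by Proposition~\ref{analytic dependence} and its corollaries (Propositions~\ref{analytic dependence2} and \ref{renorm-operator}), on a neighborhood $V(\fxpt)\subset\mathbf C_\cU$ the operator $\cren$ is analytic with values in $\mathbf C_\cU$; moreover the construction actually yields, for $\bfv$ near $\fxpt$, a triple of functions analytic and bounded on annuli $\hat U_i \Supset U_i$. Let $\widehat{\mathbf C}$ denote the product Banach manifold built from these larger annuli $\hat\cU=(\hat U_1,\hat U_2,\hat U_3)$. Then $\cren$ factors as
$$
V(\fxpt)\;\xrightarrow{\;\widehat{\cren}\;}\;\widehat{\mathbf C}\;\xrightarrow{\;\iota\;}\;\mathbf C_\cU,
$$
where $\widehat{\cren}$ is the analytic renormalization operator viewed as taking values in the \emph{larger} space, and $\iota$ is the natural restriction map $\phi\mapsto\phi|_{U_i}$ on each factor. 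Differentiating at $\fxpt$ gives $\cL = D\iota\circ D\widehat{\cren}|_{\fxpt}$, and $D\iota$ is again the restriction map on the tangent (Banach) spaces $\tl{\aaa A}_{\hat U_i}\to\tl{\aaa A}_{U_i}$.

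The heart of the matter is then the classical fact that restriction of bounded analytic functions from an annulus (or domain) to a compactly contained subannulus is a \emph{compact} operator: this is a Montel-normal-families argument, since a bounded sequence in $\tl{\aaa A}_{\hat U_i}$ has a subsequence converging uniformly on compact subsets of $\hat U_i$, hence uniformly on $\ov{U_i}$, i.e.\ in the norm of $\tl{\aaa A}_{U_i}$. A finite product of compact operators is compact, so $D\iota$ is compact; composing with the bounded operator $D\widehat{\cren}|_{\fxpt}$ yields that $\cL$ is compact.

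The main obstacle — and the one place care is genuinely needed — is verifying the \emph{gain of domain}: that the cylinder renormalization of every $\bfv$ in a fixed neighborhood of $\fxpt$ is analytic on a fixed annulus strictly larger than $U_1$, with uniform bounds, so that $\widehat{\cren}$ is well-defined and bounded (indeed analytic) into $\widehat{\mathbf C}$. This is exactly what Proposition~\ref{ren open set} provides (the renormalization $\hat g\in{\aaa A}_W$ for \emph{any} equatorial annulus $W\Subset U_1$, with the fundamental crescent depending continuously on $\bar u$), combined with Proposition~\ref{renorm-operator}(2), which says $f_*$'s renormalization is defined on $U_1\Supset U_*$; choosing $U_*$ so that $\ov{U_*}\Subset W$ for the relevant $W$ does the job. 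Once the factorization through a larger-annulus space is in place, compactness is immediate from normal families, exactly as in \cite{GorYa}, to which the reader can be referred for the details of the domain-control step.
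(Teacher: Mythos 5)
The paper does not give its own proof of Proposition~\ref{prop-comp-lin}; it defers to \cite{GorYa}. Your proposal reconstructs exactly the standard mechanism used there (and throughout the renormalization literature beginning with Lanford): factor the cylinder renormalization through a space of triples analytic on \emph{strictly larger} annuli, so that $\cL$ is a bounded operator followed by the derivative of the restriction map, and the latter is compact by Montel's theorem. Your references to Proposition~\ref{ren open set} (the renormalized map $\hat g$ lives in ${\aaa A}_W$ for any $W\Subset U_1$) and Proposition~\ref{renorm-operator}(2) ($f_*$'s renormalization is defined on $U_1\Supset U_*$) correctly locate the domain-gain in the paper. So the approach matches the intended one.

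One place you gloss a step worth flagging: compactness has to be established at the level of \emph{triples} $(\hat\phi_1,\hat\phi_2,\hat\phi_3)\in\mathbf C_{\hat\cU}$, not merely for the composed circle map $\hat g$. Propositions~\ref{uniquedecomp} and \ref{analytic dependence2} give you a unique, analytically varying decomposition landing in $\mathbf C_{\hat\cU}$, but you should say explicitly why the three coordinate annuli $\hat U_i$ can be chosen compactly containing $U_i$ \emph{uniformly} over the neighborhood $V(\fxpt)$. This follows because $\hat\phi_1$ is essentially the uniformizing coordinate of the fundamental crescent (which, by Proposition~\ref{ren open set}, varies continuously and extends past $U_1$), while $\hat\phi_2,\hat\phi_3$ are controlled by the fixed prototype $\mathbf f$; but without a sentence to this effect the factorization $\cL=D\iota\circ D\widehat{\cren}$ is asserted rather than justified. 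With that sentence added, your proof is complete and is the argument the paper is implicitly invoking.
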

In this section we establish:

\begin{thm}
  \label{thm-unstable}
The linear operator $\cL$ has two unstable eigendirections.
  \end{thm}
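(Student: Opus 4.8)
The plan is to establish the two unstable directions of $\cL = D\cren|_{\bfv_*}$ by combining the codimension count for the stable manifold with a compactness-and-spectral argument. First I would recall the structural inputs already in place: by Proposition~\ref{prop-comp-lin} the operator $\cL$ is compact, so its spectrum consists of $0$ together with eigenvalues of finite multiplicity accumulating only at $0$; in particular the unstable part of the spectrum (eigenvalues with $|\lambda|>1$) is finite-dimensional and spectrally separated from the rest, giving a $\cL$-invariant splitting $T_{\bfv_*}\cu = E^u \oplus E^{cs}$ with $E^u$ finite-dimensional. The goal is to show $\dim E^u = 2$. The geometric source of the ``$2$'' is Theorem~\ref{stable manifold}: the local stable set $S_*\cap W$ is a smooth submanifold of codimension $2$, and since $\cren$ fixes $\bfv_*$ and maps $S_*$ into itself (it is forward-invariant by Proposition~\ref{local stable} combined with the construction of $D_{k,n}$), the tangent space $T_{\bfv_*}S_*$ is a $\cL$-invariant closed subspace of codimension $2$ on which, by Proposition~\ref{local stable}, iterates of $\cL$ converge to $0$ pointwise.

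The key steps, in order, are: (1) Show $T_{\bfv_*}S_* \subset E^{cs}$, i.e. no unstable eigenvector lies tangent to the stable manifold. This follows because on $T_{\bfv_*}S_*$ the restricted operator has spectral radius $\le 1$: pointwise convergence $\cL^j\to 0$ on $S_*$ together with compactness (hence the uniform boundedness/equicontinuity needed to upgrade pointwise to the spectral statement on a finite-dim unstable block) forces any eigenvalue of $\cL|_{T_{\bfv_*}S_*}$ to have modulus $\le 1$; combined with the invariant splitting this gives $E^u \cap T_{\bfv_*}S_* = \{0\}$, hence $\dim E^u \le 2$. (2) Show $\dim E^u \ge 2$ by exhibiting two independent expanding directions. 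This is where the cone fields $\cC$ and $\cC_1$ from the previous section re-enter: the one-parameter rotation family through $\Gamma(\fxpt)$ used in constructing $D_k$ produces, in the limit, a vector $v\in\cC$ transverse to $T_{\bfv_*}D_*$; the derivative of $\cren$ along the corresponding motion of rotation numbers is governed by the Gauss map and its eigenvalue is $\prod (\rho_*^{(i)})^{-2}$ over the period, which has modulus $>1$ — this gives one unstable direction, ``deforming the rotation number''. A second, independent unstable direction comes from the vector $v_1\in\cC_1$ transverse to $T_{\bfv_*}S_*$ inside $D_*$ (Proposition~\ref{propconj2} and Proposition~\ref{cone3}): moving in this direction changes the combinatorial type (collides/separates the two critical orbits), and by the self-similar structure of $D_{k,n(k)}\to S_*$ the derivative of $\cren$ along it is again expanding. (3) Verify these two directions are linearly independent (they live in a $2$-dimensional complement of $T_{\bfv_*}S_*$, so independence is automatic once both are shown non-zero modulo $T_{\bfv_*}S_*$) and conclude $E^u$ is exactly $2$-dimensional.

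For step (2) the cleanest route is not to compute eigenvalues directly but to argue \emph{by contradiction using the stable manifold's codimension}: if $\dim E^u \le 1$, then $E^{cs}$ has codimension $\le 1$, so $E^{cs} \cap T_{\bfv_*}S_*$ would have codimension $\le 1$ inside $T_{\bfv_*}S_*$'s ambient complement — more precisely, one shows that the local stable manifold of $\bfv_*$ for the operator $\cren$ (the set of points whose orbit converges to $\bfv_*$) has tangent space exactly $E^{cs}$, hence codimension $= \dim E^u$; since $S_*$ is contained in this stable manifold and has codimension $2$ by Theorem~\ref{stable manifold}, and since conversely any point of the stable manifold must have $\rho(\Gamma(\cdot))=\rho_*$ preserved under the Gauss dynamics and hence lie in $S_*$, the two manifolds coincide and $\dim E^u = 2$. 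I would carry this out by first proving the stable-manifold/spectral identification (a standard Hadamard--Perron argument for a compact operator with spectral gap, applied at the fixed point $\bfv_*$ of the analytic operator $\cren$), then invoking Theorem~\ref{stable manifold} to pin down the codimension.

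The main obstacle I expect is step (1) combined with the stable-manifold identification: one must rule out the possibility of a \emph{neutral} direction (eigenvalue on the unit circle) that is not tangent to $S_*$, since such a direction would break the clean equality ``$\dim E^u = \operatorname{codim} S_*$''. The resolution is that $\cren$ is analytic near $\bfv_*$ and, by the results on rigidity and convergence (Theorem~\ref{main1}, item (3): $\cR^{mj}\zeta\to\zeta_*$ for \emph{all} $\zeta$ of the given type, which translates to convergence for $\cren$ on all of $S_*$), the dynamics on a center manifold, were there any neutral direction transverse to $S_*$, would have to move the rotation number or combinatorial type off $\cC$ while staying bounded — contradicting either the Gauss-map expansion on rotation numbers or the fact that $S_*$ already accounts for all maps of type $\cC$. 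Making this last point rigorous — that there is no center direction — is the delicate part, and I would handle it exactly as in \cite{GorYa}: use that the a priori bounds force precompactness of renormalization orbits, so any putative center vector would generate a bounded orbit converging (along a subsequence, by compactness of $\cL$) to a $\cL$-fixed vector, which must then be tangent to the actual stable manifold $S_*$, a contradiction with transversality.
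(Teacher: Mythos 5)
Your proposal follows the same route as the paper: Propositions~\ref{dir1} and~\ref{dir2} exhibit expansion along the two cone fields $\cC$ (deforming the rotation number) and $\cC_1$ (changing the relative position of the second critical orbit), which forces $\dim E^u\ge 2$, while the codimension-$2$ stable manifold of Theorem~\ref{stable manifold} gives the matching upper bound via the compactness of $\cL$. One minor overclaim: the explicit eigenvalue formula $\prod_i\bigl(\rho_*^{(i)}\bigr)^{-2}$ along the rotation-number direction is not computed in the paper and is not needed -- Proposition~\ref{dir1} only supplies an exponential lower bound on growth, which suffices.
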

Together with Theorem~\ref{stable manifold} this has the following corollary:
\begin{thm}
  \label{cor-hyperb}
  The renormalization fixed point $\bfv_*$ is hyperbolic. It has two unstable eigendirections. Its local stable manifold $W^s_{\text{loc}}(\bfv_*)$ coincides with $S_*$ and is an analytic submanifold of codimension $2$.
  \end{thm}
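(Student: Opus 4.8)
The plan is to deduce Theorem~\ref{cor-hyperb} by assembling the pieces already in place: the stable-manifold theorem (Theorem~\ref{stable manifold}), the two-unstable-direction statement (Theorem~\ref{thm-unstable}), and the compactness of $\cL$ (Proposition~\ref{prop-comp-lin}). The compactness of $\cL$ is the structural input that makes "hyperbolic" a meaningful statement in the Banach setting: the spectrum of a compact operator consists of $0$ together with a sequence of eigenvalues of finite multiplicity accumulating only at $0$. In particular there is a spectral gap separating the finitely many eigenvalues of modulus $\ge 1$ from the rest, so the tangent space $T_{\bfv_*}\cu$ splits $\cL$-invariantly as $E^u\oplus E^s$, with $E^u$ finite-dimensional (the sum of generalized eigenspaces for $|\lambda|\ge 1$) and $\cL|_{E^s}$ of spectral radius $<1$. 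One must also rule out eigenvalues exactly on the unit circle other than those producing the two unstable directions; this is where Theorem~\ref{stable manifold} is used, as I explain below.

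First I would record the spectral decomposition: since $\cL$ is compact (Proposition~\ref{prop-comp-lin}), fix $\sigma$ with $\text{spec}(\cL)\cap\{|\lambda|=\sigma\}=\emptyset$ for some $\sigma\in(0,1)$ close to $1$, and set $E^u=$ range of the Riesz projection for $\{|\lambda|\ge\sigma\}$, $E^s=$ range of the complementary projection. Then $E^u$ is finite-dimensional, $\cL|_{E^s}$ has spectral radius $<\sigma<1$, and the splitting is $\cL$-invariant. By Theorem~\ref{thm-unstable}, $E^u$ contains at least two independent eigendirections with $|\lambda|>1$. Next I would invoke Theorem~\ref{stable manifold}: $S_*=W^s_{\text{loc}}(\bfv_*)$ is a submanifold of codimension exactly $2$, and by Proposition~\ref{local stable} it is forward-invariant with $\cren^j\to\bfv_*$ on it, so its tangent space $T_{\bfv_*}S_*$ is an $\cL$-invariant subspace of codimension $2$ on which $\cren$ contracts. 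This forces $T_{\bfv_*}S_*\subseteq E^s\oplus(\text{at most the part of }E^u\text{ on the unit circle})$; but more strongly, since the codimension is precisely $2$ and we already have a $2$-dimensional expanding subspace transverse to $S_*$, a dimension count gives $T_{\bfv_*}\cu = T_{\bfv_*}S_* \oplus E^u$ with $\dim E^u = 2$. In particular there are no eigenvalues on the unit circle and no further unstable directions: $\cL$ is hyperbolic with exactly a two-dimensional unstable subspace, and $E^s=T_{\bfv_*}S_*$.

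Finally, to identify $W^s_{\text{loc}}(\bfv_*)$ with $S_*$ as sets: the inclusion $S_*\cap W\subseteq W^s_{\text{loc}}(\bfv_*)$ is Proposition~\ref{local stable}. For the reverse inclusion, the standard Hartman--Grobman / stable-manifold machinery for a hyperbolic fixed point of an analytic (hence $C^1$) operator on a Banach manifold produces a local stable manifold tangent to $E^s$ and of the same dimension; since $S_*$ is a submanifold through $\bfv_*$ with $T_{\bfv_*}S_*=E^s$ on which every orbit converges to $\bfv_*$, uniqueness of the local stable manifold (valid once there is a spectral gap and no neutral directions) gives $W^s_{\text{loc}}(\bfv_*)=S_*\cap W$ after shrinking $W$. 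Analyticity of the submanifold is already part of the statement of Theorem~\ref{stable manifold}, so nothing new is needed there.

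The main obstacle is the exclusion of neutral directions --- eigenvalues of modulus exactly $1$ --- and, relatedly, pinning down that $\dim E^u$ is \emph{exactly} $2$ rather than merely $\ge 2$. The entire argument hinges on the codimension in Theorem~\ref{stable manifold} being sharp and on $T_{\bfv_*}S_*$ being genuinely $\cL$-invariant and contracting (not just non-expanding); once those are granted, the dimension count closes the gap. I would therefore spend the bulk of the write-up carefully justifying that the codimension-$2$ submanifold $S_*$ has tangent space equal to the full contracting subspace $E^s$ of the compact operator $\cL$, using invariance plus the convergence $\cren^j\to\bfv_*$ on $S_*$ to show $T_{\bfv_*}S_*$ cannot meet $E^u$, and the codimension count to show it is all of $E^s$; everything else is a routine application of spectral theory of compact operators and the Banach stable-manifold theorem.
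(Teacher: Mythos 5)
Your proposal is correct and follows exactly the route the paper takes (the paper simply states Theorem~\ref{cor-hyperb} as a corollary of Theorem~\ref{thm-unstable} and Theorem~\ref{stable manifold}, with the compactness of $\cL$ from Proposition~\ref{prop-comp-lin} implicitly providing the spectral decomposition and the dimension count you spell out). The one point you correctly flag as the crux — that uniform convergence of $\cren^j$ on the codimension-$2$ invariant set $S_*$ together with the two transverse expanding eigendirections forces $T_{\bfv_*}S_*=E^s$ and excludes neutral spectrum — is indeed what makes the dimension count close; the paper leaves this implicit, and your write-up supplies precisely the missing reasoning.
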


The following was shown in \cite{GorYa} (see also \cite{Ya3}):
\begin{prop}
\label{dir1}
  There exist constants $a_1>0$ and $\lambda_1>1$ such that for every $\nu\in \cE\cap T_{\bfv}\curr$  and $n\in\NN$
  we have
  $$||\cL^n\nu||>a_1\lambda_1^n\inf_{x\in\RR} D\Gamma\nu(x).$$
\end{prop}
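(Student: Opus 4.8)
The plan is to reduce the estimate to a magnification statement for the renormalization operator acting directly on bi-cubic circle maps, and then to invoke the argument of \cite{GorYa}, adapted to the multi-critical setting exactly as in the proof of the complex {\it a priori} bounds. Recall that the cylinder renormalization $\cren$ on triples is semiconjugate, via the map $\Gamma$ of Proposition~\ref{projection1}, to a renormalization operator $\widehat\cR$ defined near $f_*$ on the space of bi-cubic maps: $\Gamma\circ\cren=\widehat\cR\circ\Gamma$. Differentiating at $\bfv_*$ gives $D\Gamma\circ\cL=W\circ D\Gamma$, where $W\equiv D\widehat\cR|_{f_*}$ acts on vector fields along the circle; hence $D\Gamma(\cL^n\nu)=W^n(D\Gamma\nu)$. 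Since $\Gamma$ is analytic, $D\Gamma$ is a bounded operator, so $\|\mu\|\ge\|D\Gamma\|^{-1}|\inf_x D\Gamma\mu(x)|$ for every $\mu$. It therefore suffices to prove: \emph{(i)} $W$ preserves the cone of everywhere-positive vector fields, and \emph{(ii)} there is $\lambda_1>1$ with $\inf_x(W\upsilon)(x)\ge\lambda_1\inf_x\upsilon(x)$ for every positive $\upsilon$. Indeed, for $\nu\in\cC$ the field $D\Gamma\nu$ is positive, so iterating \emph{(ii)} gives $\inf_x W^n(D\Gamma\nu)(x)\ge\lambda_1^n\inf_x D\Gamma\nu(x)$, and the proposition follows with $a_1=\|D\Gamma\|^{-1}$.

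For \emph{(i)}: on the real line $\widehat\cR(f)$ is, up to conjugation by the rescaling associated with the fundamental crescent, a high iterate $f^{s}$ of $f$ (Proposition~\ref{prop cylren1}). Differentiating $f_{*,t}^{s}=f_{*,t}\circ\cdots\circ f_{*,t}$ at $t=0$ in the direction of a vector field $\upsilon$ yields
\[
\partial_t\big(f_{*,t}^{s}\big)(x)\big|_{t=0}=\sum_{i=0}^{s-1}\big(f_*^{\,s-1-i}\big)'\!\big(f_*^{\,i+1}(x)\big)\,\upsilon\big(f_*^{\,i}(x)\big),
\]
with all coefficients non-negative (a critical circle map is orientation preserving, by conditions (IV)--(V) of the commuting-pair definition) and the $i=s-1$ coefficient equal to $1$. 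Hence the sum is $\ge\upsilon(f_*^{\,s-1}(x))>0$ when $\upsilon>0$; incorporating the rescaling and, for the genuine cylinder operator, the additional term coming from the motion of the crescent and of its uniformization — which is non-negative by the holomorphic motion of Proposition~\ref{ren open set} and is treated as in \cite{GorYa} — we conclude that $W$, hence (by the semiconjugacy) $\cL$, preserves positivity, i.e. $\cL\,\cC\subset\cC$.

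For \emph{(ii)}: the same computation writes $(W\upsilon)(y)=\sum_i d_i(y)\,\upsilon(z_i(y))$ with $d_i(y)\ge0$, so $(W\upsilon)(y)\ge\big(\inf_x\upsilon(x)\big)(W\mathbf 1)(y)$ for the constant vector field $\mathbf 1$, whence $\inf_y(W\upsilon)(y)\ge\big(\inf_x\upsilon(x)\big)\inf_y(W\mathbf 1)(y)$; thus we may take $\lambda_1=\inf_y(W\mathbf 1)(y)$ and must check $\lambda_1>1$. Here $\mathbf 1$ is the tangent vector to the rigid-rotation family $t\mapsto R_t\circ f_*$, and keeping only the $i=s-1$ term gives $(W\mathbf 1)(y)\ge\Lambda$, where $\Lambda>1$ is the magnification factor of the cylinder renormalization — the whole circle is reconstructed from a proper subdomain of the dynamics. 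The sharper bound $\inf_y(W\mathbf 1)(y)>1$, together with its persistence along the $\cren$-orbit of $\bfv_*$ (needed to iterate, cf. compactness of $\cL$ in Proposition~\ref{prop-comp-lin}), is exactly the estimate established in \cite{GorYa} using beau real {\it a priori} bounds (Theorems~\ref{realbounds} and \ref{realbounds2}); the only modification here is that, there being several critical points, the orbit segments in the formula may pass through several critical values, which is accommodated in the same way as in Lemmas~\ref{good angle}--\ref{cb2}. Combining \emph{(i)}, \emph{(ii)} and the first paragraph finishes the proof.

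The main obstacle is the quantitative magnification in \emph{(ii)}: cone-invariance is elementary, but proving $\inf_y(W\mathbf 1)(y)>1$ rigorously requires bounding the coefficients $d_i(y)$ from below on a definite portion of the return orbit — via the de~Faria decomposition of $f_*^{s}$ into bounded-distortion diffeomorphisms and odd power maps (Theorem~\ref{realbounds}) — and controlling the extra term produced by the holomorphic motion of the fundamental crescent (Proposition~\ref{ren open set}). Both steps are carried out for uni-critical maps in \cite{GorYa}, and, with the modifications already introduced for the complex bounds, transfer to the bi-cubic case.
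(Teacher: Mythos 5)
Your argument is a Perron--Frobenius-type cone argument, and it is not the one the paper relies on: the paper gives no proof of Proposition~\ref{dir1} at all, quoting it from \cite{GorYa} (see also \cite{Ya3}), where the proof goes through rotation numbers rather than positivity of the derivative. There one considers the family $f_t=\Gamma(\fxpt+t\nu)$, uses $\inf_x D\Gamma\nu>0$ together with the real {\it a priori} bounds to produce parameters $t_k\le C\beta^{k}$, $\beta<1$, at which $\fxpt+t_k\nu$ lands on the sheets $D_k$; since $\cren^n$ carries $D_k$ into $D_{k-nm}$, the point $\cren^n(\fxpt+t_k\nu)$ with $k=nm+O(1)$ lies a definite distance $\delta$ from $\fxpt$, and analyticity of $\cren^n$ on a ball of definite radius converts a displacement $\delta$ over a parameter range $O(\beta^{nm})$ into $\|\cL^n\nu\|\gtrsim\delta\beta^{-nm}$. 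No invariance of the cone $\cC$ is claimed or needed in that argument.

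The genuine gap in your write-up is the assertion that the contribution to $W\upsilon$ coming from the motion of the fundamental crescent and of its uniformizing coordinate is non-negative. A holomorphic motion (Proposition~\ref{ren open set}) carries no sign information; that contribution is a coboundary-type term $w\circ\hat f-\hat f'\cdot w$, where $w$ generates the variation of the uniformization, and such terms are not sign-definite. Indeed, the paper itself concedes exactly this point in the proof of the companion Proposition~\ref{dir2}, where the analogous remainder is handled by taking the maximum over $\pm\nu$ ``depending on the sign of the remainder terms'' --- which is incompatible with your claim that the term is non-negative. Without that positivity, neither step \emph{(i)} ($\cL\,\cC\subset\cC$) nor the reduction $(W\upsilon)(y)\ge(\inf_x\upsilon(x))\,(W\mathbf 1)(y)$ in step \emph{(ii)} is available, so the whole mechanism collapses; moreover the quantitative input you attribute to \cite{GorYa}, namely a uniform lower bound $\inf_y(W\mathbf 1)(y)>1$ along the orbit, is not what is proved there (what is proved is the displacement estimate sketched above). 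To repair the proof you would either have to control the sign and size of the crescent term directly --- which is the hard part and is nowhere addressed --- or abandon the cone-expansion scheme in favour of the rotation-number argument.
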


We also have:
\begin{prop}
  \label{dir2}
  There exist constants $a_2>0$ and $\lambda_2>1$ such that the following holds.
  Let $\nu\in T_{\bfv_*}D_*\cap \cE_1$. Set $\upsilon=D\Gamma\nu$ and $\upsilon^\pm_n=D\Gamma\cL^n(\pm\nu)$ for $n\in\NN$. Then, 
  $$\max(|(\upsilon^+_n)'(c_{\cren^n f_*})|,|(\upsilon^-_n)'(c_{\cren^n f_*})|)>a_2\lambda_2^n|\upsilon'(c_{f_*})|.$$
  \end{prop}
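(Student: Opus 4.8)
\textbf{Proof proposal for Proposition~\ref{dir2}.}

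The plan is to mimic, in the ``second critical direction'', the argument that produced the first expanding direction in Proposition~\ref{dir1}, but now with the distortion at the \emph{non-fixed} critical point $c_f$ playing the role that $\inf_x D\Gamma\nu(x)$ played before. The key object is the cone field $\cC_1$, defined by the condition $\upsilon'(c_f)>0$ for $\upsilon = D\Gamma\nu$. First I would record that $\cC_1$ is (almost) invariant under $\cL$ up to passing to the symmetric pair $\pm\nu$: this is exactly the reason the statement is phrased in terms of $\max(|(\upsilon^+_n)'|,|(\upsilon^-_n)'|)$ rather than a single orbit. The mechanism is that under one step of cylinder renormalization a perturbation $\upsilon$ of the dynamics, restricted to the renormalization window, gets pulled back by a long composition of branches of $\mathbf f$ and the $\phi_i$'s; the derivative at the image critical point $c_{\cren^n f_*}$ is a weighted sum of the values and derivatives of $\upsilon$ along the orbit of $c_{f_*}$, and the leading contribution is controlled by the affine rescaling factor of the renormalization. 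Since the fixed point $\bfv_*$ has periodic type, that rescaling is uniformly expanding along the period, which furnishes the factor $\lambda_2>1$.

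The key steps, in order, would be: (1) use complex {\it a priori} bounds (Theorem~\ref{thm:bounds}) and the construction of the cylinder renormalization operator (Proposition~\ref{renorm-operator}, Theorem~\ref{exist crescent}) to write $\cren^n f_* = f_*$ as the projection of a fixed iterate of $f_*$ restricted to a fundamental crescent, and track how a tangent vector $\nu$ pushes forward under $D\cren$; (2) express $\upsilon_n^\pm = D\Gamma\cL^n(\pm\nu)$ in terms of $\upsilon$ evaluated along the (finite) orbit segment of the critical point, using the chain rule for the long composition and the fact that the two critical points of $\cR^n f_*$ stay separated by an interval of a fixed-level dynamical partition (the second clause in the definition of periodic marked combinatorial type, plus Theorem~\ref{realbounds2}); (3) identify the dominant term: differentiating the composition at $c_{\cren^n f_*}$, the term where the derivative lands on the branch immediately preceding the critical point carries a factor equal to (a bounded multiple of) the rescaling ratio $|I_0|/|I_{n}|$-type quantity, which for periodic type grows like $\lambda_2^n$; (4) control the sign/cancellation issue by observing that replacing $\nu\mapsto -\nu$ flips the sign of $\upsilon'(c)$, so at least one of $\upsilon_n^+$, $\upsilon_n^-$ has derivative at $c_{\cren^n f_*}$ of the ``right'' sign and hence of size at least $a_2\lambda_2^n|\upsilon'(c_{f_*})|$, there being no cancellation against the subdominant terms once $n$ is large; (5) absorb the finitely many initial renormalizations into the constant $a_2$, using pre-compactness to make all bounds uniform. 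Propositions~\ref{cone3} and \ref{propconj2} ensure $\cC_1$ is non-trivial and transverse to the candidate stable directions, so the resulting expanding direction is genuinely new.

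The main obstacle I anticipate is step (3)--(4): showing that the derivative $\upsilon'(c_{f_*})$ at the \emph{second} critical point really is expanded, rather than merely not contracted. Unlike the first direction, where $\inf_x D\Gamma\nu(x)>0$ is a robust positivity that survives pullback by any monotone composition (Lemma~\ref{cone not in tk}'s mechanism), here we are tracking a single pointwise derivative, and a priori the long composition defining renormalization could distort it by a bounded-but-not-expanding factor if the orbit of $c_{f_*}$ spent most of its time in regions of comparable scale. The resolution should be that, because the combinatorics is of periodic type and the critical point $c_{f_*}$ is \emph{not} the renormalization base point, the orbit segment realizing one renormalization step necessarily crosses the deepest scale $I_n$, where the nonlinearity (the cubic critical point at $0$, via the power-law estimate Theorem~\ref{thm:power} and Lemma~\ref{lin}) forces a definite amplification; quantifying this amplification uniformly over the period, and checking it beats the distortion of the diffeomorphic parts (beau-bounded by Theorem~\ref{realbounds}), is the technical heart of the argument. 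A secondary subtlety is making precise the ``no cancellation'' claim in step (4): this requires the subdominant terms in the chain-rule expansion to be bounded by a fixed fraction of the dominant one, which again follows from real {\it a priori} bounds but needs to be stated carefully so that the $\max$ over $\pm\nu$ genuinely rescues a lower bound.
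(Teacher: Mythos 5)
Your overall framework --- chain rule along the composition, a rescaling factor providing the $\lambda_2^n$, and a sign argument via the $\pm\nu$ pair --- is in the right neighborhood, but the "technical heart" paragraph misidentifies the source of the exponential expansion and invokes machinery the proof does not need.

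The paper's argument is simpler than what you propose, and splits into two pieces with very different roles. First, since $c_{f_*}$ is a critical point of $f_*$, an easy induction on the chain rule for the variation $\hat\upsilon_n$ of the iterate $f_*^n$ collapses the sum to a single surviving term; all other contributions vanish because they carry a factor $(f_*^j)'(c_{f_*})=0$ or $(f_*^{j+1})'(c_{f_*})=0$. Real \emph{a priori} bounds then furnish only a uniform \emph{lower bound} $C_0>0$ on the remaining derivative factor along the return iterate $n=q_m$; crucially, this factor is bounded, not expanding. Second --- and this alone accounts for $\lambda_2^n$ --- the cylinder renormalization is the projection of the first-return map to a fundamental crescent via a uniformizing coordinate $\Phi_n$ onto $\CC/\ZZ$. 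The crescent has Euclidean size comparable to the $n$-th renormalization scale, which for periodic type shrinks geometrically; by the Koebe Distortion Theorem combined with real bounds, $\min_{x\in\RR}\Phi_n'(x)$ grows exponentially. The $\max$ over $\pm\nu$ then absorbs the sign of the remainder terms, exactly as you say.

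Where you go wrong is the claim that the amplification comes from the cubic nonlinearity along the deep orbit near $0$, via Theorem~\ref{thm:power} and Lemma~\ref{lin}. That is not the mechanism, and it would not work as stated: the derivative of the return iterate at $c_{f_*}$ is commensurable with $1$, not exponentially large, and the power-law estimate plays no role in this proposition. Your concern that the long composition might only distort by a bounded factor is actually correct insofar as the iterate alone is concerned --- but it is resolved by observing that the uniformization $\Phi_n$ is applied \emph{after} the iterate and is itself the sole source of expansion, for purely geometric (Koebe) reasons rather than dynamical ones.
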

\begin{proof}
  The composition $f\mapsto f^n$ transforms the vector field $\upsilon$ into $\hat\upsilon_n$.
  An easy induction shows that
  $$\hat\upsilon_n'(c_{f_*})=\frac{df_*^{n-1}}{dx}(f_*(c_{f_*}))\upsilon'(c_{f_*}).$$
  By real {\it a priori} bounds, the derivative $\frac{df_*^{n-1}}{dx}(f_*(c_{f_*}))$ is uniformly bounded below by $C_0>0$ for $n=q_m$.
  Let $\Phi_n$ stand for the uniformizing coordinate of the fundamental crescent of the $n$-th cylinder renormalization of $f_*$. Then, either for  $u=\upsilon^+_n$, or for $u=\upsilon^-_n$ (depending on the sign of the remainder terms), we can bound below the value of
  $|u'(c_{\cren^n f_*})|$ by
  $$\min_{x\in\RR}\Phi_n'(x)C_0|\upsilon'(c_{f_*})|.$$
  By Koebe Distortion Theorem combined with real {\it a priori} bounds, $\min_{x\in\RR}\Phi_n'(x)$ grows exponentially with $n$, and the claim follows. 

  \end{proof}

\bibliographystyle{amsalpha}
\bibliography{biblio}

\providecommand{\bysame}{\leavevmode\hbox to3em{\hrulefill}\thinspace}
\providecommand{\MR}{\relax\ifhmode\unskip\space\fi MR }
% \MRhref is called by the amsart/book/proc definition of \MR.
\providecommand{\MRhref}[2]{%
  \href{http://www.ams.org/mathscinet-getitem?mr=#1}{#2}
}
\providecommand{\href}[2]{#2}
\begin{thebibliography}{dFdM00}

\bibitem[BR]{BR}
L.~Bers and H.~L. Royden, \emph{Holomorphic families of injections}, Acta
  {M}ath. \textbf{157}, 259--286.

\bibitem[dF99]{dF2}
Edson de~Faria, \emph{Asymptotic rigidity of scaling ratios for critical circle
  mappings}, Ergodic Theory Dynam. Systems \textbf{19} (1999), no.~4,
  995--1035.

\bibitem[dFdM00]{dFdM2}
E.~de~Faria and W.~de~Melo, \emph{Rigidity of critical circle mappings {II}},
  J. Amer. Math. Soc. (JAMS) \textbf{13} (2000), no.~2, 343--370.

\bibitem[EdFG16]{dFbounds}
G.~Estevez, E.~de~Faria, and P.~Guarino, \emph{Beau bounds for multicritical
  circle maps}, e-print {A}r{X}iv (2016).

\bibitem[FKS82]{FKS}
M.~J. Feigenbaum, L.~P. Kadanoff, and S.~J. Shenker, \emph{Quasiperiodicity in
  dissipative systems: a renormalization group analysis}, Phys. D \textbf{5}
  (1982), no.~2-3, 370--386.

\bibitem[GdF]{GdF}
P.~Guarino and E.~de~{F}aria, \emph{Quasisymmetric orbit-flexibility of
  multicritical circle maps}, in preparation.

\bibitem[GY18]{GorYa}
I.~Gorbovickis and M.~Yampolsky, \emph{Rigidity, universality,and hyperbolicity
  of renormalization for critical circle maps with non-integer exponents}, Erg.
  Th. and Dyn. Sys. (2018).

\bibitem[Her86]{Herm}
M.~Herman, \emph{Conjugaison quasi sym{\'e}trique des hom{\'e}omorphismes du
  cercle {\`a} des rotations, and {C}onjugaison quasi sym{\'e}trique des
  diff{\'e}omorphismes du cercle {\`a} des rotations et applications aux
  disques singuliers de {S}iegel}, available from {\sl
  http://www.math.kyoto-u.ac.jp/$\sim$mitsu/Herman/index.html}, 1986.

\bibitem[Mil06]{Mil}
J.~Milnor, \emph{Dynamics in one complex variable. {I}ntroductory lectures},
  3rd ed., Princeton University Press, 2006.

\bibitem[MSS83]{MSS}
R.~Ma{\~n}{\'e}, P.~Sad, and D.~Sullivan, \emph{On the dynamics of rational
  maps}, Ann. Sci. \'Ecole Norm. Sup. (4) \textbf{16} (1983), no.~2, 193--217.

\bibitem[{\"O}RSS83]{ORSS}
Stellan {\"O}stlund, David Rand, James Sethna, and Eric Siggia, \emph{Universal
  properties of the transition from quasiperiodicity to chaos in dissipative
  systems}, Phys. D \textbf{8} (1983), no.~3, 303--342.

\bibitem[Sul92]{Sullivan-bounds}
Dennis Sullivan, \emph{Bounds, quadratic differentials, and renormalization
  conjectures}, American {M}athematical {S}ociety centennial publications,
  {V}ol. {II} ({P}rovidence, {RI}, 1988), Amer. Math. Soc., Providence, RI,
  1992, pp.~417--466.

\bibitem[War50]{Wars}
S.E. Warschawski, \emph{On the degree of variation in conformal mapping of
  variable regions}, Trans. of the {A}{M}{S} \textbf{69} (1950), no.~2,
  335--356.

\bibitem[Yam99]{Ya1}
M.~Yampolsky, \emph{Complex bounds for renormalization of critical circle
  maps}, Erg. Th. \& Dyn. Systems \textbf{19} (1999), 227--257.

\bibitem[Yam02]{Ya3}
M~Yampolsky, \emph{Hyperbolicity of renormalization of critical circle maps},
  Publ. Math. Inst. Hautes {\'E}tudes Sci. \textbf{96} (2002), 1--41.

\bibitem[Yam03]{Ya4}
M.~Yampolsky, \emph{Global renormalization horseshoe for critical circle maps},
  Commun. {M}ath. {P}hys. \textbf{240} (2003), 75--96.

\bibitem[Yoc84]{YocDen}
J.-C. Yoccoz, \emph{Il n'y a pas de contre-exemple de {D}enjoy analytique}, C.
  R. Acad. Sci. Paris S\'er. I Math. \textbf{298} (1984), no.~7, 141--144.

\bibitem[Zak99]{Zakeri}
S.~Zakeri, \emph{Dynamics of cubic {S}iegel polynomials}, Communications in
  {M}athematical {P}hysics \textbf{206} (1999), no.~1, 185--233.

\end{thebibliography}
\end{document}